\numberwithin{equation}{section}
\theoremstyle{plain}
\newtheorem{Th}{Theorem}[section]
\newtheorem{Lemma}[Th]{Lemma}
\newtheorem{Cor}[Th]{Corollary}
\newtheorem{Prop}[Th]{Proposition}
\newtheorem{Remark}[Th]{Remark}
 \theoremstyle{definition}
\newtheorem{Def}[Th]{Definition}
\newtheorem{?}[Th]{Problem}
\newcommand{\Hom}{\rm{Hom}}
\newcommand*{\rom}[1]{\uppercase\expandafter{\romannumeral #1\relax}}
\newcommand{\Q}{\mathbb{Q}}
\newcommand{\Z}{\mathbb{Z}}
\newcommand{\op}[1]{\operatorname{#1}}
\newcommand{\ZU}{\text{Z}(\text{U}_{\alpha})}
\newcommand{\F}{\mathbb{F}}
\newcommand{\p}{\varphi}
\newcommand{\GSp}{\operatorname{GSp}}
\newcommand{\g}{\operatorname{Ad}^0\bar{\rho}}
\newcommand{\GL}{\operatorname{GL}}
\newcommand{\G}{\operatorname{G}_{\mathbb{Q},S}}
\DeclareSymbolFont{cyrletters}{OT2}{wncyr}{m}{n}
\DeclareFontFamily{U}{wncy}{}
    \DeclareFontShape{U}{wncy}{m}{n}{<->wncyr10}{}
    \DeclareSymbolFont{mcy}{U}{wncy}{m}{n}
    \DeclareMathSymbol{\Sh}{\mathord}{mcy}{"58}
\newcommand\mtx[4] { \left( {\begin{array}{cc}
   #1 & #2 \\
   #3 & #4 \\
  \end{array} } \right)}
\begin{document}

\title{Deformations of Certain Reducible Galois Representations, \rom{3}}

\author[Anwesh Ray]{Anwesh Ray}
\address{Cornell University \\ Department of Mathematics \\
  Malott Hall, Ithaca, NY 14853-4201 USA} 

\email{ar2222@cornell.edu}
\begin{abstract}Let $p$ be an odd prime and $q$ a power of $p$. We examine the deformation theory of reducible and indecomposable Galois representations $\bar{\rho}:\operatorname{G}_{\Q}\rightarrow \GSp_{2n}(\F_q)$ that are unramified outside a finite set of primes $S$ and whose image lies in a Borel subgroup. We show that under some additional hypotheses, such representations have geometric lifts to the Witt vectors $\text{W}(\F_q)$. The main theorem of this manuscript is a higher dimensional generalization of the result of Hamblen-Ramakrishna \cite{hamblenramakrishna}.
\end{abstract}

\maketitle

\section{Introduction} 
Let $p$ be an odd prime number and let $\F_q$ be the finite field with $q=p^M$ elements. Denote by $\text{W}(\F_q)$ the ring Witt vectors of $\F_q$. For each prime number $l$, choose an embedding $\iota_l:\bar{\Q}\hookrightarrow \bar{\Q}_l$. Denote by $\operatorname{G}_l$ the absolute Galois group $\operatorname{Gal}(\bar{\Q}_l/\Q_l)$. Let $S$ be a finite set of prime numbers containing $p$. Denote by $\Q_S$ the maximal algebraic extension of $\Q$ which is unramified at each prime $l\notin S$ and set $\operatorname{G}_{\Q,S}=\operatorname{Gal}(\Q_S/\Q)$. The weak form of Serre's conjecture states that an odd and irreducible two-dimensional Galois representation 
\[\bar{\varrho}:\operatorname{G}_{\Q,S}\rightarrow \text{GL}_2(\F_q)\]is modular, i.e., lifts to a characteristic zero representation $\varrho$ attached to an eigencuspform. The strong form asserts that the eigencuspform may be chosen to have optimal level equal to the prime to $p$ part of the Artin conductor of $\bar{\varrho}$. Ribet \cite{ribetLL} proved via a level lowering argument that the weak form implies the strong form. Khare-Wintenberger \cite{KW2} went on to prove the full statement, building on Ribet's work.
\par 
In \cite{hamblenramakrishna}, Hamblen and Ramakrishna prove a generalization of the weak form of Serre's conjecture for reducible two-dimensional Galois representations. They impose some conditions on a two-dimensional representation
$\bar{\varrho}:\operatorname{G}_{\Q,S}\rightarrow \op{GL}_2(\F_q)$, namely,
\begin{enumerate}
\item $\bar{\varrho}$ is reducible of the form 
\[\bar{\varrho}=\mtx{\varphi}{\ast}{}{1},\] where $\varphi:\operatorname{G}_{\Q,S}\rightarrow \F_q^{\times}$ is a Galois character.
\item The representation $\bar{\varrho}$ is odd, i.e. if $c\in \operatorname{G}_{\Q}$ denotes complex conjugation, \[\det \bar{\varrho}(c)=-1.\]
\item The representation $\bar{\varrho}$ is indecomposable, i.e. the cohomology class \[\ast\in H^1(\G, \F_q(\varphi))\] is non-zero.
\item 
The Galois character $\varphi$ is stipulated to satisfy some conditions, for instance, $\varphi\neq  \bar{\chi},\bar{\chi}^{-1}$ where $\bar{\chi}$ is the mod $p$ cyclotomic character and $\varphi^2\neq 1$. Further, the image of $\varphi$ is stipulated to span $\F_q$ over $\F_p$.
\item 
There are further conditions on the restriction $\bar{\varrho}_{\restriction \operatorname{G}_p}$. The reader may refer to condition 5 of Theorem 2 in \cite{hamblenramakrishna} for further details.
\end{enumerate} 
Hamblen and Ramakrishna show that if $\bar{\varrho}$ satisfies the above mentioned conditions, then on enlarging the set of ramification $S$ by a finite set of primes $X$, $\bar{\varrho}$ has an odd, irreducible, $p$-ordinary lift $\varrho$ which is unramified outside $S\cup X$
 \[\begin{tikzpicture}[node distance = 2.0cm, auto]
      \node (GSX) {$\operatorname{G}_{\Q,S\cup X}$};
      \node (GS) [right of=GSX] {$\operatorname{G}_{\Q,S}$};
      \node (GL2) [right of=GS]{$\text{GL}_2(\F_q).$};
      \node (GL2W) [above of= GL2]{$\text{GL}_2(\text{W}(\F_q))$};
      \draw[->] (GSX) to node {} (GS);
      \draw[->] (GS) to node {$\bar{\varrho}$} (GL2);
      \draw[->] (GL2W) to node {} (GL2);
      \draw[dashed,->] (GSX) to node {$\varrho$} (GL2W);
      \end{tikzpicture}\]This lift is \textit{geometric} in the sense of Fontaine and Mazur \cite{fontainemazur}. By the result of Skinner and Wiles in \cite{skinnerwiles}, the representation $\varrho$ arises from a $p$-ordinary eigencuspform. This settles the weak form of Serre's conjecture for such $\bar{\varrho}$.
\par
The prospect of generalizing this lifting result leads us to examine higher dimensional Galois representations with image in a smooth group-scheme $\operatorname{G}$ over $\text{W}(\F_q)$. Assume that $\op{G}_{\restriction \F_q}$ is split and reductive and choose a split Borel $\operatorname{B}_{/\F_q}\subset \op{G}_{\restriction \F_q}$. Let $\bar{\rho}$ be a homomorphism \[\bar{\rho}:\operatorname{G}_{\Q}\rightarrow \operatorname{G}(\F_q).\] Let $\mathfrak{g}$ denote the Lie-algebra of the adjoint group $\operatorname{G}_{\restriction \F_q}^{ad}$ and $\Phi(\operatorname{G}_{\restriction \F_q}^{ad})$ be a root system compatible with the choice of Borel. Denote by $\mathfrak{n}\subset \mathfrak{g}$ the span of the positive roots. The $\F_q$-vector space $\mathfrak{g}$ acquires an adjoint Galois action
\[\g:\operatorname{G}_{\Q}\rightarrow \operatorname{Aut}_{\F_q}(\mathfrak{g}).\] Denote by $\g$ the Galois module with underlying vector space $\mathfrak{g}$. It is imperative that $\bar{\rho}$ is \textit{odd}. For an involution $\tau\in \op{Aut}(\operatorname{G}_{\restriction \F_q})$, let $(\g)^{\tau}$ denote the subspace of $\g$ fixed by $\tau$. It was shown by E. Cartan that
\begin{equation*}
 \dim  (\g)^{\tau}\geq\dim  \mathfrak{n}
\end{equation*}
(see \cite[Proposition 2.2]{Yun} for further details).
The representation $\bar{\rho}$ is \textit{odd} if equality is achieved for the involution $\op{ad} \bar{\rho}(c)$, i.e.
\begin{equation}\label{oddness}
 \dim  (\g)^{\op{ad}\bar{\rho}(c)}=\dim  \mathfrak{n}.
\end{equation}
In particular, the group $\operatorname{G}$ must contain an element $h=\bar{\rho}(c)$ for which equality $\ref{oddness}$ is achieved. Such an element is said to induce a \textit{Chevalley involution}. When $n>2$, the general linear group $\GL_n(\F_q)$ contains no such element. Hence there are no odd representations for the group $\GL_n(\F_q)$ when $n>2$. On the other hand, the general symplectic group $\GSp_{2n}(\F_q)$ does contain elements which induce Chevalley involutions. 
\par Ramakrishna in \cite{RamLGR} and \cite{RamFM} showed that odd, irreducible representations $\bar{\rho}:\op{G}_{\Q}\rightarrow \op{GL}_2(\bar{\F}_p)$ satisfying some additional hypotheses exhibit characteristic zero lifts which are \textit{geometric} in the sense of Fontaine and Mazur. These results provided evidence for Serre's conjecture, before it was proved by Khare and Wintenberger. Taylor in \cite{taylor} introduced a reformulation of Ramakrishna's method, by showing that the vanishing of a certain \textit{dual Selmer group} is sufficient in asserting the existence of global Galois deformations with fixed local conditions. This new formulation paved the way for higher dimensional generalizations. In \cite{partikisthesis}, Patrikis generalized Ramakrishna's lifting theorem to odd representations with \textit{big image} in $\GSp_{2n}(\F_q)$. Fakhruddin, Khare and Patrikis studied more general odd, irreducible representations in \cite{FKP1} and \cite{FKP2}.
\par We assume that our representation has image in $\GSp_{2n}(\F_q)$ for $n\geq 2$. Associate to a commutative $\text{W}(\F_q)$-algebra $R$, a non-degenerate alternating form on $R^{2n}$ prescribed by the matrix\[J:=\mtx{}{\operatorname{Id}_n}{-\operatorname{Id}_n}{}.\] The group of general symplectic matrices $\GSp_{2n}(R)$ consists of matrices $X$ which preserve this form up to a scalar i.e. satisfy
$X^t J X\in R^{\times} \cdot J$. The similitude character $\nu:\GSp_{2n}(R)\rightarrow R^{\times}$ is defined by the relation $X^t J X=\nu(X)\cdot J$. The space $\g$ is an $\F_q[\operatorname{G}_{\Q,S}]$-module with underlying space $\operatorname{sp}_{2n}(\F_q)$. The Galois action is prescribed by
 \[g\cdot X=\bar{\rho}(g) X \bar{\rho}(g)^{-1}\]where $g\in \G$ and $X\in \operatorname{sp}_{2n}(\F_q)$. Let $\operatorname{B}(R)$ be the Borel subgroup consisting of matrices
\[M=\mtx{C}{CD}{}{\xi (C^t)^{-1}}\]where $C\in \GL_n(R)$ is upper triangular, $D\in \GL_n(R)$ is symmetric and $\xi\in R^{\times}$. Note that in this setting, $\op{B}$ is defined over $\text{W}(\F_q)$. Denote by $U_1\subset \op{B}$ the unipotent subgroup.
\par
Let $\bar{\rho}:\G\rightarrow \GSp_{2n}(\F_q)$ be a continuous Galois representation with image in $\op{B}(\F_q)$. Composing $\bar{\rho}$ with the similitude-character $\bar{\nu}$ defines a Galois character denoted by $\bar{\kappa}$. Denote by $\mathcal{T}\subseteq \GSp_{2n}$ the diagonal torus and $e_{i,j}\in \op{GL}_{2n}(\F_q)$ the matrix with $1$ in the $(i,j)$-position and $0$ in all other positions. Set $\mathfrak{t}$ for the $\F_q$-span of $H_1,\dots, H_n$, where $H_i:=e_{i,i}-e_{n+i,n+i}$. Let $L_1, \dots, L_n\in \mathfrak{t}^*$ be the dual basis of $H_1,\dots, H_n$. An integer linear combination $\lambda$ of $L_1, \dots, L_n$ is viewed as character on the torus $\mathcal{T}(\F_q)$, which is trivial on the center of $\GSp_{2n}(\F_q)$. Via the natural quotient map $\op{B}\rightarrow \mathcal{T}$, a character on $\mathcal{T}$ induces a character on $\op{B}$. The character on $\op{B}$ induced by $\lambda$ is denoted by \[\omega_{\lambda}:\op{B}(\F_q)\rightarrow \F_q^{\times}.\] Associated to $\omega_{\lambda}$ is the Galois character
\[\sigma_{\lambda}=\omega_{\lambda}\circ \bar{\rho}:\G\rightarrow \F_q^{\times}.\] Let "$1$" be a formal symbol for the trivial linear combination of $L_1,\dots, L_n$ and set $\sigma_1$ equal to the trivial character. The roots $\Phi=\Phi(\g, \mathfrak{t})$ are specified by
\[\begin{split}
\Phi=& \{\pm 2L_1, \dots, \pm 2 L_n\} \\\cup &\{\pm(L_i+L_j)\mid 1\leq i<j\leq n\}\\ \cup &\{\pm(L_i-L_j)\mid 1\leq i<j\leq n\}.
\end{split}\]
The choice of the Borel $\operatorname{B}\subset \GSp_{2n}$ prescribes the following choice of simple roots $\Delta=\{\lambda_i\mid i=1,\dots, n\}$, with $\lambda_i:=L_i-L_{i+1}$ for $i<n$ and $\lambda_n:=2L_n$. The root
$2L_1=2\left(\sum_{i=1}^{n-1} \lambda_i\right)+\lambda_n$
is the highest root and the unique root of height $2n-1$. Denote by $\mathfrak{b}$ and $\mathfrak{n}$ the Lie subalgebras of $\g$ corresponding to the Borel and unipotent subgroups respectively. Set $\chi$ for the $p$-adic cyclotomic character, $\bar{\chi}$ its mod-$p$ reduction and let $c$ denote complex conjugation.
\begin{Th} \label{main} Let $\bar{\rho}:\operatorname{G}_{\Q,S}\rightarrow \operatorname{B}(\F_q)$ be a Galois representation of the form:

\begin{equation}\label{introducingbarrho}
\bar{\rho}= \begin{pmatrix}
  \p_1 & * & * & \cdots & * & *  & \cdots & *\\
   & \p_2 & * & \cdots & * & * & \cdots & *\\
    &  & \ddots & \vdots & \vdots & \vdots & \vdots & \vdots\\  
    &  &    & \p_n & * & * & \cdots & *\\
    &  &    &  & \p_1^{-1}\bar{\kappa} &  &  & \\
    &  &    &  & * &  \p_2^{-1}\bar{\kappa} &  & \\
    &  &    &  & \vdots & \vdots & \ddots & \\
    &  &    &  & * & * & \cdots & \p_n^{-1}\bar{\kappa}\\
   \end{pmatrix}
\end{equation} and let $S$ be a finite set of primes which contains $p$. Assume that the following conditions are satisfied:
\begin{enumerate}
\item\label{thc1}$p>2n$,
\item\label{thc2}$\bar{\rho}$ is odd, i.e.
$\dim (\g)^{\op{ad}\bar{\rho}(c)}=\dim \mathfrak{n}$.
\item\label{thc3}The image of $\bar{\rho}$ contains the unipotent subgroup $\operatorname{U}_1(\F_q)$.
\item\label{thc4}Both the following conditions on the distinctness of the characters $\{\sigma_{\lambda}\}$ are satisfied:
\begin{enumerate}
    \item For $\lambda, \lambda'\in \Phi\cup \{1\}$ such that $\lambda\neq \lambda'$, $\sigma_{\lambda}$ is not a $\op{Gal}(\F_q/\F_p)$-twist of $\sigma_{\lambda'}$.
    \item Moreover for $\lambda, \lambda'\in \Phi\cup \{1\}$ not necessarily distinct, $\sigma_{\lambda}$ is not a $\op{Gal}(\F_q/\F_p)$-twist of $\bar{\chi}\sigma_{\lambda'}$.
\end{enumerate}
\item\label{thc5}
For each of the roots $\lambda\in \Phi$, the $\F_p$-linear span of the image of $\sigma_{\lambda}$ in $\F_q$ is $\F_q$.

\item\label{thc7}
At each prime $v\in S$ such that $v\neq p$, there is a liftable local deformation condition $\mathcal{C}_v$ with tangent space $\mathcal{N}_v$ of dimension 
\[\dim  \mathcal{N}_v=h^0(\operatorname{G}_v, \g).\]\item\label{thc8}Tilouine's regularity conditions $(\operatorname{REG})$ and $(\operatorname{REG})^*$ are satisfied, i.e. 
\[H^0(\op{G}_p, \g/\mathfrak{b})=0\text{ and }H^0(\op{G}_p, (\g/\mathfrak{b})(\bar{\chi}))=0.\]
\end{enumerate} Let $\kappa$ be a fixed choice of a lift of the character $\bar{\kappa}$ such that $\kappa=\kappa_0\chi^k$, where $k$ is a positive integer divisible by $p(p-1)$ and $\kappa_0$ is the Teichm\"uller lift of $\bar{\kappa}$. Then $\exists$ a finite set of auxiliary primes $X$ disjoint from $S$ and a lift $\rho$ \[\begin{tikzpicture}[node distance = 2.0cm, auto]
      \node (GSX) {$\operatorname{G}_{\Q,S\cup X}$};
      \node (GS) [right of=GSX] {$\operatorname{G}_{\Q,S}$};
      \node (GL2) [right of=GS]{$\GSp_{2n}(\F_q).$};
      \node (GL2W) [above of= GL2]{$\GSp_{2n}(\text{W}(\F_q))$};
      \draw[->] (GSX) to node {} (GS);
      \draw[->] (GS) to node {$\bar{\rho}$} (GL2);
      \draw[->] (GL2W) to node {} (GL2);
      \draw[dashed,->] (GSX) to node {$\rho$} (GL2W);
      \end{tikzpicture}\] for which 
\begin{enumerate}
\item $\rho$ is irreducible,
\item $\rho$ is $p$-ordinary (in the sense of \cite[section 4.1]{patrikisexceptional}),
\item $\nu\circ \rho= \kappa$,
\item for $v\in S\backslash \{p\}$, the restriction to the decomposition group $\rho_{\restriction \operatorname{G}_v}\in \mathcal{C}_v$.
\end{enumerate}
\end{Th}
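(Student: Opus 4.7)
The plan is to apply the Ramakrishna--Taylor deformation-theoretic lifting method, as adapted to the reducible Borel setting in \cite{hamblenramakrishna} and to the symplectic group in \cite{partikisthesis}. One constructs $\rho$ as the inverse limit of a tower of successive mod-$p^n$ lifts, at each stage enlarging the ramification set by a finite set of auxiliary Ramakrishna primes in order to annihilate the dual Selmer group that obstructs the next lift.

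First I would set up the global deformation problem. At each $v\in S\setminus\{p\}$ take the local condition $\mathcal{C}_v$ supplied by hypothesis (\ref{thc7}); at $v=p$ impose the $p$-ordinary condition with Hodge--Tate cocharacter determined by the diagonal of $\bar{\rho}$ and by $\kappa=\kappa_0\chi^k$, whose liftability and smoothness is guaranteed by Tilouine's $(\op{REG})$ and $(\op{REG})^*$ in hypothesis (\ref{thc8}); and fix $\kappa$ as the similitude character of all deformations. The divisibility $p(p-1)\mid k$ ensures $\chi^k\equiv 1\pmod{p}$, so $\kappa$ reduces to $\bar{\kappa}$. The oddness hypothesis (\ref{thc2}) enters via the archimedean contribution to the Wiles--Greenberg formula, producing the numerical balance that would yield existence of the lift \emph{provided} one can arrange the vanishing of the dual Selmer group.

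The heart of the argument, and the principal obstacle, lies in the Chebotarev construction of $X$. Because $\op{image}(\bar{\rho})\subseteq \op{B}(\F_q)$ is solvable, the standard ``big image'' arguments of \cite{partikisthesis} and \cite{FKP1} do not apply directly. Instead one would exploit hypothesis (\ref{thc3}) together with the distinctness and non-twisting conditions (\ref{thc4}) on the root characters $\sigma_\lambda$: these imply that the Jordan--H\"older constituents of $\g$ are pairwise non-isomorphic and non-isomorphic to their cyclotomic twists, so a Poitou--Tate analysis separates cleanly across the root-space decomposition of $\g$. Hypothesis (\ref{thc5}), that each $\sigma_\lambda(\G)$ spans $\F_q$ over $\F_p$, gives enough freedom to realize Frobenius elements with prescribed values on individual root spaces, while hypothesis (\ref{thc1}) ($p>2n$) supplies the regular semisimple Ramakrishna elements in $\GSp_{2n}(\F_q)$ needed to write down the local picture. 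Given any nonzero class in the dual Selmer group, one locates via Chebotarev a prime $w\notin S$ at which a Ramakrishna-type deformation condition exists and at which the class becomes visible in the sense of Wiles, and adjoins $w$ to $X$; iterating shrinks the dual Selmer to zero.

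Once $X$ is in hand, the obstruction group $H^2_{\mathcal{S}'}(\op{G}_{\Q,S\cup X},\g)$ vanishes, so the mod-$p^n$ lifts assemble into the desired $\rho$ over $\text{W}(\F_q)$. The local conditions, the similitude character, and $p$-ordinarity at $p$ hold by construction. Irreducibility of $\rho$ is the final check: any proper $\rho$-stable subspace would reduce modulo $p$ to a $\bar{\rho}$-stable flag, while the Ramakrishna conditions at primes of $X$ introduce local ramification incompatible with a global Borel-valued lift, so the image of $\rho$ must strictly contain any conjugate of $\op{B}$.
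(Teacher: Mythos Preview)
Your outline captures the broad Ramakrishna--Taylor strategy, but it misses the principal difficulty of the reducible setting, which occupies the bulk of the paper.

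The auxiliary primes are not Ramakrishna primes in the usual sense. Because $\op{image}(\bar{\rho})\subseteq \op{B}(\F_q)$, one cannot find primes $v$ at which $\bar{\rho}(\op{Frob}_v)$ is regular semisimple with distinct Frobenius eigenvalues; instead the paper uses \emph{trivial primes} (primes $v\equiv 1\pmod p$, $v\not\equiv 1\pmod{p^2}$, at which $\bar{\rho}$ is trivial), following \cite{hamblenramakrishna} and \cite{FKP1}. The hypothesis $p>2n$ is used to make the exponential map on root spaces well-defined, not to supply regular semisimple elements.

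The key gap is that at a trivial prime $v$ the subspace $\mathcal{N}_v\subset H^1(\op{G}_v,\g)$ does \emph{not} stabilise $\mathcal{C}_v$ at the $\mathrm{mod}\ p^2$ level; it only does so from $\mathrm{mod}\ p^3$ onwards (\cite[Lemma 3.6, 3.10]{FKP1}). Consequently the inductive ``twist by a global class to land in $\mathcal{C}_v$'' step you describe cannot be started directly from $\bar{\rho}$. One must first manufacture, by hand, a lift $\rho_3:\op{G}_{\Q,S\cup X_2}\to \GSp_{2n}(\text{W}(\F_q)/p^3)$ satisfying $\mathcal{C}_v$ at every $v\in S\cup X_2$. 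The paper devotes all of Section~6 to this: first killing $\Sh^2$ to get some $\zeta_2$ mod $p^2$; then a delicate density argument (Propositions~\ref{P2} and \ref{lifttorho3}) to find a pair $(v_1,v_2)$ of trivial primes and a global class $h=-h^{(v_1)}+2h^{(v_2)}$ so that $\rho_2=(\op{Id}+ph)\zeta_2$ lands in $\mathcal{C}_v^{ram}$ at $v_1,v_2$ and in $\mathcal{C}_w$ at all $w\in T$. This step has no analogue in your sketch and is where hypotheses (\ref{thc3})--(\ref{thc5}) are really exercised, via the structural Lemmas \ref{mainin}, \ref{y1}, \ref{fullrankLemma} on Galois submodules of $\g$ and $\g^*$.

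Finally, your irreducibility argument is not the one used. The paper arranges (Proposition~\ref{bigimageprop}) that $\rho_2(\op{G}_L)$, identified with a submodule of $\g$, is all of $\g$ --- forced by the fact that $h^{(v_1)}(\tau_{v_1})$ has nonzero $(-2L_1)$-component together with Lemma~\ref{fullrankLemma}. Irreducibility of $\rho$ then follows immediately from the size of the image, not from an incompatibility with a Borel-valued lift.
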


The lift $\rho$ is geometric in the sense of Fontaine and Mazur. For $\lambda\in \Phi$, setting $\lambda=-\lambda'$ in condition $\eqref{thc4}$, we have that $\sigma_{\lambda}^2\neq 1$. Note that the conditions also imply that $\sigma_{\lambda}\neq \bar{\chi},\bar{\chi}^{-1}$. This is reminiscent of the condition $\varphi^2\neq 1$ of Hamblen-Ramakrishna. In particular, condition $\eqref{thc4}$ implies that $p>2$. The requirement $p>2n$ is primarily made so that we may suitably work with the exponential map in various places.
\par It is a consequence of Tilouine's regularity conditions that the ordinary deformations of $\bar{\rho}_{\restriction \op{G}_p}$ constitute a liftable deformation condition $\mathcal{C}_p$ for which the tangent space $\mathcal{N}_p$ has dimension:
\[\dim \mathcal{N}_p=h^0(\op{G}_p,\g)+\dim \mathfrak{n}.\] For a discussion on the ordinary deformation condition and Tilioune's regularity conditions, the reader may refer to \cite[section 4]{patrikisexceptional}. With reference to condition $\eqref{thc7}$, the reader may consult \cite[sections 4.3 and 4.4]{patrikisexceptional} for examples of such deformation conditions.
\par When $n=1$, the unipotent group is abelian and examples of such Galois representations $\bar{\rho}$ are constructed via class field theory. The reader may, for instance, refer to \cite{ribetclassgroups}, \cite[section 7]{hamblenramakrishna} and \cite{ray} for more details. In section $\ref{examples}$, examples of Galois representations $\bar{\rho}:\op{G}_{\Q,\{p\}}\rightarrow \op{GSp}_4(\F_p)$ satisfying the conditions of Theorem $\ref{main}$ are constructed. It is shown that if $p\geq 23$ is a regular prime, there exists a Galois representation
  \[\bar{\rho}=\left( {\begin{array}{cccc}
   \bar{\chi}^3 &\ast & \ast & \ast \\
    & 1 & \ast & \ast \\
    & & \bar{\chi}^6 & \\
    & & \ast & \bar{\chi}^9
  \end{array} } \right):\op{G}_{\Q,\{p\}}\rightarrow \op{GSp}_4(\F_p)\] which satisfies the conditions of Theorem $\ref{main}$.
\subsection*{Acknowledgements}
The author is very grateful to his advisor Ravi Ramakrishna for introducing him to the fascinating subject of Galois deformations. He thanks Brian Hwang and Nicolas Templier for fruitful conversations. The author also appreciates the suggestions made by the anonymous referee which have led to significant improvement of the article.
\section{Notation}\label{notationsection}
\begin{itemize}
\item For an $\F_q$-vector space $M$, set $\dim M:=\dim_{\F_q} M$.
\item At every prime $v$, choose an embedding $\iota_v:\bar{\Q}\hookrightarrow\bar{\Q}_v$. The absolute Galois group $\operatorname{G}_v=\operatorname{Gal}(\bar{\Q}_v/\Q_v)$ is identified with the decomposition group of the prime dividing $v$ determined by $\iota_v$. 
\item 
 Let $e_{i,j}$ denote the $2n\times 2n$ square matrix with coefficients in $\F_q$ with $1$ in the $(i,j)$-th position and $0$ at all other positions. \item The space $\g$ is an $\F_q[\operatorname{G}_{\Q,S}]$-module with underlying space $\operatorname{sp}_{2n}(\F_q)$. The Galois action is prescribed by
 \[g\cdot X=\bar{\rho}(g) X \bar{\rho}(g)^{-1}\]where $g\in \G$ and $X\in \operatorname{sp}_{2n}(\F_q)$.
 \item The space of diagonal matrices in $\g$ is denoted by $\mathfrak{t}$. Let $H_1,\dots, H_n$ be the basis of $\mathfrak{t}$ defined by $H_i:=e_{i,i}-e_{n+i,n+i}$. Let $L_1, \dots, L_n\in \mathfrak{t}^*$ be the dual basis.
 \item Let $\Phi$ be the set of roots of $\operatorname{sp}_{2n}(\F_q)$ and $\lambda_1,\dots, \lambda_n\in \Phi$ be the simple roots defined as follows
 \[\lambda_i:=\begin{cases}
 L_i-L_{i+1}\text{ for }i<n\\
 2L_n \text{ for }i=n.
 \end{cases}\]Let $\Delta$ be the simple roots $\{\lambda_1,\dots, \lambda_n\}$.
 Let $\Phi^{+}$ and $\Phi^{-}$ denote the positive and negative roots respectively.
 \item For $\lambda\in \Phi$, let $(\g)_{\lambda}$ be the $\lambda$ root-subspace of $\g$. For $\lambda\in \Phi$, let $X_{\lambda}$ be a choice a root vector generating the one-dimensional space $(\g)_{\lambda}$. For instance when $n=2$, we may choose root vectors as follows,
\[
{\small X_{2L_1}}:={\tiny\left( {\begin{array}{cccc}
    0 &  & 1 &  \\
    & 0 &  &  \\
   &  &  0 &  \\
    &  & & 0
  \end{array} }\right)}, {\small X_{2L_2}}:={\tiny\left( {\begin{array}{cccc}
    0 &  &  &  \\
    & 0 &  & 1 \\
   &  &  0 &  \\
    &  & & 0
  \end{array} }\right)},\]\[{\small X_{L_1+L_2}}:={\tiny\left( {\begin{array}{cccc}
    0 &  &  & 1 \\
    & 0 & 1 &  \\
   &  &  0 &  \\
    &  & & 0
  \end{array} }\right)}\text{ and }{\small X_{L_1-L_2}}:={\tiny\left( {\begin{array}{cccc}
    0 & 1 &  &  \\
    & 0 &  &  \\
   &  &  0 &  \\
    &  & -1 & 0
  \end{array} }\right).}
\]
For $\lambda\in \Phi^-$, we may choose $X_{\lambda}$ to be the transpose of $X_{-\lambda}$.

\item
Let $\lambda$ be a root. There is a unique presentation of $\lambda=\sum \alpha_i \lambda_i$, where $\alpha_i$ are all non-negative or all non-positive. The height of $\lambda$ is defined by $\operatorname{ht}(\lambda):=\sum_i\alpha_i$. For instance, the root $2L_1=2\left(\sum_{i=1}^{n-1} \lambda_i\right)+\lambda_n$ has height equal to $2n-1$. Every other root has height less than $2n-1$.
\item 
For any integer $k$, let
$(\g)_k$ be the $\F_q[\G]$-submodule defined by
\[(\g)_k:=\bigoplus_{\substack{\alpha\in \Phi \\ \text{ht}\alpha\geq k}} (\g)_{\alpha}.\] Set $\mathfrak{b}:=(\g)_0$ and $\mathfrak{n}:=(\g)_1$.
\item
Associated with any root $\lambda$ is a Galois character denoted by $\sigma_{\lambda}:\G\rightarrow \F_q^{\times}$ obtained by composing $\bar{\rho}$ with the character induced on $\op{B}(\F_q)$ by the root $\lambda$. Denote by $\sigma_1$ the trivial character and set $\op{ht}(1)=0$. For $\lambda\in \Phi\cup\{1\}$, we have that 
\[g\cdot X_{\lambda}-\sigma_{\lambda}(g) X_{\lambda} \in (\g)_{\op{ht}(\lambda)+1}.\]
\item Let $Q\subseteq \g$ be an $\F_q[\G]$-submodule, the $\sigma_{2L_1}$-eigenspace of $Q$ is the $\F_q[\G]$-submodule defined by $Q_{\sigma_{2L_1}}:=Q\cap (\g)_{2L_1}$. Likewise, if $P\subseteq \g^*$ is an $\F_q[\G]$-submodule, the $\bar{\chi} \sigma_{2L_1}$-eigenspace $P_{\bar{\chi} \sigma_{2L_1}}$ is defined by
\[P_{\bar{\chi} \sigma_{2L_1}}:=\{v\in P\mid v(X)=0\text{ for }X\in (\g)_{-2n+2}\}.\]
\item
For $k\geq 1$, let $U_k\subset \op{B}(\F_q)$ be the exponential subgroup generated by $\operatorname{exp}((\g)_k)$. Note that the exponential map here is well-defined since $p>2n$. The group $U_1$ is the unipotent subgroup of $\op{B}(\F_q)$.
\item Throughout, $h^i$ will be an abbreviation for $\dim H^i$. For instance, $h^i(\op{G}_v, \g)$ is an abbreviation for $\dim H^i(\op{G}_v, \g)$.
\item Let $M$ be an $\F_q[\op{G}_S]$-module, let $\Sh^i_S(M)$ consist of cohomology classes $f\in H^i(\G, M)$ such that $f_{\restriction \op{G}_v}=0$ for all $v\in S$.
\end{itemize}
\section{The General Lifting Strategy}\label{section3}
\par Let $\bar{\varrho}$ be a Galois representation $\bar{\varrho}:\operatorname{G}_{\Q}\rightarrow \GL_2(\bar{\F}_p)$ which is irreducible, odd and unramified outside finitely many primes. Ramakrishna in \cite{RamFM} and \cite{RamLGR} showed that if $\bar{\varrho}$ satisfies additional conditions, it lifts to a continuous Galois representation $\varrho$ which is geometric in the sense of Fontaine and Mazur. In other words, $\varrho$ is odd, unramified outside finitely many primes and $\varrho_{\restriction \operatorname{G}_p}$ is de Rham. This geometric lifting theorem provided evidence for the weak version of Serre's conjecture before it was proved by Khare and Wintenberger. The geometric lifting construction was adapted to the reducible setting in \cite{hamblenramakrishna}. The main result of this manuscript is a higher dimensional generalization of the lifting theorem of Hamblen-Ramakrishna. The basic strategy involves successively lifting $\bar{\rho}$ to a characteristic zero irreducible geometric representation $\rho$ by successively lifting $\rho_m$ to $\rho_{m+1}$
\[\begin{tikzpicture}[node distance = 2.0 cm, auto]
      \node (GSX) at (0,0){$\operatorname{G}_{\Q,S\cup X}$};
      \node (GL2) at (5,0){$\GSp_{2n}(\F_q).$};
      \node (GL2Wn) at (3,2)[above of= GL2]{$\GSp_{2n}(\text{W}(\F_q)/p^{m})$};
      \node (GL2Wnplus1) at (5,4){$\GSp_{2n}(\text{W}(\F_q)/p^{m+1})$};
      \draw[->] (GSX) to node [swap]{$\bar{\rho}$} (GL2);
      \draw[->] (GL2Wn) to node {} (GL2);
      \draw[->] (GSX) to node [swap]{$\rho_m$} (GL2Wn);
      \draw[->] (GL2Wnplus1) to node {} (GL2Wn);
      \draw[dashed,->] (GSX) to node {$\rho_{m+1}$} (GL2Wnplus1);
      \end{tikzpicture}\] 
      
      \begin{Def} Let $\mathcal{C}$ be the category of coefficient rings over $\text{W}(\F_q)$ with residue field $\F_q$. The objects of this category consist of local $\text{W}(\F_q)$-algebras $(R,\mathfrak{m})$ for which
      \begin{itemize}
          \item $R$ is complete and Noetherian,
          \item $R/\mathfrak{m}$ is isomorphic to $\F_q$ as a $\text{W}(\F_q)$-algebra. The residual map \[\phi:R\rightarrow \F_q\]is the composite of the quotient map $R\rightarrow R/\mathfrak{m}$ with the unique isomorphism of $W(\F_q)$-algebras $R/\mathfrak{m}\xrightarrow{\sim}\F_q$.
      \end{itemize} A morphism $F:(R_1,\mathfrak{m}_1)\rightarrow (R_2,\mathfrak{m}_2)$ is a homorphism of local rings which is also a $\text{W}(\F_q)$-algebra homorphism. Recall that $\kappa$ is a fixed choice of lift of $\bar{\kappa}$. Let $\kappa_v$ denote the restriction of $\kappa$ to $\operatorname{G}_v$.
      \end{Def}
      \par Let $v$ be a prime and $R\in \mathcal{C}$. Denote by $\phi^*: \GSp_{2n}(R)\rightarrow \GSp_{2n}(\F_q)$ the group homomorphism induced by the residual homomorphism $\phi: R\rightarrow \F_q$. We say that $\rho_R:\operatorname{G}_v\rightarrow \GSp_{2n}(R)$ is an $R$-lift of $\bar{\rho}_{\restriction \operatorname{G}_v}$ if $\phi^*\circ \rho_R=\bar{\rho}_{\restriction \operatorname{G}_v}$, i.e. the following diagram commutes
     \[ \begin{tikzpicture}[node distance = 2.2 cm, auto]
            \node(G) at (0,0){$\operatorname{G}_{v}$};
             \node (A) at (3,0) {$\GSp_{2n}(\F_q)$.};
             \node (B) at (3,2) {$\GSp_{2n}(R)$};
      \draw[->] (G) to node [swap]{$\bar{\rho}_{\restriction \operatorname{G}_v}$} (A);
       \draw[->] (B) to node{$\phi^*$} (A);
      \draw[->] (G) to node {$\rho_R$} (B);
      \end{tikzpicture}\]Further, we shall require that the similitude character of $\rho_R$ coincides with the composite of $\kappa_v$ with the homomorphism  $W(\F_q)^{\times}\rightarrow R^{\times}$ induced by the structure map.
      \par Two lifts $\rho_R$ and $\rho_R'$ are said to be strictly-equivalent if there is
      \[A\in \text{ker}\lbrace \GSp_{2n}(R)\xrightarrow{\phi^*} \GSp_{2n}(\F_q)\rbrace \]
      such that 
      $\rho_R=A\rho_R' A^{-1}$. A deformation is a strict equivalence class of lifts. Let $\operatorname{Def}_v(R)$ be the set of $R$-deformations of $\bar{\rho}_{\restriction \operatorname{G}_v}$. The association $R\mapsto \operatorname{Def}_v(R)$ defines a covariant functor \[\operatorname{Def}_v:\mathcal{C}\rightarrow \operatorname{Sets}. \]
      The tangent space $\operatorname{Def}_v(\F_q[\epsilon]/(\epsilon^2))$ naturally acquires the structure of an $\F_q$-vector space and is isomorphic to $H^1(\operatorname{G}_v,\g)$. Under this association, a cohomology class $f$ is identified with the deformation $(\operatorname{Id}+\epsilon f)\bar{\rho}_{\restriction \operatorname{G}_v}$.
      \par For $m\in \Z_{\geq 2}$, the deformations $\operatorname{Def}_v(\text{W}(\F_q)/p^{m})$ are equipped with action of the cohomology group $H^1(\operatorname{G}_v, \g)$. For $\varrho_m\in \operatorname{Def}_v(\text{W}(\F_q)/p^{m})$ and $f\in H^1(\operatorname{G}_v, \g)$, the twist of $\varrho_m$ by $f$ is defined by the formula $(\op{Id}+p^{m-1}f)\varrho_m$. The set of deformations $\varrho_m$ of a fixed $\varrho_{m-1}\in \operatorname{Def}_v(\text{W}(\F_q)/p^{m-1})$ is either empty or in bijection with $H^1(\operatorname{G}_v,\g)$.
      \begin{Def}\label{defconditiondef} (see \cite{taylor}) We say that a sub-functor $\mathcal{C}_v$ of $\operatorname{Def}_v$ is a deformation condition if (1) to (3) below are satisfied. If condition (4) is satisfied, $\mathcal{C}_v$ is said to be liftable.
      \begin{enumerate}
          \item First, we require that $\mathcal{C}_v(\F_q)=\{\bar{\rho}_{\restriction \operatorname{G}_v}\}.$
          \item For $R_1$ and $R_2$ be $\mathcal{C}$, let $\rho_1\in \mathcal{C}_v(R_1)$ and $\rho_2\in \mathcal{C}_v(R_2)$. Let $I_1$ be an ideal in $R_1$ and $I_2$ an ideal in $R_2$ such that there is an isomorphism $\alpha:R_1/I_1\xrightarrow{\sim} R_2/I_2$ satisfying \[\alpha(\rho_1 \;\text{mod}{I_1})=\rho_2 \;\text{mod}{I_2}.\] Let $R_3$ be the fibred product \[R_3=\lbrace(r_1,r_2)\mid \alpha(r_1\text{mod} I_1)=r_2 \text{mod} I_2\rbrace\] and $\rho_3$ the $R_3$-deformation induced from $\rho_1$ and $\rho_2$. Then $\rho_3$ satisfies $\mathcal{C}_v(R_3)$.
          \item Let $R\in \mathcal{C}$ with maximal ideal $\mathfrak{m}_R$. If $\rho\in \operatorname{Def}_v(R)$ is such that $\rho\mod{\mathfrak{m}_R^n}$ satisfies $\mathcal{C}_v$ for all $n\in \Z_{\geq 1}$, then  $\rho$ also satisfies $\mathcal{C}_v$.
          \item Let $R\in \mathcal{C}$ and $I$ an ideal such that $I.\mathfrak{m}_R=0$. For $\rho\in \mathcal{C}_v(R/I)$, there exists $\tilde{\rho}\in \mathcal{C}_v(R)$ such that $\rho=\tilde{\rho}\mod{I}$.
      \end{enumerate}
      \end{Def}
      Let $\mathcal{C}_v$ be a local deformation condition at the prime $v$. The tangent space $\mathcal{N}_v$ consists of $f\in H^1(\operatorname{G}_v, \g)$, such that $(\op{Id}+\epsilon f) \bar{\rho}_{\restriction \op{G}_v}\in \mathcal{C}_v(\F_q[\epsilon]/(\epsilon^2))$. The action of $\mathcal{N}_v$ on $\operatorname{Def}_v(\text{W}(\F_q)/p^m)$ stabilizes $\mathcal{C}_v(\text{W}(\F_q)/p^m)$. In other words, if $\varrho_m\in \mathcal{C}_v(\text{W}(\F_q)/p^m)$ and $f\in \mathcal{N}_v$, then 
      \[(\op{Id}+p^{m-1}f)\varrho_{m}\in \mathcal{C}_v(\text{W}(\F_q)/p^m). \]It is assumed that each prime $v\in S\backslash \{p\}$ is equipped with a liftable local deformation condition $\mathcal{C}_v$ such that 
\[\dim  \mathcal{N}_v=h^0(\operatorname{G}_v, \g).\] The reader may consult \cite[sections 4.3 and 4.4]{patrikisexceptional} for examples of such deformation conditions. The deformation condition $\mathcal{C}_p$ is the ordinary deformation condition. Since we have assumed that Tilouine's regularity conditions are satisfied (cf. \cite[section 4.1]{patrikisexceptional}), $\mathcal{C}_p$ is liftable and the tangent space $\mathcal{N}_p$ has dimension equal to
\[\dim \mathcal{N}_p=h^0(\operatorname{G}_p, \g)+\dim \mathfrak{n},\]see \cite[Proposition 4.4]{patrikisexceptional}. We allow the successive lifts $\rho_m$ to be ramified at a set of primes $S\cup X$. Each auxiliary prime $v\in X$ is equipped with a liftable subfunctor $\mathcal{C}_v$ of $\operatorname{Def}_v$. These auxiliary primes are referred to as trivial primes and were introduced by Hamblen and Ramakrishna in the two-dimensional setting \cite[section 4]{hamblenramakrishna}. These are primes $v\equiv 1\mod{p}$, not contained in $S$, at which $\bar{\rho}_{\restriction \op{G}_v}$ the trivial representation and $v\not\equiv 1\mod{p^2}$. We use a higher dimensional generalization of the deformation functor $\mathcal{C}_v$ at a trivial prime $v$, due to Fakhruddin, Khare and Patrikis \cite[Definition 3.1]{FKP1}. At each trivial prime $v$ there is a subspace $\mathcal{N}_v$ of $H^1(\operatorname{G}_v, \g)$ of dimension $h^0(\operatorname{G}_v, \g)$ which behaves like a versal tangent space. For $m\geq 3$, the action of $\mathcal{N}_v$ on $\operatorname{Def}(\text{W}(\F_q)/p^m)$ stabilizes $\mathcal{C}_v(\text{W}(\F_q)/p^m)$. This is proved in the $\op{GL}_2$-case by Hamblen-Ramakrishna, see \cite[Corollory 25, 29]{hamblenramakrishna}. For more general groups, we refer to Fakhruddin-Khare-Patrikis \cite[Lemma 3.6,3.10]{FKP1} for the precise statement. However, this is not the case for $m=2$.
\par Let $X$ be a finite set of trivial primes disjoint from $S$. For $v\in S\cup X$, set $\mathcal{N}_v^{\perp}\subseteq H^1(\operatorname{G}_v,\g^*)$ to be the orthogonal complement of $\mathcal{N}_v$ with respect to the non-degenerate Tate pairing 
\[H^1(\operatorname{G}_v, \g)\times H^1(\operatorname{G}_v, \g^*)\rightarrow H^2(\operatorname{G}_v, \F_q(\bar{\chi}))\xrightarrow{\sim}\F_q.\] Set $\mathcal{N}_{\infty}=0$ and $\mathcal{N}_{\infty}^{\perp}=0$. The Selmer-condition $\mathcal{N}$ is the tuple $\{\mathcal{N}_v\}_{v\in S\cup X\cup \{\infty\}}$ and the dual Selmer condition $\mathcal{N}^{\perp}$ is $\{\mathcal{N}_v^{\perp}\}_{v\in S\cup X\cup\{\infty\}}$. Attached to $\mathcal{N}$ and $\mathcal{N}^{\perp}$ are the Selmer and dual-Selmer groups defined as follows:
      \[H^1_{\mathcal{N}}(\op{G}_{\Q,S\cup X}, \g):=\text{ker}\left\{ H^1(\operatorname{G}_{\Q, S\cup X}, \g)\xrightarrow{\operatorname{res}_{S\cup X}} \bigoplus_{v\in S\cup X} \frac{H^1(\operatorname{G}_v, \g)}{\mathcal{N}_v}\right\}\]
      and
      \[H^1_{\mathcal{N}^{\perp}}(\op{G}_{\Q,S\cup X}, \g^*):=\text{ker}\left\{ H^1(\operatorname{G}_{\Q, S\cup X}, \g^{*})\xrightarrow{\op{res}_{S\cup X}'} \bigoplus_{v\in S\cup X} \frac{H^1(\operatorname{G}_v, \g^*)}{\mathcal{N}_v^{\perp}}\right\}\]
      respectively. The following formula is due to Wiles (see \cite[Theorem 8.7.9]{NW}):
      \begin{equation}\label{wilesformula}\begin{split}h^1_{\mathcal{N}}(\op{G}_{\Q,S\cup X}, \g)-h^1_{\mathcal{N}^{\perp}}(\operatorname{G}_{\Q, S\cup X}, \g^{*})&=h^0(\op{G}_{\Q}, \g)-h^0(\op{G}_{\Q},\g^*)\\ &+\sum_{v\in S\cup X\cup \{\infty\}} \left(\dim \mathcal{N}_v-h^0(\op{G}_v, \g)\right).\\\end{split}\end{equation} Since $\bar{\rho}$ is odd, one has that $h^0(\op{G}_{\infty}, \g)=\dim \mathfrak{n}$. It follows from the above formula that the dimensions of the Selmer group and dual Selmer group coincide, i.e.
      \[h^1_{\mathcal{N}}(\op{G}_{\Q,S\cup X}, \g)=h^1_{\mathcal{N}^{\perp}}(\operatorname{G}_{\Q, S\cup X}, \g^{*}).\]The Selmer and dual Selmer groups fit into a long exact sequence called the Poitou-Tate sequence. We only point out that the cokernel of $\op{res}_{S\cup X}$ injects into $H^1_{\mathcal{N}^{\perp}}(\op{S}_{S\cup X}, \g^*)^{\vee}$. In particular, if the Selmer group is zero, then so is the dual Selmer group, in which case the restriction map $\operatorname{res}_{S\cup X}$ is an isomorphism. Since the spaces $\mathcal{N}_v$ at a trivial prime $v$ stabilize lifts only past mod $p^3$, it becomes necessary to produce a mod $p^3$ lift $\rho_3$ of $\bar{\rho}$ before applying the general lifting-strategy. All deformations $\rho_m$ discussed in this paper will have similitude character equal to $\kappa\mod{p^m}$.
      \par The three main steps are as follows:
      \begin{enumerate}
          \item first it is shown that there is a finite set of trivial primes $X_1$ disjoint from $S$ such that the representation $\bar{\rho}$ lifts to a mod $p^2$ representation $\rho_2$ which is unramified outside $S\cup X_1$.
          \item It is shown in \cite[section 5]{hamblenramakrishna} that there is a finite set of trivial primes $X_2\supset X_1$ disjoint from $S$ and a mod $p^3$ lift $\rho_3$ of $\rho_2$ which satisfies the following conditions
          \begin{itemize}
              \item $\rho_3$ is irreducible, i.e. does not contain a free rank one Galois stable $\text{W}(\F_q)/p^3$-submodule.
              \item It is unramified outside $S\cup X_2$.
              \item The lift $\rho_3$ is also arranged to satisfy conditions $\mathcal{C}_v$ at each prime $v\in S\cup X_2$.
          \end{itemize}
          This strategy for getting past mod-$p^2$ is based on the methods developed by Khare, Larsen and Ramakrishna in \cite{KLR}.
          \item At this stage, all that remains to be shown is that the set of primes $X_2$ may be further enlarged to a set of trivial primes $X$ which is disjoint from $S$ such that the Selmer group $H^1_{\mathcal{N}}(\operatorname{G}_{\Q,S\cup X}, \g)$ is equal to zero.
      \end{enumerate}
      \par The rest of the argument warrants some explanation. Since the Selmer group is zero, the map $\op{res}_{S\cup X}$ is an isomorphism. Suppose for $m\geq 3$ and $\rho_m$ is a mod $p^m$ lift of $\rho_3$ which is unramified outside $S\cup X$ and satisfies the conditions $\mathcal{C}_v$ at each prime $v\in S\cup X$. We show that $\rho_m$ may be lifted to $\rho_{m+1}$ which satisfies the same conditions. Since the dual Selmer group is zero, so is $\Sh^1_{S\cup X}(\g^*)$, and it follows from global-duality that $\Sh^2_{S\cup X}(\g)$ is zero. Since local condition $\mathcal{C}_v$ is liftable, there are no local obstructions to lifting ${\rho_m}$. The cohomological obstruction to lifting $\rho_{m}$ to $\rho_{m+1}$ is a class in $\Sh^2_{S\cup X}(\g)$ and hence is zero. As a result, $\rho_m$ does lift one more step to $\rho_{m+1}$. In order to complete the inductive argument, it is shown that $\rho_{m+1}$ can be chosen to satisfy the conditions $\mathcal{C}_v$. After picking a suitable global cohomology class $z\in H^1(\operatorname{G}_{\Q, S\cup X}, \g)$ and replacing $\rho_{m+1}$ by its twist $(\operatorname{Id}+p^{m}z)\rho_{m+1}$, this may be arranged. At each prime $v\in S\cup X$, there is a cohomology class $z_v\in H^1(\operatorname{G}_v, \g)$ such that the twist $(\operatorname{Id}+p^mz_v){\rho_{m+1}}_{\restriction \operatorname{G}_v}$ satisfies $\mathcal{C}_v$. Since we assume that $m\geq 3$, we have that $\mathcal{N}_v$ stabilizes $\mathcal{C}_v$. For $v\in S\cup X$, the elements $z_v$ are defined modulo $\mathcal{N}_v$. Since $\operatorname{res}_{S\cup X}$ is an isomorphism, the tuple
      $(z_v)\in \bigoplus_{v\in S\cup X} H^1(\operatorname{G}_v, \g)/\mathcal{N}_v$ arises from a unique global cohomology class $z$ which is unramified outside $S\cup X$. After replacing $\rho_{m+1}$ by $(\op{Id}+p^m z) \rho_{m+1}$, it satisfies the conditions $\mathcal{C}_v$ at each prime $v\in S\cup X$. This completes the inductive lifting argument.
\section{Preliminaries}
\par In this section, we prove a number of Galois theoretic results which will be applied in later sections. Let $M$ be a finite abelian group with $\op{G}_{\Q}$-action and $E$ be a number field. Denote by $E(M)$ the extension of $E$ \textit{cut out} by $M$. In other words, it is the Galois extension of $E$ which is fixed by the kernel of the action of $\op{G}_{E}$ on $M$. Let $M_1,\dots, M_k$ be finite abelian groups on which $\op{G}_{\Q}$ acts. Denote by $E(M_1,\dots, M_k)$ the composite of the fields $E(M_1)\cdots E(M_k)$. Let $K:=\Q(\bar{\rho}, \mu_p)$ and $L:=\Q(\bar{\rho})$ and set $\op{G}':=\op{Gal}(K/\Q)$ and $\op{G}:=\op{Gal}(L/\Q)$. Let $F$ be the subfield $\Q(\varphi_1,\dots, \varphi_n, \bar{\kappa})$ of $L$, where we recall that the characters $\varphi_1,\dots, \varphi_n$ are as in $\eqref{introducingbarrho}$. Denote by $N':=\operatorname{Gal}(K/F(\mu_p))$ and $N:=\operatorname{Gal}(L/F)$. The groups $\op{G},\op{G}', N$ and $N'$ are depicted in the following field diagram
\begin{equation*}
\begin{tikzpicture}[node distance = 1.5cm, auto]
      \node(Q) {$\Q.$};
      \node (L) [above of =Q]{$F$};
      \node (E) [above of=L, right of=L] {$F(\mu_p)$};
      \node (F) [above of=L, left of =L] {$\Q(\bar{\rho})$};
      \node (K) [above of=E, left of=E] {$\Q(\bar{\rho},\mu_p)$};
      \draw[-] (Q) to node {} (L);
      \draw[-] (L) to node {} (E);
      \draw[-] (L) to node {\scriptsize$N$} (F);
      \draw[-] (F) to node {} (K);
      \draw[-] (E) to node [swap]{\scriptsize$N'$} (K);
      \end{tikzpicture}
      \end{equation*}
      Condition $\eqref{thc3}$ of Theorem $\ref{main}$ asserts that the image of $\bar{\rho}$ contains $U_1(\F_q)$. Therefore $N$ may be identified with $\bar{\rho}(N)=U_1(\F_q)$. In particular the abelianization $N^{ab}$ may be identified with $U_1(\F_q)/U_2(\F_q)$. Since $N$ is a $p$-group and $[F(\mu_p):F]$ is coprime to $p$, it follows that $\Q(\bar{\rho})$ and $F(\mu_p)$ are linearly disjoint over $F$. It follows that $N$ is canonically isomorphic to $N'$. The inclusion of $\mathcal{T}$ into $\op{B}$ is a section of the quotient map $\op{B}\rightarrow \mathcal{T}$. This induces a semi-direct product decomposition $\op{B}=U_1\rtimes \mathcal{T}$. Let $\mathcal{T}'$ be the intersection of the image of $\bar{\rho}$ with $\mathcal{T}$. The group $\op{G}$ may be identified with the image of $\bar{\rho}$. It is easy to see that $\op{G}$ has a semi-direct product decomposition $\op{G}\simeq \bar{\rho}(\op{G})=U_1(\F_q)\rtimes \mathcal{T}'$.\begin{Lemma}\label{l1}
Suppose $0<|k|\leq 2n-1$, there is an isomorphism of $\F_q[\G]$-modules
\[(\g)_k/(\g)_{k+1}\simeq \bigoplus_{ht \lambda=k} \F_q(\sigma_{\lambda}).\]On the other hand, \[(\g)_0/(\g)_1=\mathfrak{b}/\mathfrak{n}\simeq \mathfrak{t}.\]
\end{Lemma}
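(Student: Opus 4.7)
The plan is to deduce both parts from the key formula $g\cdot X_\lambda - \sigma_\lambda(g) X_\lambda \in (\g)_{\op{ht}(\lambda)+1}$ recorded in the notation section, together with the fact that $\bar{\rho}(\G)\subseteq \op{B}(\F_q)$ preserves the height filtration on $\g$.

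For the first assertion, fix $k$ with $0<|k|\leq 2n-1$. By definition $(\g)_k=\bigoplus_{\op{ht}\alpha\geq k}(\g)_\alpha$ and likewise for $(\g)_{k+1}$, so the classes $\bar{X}_\lambda$ of the root vectors $X_\lambda$ with $\op{ht}\lambda=k$ form an $\F_q$-basis of the quotient $(\g)_k/(\g)_{k+1}$. The key formula reads
\[g\cdot X_\lambda \equiv \sigma_\lambda(g)\,X_\lambda\pmod{(\g)_{k+1}}\]
for each such $\lambda$, so the line $\F_q\cdot\bar{X}_\lambda$ is $\G$-stable and $\G$-equivariantly isomorphic to $\F_q(\sigma_\lambda)$. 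Summing over roots of height $k$ yields the claimed direct sum decomposition; the Galois-stability of $(\g)_k$ itself also follows from the same formula, since for any $\lambda$ with $\op{ht}\lambda\geq k$ one has $g\cdot X_\lambda\in \F_q X_\lambda+(\g)_{\op{ht}(\lambda)+1}\subseteq (\g)_k$.

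For the second assertion, the equality $(\g)_0/(\g)_1=\mathfrak{b}/\mathfrak{n}$ is built into the definitions, so only the identification with $\mathfrak{t}$ requires argument. Because $\op{B}$ normalises its unipotent radical $U_1$, the adjoint action of $\bar{\rho}(g)\in\op{B}(\F_q)$ preserves $\mathfrak{n}$, and the induced action on $\mathfrak{b}/\mathfrak{n}$ factors through the quotient $\op{B}\twoheadrightarrow \op{B}/U_1\cong\mathcal{T}$. Since $\mathcal{T}$ is abelian, its adjoint action on $\mathfrak{t}$ is trivial. The natural section $\mathfrak{t}\hookrightarrow\mathfrak{b}\twoheadrightarrow \mathfrak{b}/\mathfrak{n}$ arising from the semidirect decomposition $\op{B}=U_1\rtimes\mathcal{T}$ is therefore a $\G$-equivariant isomorphism, where $\mathfrak{t}$ is endowed with the trivial Galois action.

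There is essentially no obstacle: the lemma is a bookkeeping statement recording the associated graded pieces of the height filtration on $\g$, and both cases are immediate once the filtration is identified as $\G$-stable and the diagonal characters are read off from the definition of $\sigma_\lambda$.
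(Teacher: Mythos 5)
Your argument is correct and matches the paper's proof: both deduce the first isomorphism from the observation that conjugation by $\bar{\rho}(g)$ acts on $X_\lambda$ by $\sigma_\lambda(g)$ modulo the next filtration step, and both obtain the second from the triviality of the conjugation action of $\op{B}$ on $\mathfrak{b}/\mathfrak{n}\simeq\mathfrak{t}$. Your write-up is a more detailed version of the same two-line argument the paper gives.
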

\begin{proof}
Let $\lambda$ be of height $k$. Let $X\in (\g)_{\lambda}$, we observe that
\[\bar{\rho}(g)\cdot X \cdot \bar{\rho}(g)^{-1}\equiv \sigma_{\lambda}(g) X \mod{(\g)_{k+1}}.\] Likewise, for $X\in \mathfrak{b}$, the conjugation action on $X$ modulo $\mathfrak{n}$ is trivial.
\end{proof}

\par Let $\zeta$ be a non-zero element of $\F_q(\bar{\chi})$. For $i=1,\dots, n$, set $\delta_{i,j}=\zeta$ if $i=j$ and $0$ otherwise. Likewise, for roots $\lambda$ and $\gamma$, set $\delta_{\lambda, \gamma}$ to equal $\zeta$ if $\lambda=\gamma$ and $0$ otherwise. Denote by $X_{\lambda}^*$ and $H_i^*$ the elements of $\g^*$ which are defined by the following relations: \begin{equation}\label{XHdual}\begin{split}& X_{\lambda}^*(X_{\gamma})=\delta_{\lambda,\gamma}\text{ and }X_{\lambda}^*(H_i)=0,\\
      & H_i^*(X_{\lambda})=0 \text{ and }H_i^*(H_j)=\delta_{i,j}.\end{split}\end{equation} The element $H_i^*\in \g^*$ should not be confused with $L_i\in \mathfrak{t}^*$. Let $(\g^*)_{\bar{\chi}}$ be the span of $H_1^*,\dots, H_n^*$ and $(\g^*)_{\bar{\chi}\sigma_{\lambda}}$ the span of $X_{-\lambda}^*$. Let $P$ be a Galois-stable subgroup of $\g^*$. Associated to $P$ are its eigenspaces for the action of $\bar{\rho}^{-1}(\mathcal{T})$. For $\lambda\in \Phi\cup \{1\}$, set $P_{\bar{\chi}\sigma_{\lambda}}$ to be the intersection of $P$ with $(\g^*)_{\bar{\chi}\sigma_{\lambda}}$. Likewise, associate to a Galois stable subgroup $Q\subseteq \g$, an eigenspace $Q_{\sigma_{\lambda}}$. Define $Q_1$ to be the intersection $Q\cap \mathfrak{t}$. For $\lambda \in \Phi$, denote by $Q_{\sigma_{\lambda}}$ the intersection $Q\cap (\g)_{\lambda}$.
      \par The representation $\bar{\rho}$ factors through $\op{G}$. Let $\mathbb{T}$ be the subgroup of $\op{G}'$ consisting of $g$ such that $\bar{\rho}(g)\in \mathcal{T}$. For $\lambda\in \Phi\cup \{1\}$, $\mathbb{T}$ acts on $P_{\bar{\chi}\sigma_{\lambda}}$ by the character $\bar{\chi}\sigma_{\lambda}$ and on $Q_{\sigma_{\lambda}}$ by the character $\sigma_{\lambda}$. Since the characters $\sigma_{\lambda}$ are assumed to be distinct, it is easy to see that
      \[P_{\bar{\chi}\sigma_{\lambda}}=\{p\in P| t\cdot p=\bar{\chi}\sigma_{\lambda}(t)p\text{ for }t\in \mathbb{T}\}\]
      \[Q_{\sigma_{\lambda}}=\{q\in Q| t\cdot q=\sigma_{\lambda}(t)q\text{ for }t\in \mathbb{T}\}.\] The order of $\mathbb{T}$ is coprime to $p$, hence Maschke's theorem asserts that any finite dimensional $\F_p[\op{G}']$-module $M$ decomposes into a direct sum $M=\bigoplus_{\tau} M_{\tau}$, where $\tau$ is a character of $\mathbb{T}$ and $M_{\tau}$ is the $\tau$-eigenspace $M_{\tau}:=\{m\in M| g\cdot m=\tau(g) m\}$. Thus, we have the next Lemma, which follows from the discussion above.
          \begin{Lemma}\label{Pdecomposition} Let $P\subseteq \g^*$ and $Q\subseteq \g$ be Galois-stable subgroups.
          \begin{enumerate}
              \item As a $\mathbb{T}$-module, $P$ decomposes into a direct sum of eigenspaces: \[P=\bigoplus_{\lambda\in \Phi\cup\{1\}} P_{\bar{\chi}\sigma_{\lambda}}.\]
              \item As a $\mathbb{T}$-module, $Q$ decomposes into a direct sum of eigenspaces:
    \[Q=\bigoplus_{\lambda\in \Phi\cup \{1\}} Q_{\sigma_{\lambda}}.\]
          \end{enumerate}
      \end{Lemma}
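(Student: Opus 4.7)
The plan is to derive the decomposition from Maschke's theorem, since the hypothesis that $|\mathbb{T}|$ is coprime to $p$ makes $\F_q[\mathbb{T}]$ a semisimple algebra, so that any finite-dimensional $\F_q[\mathbb{T}]$-module breaks canonically into a direct sum of isotypic components indexed by characters $\tau:\mathbb{T}\to\F_q^\times$. The only preparation needed is to identify the list of characters through which $\mathbb{T}$ acts on the ambient spaces $\g$ and $\g^*$ and to verify that the subspaces $Q_{\sigma_\lambda}$ and $P_{\bar\chi\sigma_\lambda}$ defined before the lemma are precisely the intersections of $Q$ and $P$ with these isotypic components.

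First I would treat $Q\subseteq \g$. Use the weight-space decomposition $\g=\mathfrak{t}\oplus \bigoplus_{\lambda\in\Phi}(\g)_\lambda$: because $\bar\rho(\mathbb{T})\subseteq \mathcal{T}(\F_q)$, the group $\mathbb{T}$ acts on each root line $(\g)_\lambda$ through $\omega_\lambda\circ \bar\rho=\sigma_\lambda$, and acts trivially (i.e.\ through $\sigma_1$) on $\mathfrak{t}$. For $P\subseteq \g^*$, argue symmetrically: a direct computation using $(t\cdot f)(X)=\bar\chi(t)\,f(t^{-1}\cdot X)$ shows that $\mathbb{T}$ acts on the dual basis vector $X_{-\lambda}^*$ through $\bar\chi\sigma_\lambda$ and on each $H_i^*$ through $\bar\chi$, so the decomposition $\g^*=(\g^*)_{\bar\chi}\oplus \bigoplus_{\lambda\in\Phi}(\g^*)_{\bar\chi\sigma_\lambda}$ reads off directly from the distinguished basis of \eqref{XHdual}.

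Second I would invoke hypothesis $\eqref{thc4}$, which ensures that the characters $\{\sigma_\lambda\}_{\lambda\in\Phi\cup\{1\}}$ are pairwise distinct and that $\{\bar\chi\sigma_\lambda\}_{\lambda\in\Phi\cup\{1\}}$ are pairwise distinct. Hence the decompositions of $\g$ and $\g^*$ displayed above are genuinely the isotypic decompositions for the $\mathbb{T}$-action, with the $\sigma_\lambda$-isotypic component of $\g$ equal to $(\g)_\lambda$ (or $\mathfrak{t}$ in the case $\lambda=1$) and the $\bar\chi\sigma_\lambda$-isotypic component of $\g^*$ equal to $(\g^*)_{\bar\chi\sigma_\lambda}$.

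Third, I would apply the general semisimplicity fact that if $M=\bigoplus_\tau M_\tau$ is the decomposition of an $\F_q[\mathbb{T}]$-module into isotypic components for pairwise distinct $\F_q^\times$-valued characters, then any $\mathbb{T}$-stable subgroup $N\subseteq M$ satisfies $N=\bigoplus_\tau(N\cap M_\tau)$: the $\mathbb{T}$-equivariant idempotents $e_\tau=\frac{1}{|\mathbb{T}|}\sum_{t\in\mathbb{T}}\tau(t)^{-1}t\in\F_q[\mathbb{T}]$ sum to $\op{id}$ and send any $\mathbb{T}$-stable $N$ into $N\cap M_\tau$. Applied to $Q\subseteq \g$ and $P\subseteq \g^*$, and unwinding the identifications $Q\cap (\g)_\lambda=Q_{\sigma_\lambda}$, $Q\cap\mathfrak{t}=Q_1$, and $P\cap (\g^*)_{\bar\chi\sigma_\lambda}=P_{\bar\chi\sigma_\lambda}$, this yields both decompositions claimed in the lemma. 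There is no substantive obstacle here; the entire content is Maschke's theorem together with distinctness of the weight characters, both of which are directly supplied by the hypotheses.
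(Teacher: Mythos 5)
Your proposal is correct and follows the same route as the paper: the paper's ``proof'' is precisely the two-paragraph discussion immediately preceding the lemma, which identifies $P_{\bar\chi\sigma_\lambda}$ and $Q_{\sigma_\lambda}$ as $\mathbb{T}$-eigenspaces, notes that $|\mathbb{T}|$ is coprime to $p$, and invokes Maschke's theorem; you have merely unwound the details (weight-space decompositions of $\g$ and $\g^*$, the explicit idempotents $e_\tau$, and the role of condition \eqref{thc4} in guaranteeing distinctness of the weight characters) that the paper dispatches with ``it is easy to see'' and ``follows from the discussion above.''
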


     Set the height of the formal symbol "$1$" to be equal to zero. Fix a total order on $\Phi\cup \{1\}$ such that $\op{ht}(\lambda)\leq \op{ht}(\gamma)$ if $\lambda\leq \gamma$. 
\begin{Lemma}\label{mainin}
\begin{enumerate} Let $P$ be a non-zero Galois stable subgroup of $\g^*$. Let $Q$ be a non-zero Galois stable subgroup of $\g$. The following statements hold:
    \item \label{43c1} $P_{\bar{\chi}\sigma_{2L_1}}$ is equal to $(\g^*)_{\bar{\chi}\sigma_{2L_1}}$,
    \item \label{43c2} $Q_{\sigma_{2L_1}}$ is equal to $(\g)_{\sigma_{2L_1}}$.
\end{enumerate}
\end{Lemma}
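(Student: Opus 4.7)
The key idea I would use is that the conjugation action of $U_1(\F_q)\subseteq\bar\rho(\G)$, made available by condition \eqref{thc3}, can propagate any nonzero vector in $Q$ (or $P$) up to the highest-root eigenspace. I would prove (2) in detail and then sketch the dual argument for (1).

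Starting from Lemma \ref{Pdecomposition}, $Q=\bigoplus_{\lambda\in\Phi\cup\{1\}}Q_{\sigma_\lambda}$, and each $Q_{\sigma_\lambda}$ for $\lambda\in\Phi$ is either $0$ or all of the one-dimensional root space $(\g)_\lambda$. So the task reduces to producing a nonzero element of $Q_{\sigma_{2L_1}}$. My plan has two stages: first arrange that $Q$ contains some positive-root vector $X_\alpha$, then climb from $\alpha$ to $2L_1$ one simple root at a time.

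For stage one, I would select any nonzero eigencomponent of $Q$. If it is $X_\alpha$ with $\alpha\in\Phi^+$, there is nothing to do. If it is $X_\alpha$ with $\alpha\in\Phi^-$, then $-\alpha\in\Phi^+$ and $u_{-\alpha}(t):=\exp(tX_{-\alpha})\in U_1(\F_q)$, so using the $\mathfrak{sl}_2$-triple $\{X_\alpha,H_\alpha,X_{-\alpha}\}$ gives
\[
u_{-\alpha}(t)X_\alpha u_{-\alpha}(t)^{-1}=X_\alpha-tH_\alpha-\tfrac{t^2}{2}\alpha(H_\alpha)X_{-\alpha}\in Q;
\]
applying Lemma \ref{Pdecomposition} (and using $\alpha(H_\alpha)\neq 0$, valid since $p>2$) extracts the positive-root vector $X_{-\alpha}\in Q$. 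If the only nontrivial eigencomponent is $0\neq H\in Q\cap\mathfrak{t}$, I would choose $\beta\in\Phi^+$ with $\beta(H)\neq 0$ (such $\beta$ exists because the positive roots span $\mathfrak{t}^*$) and observe that $u_\beta(t)Hu_\beta(t)^{-1}-H=-t\beta(H)X_\beta\in Q$, so once again $X_\beta\in Q$. For stage two, given $X_\alpha\in Q$ with $\alpha\in\Phi^+$, I would induct on $(2n-1)-\op{ht}(\alpha)$: if $\alpha=2L_1$ we are done, and otherwise the standard root-system fact supplies a simple root $\alpha_i$ with $\alpha+\alpha_i\in\Phi$, giving
\[
u_{\alpha_i}(t)X_\alpha u_{\alpha_i}(t)^{-1}-X_\alpha=tc\,X_{\alpha+\alpha_i}+(\text{strictly higher root terms})\in Q
\]
for some nonzero structure constant $c$. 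Lemma \ref{Pdecomposition} then extracts $X_{\alpha+\alpha_i}\in Q$ and the induction closes.

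For (1), I would run the same plan with $\g^*$ in place of $\g$. Because $N\simeq N'$, I can lift each element of $U_1(\F_q)$ to an element of $\G$ on which $\bar\chi$ is trivial, so the action on $\g^*$ simplifies to $(u\cdot f)(Y)=f(\bar\rho(u)^{-1}Y\bar\rho(u))$. A direct pairing computation then shows that $u_{\alpha_i}(t)\cdot X_{-\lambda}^*-X_{-\lambda}^*$ has nonzero $X_{-(\lambda+\alpha_i)}^*$-component whenever $\lambda+\alpha_i\in\Phi$, so the climb goes through verbatim; the reduction from $P_{\bar\chi}$ to a root eigenspace uses that the coroots $H_\alpha$ span $\mathfrak{t}$, so $H(H_\alpha)\neq 0$ for a suitable $\alpha$. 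I expect the main obstacle to be verifying that the climb in stage two never stalls, which rests on the root-theoretic fact that every non-highest positive root of the irreducible root system of $\operatorname{sp}_{2n}$ can be raised to a higher root by adding a suitable simple root.
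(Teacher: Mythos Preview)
Your proof is correct and follows essentially the same route as the paper's: decompose via Lemma~\ref{Pdecomposition}, then use conjugation by elements of $U_1(\F_q)$ (available by condition~\eqref{thc3}) together with the exponential formula to push a nonzero eigencomponent up to the highest-root eigenspace. The only organizational difference is that the paper argues by contradiction from a maximal $\gamma$ and produces a single positive root $\mu$ with $\gamma+\mu\in\Phi$ by a seven-case analysis, whereas you split into two stages (first reach some positive-root vector, then climb to $2L_1$ by adding simple roots); both arguments rest on the same ingredients and neither is materially simpler.
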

\begin{proof}
It follows from Lemma $\ref{Pdecomposition}$ that $P$ decomposes into $\bigoplus_{\lambda\in \Phi\cup \{1\}} P_{\bar{\chi}\sigma_{\lambda}}$. By condition $\eqref{thc5}$ of Theorem $\ref{main}$, the image of $\sigma_{2L_1}$ spans $\F_q$. Since $\bar{\chi}$, takes values in $\F_p^{\times}$, the same is true for the image of $\bar{\chi}\sigma_{2L_1}$. Therefore, if $P_{\bar{\chi}\sigma_{2L_1}}$ is not zero, then $P_{\bar{\chi}\sigma_{2L_1}}=(\g^*)_{\bar{\chi}\sigma_{2L_1}}$. Suppose by way of contradiction that $P_{\bar{\chi}\sigma_{2L_1}}=0$. Then we may choose $\gamma\in \Phi\cup \{1\}$ such that:
\begin{itemize}
    \item $P_{\bar{\chi}\sigma_{\gamma}}\neq 0$,
    \item $P_{\bar{\chi}\sigma_{\lambda}}=0$ for all $\lambda>\gamma$.
\end{itemize} By assumption, $\gamma$ is not the maximal root $2L_1$. There exists $\gamma_1\in \Phi$ such that the difference $\mu:=\gamma_1-\gamma$ is in $ \Phi^{+}$. This can be shown by considering all possibilities for $\gamma$:
\begin{enumerate}
\item $\gamma=1$, then let $\mu=\gamma_1$ be any positive root,
    \item $\gamma=2L_i$ for $i> 1$, then $\mu=L_{i-1}-L_i$ and $\gamma_1=L_{i-1}+L_i$,
    \item $\gamma=-2L_i$ for $i>1$, then $\mu=L_{i-1}+L_i$ and $\gamma_1=L_{i-1}-L_i$,
    \item $\gamma=-2L_1$, then $\mu=L_{1}+L_2$ and $\gamma_1=-L_{1}+L_2$,
    \item $\gamma=L_i+L_j$ for $i<j$, then $\mu=L_i-L_j$ and $\gamma_1=2L_i$,
    \item $\gamma=-L_i-L_j$ for $i<j$, then $\mu=L_i-L_j$ and $\gamma_1=-2L_j$,
    \item $\gamma=L_i-L_j$ for $i\neq j$, then $\mu=L_i+L_j$ and $\gamma_1=2L_i$.
\end{enumerate}
By condition $\eqref{thc3}$ of Theorem $\ref{main}$, the root subgroup $U_{\mu}$ is contained in the image of $\bar{\rho}$. Let $g\in \op{G}_{\Q}$ be such that $\bar{\rho}(g)\neq \op{Id}$ and $\bar{\rho}(g)\in U_{\mu}$. Let $p\in P_{\bar{\chi}\sigma_{\gamma}}$ be a non-zero element. We show that the projection of $g\cdot p$ to $P_{\bar{\chi} \sigma_{\gamma_1}}$ is non-zero. Express $\bar{\rho}(g)$ as $e^{-X}:=\sum_{i=0}^{2n-1} \frac{(-X)^i}{i!}$, where $X\in (\g)_{\mu}$. For $Y\in (\g)_k$, the following identity is well known (see \cite[Exercise 3.9.14]{hall}): 
\[\begin{split}g^{-1}\cdot Y=& e^X Y e^{-X}= e^{\op{ad}_X}(Y)= \sum_{i=0}^{2n-1} \frac{(\op{ad}_X)^i(Y)}{i!}\\ =& Y+[X,Y]+\frac{1}{2!}[X,[X,Y]]+\frac{1}{3!}[X,[X,[X,Y]]]+\dots.\end{split}\] We note here that the above formula makes sense since it is assumed that $p>2n$. Note that $\mu$ is a positive root and hence,  
\[g^{-1}\cdot Y-Y=[X,Y]\mod{(\g)_{\op{ht}(\mu)+k+1}}.\]Note that since $-\gamma_1+\mu=-\gamma$ is a root, \[[(\g)_{\mu},(\g)_{-\gamma_1}]=(\g)_{-\gamma}\] (cf. \cite[p. 39]{humphreys}). Letting $Y$ run through an appropriate basis of $\g$, it follows from the above identity that $g\cdot p-p$ can be expressed as a sum $a+b$ where $a\neq 0$ is in $P_{\bar{\chi}\sigma_{\gamma_1}}$ and $b\in \bigoplus _{\lambda>\gamma_1}P_{\bar{\chi}\sigma_{\lambda}}$. In particular, this shows that the projection of $g\cdot p$ to $P_{\bar{\chi}\sigma_{\gamma_1}}$ is non-zero.
\par Since $\gamma_1=\mu+\gamma$ and $\mu\in \Phi^+$, the height of $\gamma_1$ is strictly larger than the height of $\gamma$. As a result, $\gamma_1>\gamma$. Therefore, the subgroup $P_{\bar{\chi}\sigma_{\gamma_1}}=0$. This contradiction shows that $\gamma=2L_1$ and $P_{\bar{\chi}\sigma_{2L_1}}\neq 0$. This concludes part $\eqref{43c1}$. The proof of part $\eqref{43c2}$ is similar and is left to the reader.
\end{proof}
For $\lambda\in \Phi\cup \{1\}$, set \[N_{\lambda}=\begin{cases}
1\text{ if }\lambda\in \Delta,\\
0\text{ otherwise.}
\end{cases}\]
\begin{Lemma}\label{l2}
Let $\lambda\in \Phi\cup \{1\}$ and $\sigma_{\lambda}$ the associated character. The following assertions are satisfied:
\begin{enumerate}
\item\label{l2c1}
$\dim \Hom( N, \F_q(\sigma_{\lambda}))^{\op{G}/N}=N_{\lambda}$.
\item\label{l2c2}
$\dim \Hom(N', \F_q(\sigma_{\lambda})^*)^{\op{G}'/N'}=0$.
\item\label{l2c3} For $k\neq 1$, \[ H^1(\op{G}, (\g)_k/(\g)_{k+1})=0.\]
On the other hand,
\[h^1 (\op{G}, (\g)_1/(\g)_2)= \dim \mathfrak{t}.\]
\item\label{l2c4} For all $k$,
 $h^1(\op{G}', ((\g)_k/(\g)_{k+1})^*)=0$.
\end{enumerate}
\end{Lemma}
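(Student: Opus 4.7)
The plan is to prove parts (1) and (2) directly from the $\mathcal{T}'$-module structure of $N^{\mathrm{ab}}$, and then deduce (3) and (4) from them via Hochschild--Serre.

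For part (1), I would first use condition~\eqref{thc3} of Theorem~\ref{main} to identify $N$ with $U_1(\F_q)$ through $\bar\rho$. Since $n \geq 2$ the $C_n$ Dynkin diagram is connected, and the Chevalley commutator relations give $[U_1, U_1] = U_2$; hence $N^{\mathrm{ab}} \cong U_1(\F_q)/U_2(\F_q) \cong \bigoplus_{\mu \in \Delta} \F_q(\sigma_\mu)$ as a $G/N = \mathcal{T}'$-module (Lemma~\ref{l1}). Condition~\eqref{thc5} makes each $\F_q(\sigma_\mu)$ into a \emph{simple} $\F_p[\mathcal{T}']$-module, since the $\F_p$-linear span of the image of $\sigma_\mu$ fills $\F_q$; its endomorphism ring is then $\F_q$. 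Maschke (applicable because $|\mathcal{T}'|$ is prime to $p$) plus Schur reduce the computation to
\[
\Hom(N, \F_q(\sigma_\lambda))^{G/N} \;=\; \bigoplus_{\mu \in \Delta} \Hom_{\F_p[\mathcal{T}']}(\F_q(\sigma_\mu), \F_q(\sigma_\lambda)),
\]
each summand having $\F_q$-dimension $1$ if $\sigma_\mu$ is an $\op{Gal}(\F_q/\F_p)$-twist of $\sigma_\lambda$ and $0$ otherwise. The first half of condition~\eqref{thc4} rules out every such twist apart from the tautological one $\mu = \lambda$, giving the count $N_\lambda$. Part~(2) is the same argument with target $\F_q(\sigma_\lambda)^* \cong \F_q(\bar\chi\sigma_\lambda^{-1})$; the second half of condition~\eqref{thc4} now forbids \emph{every} possible coincidence, so the Hom vanishes.

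For (3), I would apply the Hochschild--Serre spectral sequence for $1 \to N \to G \to \mathcal{T}' \to 1$. Because $|\mathcal{T}'|$ is prime to $p$ while each $(\g)_k/(\g)_{k+1}$ is $p$-torsion, higher $\mathcal{T}'$-cohomology vanishes and one obtains $H^1(G, (\g)_k/(\g)_{k+1}) = H^1(N, (\g)_k/(\g)_{k+1})^{\mathcal{T}'}$. The characters $\sigma_\lambda$ are trivial on the unipotent radical, so $N$ acts trivially on each quotient and $H^1(N,\cdot) = \Hom(N,\cdot)$. Applying (1) summand-by-summand through Lemma~\ref{l1} yields $\sum_{\mathrm{ht}\,\lambda = k} N_\lambda$, which vanishes for every $k \neq 0,1$ (no root of height $\neq 1$ is simple) and equals $n = \dim \mathfrak{t}$ when $k = 1$. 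The case $k = 0$, where $M = \mathfrak{t}$ carries trivial $G$-action, needs a separate step: $H^1(G, \mathfrak{t}) = \Hom(G^{\mathrm{ab}}, \mathfrak{t})$ vanishes because $\Hom(\mathcal{T}', \mathfrak{t}) = 0$ by order considerations and the $\mathcal{T}'$-coinvariants of $N^{\mathrm{ab}}$ are zero (every $\sigma_\mu$, $\mu \in \Delta$, is nontrivial by the first half of~\eqref{thc4} applied with $\lambda' = 1$). Part~(4) is the mirror statement: the same inflation--restriction argument applied to $1 \to N' \to G' \to \op{Gal}(F(\mu_p)/\Q) \to 1$ (whose quotient embeds into $\mathcal{T}' \times (\Z/p)^\times$ and so has order prime to $p$), combined with (2), kills every summand of $((\g)_k/(\g)_{k+1})^*$ in every degree $k$, including the $\F_q(\bar\chi) = \F_q(\sigma_1)^*$ copies coming from $\mathfrak{t}^*$ at $k = 0$.

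The only step requiring real care is the character bookkeeping in (1) and (2). The relevant equivalence there is Galois conjugacy of $\F_q$-valued characters, not merely equality, and the two halves of condition~\eqref{thc4} are formulated precisely to forbid these coincidences; condition~\eqref{thc5} is equally essential, since it is what makes each $\F_q(\sigma_\mu)$ simple over $\F_p[\mathcal{T}']$ so that Schur's lemma produces a clean one-dimensional answer. Once these identifications are in place, the cohomological statements in (3) and (4) follow as formal consequences of Hochschild--Serre with coefficients acted on trivially by $N$ and $N'$.
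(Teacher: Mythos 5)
Your argument is correct and follows essentially the same route as the paper: identify $N\simeq U_1(\F_q)$ via condition~\eqref{thc3}, use $N^{ab}\simeq\bigoplus_{\mu\in\Delta}\F_q(\sigma_\mu)$, invoke conditions~\eqref{thc4} and~\eqref{thc5} to compute the Hom-spaces, and deduce parts~\eqref{l2c3} and~\eqref{l2c4} by inflation--restriction from the prime-to-$p$ quotients $\op{G}/N$ and $\op{G}'/N'$. The only superficial deviation is that you handle $k=0$ through a separate coinvariants argument for $G^{\mathrm{ab}}$, whereas the paper treats it uniformly via part~\eqref{l2c1} applied to $\lambda=1$ (where $N_1=0$); both are correct and amount to the same use of condition~\eqref{thc4} with $\lambda'=1$.
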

\begin{proof}
By condition $\ref{thc3}$ of Theorem $\ref{main}$, $N$ may be identified with $U_1(\F_q)$. The abelianization $N^{ab}$ is \[U_1/U_2(\F_q)\simeq \bigoplus_{\gamma\in \Delta} \F_q(\sigma_{\gamma}).\]
By condition $\eqref{thc5}$ of Theorem $\ref{main}$, any $\op{G}/N$ equivariant map $F:\F_q(\sigma_{\gamma})\rightarrow \F_q(\sigma_{\lambda})$ is determined by the image of any nonzero element, hence \[\dim \op{Hom}(\F_q(\sigma_{\gamma}),\F_q(\sigma_{\lambda}))^{\op{G}/N}\leq 1.\] We have that \[F(\sigma_{\gamma}(g_1)\sigma_{\gamma}(g_2))=\sigma_{\lambda}(g_1)\sigma_{\lambda}(g_2)F(1)=F(\sigma_{\gamma}(g_1))F(\sigma_{\gamma}(g_2))F(1).\] Since the image of $\sigma_{\lambda}$ spans $\F_q$, it follows that $F$ is a scalar multiple of an element of $\op{Gal}(\F_q/\F_p)$. By assumption, if $\lambda\neq \gamma$, the characters $\sigma_{\lambda}$ and $\sigma_{\gamma}$ are not equal up to a twist of $\op{Gal}(\F_q/\F_p)$. Therefore,
\[\Hom( \F_q(\sigma_{\gamma}), \F_q(\sigma_{\lambda}))^{\op{G}/N}=\begin{cases} \F_q \mbox{ if $\sigma_{\lambda}=\sigma_{\gamma}$,}\\
0 \mbox{ otherwise,}
\end{cases}\]and part $\eqref{l2c1}$ follows this.

Observe that $N'$ is isomorphic to $N$ and $\op{G}'/N'$ is the Galois group $\op{Gal}(\Q(\{\varphi_i\}, \bar{\kappa},\bar{\chi})/\Q)$. By condition $\eqref{thc4}$, the characters $\sigma_{\gamma}$ and $\sigma_{\lambda}^*=\bar{\chi}\sigma_{-\lambda}$ are not equivalent up to a twist of $\op{Gal}(\F_q/\F_p)$. Part $\eqref{l2c2}$ follows via the same reasoning as part $\eqref{l2c1}$.
\par The order of $\op{G}/N$ is coprime to $p$. By inflation-restriction and part $(\ref{l2c1})$
\begin{equation*}
\begin{split}
& \dim  H^1(\op{G}, (\g)_k/(\g)_{k+1})\\
= & \dim  \Hom(N, (\g)_k/(\g)_{k+1})^{\op{G}/N}\\
= & \sum_{\lambda\in\Phi, ht(\lambda)=k} \dim  \Hom(N, \F_q(\sigma_{\lambda}))^{\op{G}/N}\\
=  & \sum_{\lambda\in \Phi, ht(\lambda)=k} N_{\lambda}.\\
\end{split}
\end{equation*}

It follows that if $k\neq 1$, 
\[ H^1(\op{G}, (\g)_k/(\g)_{k+1})=0\]and that
\[h^1(\op{G}, (\g)_1/(\g)_{2})=\# \Delta =n=\dim \mathfrak{t}.\] This concludes part $\eqref{l2c3}$.
\par The order of $\op{G}'/N'$ is coprime to $p$. Therefore, by inflation-restriction,
\begin{equation*}
\begin{split}
& \dim  H^1(\op{G}', (\g)_k/(\g)_{k+1})\\
= & \dim  \Hom(N', (\g)_k/(\g)_{k+1})^{\op{G}'/N'}\\
= & \sum_{\lambda\in\Phi, ht(\lambda)=k} \dim  \Hom(N', \F_q(\sigma_{\lambda})^*)^{\op{G}'/N'}\\
=& 0.
\end{split}
\end{equation*} This concludes the proof of part $\eqref{l2c4}$.
\end{proof}
\begin{Def}\label{perpdef}
Let $(\g)_k^{\perp}\subset \g^*$ be the subspace of $\g^*$ consisting of $f\in \g^*$ for which $f_{\restriction (\g)_k}=0$.
\end{Def}
\begin{Remark}
For $k>-2n+1$, the submodule $(\g)_k^{\perp}\neq 0$ and by Lemma $\ref{mainin}$, \[(\g)_{k,\bar{\chi}\sigma_{2L_1}}^{\perp}\simeq(\g)_{\bar{\chi}\sigma_{2L_1}}^*.\] 
\end{Remark}
\begin{Lemma}\label{l3}
Let $k$ be an integer,
\begin{enumerate}
\item\label{l3c1}
$H^1(\op{G},\g/(\g)_k)=0$ and $H^1(\op{G}',\g/(\g)_k)=0$,
\item\label{l3c2}
$H^1(\op{G}',(\g)_k^{\perp})=0$.
\end{enumerate}
\end{Lemma}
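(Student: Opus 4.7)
My plan is to prove both parts by induction along the filtration $\{(\g)_k\}$, reducing to the layer-by-layer analysis of Lemma \ref{l2}.

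For part \eqref{l3c1}, I would induct upward on $k$ starting from $k \leq -(2n-1)$, where $\g/(\g)_k = 0$ and the statement is vacuous. The driving short exact sequence of $\op{G}$-modules is
\[0 \to (\g)_{k-1}/(\g)_k \to \g/(\g)_k \to \g/(\g)_{k-1} \to 0.\]
Whenever $k-1 \neq 1$, Lemma \ref{l2}\eqref{l2c3} gives $H^1(\op{G}, (\g)_{k-1}/(\g)_k) = 0$, so the long exact sequence together with the inductive hypothesis $H^1(\op{G}, \g/(\g)_{k-1}) = 0$ immediately delivers $H^1(\op{G}, \g/(\g)_k) = 0$. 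The $\op{G}'$-version will then follow by inflation-restriction applied to $1 \to \op{Gal}(K/L) \to \op{G}' \to \op{G} \to 1$: the group $\op{Gal}(K/L)$ embeds in $\op{Gal}(\Q(\mu_p)/\Q)$, hence has order dividing $p-1$, and acts trivially on the $p$-group $\g/(\g)_k$, so inflation induces an isomorphism $H^1(\op{G}, \g/(\g)_k) \xrightarrow{\sim} H^1(\op{G}', \g/(\g)_k)$.

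The sensitive case is $k = 2$, where $H^1(\op{G}, (\g)_1/(\g)_2) \simeq \bigoplus_{\gamma \in \Delta} \F_q$ is $n$-dimensional. I plan to handle this by proving that the connecting map
\[\delta: H^0(\op{G}, \g/\mathfrak{n}) \to H^1(\op{G}, (\g)_1/(\g)_2)\]
is surjective. Since $\op{G}$ acts trivially on $\mathfrak{t} = \mathfrak{b}/\mathfrak{n}$, the inclusion $\mathfrak{t} \hookrightarrow H^0(\op{G}, \g/\mathfrak{n})$ is defined. For $H \in \mathfrak{t}$ lifted to itself in $\g/(\g)_2$, and for $g$ with $\bar{\rho}(g) = \exp(Y)$, $Y = \sum_{\alpha \in \Phi^+} c_\alpha X_\alpha \in \mathfrak{n}$, I would expand
\[\bar{\rho}(g) H \bar{\rho}(g)^{-1} - H = [Y,H] + \tfrac{1}{2}[Y,[Y,H]] + \cdots\]
(the series terminates since $p > 2n$) and note that the iterated brackets of order $\geq 2$ lie in $(\g)_2$, so modulo $(\g)_2$ the cocycle collapses to the height-one contribution $[Y,H] \equiv -\sum_{\gamma \in \Delta} c_\gamma \gamma(H) X_\gamma$. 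Under the inflation-restriction identification $H^1(\op{G}, (\g)_1/(\g)_2) \simeq \bigoplus_{\gamma \in \Delta} \F_q$ coming from Lemma \ref{l2}\eqref{l2c1}, the restriction $\delta_{|\mathfrak{t}}$ becomes the linear map $H \mapsto (-\gamma(H))_{\gamma \in \Delta}$, which is an isomorphism because $\Delta$ is a basis of $\mathfrak{t}^*$. Surjectivity of $\delta$ then forces $H^1(\op{G}, \g/(\g)_2) = 0$, after which the induction proceeds without obstruction for all larger $k$.

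For part \eqref{l3c2}, a parallel but simpler induction suffices, based on the short exact sequence of $\op{G}'$-modules
\[0 \to (\g)_{k-1}^\perp \to (\g)_k^\perp \to \bigl((\g)_{k-1}/(\g)_k\bigr)^* \to 0\]
obtained by restricting linear functionals, with base case $(\g)_k^\perp = 0$ for $k \leq -(2n-1)$. This time no exceptional case arises: Lemma \ref{l2}\eqref{l2c4} gives $H^1(\op{G}', ((\g)_{k-1}/(\g)_k)^*) = 0$ for \emph{every} $k$ (including $k-1 = 1$), because condition \eqref{thc4}(b) of Theorem \ref{main} prevents any simple-root character $\sigma_\gamma$ from being a $\op{Gal}(\F_q/\F_p)$-twist of a dual-type character $\bar{\chi}\sigma_{-\lambda}$. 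The long exact sequence and the inductive hypothesis then yield $H^1(\op{G}', (\g)_k^\perp) = 0$ directly. The main obstacle throughout is therefore the $k = 2$ case of part \eqref{l3c1}: the layer $(\g)_1/(\g)_2$ has nonvanishing $\op{G}$-cohomology precisely because every simple-root character $\sigma_\gamma$ already appears in $N^{ab} = U_1/U_2$, so one must explicitly identify the image of the connecting map using the adjoint action of the torus on itself. The corresponding obstruction is absent in part \eqref{l3c2} thanks to the cyclotomic twist built into $\g^*$.
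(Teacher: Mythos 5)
Your argument is correct but resolves the one nontrivial layer, $k=2$, by a genuinely different mechanism from the paper. You induct upward on $k$, hit the obstruction $h^1(\op{G},(\g)_1/(\g)_2)=n$, and break it by exhibiting the connecting map $\delta$ explicitly: for $g\in N$ with $\bar{\rho}(g)=\exp(Y)$, lifting $H\in\mathfrak{t}$ to $\g/(\g)_2$ and expanding $\operatorname{Ad}(\exp Y)H - H \equiv [Y,H] \pmod{(\g)_2}$, you read off that $\delta|_{\mathfrak{t}}$ is $H\mapsto(-\gamma(H))_{\gamma\in\Delta}$, an isomorphism since $\Delta$ is a basis of $\mathfrak{t}^*$ over $\F_q$ (for type $C_n$ the determinant of the change of basis from $\{\lambda_i\}$ to $\{L_i\}$ is $2$, nonzero because $p>2n$). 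The paper proves the same vanishing without ever writing down $\delta$: it treats all $k>1$ at once, first showing $H^0(\op{G},\g/(\g)_k)=0$ so that the connecting map sits in a short exact sequence $0\to H^0(\g/(\g)_1)\to H^1((\g)_1/(\g)_k)\to H^1(\g/(\g)_k)\to 0$, and then comparing $h^0(\g/(\g)_1)=\dim\mathfrak{t}$ with $h^1((\g)_1/(\g)_k)\le\dim\mathfrak{t}$ to force the cokernel to vanish. Your route is more transparent (it pins $\delta$ down as an isomorphism on the torus via the adjoint action of $U_1$, and it only needs the inclusion $\mathfrak{t}\hookrightarrow H^0(\g/\mathfrak{n})$ rather than the equality), while the paper's dimension count is more economical and handles every $k>1$ in a single step. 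The descent to $\op{G}'$ by inflation-restriction through the prime-to-$p$ group $\op{Gal}(K/L)$, and the induction for part (2) along the filtration $(\g)_k^{\perp}$ with no exceptional layer (Lemma \ref{l2}\eqref{l2c4}), match the paper exactly.
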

\begin{proof}
We begin with the proof of part $\eqref{l3c1}$. Consider the case when $k\leq 1$. By part $\eqref{l2c3}$ of Lemma $\ref{l2}$, for $i\leq 1$, 
\begin{equation*}
H^1(\op{G}, (\g)_{i-1}/(\g)_{i})=0.
\end{equation*}
and hence there is an injection
\begin{equation*}H^1(\op{G}, \g/(\g)_i)\hookrightarrow H^1(\op{G}, \g/(\g)_{i-1}).\end{equation*}
We deduce that $H^1(\op{G}, \g/(\g)_k)=0$.
\par Next consider the case $k>1$. Associated to 
\begin{equation*}
0\rightarrow (\g)_1/(\g)_k\rightarrow \g/(\g)_k\rightarrow \g/(\g)_1\rightarrow 0 \end{equation*}
is the long exact sequence in cohomology. It follows from $\ref{mainin}$ that any non-zero submodule of $\g$ has a non-trivial $\sigma_{2L_1}$ eigenspace for the $\mathbb{T}$-action. As a consequence, $H^0(\operatorname{G}_{\Q}, \g)=0$. Since Lemma $\ref{l2}$ asserts that \[H^1(\op{G}, (\g)_k/(\g)_{k+1})=0,\] we have that $H^0(\op{G}, \g/(\g)_{k+1})$ surjects onto $H^0(\op{G}, \g/(\g)_{k})$ for $k>1$. As a result, $H^0(\op{G}, \g)$ surjects onto $H^0(\op{G}, \g/(\g)_{k})$, and therefore, \[H^0(\op{G}, \g/(\g)_{k})=0.\]
\par Since it has been shown that $H^1(\op{G}, \g/(\g)_1)=0$, we have a short exact sequence
\[0\rightarrow  H^0(\op{G}, \g/(\g)_1)\rightarrow H^1(\op{G}, (\g)_1/(\g)_k)\rightarrow H^1(\op{G}, \g/(\g)_k)\rightarrow 0.\]It suffices to show that
\[\dim H^0(\op{G}, \g/(\g)_1)\geq   \dim H^1(\op{G}, (\g)_1/(\g)_k).\]
Condition $\eqref{thc4}$ of Theorem $\ref{main}$ stipulates that for $\lambda \in \Phi$, $\sigma_{\lambda}$ is not equal to $\sigma_1=1$. Therefore for $i\leq 0$,
\begin{equation*}
H^0(\op{G}, (\g)_{i-1}/(\g)_{i})=\bigoplus_{ \text{ht} \gamma=i-1} H^0(\op{G}, \F_q(\sigma_{\gamma}))=0.
\end{equation*}
For $i\leq 0$, we deduce that
\begin{equation*}
H^0(\op{G}, (\g)_i/(\g)_1)\xrightarrow{\sim} H^0(\op{G}, (\g)_{i-1}/(\g)_1).
\end{equation*}
Composing these isomorphisms we have that
\begin{equation*}
(\g)_0/(\g)_1=H^0(\op{G},(\g)_0/(\g)_1)\xrightarrow{\sim} H^0(\op{G},\g/(\g)_1).
\end{equation*}
We have deduced that
\begin{equation*}
h^0(\op{G}, \g/(\g)_1)=\dim (\g)_0/(\g)_1=\dim \mathfrak{t}.
\end{equation*}
By Lemma $\ref{l2}$ part $\eqref{l2c3}$,
\begin{equation*}
h^1(\op{G}, (\g)_1/(\g)_2)=\dim \mathfrak{t}=h^0(\op{G}, \g/(\g)_1).
\end{equation*}
By Lemma $\ref{l2}$, for $i\geq 2$, we have that \begin{equation*}H^1(\op{G}, (\g)_i/(\g)_{i+1})=0.\end{equation*} and it follows that $H^1(\op{G}, (\g)_2/(\g)_k)=0$.
Hence it follows that
\begin{equation}\label{deltaiso}h^1(\op{G},(\g)_1/(\g)_k)\leq h^1(\op{G}, (\g)_1/(\g)_2).\end{equation}We conclude that
\[h^1(\op{G},(\g)_1/(\g)_k)\leq h^0(\op{G}, \g/ (\g)_1).\]Therefore we conclude that $H^1(\op{G}, \g/(\g)_k)=0$.
\par Since $[L:K]$ is coprime to $p$, from a direct application of inflation-restriction it follows that $H^1(\op{G}',\g/(\g)_k)=0$.
We have proved part $(\ref{l3c1})$.
\par Consider the short exact sequence \[0\rightarrow (\g)_{i-1}^{\perp}\rightarrow (\g)_i^{\perp} \rightarrow ((\g)_{i-1}/(\g)_{i})^*\rightarrow 0\] and the associated sequence in cohomology. By Lemma $\ref{l2}$, 
\begin{equation*}H^1(\op{G}', ((\g)_{i-1}/(\g)_{i})^*)=0
\end{equation*}
from which we deduce that 
\begin{equation*}
H^1(\op{G}',(\g)_{i-1}^{\perp})\rightarrow H^1(\op{G}', (\g)_i^{\perp})
\end{equation*}
is a surjection for all $i$. As \[(\g)_{-2n+1}^{\perp}\simeq (\g/(\g)_{-2n+1})^*=0\] we deduce that $H^1(\op{G}', (\g)_k^{\perp})=0$.
\end{proof}
\par For $\psi$ in $H^1(\operatorname{G}_{\Q}, \g^*)$, the restriction $\psi_{\restriction \op{G}_K}:\op{G}_K\rightarrow \g^*$ is a homomorphism since the action of $\op{G}_K$ on $\g^*$ is trivial. Let $K_{\psi}\supseteq K$ be the extension \textit{cut out} by $\psi$, i.e. $K_{\psi}$ is the smallest extension of $K$ which is fixed by the kernel of $\psi_{\restriction \op{G}_K}$. Identify $\op{Gal}(K_{\psi}/K)$ with $\psi(\op{G}_K)\subseteq \g^*$ and let $J_{\psi}\subseteq K_{\psi}$ be the subfield for which $\op{Gal}(K_{\psi}/J_{\psi})\simeq \psi(\op{G}_K)_{\bar{\chi}\sigma_{2L_1}}$. Likewise for $f$ in $H^1(\operatorname{G}_{\Q}, \g)$ denote by $L_f$, the extension of $L$ cut out by $f$. Set $K_f$ to be the composite of $L_f$ with $K$. Since $p\nmid [K:L]$, we have that $\op{Gal}(K_f/K)\simeq \op{Gal}(L_f/L)$.
\begin{Lemma}\label{l4}Let $\mathcal{J}\supseteq S$ be a finite set of primes. Let $f\in H^1(\op{G}_{\Q,\mathcal{J}}, \g)$ and $\psi\in H^1(\op{G}_{\Q,\mathcal{J}}, \g)$ be a non-zero cohomology classes. Then the following assertions are satisfied:
\begin{enumerate}
\item\label{l4p1} $L_{f}\supsetneq  L$ (equivalently, $K_f\supsetneq K$),
\item\label{l4p2}
$K_{\psi}\supsetneq J_{\psi}$, in particular, $K_{\psi}\supsetneq K$.
\end{enumerate}
\begin{proof}
\par For part $\eqref{l4p2}$, recall that Lemma $\ref{l3}$ asserts that
$H^1(\op{G}', \g^*)=0$. Therefore, the restriction $\psi_{\restriction \op{G}_K}$ is not zero. This shows that $K_{\psi}\supsetneq K$. That $K_{\psi}$ strictly contains $J_{\psi}$ is a direct consequence of Lemma $\ref{mainin}$. Part $\eqref{l4p1}$ also follows from Lemma $\ref{l3}$.
\end{proof}
\end{Lemma}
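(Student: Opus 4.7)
The plan is to establish each part by first showing the restriction of the cocycle to an appropriate absolute Galois group is nontrivial (using inflation-restriction against the vanishing results of Lemma \ref{l3}), and then translate this into a statement about the cut-out field. Note that although the statement labels both classes as lying in $H^1(\op{G}_{\Q,\mathcal{J}}, \g)$, the object $K_\psi$ only makes sense for $\psi \in H^1(\op{G}_{\Q,\mathcal{J}}, \g^*)$ and I treat it accordingly.

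For part \eqref{l4p1}, I would apply inflation-restriction to the extension $L/\Q$, whose Galois group is $\op{G}$. Since $\op{G}_{L}$ acts trivially on $\g$, the inflation-restriction sequence gives
\begin{equation*}
0 \to H^1(\op{G}, \g) \to H^1(\op{G}_{\Q,\mathcal{J}}, \g) \to H^1(\op{G}_{L,\mathcal{J}}, \g)^{\op{G}}.
\end{equation*}
Taking $k$ large enough that $(\g)_k = 0$, Lemma \ref{l3} \eqref{l3c1} yields $H^1(\op{G}, \g) = H^1(\op{G}, \g/(\g)_k) = 0$. Hence $f \mapsto f_{\restriction \op{G}_{L,\mathcal{J}}}$ is injective on $H^1$, so the nonzero class $f$ restricts to a nontrivial homomorphism on $\op{G}_{L,\mathcal{J}}$, and its kernel cuts out a strict extension $L_f \supsetneq L$.

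For part \eqref{l4p2}, the same strategy applied to the extension $K/\Q$, with Galois group $\op{G}'$ and kernel $\op{G}_{K,\mathcal{J}}$ acting trivially on $\g^*$, produces
\begin{equation*}
0 \to H^1(\op{G}', \g^*) \to H^1(\op{G}_{\Q,\mathcal{J}}, \g^*) \to H^1(\op{G}_{K,\mathcal{J}}, \g^*)^{\op{G}'}.
\end{equation*}
Choosing $k$ so that $(\g)_k = 0$ forces $(\g)_k^\perp = \g^*$, so Lemma \ref{l3} \eqref{l3c2} gives $H^1(\op{G}', \g^*) = 0$. This shows $\psi_{\restriction \op{G}_{K,\mathcal{J}}}$ is nonzero, hence $K_\psi \supsetneq K$. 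For the sharper claim $K_\psi \supsetneq J_\psi$, I need to verify that $\psi(\op{G}_K)$ is nonzero and Galois-stable inside $\g^*$. Stability follows from the cocycle identity: for $g \in \op{G}_\Q$ and $h \in \op{G}_K$, using that $\op{G}_K$ is normal in $\op{G}_\Q$ and acts trivially on $\g^*$, one computes $g \cdot \psi(h) = \psi(ghg^{-1})$, which lies in $\psi(\op{G}_K)$. Given nontriviality and Galois stability, Lemma \ref{mainin} \eqref{43c1} forces $\psi(\op{G}_K)_{\bar\chi \sigma_{2L_1}} = (\g^*)_{\bar\chi \sigma_{2L_1}} \neq 0$, which is precisely the assertion $\op{Gal}(K_\psi/J_\psi) \neq 1$.

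The key technical point is the Galois-stability check for $\psi(\op{G}_K)$, since Lemma \ref{mainin} is stated only for Galois-stable subgroups. All other steps are direct applications of the already-established vanishing results, so I expect no substantive obstacle beyond this verification.
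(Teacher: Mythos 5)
Your proof is correct and follows essentially the same route as the paper's: both parts reduce via inflation-restriction to the vanishing statements of Lemma \ref{l3} (taking $k$ large so $\g/(\g)_k = \g$ and $(\g)_k^\perp = \g^*$), and the sharper claim $K_\psi \supsetneq J_\psi$ is handled by applying Lemma \ref{mainin} to the Galois-stable subgroup $\psi(\op{G}_K) \subseteq \g^*$. Your explicit verification of Galois-stability of $\psi(\op{G}_K)$ via the cocycle identity (and your note that $\psi$ must be $\g^*$-valued, correcting the typo in the statement) simply fill in details the paper leaves implicit.
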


\begin{Lemma}Let $P\subseteq \g^*$ (resp. $Q\subseteq \g$) be a nonzero Galois-stable subgroup and $\iota_P:P\rightarrow \g^*$ (resp. $\iota_Q:Q\rightarrow \g$) denote the inclusion. Then, \label{y1}
\begin{enumerate}
    \item\label{48c1} 
$\Hom_{\F_p}(P,\g^*)^{\op{G}'}=\F_q\cdot \iota_P$,
\item\label{48c2} 
$\Hom_{\F_p}(Q,\g^*)^{G}=\F_q\cdot \iota_Q$.
\end{enumerate}
\end{Lemma}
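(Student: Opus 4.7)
The plan is to analyze any $F\in\Hom_{\F_p}(P,\g^*)^{\op{G}'}$ by exploiting the $\mathbb{T}$-eigenspace decomposition of Lemma~\ref{Pdecomposition} together with conditions~\eqref{thc4} and~\eqref{thc5} of Theorem~\ref{main}, and then to propagate the resulting eigenspace scalars via the unipotent action, using exactly the height-raising device from Lemma~\ref{mainin}. Part~\eqref{48c2} follows in parallel, so I concentrate on part~\eqref{48c1}.

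Step one pins $F$ down on each eigenspace. Since $F$ is $\mathbb{T}$-equivariant, it decomposes componentwise into $\F_p$-linear maps $F_{\lambda,\gamma}\colon P_{\bar{\chi}\sigma_\lambda}\to(\g^*)_{\bar{\chi}\sigma_\gamma}$. After fixing $\F_q$-generators of the (at most one-dimensional) source and target, a nonzero $F_{\lambda,\gamma}$ is encoded by an $\F_p$-linear map $\psi\colon\F_q\to\F_q$ satisfying $\psi((\bar{\chi}\sigma_\lambda)(t)c)=(\bar{\chi}\sigma_\gamma)(t)\psi(c)$ for all $t\in\mathbb{T}$ and $c\in\F_q$. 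By condition~\eqref{thc5} the image of $\sigma_\lambda$ spans $\F_q$ over $\F_p$; setting $c=1$ one checks that $\tilde\psi:=\psi/\psi(1)$ is both additive and multiplicative on this spanning set, hence belongs to $\op{Gal}(\F_q/\F_p)$ and satisfies $\sigma_\gamma=\tilde\psi\circ\sigma_\lambda$ (after cancelling $\bar{\chi}$, which takes values in $\F_p^\times$). Condition~\eqref{thc4} rules out nontrivial twists between distinct characters and forces $\gamma=\lambda$, after which the relation $\sigma_\lambda=\tilde\psi\circ\sigma_\lambda$ together with condition~\eqref{thc5} again forces $\tilde\psi=\op{id}$. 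Thus $F$ preserves the eigenspace decomposition of Lemma~\ref{Pdecomposition} and acts $\F_q$-linearly, as scalar multiplication by some $c_\lambda\in\F_q$, on each nonzero $P_{\bar{\chi}\sigma_\lambda}$.

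Step two forces all the $c_\lambda$ to coincide. Let $\Lambda:=\{\lambda\in\Phi\cup\{1\}\mid P_{\bar{\chi}\sigma_\lambda}\neq 0\}$; by Lemma~\ref{mainin}\eqref{43c1} we have $2L_1\in\Lambda$. For any $\lambda\in\Lambda$ with $\lambda\neq 2L_1$, the case analysis in the proof of Lemma~\ref{mainin} produces a positive root $\mu$ with $\gamma_1:=\lambda+\mu\in\Phi$ and, via condition~\eqref{thc3}, an element $g\in\op{G}_\Q$ with $\bar\rho(g)$ a nontrivial element of $U_\mu$ such that for any chosen nonzero $p\in P_{\bar{\chi}\sigma_\lambda}$ the projection of $g\cdot p$ onto the $\bar{\chi}\sigma_{\gamma_1}$-eigenspace is nonzero. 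Projecting the identity $F(g\cdot p)=g\cdot F(p)=c_\lambda\cdot\iota_P(g\cdot p)$ onto this eigenspace yields $c_{\gamma_1}=c_\lambda$, and iterating up the strictly increasing chain of heights reaches $2L_1$, so $c_\lambda=c_{2L_1}$ for every $\lambda\in\Lambda$. Hence $F=c_{2L_1}\cdot\iota_P$. Part~\eqref{48c2} is proved identically, replacing $\g^*$ and $\op{G}'$ by $\g$ and $\op{G}$ throughout and invoking Lemma~\ref{mainin}\eqref{43c2} in place of~\eqref{43c1}.

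The main subtlety is step one: separating $\F_p$-linear from $\F_q$-linear equivariance is exactly what requires the twist-free input of conditions~\eqref{thc4} and~\eqref{thc5}, and without this one would be left with extra Frobenius-twisted intertwiners between non-conjugate eigenspaces. Once the scalar form is established on each eigenspace, step two is a clean iteration of the propagation argument already developed in Lemma~\ref{mainin}.
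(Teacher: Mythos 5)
Your two-step organization — diagonalize $F$ over the $\mathbb{T}$-eigenspaces and then propagate the scalars up the height chain — is genuinely different from the paper's, which pins $\varphi$ down only on the top eigenspace $P_{\bar{\chi}\sigma_{2L_1}}$ and then shows $\op{Hom}(P/P_1,\g^*)^{\op{G}'}=0$ by filtering $P/P_1$ and applying Lemma~\ref{mainin} to the image of each graded piece. For the root eigenspaces your Step~1 is correct: the off-diagonal blocks $F_{\lambda,\gamma}$ ($\lambda\neq\gamma$) vanish by conditions~\eqref{thc4} and~\eqref{thc5}, and $F_{\lambda,\lambda}$ is $\F_q$-scalar multiplication for $\lambda\in\Phi$. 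But there is a genuine gap at the eigenspace indexed by the formal symbol $1$. The eigenspace $(\g^*)_{\bar{\chi}}$ is the $\F_q$-span of $H_1^*,\dots,H_n^*$ — it is $n$-dimensional, not one-dimensional, and $\mathbb{T}$ acts on it by $\bar{\chi}$, which takes values in $\F_p^{\times}$. Consequently \emph{every} $\F_p$-linear endomorphism of $(\g^*)_{\bar{\chi}}$ is automatically $\mathbb{T}$-equivariant, and your $\tilde\psi$ mechanism, which encodes $F_{\lambda,\gamma}$ as a map $\F_q\to\F_q$, does not even apply. Step~1 therefore produces no scalar $c_1$. Step~2 does not rescue this: the displayed identity $g\cdot F(p)=c_\lambda\cdot\iota_P(g\cdot p)$ already presupposes $F(p)=c_\lambda p$, which is precisely the unproven statement when $\lambda=1$. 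So the conclusion $F=c_{2L_1}\iota_P$ is not established on $P_{\bar{\chi}}$, and symmetrically part~\eqref{48c2} has the same gap at $Q\cap\mathfrak{t}$.

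The repair is short and is essentially the paper's quotient step. After your Step~2, set $F':=F-c_{2L_1}\iota_P$. This is a $\op{G}'$-equivariant map which vanishes on $P_{\bar{\chi}\sigma_\lambda}$ for every $\lambda\in\Phi$ with $P_{\bar{\chi}\sigma_\lambda}\neq 0$, and the block-diagonality you did establish shows $F'(P)=F'(P_{\bar{\chi}})\subseteq(\g^*)_{\bar{\chi}}$. If $F'\neq 0$, then $F'(P)$ is a nonzero Galois-stable subgroup of $\g^*$, so Lemma~\ref{mainin} forces it to contain $(\g^*)_{\bar{\chi}\sigma_{2L_1}}$; but this eigenspace meets $(\g^*)_{\bar{\chi}}$ trivially since $\sigma_{2L_1}\neq 1$ by condition~\eqref{thc4}. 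Hence $F'=0$. Alternatively, one can repeat your unipotent propagation but pair $p\in P_{\bar{\chi}}$ against the simple coroots $H_\mu$ one at a time: choosing $g$ with $\bar{\rho}(g)\in U_{\mu}$ and comparing $\mu$-components of $F(g\cdot p)=g\cdot F(p)$ gives $F(p)(H_\mu)=c_{2L_1}\,p(H_\mu)$ for every simple root $\mu$, whence $F(p)=c_{2L_1}p$ since the simple coroots span $\mathfrak{t}$.
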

\begin{proof}
We prove part $\eqref{48c1}$, the proof of $\eqref{48c2}$ is similar. Let $\Phi'=\Phi\cup \{1\}$ and $m:=\# \Phi'$. Enumerate $\Phi'=\{\gamma_1,\dots, \gamma_m\}$, so that $\gamma_i>\gamma_j$ if $i<j$. The root $\gamma_1$ is the highest root $2L_1$ and $\gamma_m$ is $-2L_1$. Let $W_i$ be the $\op{G}'$-submodule of $\g^*$ defined by
\[W_i:= \left(\bigoplus_{j\leq  i} (\g^*)_{\bar{\chi}\sigma_{\gamma_j}}\right).\] Setting $P_i=P\cap W_i$, Lemma $\ref{Pdecomposition}$ asserts that 
\[P=\bigoplus_{\lambda\in \Phi'} P_{\bar{\chi}\sigma_{\lambda}}\text{ and } P_i= \left(\bigoplus_{j\leq  i} P_{\bar{\chi}\sigma_{\gamma_j}}\right).\]Let $\p\in \Hom(P,\g^*)^{\op{G}'}$, we show that there exists $\beta\in \F_q$ such that $\varphi=\beta\cdot \iota_P$.
\par First, it is shown that the restriction of $\varphi$ to $P_1$ is equal to $\beta\cdot  \iota_{P_1}$, where $\iota_{P_1}$ is the inclusion of $P_1$ in $\g^*$ and $\beta\in \F_q$.  Note that $P_1=(P)_{\bar{\chi}\sigma_{2L_1}}$ is a $\op{G}'$-submodule. Since $P$ is assumed to be non-zero, by Lemma $\ref{mainin}$, we have that $P_1$ is equal to $(\g^*)_{\bar{\chi}\sigma_{2L_1}}$, a one-dimensional $\F_q$-vector space. Without loss of generality, consider the case when $\varphi(P_1)$ is non-zero. By Lemma $\ref{mainin}$, we have that $\varphi(P_1)$ must contain $(\g^*)_{\bar{\chi}\sigma_{2L_1}}=P_1$. From the inequalities \[\dim_{\F_p} P_1\leq \dim_{\F_p} \varphi(P_1)\leq \dim_{\F_p} P_1,\] conclude that $\varphi(P_1)=P_1$ if $\varphi(P_1)\neq 0$. We have shown that $\varphi_{\restriction P_1}$ is a $\op{G}'$-endomorphism of $P_1$ and that $P_1=(\g^*)_{\bar{\chi}\sigma_{2L_1}}$. Note that $\varphi$ is not a priori $\F_q$-linear, however since the image of $\sigma_{2L_1}$ spans $\F_q$ over $\F_p$, it follows that \[\op{Hom}((\g^*)_{\bar{\chi}\sigma_{2L_1}}, (\g^*)_{\bar{\chi}\sigma_{2L_1}})^{\op{G}'}\simeq \F_q.\] Consequently, there exists $\beta\in \F_q$ be such that $\varphi_{\restriction P_1}=\beta \cdot \iota_{P_1}$. \par 
The next step is to show that the map $\varphi- \beta \cdot \iota_P: P/P_1\rightarrow \g^*$ is equal to zero. This will conclude the proof. We show that 
\[\op{Hom}(P/P_1, \g^*)^{\op{G}'}=0.\]For $i\geq 2$, from the short exact sequence
\[0\rightarrow  P_i/P_{i-1}\rightarrow P/P_{i-1}\rightarrow P/P_i\rightarrow 0,\] we have
\[0\rightarrow \op{Hom}(P/P_i, \g^*)^{\op{G}'}\rightarrow \op{Hom}(P/P_{i-1}, \g^*)^{\op{G}'}\rightarrow \op{Hom}(P_i/P_{i-1}, \g^*)^{\op{G}'} .\] It suffices to show that 
\[\op{Hom}(P_i/P_{i-1}, \g^*)^{\op{G}'}=0\]for $i\geq 2$. Let $\phi\in \op{Hom}(P_i/P_{i-1}, \g^*)^{\op{G}'}$ and set $R:=\phi(P_i/P_{i-1})\subset \g^*$. Note that $\op{G}'$ does act on $P_i/P_{i-1}$ via the character $\bar{\chi} \sigma_{\gamma_i}$. Consider two cases, first, consider the case when $\gamma_i$ is the formal symbol "1", i.e. $\sigma_{\gamma_i}=1$. Since $\bar{\chi}\sigma_{\gamma_i}=\bar{\chi}$ takes values in $\F_p$ and $\varphi$ is $\F_p$-linear, it follows that $\op{G}'$ acts on $R$ via the character $\bar{\chi}$. On the other hand, if $R\neq 0$, Lemma $\ref{mainin}$ asserts that it contains the $\bar{\chi} \sigma_{2L_1}$-eigenspace of $\g^*$. The assumption $\sigma_{2L_1}\neq \sigma_{-2L_1}$ implies that $\sigma_{2L_1}^2\neq 1$, and in particular, $\sigma_{2L_1}\neq 1$. Therefore, $R$ is equal to zero and thus it has been shown that \[\op{Hom}(P_i/P_{i-1}, \g^*)^{\op{G}'}=0\] when $\gamma_i$ is equal to "1". Consider the case when $\gamma_i\in \Phi$. In this case, $W_i/W_{i-1}$ is a one-dimensional $\F_q$ vector space and $P_i/P_{i-1}$ is an $\F_p$-subspace on which $\op{G}'$ acts via $\bar{\chi}\sigma_{\gamma_i}$. Assume without loss of generality that $P_i/P_{i-1}\neq 0$. Since the $\F_p$-span of the image of $\bar{\chi}\sigma_{\gamma_i}$ is $\F_q$, it follows that $P_i/P_{i-1}=W_i/W_{i-1}\simeq \F_q(\bar{\chi}\sigma_{\gamma_i})$. If $R$ is non-zero, it follows from Lemma $\ref{mainin}$, that $R$ contains $(\g^*)_{\bar{\chi}\sigma_{\gamma_1}}$. The dimension of $P_i/P_{i-1}$ over $\F_p$ is equal to that of $(\g^*)_{\bar{\chi}\sigma_{\gamma_1}}$ as they are both one-dimensional over $\F_q$. Since
\[\dim_{\F_p} R\leq \dim_{\F_p} P_i/P_{i-1},\] it follows that $R$ is equal to $(\g^*)_{\bar{\chi}\sigma_{\gamma_1}}$. Therefore, we have that \[\op{Hom}(P_i/P_{i-1}, R)^{\op{G}'}\simeq \op{Hom}(\F_q(\bar{\chi}\sigma_{\gamma_i}), \F_q(\bar{\chi}\sigma_{\gamma_1}))^{\op{G}'}=0\]since $\sigma_{\gamma_i}$ and $\sigma_{\gamma_1}$ are not twist equivalent. It has thus been shown that $\phi=0$. This concludes the proof of the Lemma.

\end{proof}
\begin{Cor}\label{Coradd}
The following statements hold:
\begin{enumerate}
    \item\label{coradd1}
    let $P_1$ and $P_2$ be Galois-stable subgroups of $\g^*$ such that there is an isomorphism $\phi:P_1\xrightarrow{\sim} P_2$ of Galois modules. Then $P_1=P_2$ and $\phi$ is multiplication by a scalar.
    \item \label{coradd2}
    Let $Q_1$ and $Q_2$ be Galois-stable subgroups of $\g$ such that there is an isomorphism $\phi:Q_1\xrightarrow{\sim} Q_2$ of Galois modules. Then $Q_1=Q_2$ and $\phi$ is multiplication by a scalar.
\end{enumerate}
\end{Cor}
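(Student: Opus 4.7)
The plan is to deduce both parts of the corollary directly from Lemma \ref{y1}, which pins down all Galois-equivariant $\F_p$-linear maps from a nonzero Galois-stable subgroup of $\g^*$ (resp.\ $\g$) into $\g^*$ (resp.\ $\g$) as precisely the $\F_q$-multiples of the inclusion.

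For part \eqref{coradd1}, given the isomorphism $\phi: P_1 \xrightarrow{\sim} P_2$ of Galois modules, I would form the composition $\iota_{P_2}\circ\phi: P_1 \to \g^*$. This is a nonzero $\op{G}'$-equivariant $\F_p$-linear map, so Lemma \ref{y1}\eqref{48c1} forces
\[
\iota_{P_2}\circ\phi \;=\; \beta\cdot\iota_{P_1}
\]
for some $\beta\in\F_q$. Since $\phi$ is an isomorphism, $\beta\in\F_q^{\times}$. Translating this identity back inside $\g^*$ yields $\phi(p) = \beta\cdot p$ for every $p\in P_1$, which is precisely the assertion that $\phi$ is multiplication by a scalar. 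Taking images, $P_2 = \phi(P_1) = \beta\cdot P_1$. Because $P_1$ is a Galois-stable $\F_q$-submodule of $\g^*$ and $\beta\in\F_q^{\times}$, one has $\beta P_1 = P_1$, and therefore $P_1 = P_2$.

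Part \eqref{coradd2} is handled in exactly the same way: substitute $Q_i\subseteq \g$ for $P_i\subseteq\g^*$, replace $\op{G}'$ by $\op{G}$, and invoke Lemma \ref{y1}\eqref{48c2} in place of \eqref{48c1}. The same chain of reasoning produces a scalar $\beta\in\F_q^{\times}$ with $\phi(q) = \beta q$ for all $q\in Q_1$, and then $Q_1 = Q_2$.

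There is no real obstacle here; the corollary is essentially a repackaging of Lemma \ref{y1}. The only point that merits a moment of care is the step from $P_2 = \beta P_1$ to $P_2 = P_1$, which uses that the Galois-stable subgroups considered are $\F_q$-stable (a convention consistent with the rest of the paper, where $\g$ and $\g^*$ are treated as $\F_q[\op{G}_{\Q,S}]$-modules and their Galois-stable subgroups as $\F_q$-submodules).
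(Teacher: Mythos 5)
Your proof is essentially the same as the paper's: both pass through $\iota_{P_2}\circ\phi\in\Hom_{\F_p}(P_1,\g^*)^{\op{G}'}$ and invoke Lemma \ref{y1} to write it as $\beta\cdot\iota_{P_1}$, from which the multiplication-by-scalar statement is immediate. The paper's proof is terser, simply recording that $\iota_{P_1}$ and $\iota_{P_2}\circ\phi$ agree up to a scalar and then saying ``the assertion follows.''

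Where you go beyond the paper is in flagging the passage from $P_2=\beta P_1$ to $P_1=P_2$. You are right that this is not automatic: Lemma \ref{y1} is stated for $\F_p[\op{G}']$-submodules (it is applied in the paper to Galois groups such as $\op{Gal}(K_\psi/K)$, which are a priori only $\F_p$-subspaces of $\g^*$), and for such $P_1$ the equality $\beta P_1=P_1$ can genuinely fail. By Lemma \ref{Pdecomposition}, the eigenspaces $(P_1)_{\bar\chi\sigma_\lambda}$ for $\lambda\in\Phi$ are forced by condition \eqref{thc5} to be $0$ or the full $\F_q$-line, but the $\bar\chi$-eigenspace $(P_1)_{\bar\chi}$ — on which $\mathbb{T}$ acts through the $\F_p^\times$-valued character $\bar\chi$ — can be a proper $\F_p$-subspace of the $\F_q$-span of $H_1^*,\dots,H_n^*$ that is not $\F_q$-stable, while $P_1$ remains $\op{G}'$-stable. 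For such $P_1$ and $\beta\in\F_q\setminus\F_p$ one has $P_2=\beta P_1\neq P_1$, so the $P_1=P_2$ part of the corollary really does require the extra hypothesis of $\F_q$-stability that you invoke. In the paper this is harmless because the only place Corollary \ref{Coradd} is used (in the proof of Proposition \ref{lastchebotarev}) the modules in question are all of $\g$, so $Q_1=Q_2$ is trivial and only the scalar statement is needed. Your proof is therefore correct under the reading you state, and it is slightly more careful than the paper's on this point.
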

\begin{proof}
We prove part $\eqref{coradd1}$, part $\eqref{coradd2}$ is identical. Let $\iota_{P_i}:P_i\hookrightarrow \g^*$ be the inclusion. By Proposition $\ref{y1}$, the two inclusions $\iota_{P_1}$ and $\iota_{P_2}\circ \phi$ are the same upto a scalar. The assertion follows.
\end{proof}
Let $Q$ be a $\op{G}$-submodule of $\g$, by Lemma $\ref{Pdecomposition}$, the projection of $Q$ to $(\g)_{-2L_1}$ equals $Q_{\sigma_{-2L_1}}$. For convenience of notation, let $Q_{-2L_1}$ denote $Q_{\sigma_{-2L_1}}$.
\begin{Lemma}\label{fullrankLemma}
Let $Q$ be a Galois-stable submodule of $\g$ for which $ Q_{-2L_1}\neq 0$, then $Q=\g$.
\end{Lemma}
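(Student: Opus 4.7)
The plan is to show that $Q_{\sigma_{\lambda}} = (\g)_{\lambda}$ for every $\lambda \in \Phi$ and that $Q_1 = \mathfrak{t}$; Lemma \ref{Pdecomposition} then yields $Q = \bigoplus_{\lambda \in \Phi \cup \{1\}} Q_{\sigma_{\lambda}} = \g$. The starting observation is that $Q_{-2L_1}$ is a nonzero Galois-stable subgroup of the one-dimensional $\F_q$-space $(\g)_{-2L_1}$, on which the torus $\mathbb{T}$ acts by $\sigma_{-2L_1}$; since condition \eqref{thc5} asserts that the $\F_p$-span of the image of $\sigma_{-2L_1}$ is all of $\F_q$, the subgroup $Q_{-2L_1}$ is closed under multiplication by $\F_q$ and hence equals $(\g)_{-2L_1}$. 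In particular $X_{-2L_1} \in Q$.

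The main engine is the following propagation claim, modeled on the key computation inside the proof of Lemma \ref{mainin}: if $X_\lambda \in Q$ for some $\lambda \in \Phi$, and $\mu \in \Phi^+$ satisfies $\lambda + \mu \in \Phi \cup \{1\}$, then $\beta\,[X_\mu, X_\lambda] \in Q$ for every $\beta \in \F_q$. To prove this, condition \eqref{thc3} supplies $g \in \G$ with $\bar{\rho}(g) = \exp(\beta X_\mu)$, and the expansion
\[
g\cdot X_\lambda - X_\lambda = \beta\,[X_\mu, X_\lambda] + \tfrac{\beta^{2}}{2!}\bigl[X_\mu, [X_\mu, X_\lambda]\bigr] + \cdots
\]
has $k$-th summand in the distinct $\mathbb{T}$-eigenspace $(\g)_{\lambda + k\mu}$. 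Applying the $\mathbb{T}$-decomposition of $Q$ from Lemma \ref{Pdecomposition} separates these summands and places each one in $Q$, whence $\beta\,[X_\mu, X_\lambda] \in Q$. The bracket $[X_\mu, X_\lambda]$ is a nonzero element of $(\g)_{\lambda+\mu}$ because the Chevalley structure constants for $\operatorname{sp}_{2n}$ are $\pm 1$ or $\pm 2$ and $p > 2n$.

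With this in hand I would induct on height. Every root $\lambda \in \Phi$ can be reached from $-2L_1$ by a sequence of simple-positive-root additions with each partial sum remaining a root; this is the standard fact that the lowest weight vector $X_{-2L_1}$ generates the adjoint representation of the simple Lie algebra $\operatorname{sp}_{2n}$ under $\mathfrak{n}^+$, and it is checked by the same type of direct case-analysis on the root types of $C_n$ that already appears in the proof of Lemma \ref{mainin}. Iterating the propagation claim along such a chain yields $(\g)_\lambda \subseteq Q$ for every $\lambda \in \Phi$.

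Finally, for the toral component $Q_1 \subseteq \mathfrak{t}$: for each $\mu \in \Phi^+$ we now have $X_{-\mu} \in Q$, so the propagation claim applied with $\lambda = -\mu$ (so $\lambda + \mu$ is the trivial weight, whose eigenspace is $\mathfrak{t}$) gives $\beta\,[X_\mu, X_{-\mu}] \in Q_1$ for every $\beta \in \F_q$; since $[X_\mu, X_{-\mu}]$ is a nonzero scalar multiple of the coroot $H_\mu$, this yields $\F_q\cdot H_\mu \subseteq Q_1$. Ranging $\mu$ over the simple positive roots, whose coroots form an $\F_q$-basis of $\mathfrak{t}$, we conclude $Q_1 = \mathfrak{t}$. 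The principal technical obstacle is the combinatorial check inside the induction, namely that chains of positive-root additions starting from $-2L_1$ traverse every root of $C_n$; this is mechanical but requires the same case-by-case bookkeeping as in Lemma \ref{mainin}.
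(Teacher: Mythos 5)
Your proof takes a genuinely different route from the paper's.  The paper passes to the annihilator $P:=\{\gamma\in\g^*\mid \gamma(Q)=0\}$, observes that the hypothesis $Q_{-2L_1}\neq 0$ together with condition~\eqref{thc5} forces $P_{\bar{\chi}\sigma_{2L_1}}=0$, and then quotes Lemma~\ref{mainin}\eqref{43c1} to conclude $P=0$, hence $Q=\g$: a three-line dualization.  You instead build $Q$ up directly from $X_{-2L_1}$ by repeatedly hitting it with unipotent elements $\exp(\beta X_\mu)$, $\mu\in\Phi^+$, and separating the $\mathbb{T}$-eigencomponents of $g\cdot X_\lambda - X_\lambda$ via Lemma~\ref{Pdecomposition}; the toral part is recovered from the coroots $[X_{\lambda_i},X_{-\lambda_i}]$.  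This ``generation from the lowest weight vector'' argument is correct in principle and is an honest, more constructive alternative; it essentially re-runs the mechanism of Lemma~\ref{mainin} on the $\g$ side rather than invoking the already-proved $\g^*$ side, so it is longer but self-contained.

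One assertion needs repair.  You state that every root can be reached from $-2L_1$ by \emph{simple}-positive-root additions with each partial sum remaining a root, and cite the fact that $U(\mathfrak{n}^+)\cdot X_{-2L_1}=\g$.  That cyclicity fact is true but does \emph{not} give such chains: already in $C_2$, starting from $-2L_1$ and adding only $\lambda_1=L_1-L_2$ and $\lambda_2=2L_2$, the reachable roots staying inside $\Phi$ are exactly $\{-2L_1,\,-L_1-L_2,\,-2L_2,\,-L_1+L_2\}$; every further simple-root addition lands on the zero weight or outside $\Phi$, so $L_1-L_2$, $L_1+L_2$, $2L_1$, $2L_2$ are \emph{not} reached this way.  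The cyclicity argument inevitably routes some monomials through the multiplicity-$n$ zero weight space $\mathfrak{t}$, and your propagation claim as phrased only takes a root vector $X_\lambda$ as input, so it does not immediately continue through a toral element.  There are two clean fixes: (i) allow $\mu$ to be an arbitrary positive root, as your propagation claim in fact permits; then for every $\lambda\neq -2L_1$ one can find $\mu\in\Phi^+$ with $\lambda-\mu\in\Phi$ (apply the case analysis in the proof of Lemma~\ref{mainin} to $\gamma=-\lambda$: if $\gamma_1=\gamma+\mu$ then $\lambda-\mu=-\gamma_1\in\Phi$), and descending induction on height reaches $-2L_1$ without ever touching $\mathfrak{t}$; or (ii) extend the propagation claim to toral inputs, noting that for $H\in Q\cap\mathfrak{t}$ and $\mu\in\Phi^+$ with $\mu(H)\neq 0$, the same eigenspace separation gives $\F_q X_\mu\subseteq Q$.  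Either patch is routine, but as written the combinatorial step does not follow from what you cite, and the claim that simple roots suffice is false.
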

\begin{proof}
Let $P:=\{\gamma\in \g^*\mid \gamma(x)=0 \text{ for }x\in Q\}$. The assumption on $Q$ implies that $P_{\bar{\chi}\sigma_{2L_1}}\neq (\g^*)_{\bar{\chi}\sigma_{2L_1}}$. Since the image of $\bar{\chi}\sigma_{2L_1}$ spans $\F_q$, $P_{\bar{\chi}\sigma_{2L_1}}\neq (\g^*)_{\bar{\chi}\sigma_{2L_1}}$ implies that  $P_{\bar{\chi}\sigma_{2L_1}}=0$ By Lemma $\ref{mainin}$, $P=0$, and therefore, $Q=\g$.
\end{proof}

\begin{Lemma}\label{22Dec5} The following statements hold:
\begin{enumerate}
    \item\label{411c1} the fields $K=\Q(\bar{\rho},\mu_p)$ and $\Q(\mu_{p^2})$ are linearly disjoint over $\Q(\mu_p)$.
    \item\label{411c2} Let $\mathcal{J}\supseteq S$ be a finite set of prime numbers, $\psi_1,\dots, \psi_t\in H^1(\operatorname{G}_{\Q,\mathcal{J}},\g^*)$ and set $K_j:=K_{\psi_j}$ for $j=1,\dots, t$. Then the composite $K_1\cdots K_t$ and $\Q(\mu_{p^2})$ are linearly disjoint over $\Q(\mu_p)$.
\end{enumerate}
\end{Lemma}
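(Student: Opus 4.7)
The plan is to prove both parts by showing that the field in question has no $\op{Gal}(\Q(\mu_p)/\Q)$-equivariant $\F_p(\bar\chi)$-quotient (which would be required to contain $\Q(\mu_{p^2})$); both vanishings are ultimately reduced to condition $\eqref{thc4}(b)$ of Theorem $\ref{main}$.

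For part $\eqref{411c1}$, I suppose for contradiction that $\Q(\mu_{p^2})\subseteq K$. Let $H = \op{Gal}(K/\Q(\mu_p))$. Since $H/N'\cong \op{Gal}(F(\mu_p)/\Q(\mu_p))$ has order coprime to $p$, the maximal elementary $p$-abelian quotient $H/H^p[H,H]$ coincides with the $(H/N')$-coinvariants of the abelianization $N'^{\op{ab}}\cong U_1/U_2(\F_q) = \bigoplus_{\gamma\in\Delta}\F_q(\sigma_\gamma)$. A $\op{Gal}(\Q(\mu_p)/\Q)$-equivariant surjection to $\F_p(\bar\chi)$ would yield, for some $\gamma\in\Delta$, a nonzero $\F_p[G']$-linear map $\F_q(\sigma_\gamma)\to \F_p(\bar\chi)$; exactly as in the proof of Lemma~$\ref{l2}$, this forces $\bar\chi$ to be a $\op{Gal}(\F_q/\F_p)$-twist of $\sigma_\gamma$, which is excluded by $\eqref{thc4}(b)$ applied with $\lambda'=1$.

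For part $\eqref{411c2}$, set $M := K_1\cdots K_t$ and suppose $\Q(\mu_{p^2})\subseteq M$. By part $\eqref{411c1}$, $\Q(\mu_{p^2})\cdot K/K$ is a nontrivial degree-$p$ subextension of $M/K$, producing a nonzero $G'$-equivariant surjection
\[
\phi\colon V := \op{Gal}(M/K) \twoheadrightarrow \F_p(\bar\chi)
\]
whose composite with $\op{G}_K\twoheadrightarrow V$ is the restriction to $\op{G}_K$ of the cyclotomic class $\eta_0\in H^1(\op{G}_{\Q,\mathcal{J}},\F_p(\bar\chi))$. The key module-theoretic input is the vanishing
\[
\op{Hom}_{G'}(\g^*,\F_p(\bar\chi))=0.
\]
I would establish this via the $\mathbb{T}$-weight decomposition of Lemma~$\ref{Pdecomposition}$: a $G'$-equivariant $\F_p$-linear map vanishes on $(\g^*)_{\bar\chi\sigma_\lambda}$ for $\lambda\ne 1$ by $\mathbb{T}$-equivariance, while on $(\g^*)_{\bar\chi}=\bigoplus_i \F_q H_i^*$ the $N'$-equivariance imposed by the action of $X_\gamma\in(\g)_\gamma$ on $X_\gamma^*\in(\g^*)_{\bar\chi\sigma_{-\gamma}}$ gives the identity $X_\gamma\cdot X_\gamma^*=\sum_j \gamma(H_j)\,H_j^*$ (up to a sign), forcing $\sum_j \gamma(H_j)\,\phi(H_j^*)=0$ for every $\gamma\in\Phi^+$. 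Since the positive roots span $\mathfrak{t}^*$, this kills each $\phi(H_i^*)$, and hence $\phi=0$.

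To conclude, using $H^1(G',\F_p(\bar\chi))=0$ (which follows as in Lemma~$\ref{l2}$~$\eqref{l2c1}$ from $\eqref{thc4}(b)$), the inflation-restriction sequence embeds $H^1(\op{G}_{\Q,\mathcal{J}},\F_p(\bar\chi))$ into $H^1(\op{G}_K,\F_p(\bar\chi))^{G'}$. The factorization $\eta_0|_{\op{G}_K}=\phi\circ\pi$ through $V\hookrightarrow(\g^*)^t$ pulls back to a global expression $\eta_0 = \sum_j \phi_j\circ\psi_j$ with $\phi_j\colon\g^*\to\F_p(\bar\chi)$ each $G'$-equivariant; by the vanishing above each $\phi_j=0$, so $\eta_0=0$, contradicting the non-triviality of $\Q(\mu_{p^2})/\Q(\mu_p)$. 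The main obstacle I anticipate is precisely this last step: lifting the $G'$-equivariant $\phi$ defined only on the submodule $V\subseteq(\g^*)^t$ to a sum of $G'$-equivariant maps out of the individual factors $\g^*$ requires a delicate inflation-restriction argument (possibly requiring vanishing of certain $H^2(G',-)$ or a careful use of $H^1(G',\g^*)=0$ from Lemma~$\ref{l3}$), and this is the step I would check most carefully.
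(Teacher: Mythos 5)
Your argument rests on the identification of $\operatorname{Gal}(K\cdot\Q(\mu_{p^2})/K)$ as the $\op{G}'$-module $\F_p(\bar{\chi})$, and this is incorrect: because $\Q(\mu_{p^2})/\Q$ is abelian, the conjugation action of $\op{G}'$ (hence of $\op{Gal}(\Q(\mu_p)/\Q)$, hence of $\mathbb{T}$) on $\operatorname{Gal}(\Q(\mu_{p^2})/\Q(\mu_p))$ is \emph{trivial}, not $\bar{\chi}$. In the Kummer picture this is the cancellation $\bar{\chi}\cdot\bar{\chi}^{-1}$, since the element $\zeta_p$ lies in the $\bar{\chi}$-eigenspace of $\Q(\mu_p)^\times/(\Q(\mu_p)^\times)^p$. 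The paper states this explicitly ("since $F(\mu_{p^2})$ is an abelian extension of $\Q$, the $\mathcal{A}$-action on $N'/V$ is trivial"). So in part (1) the vanishing you want is $\op{Hom}_{\op{G}'}(\F_q(\sigma_\gamma),\F_p)=0$, which follows from $\sigma_\gamma\neq 1$, i.e.\ condition \eqref{thc4}(a) with $\lambda'=1$, not \eqref{thc4}(b); you reach the right conclusion but cite the wrong hypothesis for the wrong module.

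This misidentification is also what creates the two real problems in your part (2). First, with the target $\F_p(\bar{\chi})$ the $\mathbb{T}$-eigenspace argument cannot kill the $\bar{\chi}$-weight component of $\g^*$ (the span of the $H_i^*$), which is why you are forced into the ad hoc $N'$-equivariance computation on those elements. Second, and more seriously, your final step requires lifting the $\op{G}'$-equivariant map $\phi\colon V\to\F_p(\bar{\chi})$, defined only on the submodule $V\subseteq(\g^*)^t$, to a sum of $\op{G}'$-equivariant maps $\phi_j\colon\g^*\to\F_p(\bar{\chi})$; as you yourself note, there is no reason such an extension exists, and nothing in Lemma~\ref{l3} or the inflation-restriction machinery supplies it. Both difficulties evaporate once the target is corrected to the trivial module $\F_p$: any $\op{G}'$-subquotient of $(\g^*)^t$ decomposes (Lemma~\ref{Pdecomposition}, i.e.\ Maschke for $\mathbb{T}$) into eigenspaces with characters among $\{\bar{\chi}\sigma_\lambda\}_{\lambda\in\Phi\cup\{1\}}$, each of which is nontrivial on $\mathbb{T}$ because $\bar{\chi}\sigma_\lambda\neq 1$ — this is \eqref{thc4}(b) applied with $\sigma_1=1$ against $\bar{\chi}\sigma_\lambda$. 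A $\mathbb{T}$-equivariant map from any such eigenspace to the trivial module is zero, and no extension from $V$ to $(\g^*)^t$ is needed. This is exactly what the paper does, organized as an induction: it shows $\mathcal{K}_j\cap\Q(\mu_{p^2})=\mathcal{K}_{j-1}\cap\Q(\mu_{p^2})$ by observing that $P_j/Q_j$ is a subquotient of $\g^*$ with trivial $\op{G}'$-action, hence decomposes into the eigenspaces $\bar{\chi}\sigma_\lambda$, all nontrivial, forcing $P_j=Q_j$.
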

\begin{proof}
Suppose by way of contradiction that $\Q(\mu_{p^2})\subseteq K$. Set $V:=\op{Gal}(K/F(\mu_{p^2}))$ and $\mathcal{A}:=\op{G}'/N'=\op{Gal}(F(\mu_p)/\Q)$. For $n\in N'^{ab}$ and $g\in \mathcal{A}$, let $\tilde{n}$ and $\tilde{g}$ be lifts of $n$ and $g$ to $N'$ and $\op{G}'$ respectively. The action of $\mathcal{A}$ on $N'^{ab}$ is induced by conjugation, defined by $g\cdot n:= \tilde{g}\tilde{n}\tilde{g}^{-1}\mod{[N',N']}$. The groups $N$ and $N'$ are isomorphic and the image of $\bar{\rho}$ is assumed to contain $U_1(\F_q)$ (condition $\ref{thc3}$ of Theorem $\ref{main}$). The quotient $N'/V=\op{Gal}(F(\mu_{p^2})/F(\mu_p))\simeq \F_p$. Let $\pi: \op{N}'^{ab}\rightarrow \F_p$ denote the map induced by the mod-$V$ quotient. Being the composite of Galois extensions, $F(\mu_{p^2})$ is Galois over $\Q$. As a result, $\pi$ is $\mathcal{A}$-equivariant. Furthermore, since $F(\mu_{p^2})$ is an abelian extension of $\Q$, the $\mathcal{A}$-action on $N'/V$ is trivial. On the other hand, as an $\mathcal{A}$-module, $N'^{ab}\simeq \bigoplus_{\lambda\in \Delta} \F_q(\sigma_{\lambda})$. It follows from condition $\ref{thc4}$ of Theorem $\ref{main}$ that $\sigma_{\lambda}\neq 1$ for $\lambda\in \Phi$. As a result, 
\[\op{Hom}(N'^{ab}, \F_p)^{\op{G}'}\simeq \bigoplus_{\lambda\in \Delta}\op{Hom}(\F_q(\sigma_{\lambda}), \F_p)^{\op{G}'}=0.\] This is a contradiction which concludes the proof of the first part.
\par 
Set $\mathcal{K}_j$ to be $K_1\dots K_j$ and $\mathcal{K}_0:=K$. Setting $E:=\Q(\mu_{p^2})$, it suffices to show that $\mathcal{K}_j\cap E=\mathcal{K}_{j-1}\cap E$. We begin with the case $j=1$. For $\psi\in H^1(\op{G}_{\Q,\mathcal{J}}, \g^*)$, regard $\operatorname{Gal}(K_{\psi}/K)$ as an $\F_q[\op{G}']$-module, where the Galois action is induced via conjugation. The $\op{G}'$-module $P_1:=\operatorname{Gal}(K_{1}/K)$ is identified with $\psi_1(\op{G}_K)$. Let $Q_1\subseteq P_1$ be the $\op{G}'$-stable subgroup defined by $Q_1:=\op{Gal}(K_1/(K_1\cap E)\cdot K)$. The action of $\op{G}'$ on $P_1/Q_1=\op{Gal}((K_1\cap E)\cdot K/K)$ is trivial. By Lemma $\ref{Pdecomposition}$, the quotient $P_1/Q_1$ decomposes into subgroups 
\[P_1/Q_1=\bigoplus_{\lambda\in \Phi\cup\{1\}} (P_1)_{\bar{\chi}\sigma_{\lambda}}/(Q_1)_{\bar{\chi}\sigma_{\lambda}}.\]
The characters $\sigma_{\lambda}\neq \bar{\chi}^{-1}$ and hence $P_1=Q_1$. We have thus shown that $K_1\cap E= K\cap E$.
\par Let $P_j$ be defined by $P_j:=\operatorname{Gal}(\mathcal{K}_j/\mathcal{K}_{j-1})$. The $\op{G}'$-module $P_j$ is isomorphic to \[ \operatorname{Gal}(K_j/K_j\cap \mathcal{K}_{j-1})\subseteq  \psi_j(\op{G}_K)\subseteq \g^*.\] Let $Q_j$ be the $\op{G}'$-stable subgroup $\op{Gal}(\mathcal{K}_j/(\mathcal{K}_j\cap E)\cdot \mathcal{K}_{j-1})$ and note that the $\op{G}'$ action on $P_j/Q_j$ is trivial. Invoking the same argument as in the case when $j=1$, we have that $P_j=Q_j$ and hence $\mathcal{K}_j\cap E=\mathcal{K}_{j-1}\cap E$. This completes the proof. 
\end{proof}

\begin{Def}

\begin{enumerate}
    \item Let $M_1$ and $M_2$ be $\F_p[\op{G}']$-modules. We say that $M_1$ is unrelated to $M_2$ if for every $\F_p[\op{G}']$-submodule $N$ of $M_1$, 
\[\op{Hom}(N,M_2)^{\op{G}'}=0.\]

\item Let $E$ be a finite extension of $K$ such that $E$ is Galois over $\Q$ and $\op{Gal}(E/K)$ is an $\F_p$-vector space. Let $M$ be an $\F_p[\op{G}']$-module. We say that $E$ is unrelated to $M$ if $\op{Gal}(E/K)$ is $\op{G}'$-unrelated to $M$. Here, the $\op{G}'$-action on $\op{Gal}(E/K)$ is induced via conjugation (let $x\in \op{Gal}(E/K)$ and $g\in \op{G}'$, pick a lift $\tilde{g}$ of $g$, set $g\cdot x:=\tilde{g}x\tilde{g}^{-1}$).
\end{enumerate}

\end{Def}
\begin{Prop}\label{414}
Let $\mathcal{J}\supseteq S$ be a finite set of prime numbers and \[\theta_0,\dots, \theta_t\in H^1(\op{G}_{\Q,\mathcal{J}},\g^*)\] be linearly independent over $\F_q$. Set $K_i:=K_{\theta_i}$ and let $\mathbb{L}_1,\dots, \mathbb{L}_k$ be a (possibly empty) set of Galois extensions of $\Q$. Assume that $\mathbb{L}_i$ contains $K$ and $\op{Gal}(\mathbb{L}_i/K)$ is an $\F_p$-vector space for $i=1,\dots, k$. Suppose that $\mathbb{L}_i$ is unrelated to $\g^*$ for $i=1,\dots, k$. Denote by $\mathcal{L}$ the composite $\mathbb{L}_1\cdots \mathbb{L}_k$. If the set $\{\mathbb{L}_1,\dots, \mathbb{L}_k\}$ is empty, set $\mathcal{L}=K$. The field $K_0$ is not contained in the composite of the fields $K_1\cdots K_t \cdot \mathcal{L}$.
\end{Prop}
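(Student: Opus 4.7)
The proof proceeds by contradiction. Assume $K_0\subseteq K_1\cdots K_t\cdot \mathcal{L}$ and set $M:=K_1\cdots K_t\cdot \mathcal{L}$. Since each $K_i$ and $\mathcal{L}$ contain $K$, so does $M$, and $\theta_0|_{\op{G}_K}$ factors through a $\op{G}'$-equivariant group homomorphism $\bar{\theta}_0:\op{Gal}(M/K)\to \theta_0(\op{G}_K)\subseteq \g^*$. The plan is to first absorb the $\mathcal{L}$ factor into triviality using the unrelatedness hypothesis, reducing to the case $K_0\subseteq K_1\cdots K_t$, and then apply a product-version of Lemma $\ref{y1}$ to obtain an $\F_q$-linear relation among $\theta_0,\ldots,\theta_t$ that contradicts their linear independence in $H^1$.

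For the absorption step, I first observe that being unrelated to $\g^*$ is preserved under passage to $\op{G}'$-submodules and $\op{G}'$-quotients directly from the definition, and under finite direct sums: given $N\subseteq M_1\oplus M_2$, the exact sequence $0\to N\cap M_1\to N\to N/(N\cap M_1)\to 0$ expresses $N$ as an extension of a submodule of $M_2$ by a submodule of $M_1$, and any $\op{G}'$-equivariant map to $\g^*$ vanishes on each piece. Since $\op{Gal}(\mathcal{L}/K)$ embeds as a $\op{G}'$-module into $\prod_i \op{Gal}(\mathbb{L}_i/K)$ via the restriction maps, it is unrelated to $\g^*$. The $\op{G}'$-stable subgroup $A:=\op{Gal}(M/K_1\cdots K_t)$ is isomorphic via restriction to $\op{Gal}(\mathcal{L}/\mathcal{L}\cap K_1\cdots K_t)\subseteq \op{Gal}(\mathcal{L}/K)$, and hence is also unrelated to $\g^*$. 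Therefore $\bar{\theta}_0|_A=0$, and $\bar{\theta}_0$ descends to $\op{Gal}(M/K)/A\simeq \op{Gal}(K_1\cdots K_t/K)$, showing $K_0\subseteq K_1\cdots K_t$.

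At this stage we have a $\op{G}'$-equivariant $\F_p$-linear map $\phi:P\to \theta_0(\op{G}_K)\subseteq \g^*$, where $P:=\op{Gal}(K_1\cdots K_t/K)$ embeds into $\prod_{i=1}^{t}\theta_i(\op{G}_K)\subseteq (\g^*)^t$ via the restriction maps, satisfying $\phi\circ (\theta_1,\ldots,\theta_t)|_{\op{G}_K}=\theta_0|_{\op{G}_K}$. The key technical step is to prove the following extension of Lemma $\ref{y1}$: for any non-zero $\op{G}'$-submodules $Q_1,\ldots,Q_t\subseteq \g^*$ and any $\op{G}'$-stable subgroup $P\subseteq \prod_i Q_i$, every $\op{G}'$-equivariant $\F_p$-linear map $\phi:P\to \g^*$ has the form $\phi(x_1,\ldots,x_t)=\sum_{i=1}^{t} c_i x_i$ for some $c_i\in \F_q$. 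I argue by induction on $t$: the case $t=1$ is Lemma $\ref{y1}$ part $\eqref{48c1}$. For the inductive step, let $\pi_t$ be the $t$-th coordinate projection and set $P':=P\cap \ker \pi_t$, viewed as a $\op{G}'$-submodule of $\prod_{i<t}Q_i$; the inductive hypothesis produces $c_1,\ldots,c_{t-1}\in \F_q$ with $\phi|_{P'}=\sum_{i<t}c_i \pi_i$. The difference $\phi-\sum_{i<t}c_i\pi_i$ then vanishes on $P'$, hence factors through $P/P'\hookrightarrow Q_t\subseteq \g^*$, and Lemma $\ref{y1}$ part $\eqref{48c1}$ produces the remaining scalar $c_t\in \F_q$.

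Applying this generalization to $\phi$ and composing with $(\theta_1,\ldots,\theta_t)|_{\op{G}_K}$ gives $\theta_0|_{\op{G}_K}=\sum_{i=1}^{t} c_i\theta_i|_{\op{G}_K}$. Since the highest root has height $2n-1$, the submodule $(\g)_{2n}$ vanishes, so $(\g)_{2n}^{\perp}=\g^*$, and Lemma $\ref{l3}$ part $\eqref{l3c2}$ gives $H^1(\op{G}',\g^*)=0$. Inflation-restriction then yields an injection $H^1(\op{G}_{\Q,\mathcal{J}},\g^*)\hookrightarrow H^1(\op{G}_{K,\mathcal{J}},\g^*)$, from which we conclude $\theta_0=\sum_{i=1}^{t} c_i\theta_i$ in $H^1(\op{G}_{\Q,\mathcal{J}},\g^*)$, contradicting the $\F_q$-linear independence of $\theta_0,\ldots,\theta_t$. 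The main obstacle is the inductive extension of Lemma $\ref{y1}$ in the third paragraph, where some care is needed to identify $P'$ as sitting in $\prod_{i<t}Q_i$ for the inductive hypothesis and the quotient $P/P'$ as a $\op{G}'$-submodule of $\g^*$ so that Lemma $\ref{y1}$ applies cleanly.
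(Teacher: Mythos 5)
Your proof is correct, and it takes a genuinely different route from the paper's, though it relies on the same two ingredients (the unrelatedness hypothesis for the $\mathbb{L}_i$ and Lemma $\ref{y1}$). The paper argues by a dimension count: it shows $h^1(\op{Gal}(\mathcal{L}/\Q),\g^*)=0$ by stripping off the $\mathbb{L}_i$ one at a time using unrelatedness, then shows $h^1(\op{Gal}(\mathcal{K}_i\cdot\mathcal{L}/\Q),\g^*)\leq h^1(\op{Gal}(\mathcal{K}_{i-1}\cdot\mathcal{L}/\Q),\g^*)+1$ via inflation-restriction and the bound $\dim\Hom(P,\g^*)^{\op{G}'}\leq 1$ from Lemma $\ref{y1}$, and concludes $h^1(\op{Gal}(\mathcal{K}\cdot\mathcal{L}/\Q),\g^*)\leq t$, contradicting the inflation of $t+1$ independent classes. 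You instead absorb $\mathcal{L}$ into triviality directly, embed $\op{Gal}(\mathcal{K}/K)$ into $\prod_i\theta_i(\op{G}_K)$, and prove a product version of Lemma $\ref{y1}$ by induction in order to extract an explicit $\F_q$-linear relation $\theta_0=\sum_i c_i\theta_i$. Your approach is more constructive — it names the linear dependence rather than merely bounding a dimension — but pays for that with the extra induction (plus the observation that unrelatedness is stable under submodules, quotients, and finite direct sums, which you correctly verify). The paper's devissage on the tower $\mathcal{L}\subset\mathcal{K}_1\cdot\mathcal{L}\subset\cdots\subset\mathcal{K}\cdot\mathcal{L}$ is essentially an inductive shadow of your product decomposition; the two arguments are dual ways of packaging the same use of Lemma $\ref{y1}$. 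Either is acceptable here.
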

\begin{proof}
Let $\mathcal{K}$ denote the composite of the fields $K_1,\dots, K_t$. If $K_0$ is contained in $\mathcal{K}\cdot \mathcal{L}$, then $\theta_0,\dots, \theta_t\in H^1(\op{Gal}(\mathcal{K}\cdot \mathcal{L}/\Q),\g^*)$ and hence
\[h^1(\op{Gal}(\mathcal{K}\cdot \mathcal{L}/\Q),\g^*)\geq t+1.\] Hence it suffices to show that 
\[h^1(\op{Gal}(\mathcal{K}\cdot \mathcal{L}/\Q),\g^*)\leq t.\]First we show that 
\[h^1(\op{Gal}( \mathcal{L}/\Q),\g^*)=0.\] Denote by $\mathcal{L}_i$ the composite of the fields $\mathbb{L}_1\cdots \mathbb{L}_i$ and set $\mathcal{L}_0:=K$. Note that $\op{Gal}(\mathcal{L}_i/\mathcal{L}_{i-1})$ is isomorphic to $\op{Gal}(\mathbb{L}_i/\mathbb{L}_i\cap \mathcal{L}_{i-1})$, which is an $\F_p[\op{G}']$-submodule of $\op{Gal}(\mathbb{L}_i/K)$. Since $\mathbb{L}_i$ is unrelated to $\g^*$,
\[\Hom(\op{Gal}(\mathcal{L}_i/\mathcal{L}_{i-1}),\g^*)^{\op{G}'}=0.\]Hence the inflation map
\[H^1(\op{Gal}(\mathcal{L}_{i-1}/\Q),\g^*)\xrightarrow{\op{inf}} H^1(\op{Gal}(\mathcal{L}_{i}/\Q),\g^*)\]is an isomorphism. We deduce that $H^1(\op{Gal}(\mathcal{L}/\Q),\g^*)$ is isomorphic to $H^1(\op{G}',\g^*)$ and hence, is zero.
\par Let $\mathcal{K}_i$ denote the composite $K_1\cdots K_i$ and $\mathcal{K}_0$ denote $K$. Note that $\op{Gal}(\mathcal{K}_i\cdot \mathcal{L}/\mathcal{K}_{i-1}\cdot\mathcal{L})$ is an $\F_p[\op{G}']$-submodule of $\op{Gal}(K_i/K)$, and hence, of $\g^*$. Lemma $\ref{y1}$ asserts that
\[\dim \Hom(\op{Gal}(\mathcal{K}_i\cdot \mathcal{L}/\mathcal{K}_{i-1}\cdot\mathcal{L}),\g^*)^{\op{G}'}\leq 1.\]Therefore, by inflation-restriction,
\[h^1(\op{Gal}(\mathcal{K}_i\cdot \mathcal{L}/\Q),\g^*)\leq h^1(\op{Gal}(\mathcal{K}_{i-1}\cdot\mathcal{L}/\Q),\g^*) +1.\]Consequently, we deduce that
$h^1(\op{Gal}(\mathcal{K}\cdot \mathcal{L}/\Q),\g^*)\leq t$ and the proof is complete.
\end{proof}
\begin{Lemma}\label{415} Let $\mathcal{J}\supseteq S$ be a finite set of primes.
 \begin{enumerate}
 
     \item\label{415c1} Let $M$ be a nontrivial quotient of $\g^*$ and $\eta\in H^1(\op{G}_{\Q,\mathcal{J}},M)$ be non-zero. Let $K_{\eta}$ be the field extension of $K$ cut out by $\eta$. The field $K_{\eta}$ is unrelated to $\g^*$.
     \item \label{415c2} The field $K(\mu_{p^2})$ is unrelated to $\g^*$.
    \item\label{415c3} Let $f\in H^1(\op{G}_{\Q,\mathcal{J}}, \g)$, then the extension $K_f$ is unrelated to $\g^*$.
    \item \label{415c4} Suppose that we are given a lift $\zeta_2:\op{G}_{\Q, \mathcal{J}}\rightarrow \op{GSp}_{2n}(\text{W}(\F_q)/p^2)$ of $\bar{\rho}$ with similitude character $\kappa\mod{p^2}$. The field extension $K(\zeta_2)$ cut out by the kernel of $\zeta_2$, is unrelated to $\g^*$.
 \end{enumerate}
\end{Lemma}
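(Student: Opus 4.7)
The plan is to handle the four parts separately, each reducing to a $\op{G}'$-equivariant Hom computation dispatched via the eigenspace machinery of Lemma \ref{Pdecomposition}, Lemma \ref{y1}\eqref{48c1}, and the character-distinctness hypothesis \eqref{thc4} of Theorem \ref{main}. For part \eqref{415c1}, I would write $M = \g^*/M'$ with $M' \neq 0$ (the intended reading of ``nontrivial''). A Galois submodule $N \subseteq \op{Gal}(K_\eta/K) \subseteq M$ has a preimage $\tilde N \supseteq M'$ in $\g^*$; any $\op{G}'$-equivariant $\F_p$-linear map $\varphi \colon N \to \g^*$ lifts along $\tilde N \twoheadrightarrow N$ to a $\op{G}'$-equivariant $\tilde\varphi \colon \tilde N \to \g^*$ vanishing on $M'$. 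Lemma \ref{y1}\eqref{48c1} forces $\tilde\varphi = \beta\, \iota_{\tilde N}$ for some $\beta \in \F_q$, and the vanishing on $M' \neq 0$ yields $\beta = 0$, so $\varphi = 0$.

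For part \eqref{415c2}, Lemma \ref{22Dec5}\eqref{411c1} identifies $\op{Gal}(K(\mu_{p^2})/K)$ with $\op{Gal}(\Q(\mu_{p^2})/\Q(\mu_p)) \simeq \F_p$. The $\op{G}'$-action by conjugation factors through the abelian group $\op{Gal}(\Q(\mu_{p^2})/\Q)$ and is therefore trivial, so the only nonzero submodule is $\F_p$ with trivial action. A $\op{G}'$-equivariant map $\F_p \to \g^*$ then lands in $(\g^*)^{\op{G}'} = \bigoplus_\lambda ((\g^*)_{\bar{\chi}\sigma_\lambda})^{\op{G}'}$ by Lemma \ref{Pdecomposition}, and each summand vanishes because condition \eqref{thc4}(b), applied with $\sigma_1 = 1$, makes $\bar{\chi}\sigma_\lambda$ nontrivial on $\op{G}'$ for every $\lambda \in \Phi \cup \{1\}$.

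For parts \eqref{415c3} and \eqref{415c4}, I would run a parallel argument. In \eqref{415c3}, the disjointness of $K/L$ (prime-to-$p$ degree) and $L_f/L$ (a $p$-extension) identifies $\op{Gal}(K_f/K)$ with $f(\op{G}_L) \subseteq \g$ as a $\op{G}'$-submodule. In \eqref{415c4}, a direct calculation identifies the kernel of $\GSp_{2n}(\text{W}(\F_q)/p^2) \to \GSp_{2n}(\F_q)$ subject to $\nu = \kappa \bmod p^2$ with $\g \oplus \F_p$ via $I + p(X + (\mu/2)I) \mapsto (X,\mu)$, where $\op{G}'$ acts by the adjoint on $\g$ and trivially on $\F_p$, so $\op{Gal}(K(\zeta_2)/K)$ embeds as a $\op{G}'$-submodule of $\g \oplus \F_p$. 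Lemma \ref{Pdecomposition} decomposes any submodule $N$ into $\mathbb{T}$-eigenspaces $N_{\sigma_\lambda}$, and $\mathbb{T}$-equivariance sends $\phi(N_{\sigma_\lambda})$ into the $\sigma_\lambda$-eigenspace of $\g^*$. Expanding an $\F_p$-linear $\op{G}'$-equivariant map between these characters as an $\F_q$-combination of $\op{Gal}(\F_q/\F_p)$-twists, one sees it is nonzero only when $\sigma_\lambda$ is a Galois twist of $\bar{\chi}\sigma_\mu$---precisely what condition \eqref{thc4}(b) forbids for every $\lambda, \mu \in \Phi \cup \{1\}$; the extra $\F_p$-factor in \eqref{415c4} is subsumed by the case $\sigma_\lambda = 1$.

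The main obstacle is part \eqref{415c1}, where the correct interpretation of ``nontrivial'' as proper ($M' \neq 0$) is essential---otherwise the inclusion $\iota_{\tilde N}$ itself would furnish a counterexample. The remaining parts are routine bookkeeping given the eigenspace decomposition and the character-distinctness conditions already established.
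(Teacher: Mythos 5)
The proposal is correct and follows the paper's overall strategy: reduce each part to vanishing of $\op{G}'$-equivariant Homs into $\g^*$ via the $\mathbb{T}$-eigenspace decomposition of Lemma \ref{Pdecomposition} and the character distinctness hypothesis \eqref{thc4}. The one genuine variation is in part \eqref{415c1}: the paper observes directly that since $M'\neq 0$, Lemma \ref{mainin} forces the $\bar{\chi}\sigma_{2L_1}$-eigenspace of $M=\g^*/M'$ to vanish, so any $\op{G}'$-equivariant image $f(N)\subseteq\g^*$ has trivial $\bar{\chi}\sigma_{2L_1}$-eigenspace and, again by Lemma \ref{mainin}, must be zero. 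You instead pull back along $\tilde N\twoheadrightarrow N$ and apply Lemma \ref{y1}\eqref{48c1} to conclude the lift equals $\beta\,\iota_{\tilde N}$, then kill $\beta$ using vanishing on $M'\neq 0$. Both reduce to the same eigenspace fact---Lemma \ref{y1} is itself a consequence of Lemma \ref{mainin}---so the paper's route is marginally more direct while yours is a clean corollary-level argument. For part \eqref{415c4} there is a small imprecision: the scalar direction in $\ker\left(\op{GSp}_{2n}(\text{W}(\F_q)/p^2)\to\op{GSp}_{2n}(\F_q)\right)$ is an $\F_q$-line, not $\F_p$, and more to the point the paper notes that since $\kappa=\kappa_0\chi^k$ with $p(p-1)\mid k$, the character $\kappa\bmod p^2$ is trivial on $\op{G}_K$, so $\op{Gal}(K(\zeta_2)/K)$ embeds into $\g$ alone, with no scalar factor at all. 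Your extra trivial-action summand is harmless for the conclusion, as you observe, since condition \eqref{thc4}(b) also excludes the trivial $\mathbb{T}$-character; but the stated identification is not quite the one the paper uses and is not literally correct.
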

\begin{proof}
\par For part $\eqref{415c1}$, it suffices to show that $M$ is unrelated to $\g^*$. Since $M$ is a non-trivial quotient of $\g^*$, it follows from Lemma $\ref{mainin}$ that the $\bar{\chi}\sigma_{2L_1}$-eigenspace of $M$ is zero. Let $N\subseteq M$ be a $\op{G}'$-submodule and $f:N\rightarrow \g^*$ be a homomorphism. The $\bar{\chi}\sigma_{2L_1}$-eigenspace of $f(N)$ is zero, hence by Lemma $\ref{mainin}$, the map $f=0$.
\par Since $\Q(\mu_{p^2})$ is an abelian extension of $\Q$, the $G'$-action on $\op{Gal}(K(\mu_{p^2})/K)$ is trivial. On the other hand, $\g^*$ has no trivial $\mathbb{T}$-eigenspace. It follows that $K(\mu_{p^2})$ is unrelated to $\g^*$ and part $\eqref{415c2}$ follows.
\par 
Let $Q$ be a $\op{G}'$-submodule of $\g$, Lemma $\ref{Pdecomposition}$ asserts that $Q=\bigoplus_{\lambda\in \Phi\cup \{1\}} Q_{\sigma_{\lambda}}$. On the other hand, $\g^*=\bigoplus_{\gamma\in \Phi\cup \{1\}} (\g^*)_{\bar{\chi}\sigma_{\lambda}}$. It follows from condition $\eqref{thc4}$ of Theorem $\ref{main}$ that 
\[\op{Hom}(Q_{\sigma_{\lambda}}, (\g^*)_{\bar{\chi}\sigma_{\gamma}})^{\mathbb{T}}=0.\] As a result, $\op{Hom}(Q,\g^*)^{\op{G}'}=0$ and hence part $\eqref{415c3}$ follows.
\par For part \eqref{415c4}, identify $\g$ with the kernel of the mod-$p$ reduction map \[\op{Sp}_{2n}(\text{W}(\F_q)/p^2)\rightarrow \op{Sp}_{2n}(\F_q),\]by identifying $X\in \g$ with $\op{Id}+pX$. Recall that $\kappa=\kappa_0\chi^k$, where $k$ is a positive integer which is divisible by $p(p-1)$. Setting $\kappa_2:=\kappa\mod{p^2}$, we see that the restriction $\kappa_{2\restriction \op{G}_K}$ is trivial. Therefore, $\op{Gal}(K(\zeta_2)/K)$ may be identified with a Galois submodule of $\g$. Here, $g\in \op{Gal}(K(\zeta_2)/K)$ is identified with 
\[\zeta_2(g)\in \op{ker}\left(\op{Sp}_{2n}(\text{W}(\F_q)/p^2)\rightarrow \op{Sp}_{2n}(\F_q)\right)\simeq \g.\] The same reasoning as in the previous case shows that $K(\zeta_2)$ is indeed unrelated to $\g^*$.
\end{proof}
\begin{Lemma}\label{lemma416}
Let $L_1,\dots, L_k$ and $K_1,\dots , K_l$ be Galois extensions of $\Q$ which contain $K$. Assume that:
\begin{itemize}
    \item $\op{Gal}(L_i/K)$ and $\op{Gal}(K_i/K)$ are finite dimensional $\F_p$-vector spaces.
    \item As a $\op{G}'$-module, $\op{Gal}(L_i/K)$ is isomorphic to a subquotient of $\g$ for $i=1,\dots, k$.
   \item As a $\op{G}'$-module, $\op{Gal}(K_i/K)$ is isomorphic to a subquotient of $\g^*$ for $i=1,\dots, l$.
\end{itemize} Then the composite $L_1\cdots L_k$ is linearly disjoint from $K_1,\dots, K_l$.
\end{Lemma}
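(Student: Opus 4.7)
The goal is to show that $\mathcal{L}\cap\mathcal{K}=K$, where $\mathcal{L}:=L_1\cdots L_k$ and $\mathcal{K}:=K_1\cdots K_l$; since both extensions are Galois over $K$, this equality is equivalent to linear disjointness of $\mathcal{L}$ from $\mathcal{K}$ over $K$. I plan to argue by analyzing the $\F_p[\op{G}']$-module structure (conjugation action) on $\op{Gal}(\mathcal{L}\cap\mathcal{K}/K)$ and showing it admits no nonzero irreducible constituents, using the incompatibility of the characters built into condition \eqref{thc4}(b) of Theorem \ref{main}.

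First I would observe that the natural restriction map realizes $\op{Gal}(\mathcal{L}/K)$ as a $\op{G}'$-equivariant submodule of $\bigoplus_{i=1}^{k}\op{Gal}(L_i/K)$, and hence by the hypothesis on each summand, as a $\op{G}'$-subquotient of $\g^{\oplus k}$. Analogously, $\op{Gal}(\mathcal{K}/K)$ is a $\op{G}'$-subquotient of $(\g^*)^{\oplus l}$. Setting $M:=\mathcal{L}\cap\mathcal{K}$, the quotient $\op{Gal}(M/K)$ appears simultaneously as a $\op{G}'$-quotient of $\op{Gal}(\mathcal{L}/K)$ and of $\op{Gal}(\mathcal{K}/K)$, so it is a subquotient of both $\g^{\oplus k}$ and $(\g^*)^{\oplus l}$.

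Now I would pass to the subgroup $\mathbb{T}\subseteq \op{G}'$. Since $|\mathbb{T}|$ is coprime to $p$, Maschke's theorem applies and each of the modules in sight splits as a direct sum of $\mathbb{T}$-isotypic components. By Lemma \ref{Pdecomposition}, the set of $\mathbb{T}$-characters that can occur in any subquotient of $\g$ is $\{\sigma_\lambda:\lambda\in\Phi\cup\{1\}\}$, while the corresponding set for $\g^*$ is $\{\bar\chi\sigma_\lambda:\lambda\in\Phi\cup\{1\}\}$. Condition \eqref{thc4}(b) of Theorem \ref{main} says precisely that no character in the first family equals a $\op{Gal}(\F_q/\F_p)$-twist of a character in the second family, and since the isomorphism class of an irreducible $\F_p[\mathbb{T}]$-module is determined by the $\op{Gal}(\bar\F_p/\F_p)$-orbit of its $\bar\F_p$-character, the irreducible $\F_p[\mathbb{T}]$-constituents of $\g$ and of $\g^*$ are disjoint.

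Consequently, the Jordan--H\"older constituents of $\op{Gal}(M/K)$ as an $\F_p[\mathbb{T}]$-module would have to lie in both the $\g$-list and the $\g^*$-list, which forces $\op{Gal}(M/K)=0$, so $M=K$ and linear disjointness follows. The main point that needs care is the identification between ``twist equivalence'' of $\F_q^\times$-valued characters of $\mathbb{T}$ and isomorphism of the corresponding irreducible $\F_p[\mathbb{T}]$-modules; this is the standard semisimple representation theory of the finite abelian prime-to-$p$ group $\mathbb{T}$ over $\F_p$, and condition \eqref{thc4}(b) is formulated in exactly the form required.
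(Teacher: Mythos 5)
Your proof is correct and takes essentially the same route as the paper's: both arguments pass to the $\mathbb{T}$-action, invoke Maschke's theorem and Lemma~\ref{Pdecomposition} to see that $\op{Gal}(\mathcal{L}/K)$ carries only $\mathbb{T}$-characters of the form $\sigma_\lambda$ while $\op{Gal}(\mathcal{K}/K)$ carries only $\bar\chi\sigma_\gamma$, and then conclude from condition~\eqref{thc4}(b) that the shared quotient $\op{Gal}(\mathcal{L}\cap\mathcal{K}/K)$ must vanish. The only superficial difference is the final step: the paper phrases the contradiction as $\op{Hom}(M,M)^{\op{G}'}=0$ forcing the identity to be zero, while you compare Jordan--H\"older constituents of $\op{Gal}(M/K)$ over $\F_p[\mathbb{T}]$ and observe they would have to lie in two disjoint lists; these are two packagings of the same observation.
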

\begin{proof}
The order of $\mathbb{T}$ is coprime to $p$, hence Maschke's theorem asserts that any finite dimensional $\F_p[\op{G}']$-module $M$ decomposes into a direct sum 
\[M=\oplus_{\tau} M_{\tau}.\]Here, $\tau$ is a character of $\mathbb{T}$ and $M_{\tau}$ is the $\tau$-eigenspace 
\[M_{\tau}:=\{m\in M| g\cdot m=\tau(g) m\}.\]The action of $\op{G}'$ on $\op{Gal}(L_i/K)$ and $\op{Gal}(K_i/K)$ is induced by conjugation. By assumption, $\op{Gal}(L_i/K)$ is isomorphic to a subquotient of $\g$, i.e. there exist $\op{G}'$-submodules $Q_1\subseteq Q_2$ of $\g$ such that $\op{Gal}(L_i/K)\simeq Q_2/Q_1$. By Lemma $\ref{Pdecomposition}$, the module $Q_i$ decomposes into $\mathbb{T}$-eigenspaces
\[Q_i=\bigoplus_{\lambda\in \Phi\cup \{1\}}(Q_i)_{\sigma_{\lambda}}\]for $i=1,2$. Therefore, the quotient $\op{Gal}(L_i/K)$ decomposes into \[\op{Gal}(L_i/K)=\bigoplus_{\lambda\in \Phi\cup \{1\}}(\op{Gal}(L_i/K))_{\sigma_{\lambda}}\] where $(\op{Gal}(L_i/K))_{\sigma_{\lambda}}:=(Q_2)_{\sigma_{\lambda}}/(Q_1)_{\sigma_{\lambda}}$ is the $\sigma_{\lambda}$-eigenspace for the action of $\mathbb{T}$ on $\op{Gal}(L_i/K)$. Likewise, $\op{Gal}(K_i/K)$ decomposes into \[\op{Gal}(K_i/K)=\bigoplus_{\lambda\in \Phi\cup \{1\}}(\op{Gal}(K_i/K))_{\bar{\chi}\sigma_{\lambda}}.\]
\par Let $\mathcal{L}$ be the composite $L_1\cdots L_k$ and $\mathcal{K}$ be the composite $K_1\cdots K_l$. Letting $\mathcal{L}_i$ be the composite $L_1\cdots L_i$, filter $\mathcal{L}$ by
 \[\mathcal{L}\supseteq \mathcal{L}_{k-1}\cdots \supseteq \mathcal{L}_1\supseteq K.\] The Galois group \[\op{Gal}(\mathcal{L}_i/\mathcal{L}_{i-1})\simeq \op{Gal}(L_i/L_i\cap \mathcal{L}_{i-1})\] is a $\op{G}'$-submodule of $\op{Gal}(L_i/K)$. Hence the characters for the action of $\mathbb{T}$ on $\op{Gal}(\mathcal{L}_i/\mathcal{L}_{i-1})$ are each of the form $\sigma_{\lambda}$. Similar reasoning shows that the characters for the action of $\mathbb{T}$ on $\op{Gal}(\mathcal{K}/K)$ are each of the form $\bar{\chi}\sigma_{\lambda}$. Set $E=\mathcal{K}\cap \mathcal{L}$ and $M=\op{Gal}(E/K)$. Being a quotient of $\op{Gal}(\mathcal{L}/K)$, $M$ decomposes into eigenspaces for the action of the torus 
\[M=\bigoplus_{\lambda\in \Phi\cup \{1\}} M_{\sigma_{\lambda}}.\]Since $M$ is a quotient of $\op{Gal}(\mathcal{K}/K)$, \[M=\bigoplus_{\gamma\in \Phi\cup \{1\}} M_{\bar{\chi}\sigma_{\gamma}}.\] It is assumed that the image of $\sigma_{\lambda}$ spans $\F_q$ and that $\sigma_{\lambda}$ is not a $\op{Gal}(\F_q/\F_p)$ twist of $\bar{\chi}\sigma_{\gamma}$. Hence, it follows that
\[\op{Hom}(\F_q(\sigma_{\lambda}),\F_q(\bar{\chi}\sigma_{\gamma}))^{\mathbb{T}}=0.\]Therefore, $\Hom(M,M)^{\op{G}'}=0$ and in particular, the identity map is zero. This implies that $\mathcal{K}\cap \mathcal{L}=K$.
\end{proof}
\section{Deformation conditions at Auxiliary Primes}

We introduce the auxiliary primes $v$ and the liftable deformation problem $\mathcal{C}_v$ at $v$.
\begin{Def} 
A prime number $v$ is a trivial prime if the following splitting conditions are satisfied:
\begin{itemize}
\item  $\operatorname{G}_v\subseteq \ker\bar{\rho}$,

\item $v\equiv 1 \mod{p}$ and $v \not\equiv 1 \mod{p^2}$.\end{itemize} 
\end{Def}
 In other words, a prime number $v$ is trivial if it splits in $\Q(\bar{\rho},\mu_p)$ and does not split in $\Q(\mu_{p^2})$. By Lemma $\ref{22Dec5}$, $\Q(\bar{\rho},\mu_p)$ does not contain $\Q(\mu_{p^2})$. This is a Chebotarev condition, i.e. defined by a finite union of sets that are defined by applying the Chebotarev density theorem. Therefore, the set of trivial primes has positive Dirichlet density, in particular, it is infinite.\par Let $v$ be a trivial prime. The deformations of the trivial representation $\bar{\rho}_{\restriction \operatorname{G}_v}$ are tamely ramified. The Galois group of the maximal pro-p extension of $\Q_{v}$ is generated by a Frobenius $\sigma_v$ and a generator of tame pro-$p$ inertia $\tau_v$. These satisfy the relation 
$\sigma_v\tau_v\sigma_v^{-1}=\tau_l^{v}$. We define the deformation functor $\mathcal{C}_v$. The functor $\mathcal{C}_v$ will be liftable, however, it will not be a deformation condition. Let $\alpha$ be a root which shall be specified later. The root-subgroup $\text{U}_{\alpha}\subset \GSp_{2n}$ is the subgroup generated by the image of the root-subspace $(\op{sp}_{2n})_{\alpha}$ under the exponential map. We let $\ZU$ be the subgroup of $\GSp_{2n}$ consisting of elements which commute with $\text{U}_{\alpha}$.

\begin{Def}\label{ramtriv}\cite[Definition 3.1]{FKP1}
 Let $\mathcal{D}_v^{\alpha}$ consist of the deformation classes of lifts such that some representative $\varrho$ satisfies:
 \begin{enumerate}
     \item $\varrho(\sigma_v)\in \mathcal{T}\cdot \ZU$ and $\varrho(\tau_v)\in \text{U}_{\alpha}$,
     \item under the composite 
     \[\mathcal{T}\cdot \ZU\rightarrow \mathcal{T}/(\mathcal{T}\cap \ZU)\xrightarrow{\alpha} \text{GL}_1\] $\varrho(\sigma_v)$ maps to $v$.
 \end{enumerate}
 \begin{Remark}
 When $n=1$ and $\alpha$ is the positive root of $\operatorname{sl}_2$, the deformation functor $\mathcal{D}_v^{\alpha}$ consists of $\varrho$ such that there exists $x$ and $y$ such that \[\varrho(\sigma_v)=c\mtx{v}{x}{0}{1}\text{ and } \varrho(\tau_v)=\mtx{1}{y}{0}{1}.\]Here $c$ is equal to $(\kappa(\sigma_v)/v)^{\frac{1}{2}}$.
 \end{Remark}
\end{Def}
We shall denote by the kernel of $\alpha$ restricted to $\mathfrak{t}$ by $\mathfrak{t}_{\alpha}$. Since the action of $\operatorname{G}_v$ on $\g$ is trivial, 
\[H^1(\operatorname{G}_v, \g)=\text{Hom}(\operatorname{G}_v, \g).\] Let $\mathcal{P}_v^{\alpha}$ be the subspace of $H^1(\operatorname{G}_v, \g)$ consisting of $\phi$ such that
\[\phi(\sigma_v)\in  \mathfrak{t}_{\alpha}+\text{Cent}((\g)_{\alpha})\]
\[\phi(\tau_v)\in (\g)_{\alpha}.\]
Let $\Phi^{\alpha}$ denote the subset of roots $\beta\in \Phi$ such that $[(\g)_{\alpha}, (\g)_{\beta}]\neq 0$. Recall that $X_{\alpha}$ is a choice of root vector for $\alpha$.

\begin{Def}\label{defconditions}
\begin{enumerate}
    \item Let $v$ be a trivial prime which is unramified mod $p^2$ in our lifting argument. Set $\alpha=2L_1$ and $\mathcal{C}_v=\mathcal{C}_v^{nr}$ consist of deformations with a representative \[\varrho'=(\operatorname{Id}+X_{-\alpha})\varrho (\operatorname{Id}+X_{-\alpha})^{-1}\] where $\varrho$ is a representative for a deformation in $\mathcal{D}_v^{\alpha}$ which satisfies further conditions. In accordance with \cite[Definition 3.5]{FKP1}, we assume that the mod-$p^2$ reduction $\varrho_2:=\varrho\mod{p^2}$ satisfies the following conditions:
    \begin{enumerate}
        \item $\varrho_2$ is unramified, with $\varrho_2(\sigma_v)\in \mathcal{T}(\text{W}(\F_q)/p^2)$,
        \item for all $\beta\in \Phi^{\alpha}$, 
        \[\beta(\varrho_2(\sigma_v))\neq 1\mod{p^2}.\]
    \end{enumerate} Let $\mathcal{S}_{v}^{\alpha}$ consist of $\phi\in H^1(\op{G}_v, \g)$ such that $\phi(\sigma_v)\in \bigoplus_{\beta\in \Phi^{\alpha}} (\g)_{\beta}$ and $\phi(\tau_v)=0$. Let $\mathcal{N}_v$ be specified by \[\mathcal{N}_v=\mathcal{N}_v^{nr}:=(\operatorname{Id}+X_{-\alpha})(\mathcal{P}_v^{\alpha}+\mathcal{S}_v^{\alpha})(\operatorname{Id}+X_{-\alpha})^{-1}.\]
    \item \label{defconditions2}Let $v$ be a trivial prime which will be ramified mod $p^2$ in our lifting argument. Let $\alpha=-2L_1$ and $\mathcal{C}_v=\mathcal{C}_v^{ram}$ consist of deformations in $\mathcal{D}_v^{\alpha}$ with representative $\varrho$ satisfying some additional conditions, which we specify. In accordance with \cite[Definition 3.9]{FKP1}, assume that the mod-$p^2$ reduction $\varrho_2$ satisfies the following conditions:
    \begin{enumerate}
        \item $\varrho_2(\tau_v)\in u_{\alpha}(py)$ where $y\in \text{W}(\F_q)^{\times}$, and $u_{\alpha}: (\g)_{\alpha}\rightarrow \op{GSp}_{2n}$ is the root group homomorphism over $\op{W}(\F_q)$.
        \item For all $\beta\in \Phi^{\alpha}$, 
        \[\beta(\varrho_2(\sigma_v))\neq 1\mod{p^2}.\]
    \end{enumerate}Let $\mathcal{S}_v^{\alpha}$ denote the space of cohomology classes specified in the proof of \cite[Lemma 3.10]{FKP1}. Let $\mathcal{N}_v=\mathcal{N}_v^{ram}$ be defined by \[\mathcal{N}_v:=\mathcal{P}_v^{\alpha}+\mathcal{S}_v^{\alpha}.\] 
\end{enumerate}
\end{Def}
The following gives us a criterion for an element $f\in H^1(\op{G}_v, \g)$ to not be contained in $\mathcal{N}_v^{nr}$. This criterion is used in the proof of Proposition $\ref{lastchebotarev}$.
\begin{Lemma}\label{lemma55}
Let $v$ be a trivial prime and $\mathcal{C}_v=\mathcal{C}_v^{nr}$. Let $f\in \mathcal{N}_v$, express $f(\sigma_v)=\sum_{\lambda\in \Phi} {a_{\lambda}} X_{\lambda}+\sum_{i=1}^n a_i H_i$. Write $X_{-2L_1}=c e_{n+1,1}$ and $X_{2L_1}=d e_{1,n+1}$. We have that $a_{2L_1}= -(cd)^{-1} a_{1}$.
\end{Lemma}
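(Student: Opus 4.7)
The plan is to unwind the definition of $\mathcal{N}_v^{nr}$ and track what happens to the $X_{2L_1}$- and $H_1$-coefficients under the twist by $u := \operatorname{Id}+X_{-\alpha}$, where $\alpha = 2L_1$. Since $X_{-\alpha}=c\,e_{n+1,1}$ satisfies $X_{-\alpha}^2=0$, we have $u = \exp(X_{-\alpha})$, and conjugation by $u$ realises the Lie-algebra operator $\mathrm{Ad}(u) = \exp(\mathrm{ad}_{X_{-\alpha}})$ on $\g$. By the definition of $\mathcal{N}_v=\mathcal{N}_v^{nr}$, we may write $f = \mathrm{Ad}(u)\,\phi$ for some $\phi \in \mathcal{P}_v^{\alpha}+\mathcal{S}_v^{\alpha}$; decomposing $\phi(\sigma_v) = \sum_{\lambda\in\Phi} b_\lambda X_\lambda + \sum_{i=1}^n b_i H_i$, the task reduces to comparing the $X_\alpha$- and $H_1$-components of $\mathrm{Ad}(u)\,\phi(\sigma_v)$ with $b_\alpha$ and $b_1$.

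The first step is to observe that $b_1 = 0$. In the Cartan decomposition $\g = \mathfrak{t}\oplus\bigoplus_\beta(\g)_\beta$, the $\mathfrak{t}$-part of $\mathrm{Cent}_{\g}((\g)_\alpha)$ is precisely $\mathfrak{t}_\alpha = \ker(\alpha|_{\mathfrak{t}})$, so $\mathfrak{t}_\alpha + \mathrm{Cent}_{\g}((\g)_\alpha)$ also projects to $\mathfrak{t}_\alpha$ in $\mathfrak{t}$. Since $\alpha(H_1) = 2 \neq 0$, we have $H_1 \notin \mathfrak{t}_\alpha$, so the $H_1$-component of any element of $\mathcal{P}_v^\alpha(\sigma_v)$ vanishes. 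Moreover $\mathcal{S}_v^\alpha(\sigma_v) \subseteq \bigoplus_{\beta\in\Phi^\alpha}(\g)_\beta$ has trivial $\mathfrak{t}$-part; hence $b_1 = 0$.

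The second step is an explicit computation of $\mathrm{Ad}(u)$. From $[X_{-\alpha},X_\alpha] = -cd\,H_1$ and $[X_{-\alpha},H_1] = 2X_{-\alpha}$, together with $\mathrm{ad}_{X_{-\alpha}}^3(X_\alpha)=0$, the exponential series terminates and gives
\[
\mathrm{Ad}(u)(X_\alpha) \;=\; X_\alpha - cd\,H_1 - cd\,X_{-\alpha}.
\]
For every other basis element $Y \in \{H_j\}_{j>1} \cup \{X_\beta\}_{\beta\neq \alpha}$, either $[X_{-\alpha},Y]=0$, or each iterated bracket $\mathrm{ad}_{X_{-\alpha}}^k(Y)$ lies in the root space $(\g)_{\beta-k\alpha}$. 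To contribute to the $X_\alpha$-coefficient one would need $\beta-k\alpha = \alpha$, and to contribute to the $H_1$-coefficient one would need $\beta-k\alpha = 0$; for $\beta\neq\alpha$ a root and $k\geq 0$, both force $\beta = m\alpha$ with $m\geq 2$, which is never a root. Hence these vectors contribute nothing to the $X_\alpha$- or $H_1$-coefficient of the output.

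Combining the two steps yields $a_{2L_1} = b_\alpha$ and $a_1 = b_1 - cd\,b_\alpha = -cd\,b_\alpha$, whence $a_{2L_1} = -(cd)^{-1}a_1$. The only mildly delicate point is the root-system bookkeeping in Step 2, but it reduces to the elementary fact that no positive integer multiple of the highest root $2L_1$ beyond itself is a root.
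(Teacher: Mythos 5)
Your proof is correct and uses essentially the same approach as the paper: both reduce to the observation that the untwisted cohomology class has zero $H_1$-component (because $\mathfrak{t}_{2L_1} + \operatorname{Cent}((\g)_{2L_1})$ and $\bigoplus_{\beta\in\Phi^{2L_1}}(\g)_\beta$ avoid $H_1$), and then conjugate by $\operatorname{Id}+X_{-2L_1}$ to relate the $X_{2L_1}$- and $H_1$-coefficients. The paper conjugates $f$ backward to $g = (\operatorname{Id}+X_{-2L_1})^{-1}f(\operatorname{Id}+X_{-2L_1})$ and does a three-term matrix expansion, while you conjugate the untwisted class $\phi$ forward via $\operatorname{Ad}(\operatorname{Id}+X_{-2L_1}) = \exp(\operatorname{ad}_{X_{-2L_1}})$ and use a root-weight argument to rule out contributions from the remaining basis vectors; the substance is the same, and your version has the minor virtue of making transparent why only the $X_{2L_1}$-component matters.
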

\begin{proof}
Set $g:=(\op{Id} + X_{-2L_1})^{-1} f(\op{Id} + X_{-2L_1})$ and express $g(\sigma_v)=\sum_{\lambda\in \Phi} {b_{\lambda}} X_{\lambda}+\sum_{i=1}^n b_i H_i$. Note that for $\phi\in \mathcal{P}_v^{2L_1}$, \[\phi(\sigma_v)\in  \mathfrak{t}_{2L_1}+\text{Cent}((\g)_{2L_1})\] and hence has zero $H_1$-component. For $\phi\in \mathcal{S}_v^{2L_1}$, we have that
\[\phi(\sigma_v)\in \bigoplus_{\beta\in \Phi^{2L_1}} (\g)_{\beta}.\] We deduce that the $H_1$-component $b_1$ is equal to zero. We show that the $H_1$-component of $g(\sigma_v)$ is equal to $a_1+cd a_{2L_1}$ from the relation $g(\sigma_v)=(\op{Id}+X_{-2L_1})^{-1}f(\sigma_v) (\op{Id}+X_{-2L_1})$. Note that $X_{-2L_1}^2=0$ and thus, $(\op{Id}+X_{-2L_1})^{-1}=(\op{Id}-X_{-2L_1})$. One has that
\[\begin{split}g(\sigma_v)=&(\op{Id}-X_{-2L_1})f(\sigma_v) (\op{Id}+X_{-2L_1})\\=&(\op{Id}-c e_{n+1,1})f(\sigma_v) (\op{Id}+c e_{n+1,1})\\
=&f(\sigma_v)+ c[f(\sigma_v),e_{n+1,1}]-c^2e_{n+1,1}f(\sigma_v) e_{n+1,1}.\end{split}\]
Note that 
\[c^2e_{n+1,1}f(\sigma_v) e_{n+1,1}=a_{2L_1}c^2  e_{n+1,1}X_{2L_1} e_{n+1,1}= a_{2L_1}c^2d  e_{n+1,1}= a_{2L_1}cd X_{-2L_1},\]and thus does not contribute to the $H_1$-component of $g(\sigma_v)$. The contribution of \[c[f(\sigma_v), e_{n+1,1}]=\sum_{\lambda\in \Phi} {ca_{\lambda}} [X_{\lambda},e_{n+1,1}]+\sum_{i=1}^n ca_i [H_i,e_{n+1,1}]\] to the $H_1$-component of $g(\sigma_v)$ is from the term
\[ca_{2L_1} [X_{2L_1},e_{n+1,1}]=ca_{2L_1} [d e_{1,n+1},e_{n+1,1}]=cd a_{2L_1} H_1.\] Thus, we have shown that $b_1=a_1+cd a_{2L_1}$. Since, $b_1=0$, we deduce that $a_1= -cd a_{2L_1}$.
\end{proof}
\begin{Lemma}\cite[Lemma 3.2, 3.6,3.10]{FKP1}
Let $v$ be a trivial prime (for which either $\mathcal{C}_v=\mathcal{C}_v^{ram}$ or $\mathcal{C}_v^{nr}$ is the chosen deformation condition) and $X\in \mathcal{N}_v$,
\begin{enumerate}
    \item $\dim \mathcal{N}_v=\dim\g=h^0(\operatorname{G}_v, \g)$.
    \item Let $m\geq 3$ and $\rho_m\in \mathcal{C}_v(\text{W}(\F_q)/p^{m})$, then
\[(\operatorname{Id}+p^{n-1}X)\rho_m\in \mathcal{C}_v(\text{W}(\F_q)/p^{m}).\]
\item The deformation functor $\mathcal{C}_v$ is liftable.
\end{enumerate}
\end{Lemma}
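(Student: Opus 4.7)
Since the decomposition group $\op{G}_v$ acts trivially on $\g$ for a trivial prime $v$, one has $h^0(\op{G}_v,\g)=\dim\g$, so part (1) reduces to showing $\dim\mathcal{N}_v=\dim\g$. Moreover $H^1(\op{G}_v,\g)=\op{Hom}(\op{G}_v,\g)$ and every continuous homomorphism factors through the maximal tame pro-$p$ quotient $\langle \sigma_v,\tau_v\mid \sigma_v\tau_v\sigma_v^{-1}=\tau_v^v\rangle$; because $v\equiv 1\mod p$ and the target is an $\F_q$-vector space, the defining relation becomes automatic, so every pair $(\phi(\sigma_v),\phi(\tau_v))\in\g\times\g$ produces a class, giving $h^1(\op{G}_v,\g)=2\dim\g$.

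In the unramified case, conjugation by $\op{Id}+X_{-\alpha}$ is a linear automorphism of $H^1(\op{G}_v,\g)$, so it suffices to compute $\dim(\mathcal{P}_v^\alpha+\mathcal{S}_v^\alpha)$. The space $\mathcal{P}_v^\alpha$ is parametrised by $\phi(\sigma_v)\in\mathfrak{t}_\alpha+\op{Cent}((\g)_\alpha)$ together with $\phi(\tau_v)\in(\g)_\alpha$, whereas $\mathcal{S}_v^\alpha$ is parametrised by $\phi(\sigma_v)\in\bigoplus_{\beta\in\Phi^\alpha}(\g)_\beta$ and $\phi(\tau_v)=0$. By definition of $\Phi^\alpha$ together with the standard description $\op{Cent}((\g)_\alpha)=\mathfrak{t}\oplus\bigoplus_{\beta\notin\Phi^\alpha}(\g)_\beta$, the root-support of $\mathcal{P}_v^\alpha$ (in the $\sigma_v$ coordinate) is disjoint from that of $\mathcal{S}_v^\alpha$, so the two subspaces meet trivially; a direct root-by-root count gives $\dim\mathcal{P}_v^\alpha+\dim\mathcal{S}_v^\alpha=\dim\g$. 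The ramified case is handled identically with $\alpha=-2L_1$ and the $\mathcal{S}_v^\alpha$ of the proof of Lemma 3.10 of FKP1.

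For part (2), the crucial observation is that $m\geq 3$ implies $p^{m-1}\equiv 0\mod p^2$, so the twist $(\op{Id}+p^{m-1}X)\rho_m$ has the same reduction modulo $p^2$ as $\rho_m$; all the mod-$p^2$ clauses of Definition \ref{defconditions} are therefore preserved. It remains to check that the shape requirements $\varrho(\sigma_v)\in \mathcal{T}\cdot\ZU$, $\varrho(\tau_v)\in \op{U}_\alpha$, and $\alpha(\varrho(\sigma_v))=v$ of Definition \ref{ramtriv} are preserved. Writing $X=X_P+X_S$ with $X_P\in\mathcal{P}_v^\alpha$, $X_S\in\mathcal{S}_v^\alpha$, the $X_P$ direction is by construction the tangent space to $\mathcal{D}_v^\alpha$ and hence preserves the form; the $X_S$ direction modifies $\varrho(\sigma_v)$ by an element of $\bigoplus_{\beta\in\Phi^\alpha}(\g)_\beta$, and this can be absorbed by conjugation inside $\op{GSp}_{2n}(\text{W}(\F_q)/p^m)$ stabilising both $\mathcal{T}\cdot\ZU$ and $\op{U}_\alpha$. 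The non-degeneracy needed to solve for such a conjugator is exactly the condition $\beta(\varrho_2(\sigma_v))\neq 1\mod p^2$ imposed in Definition \ref{defconditions}.

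For part (3), I would lift explicitly using the tame presentation: given $\rho_m\in \mathcal{C}_v(\text{W}(\F_q)/p^m)$, choose any set-theoretic lifts $\tilde{s}$ of $\rho_m(\sigma_v)$ to $\mathcal{T}\cdot\ZU$ projecting to $v$ under $\alpha$, and $\tilde{t}$ of $\rho_m(\tau_v)$ to $\op{U}_\alpha$, both with the appropriate mod-$p^2$ behaviour (such lifts exist by smoothness of the groups involved and because the mod-$p^2$ conditions have been arranged in $\rho_m$). Because $\tilde{s}\in\mathcal{T}\cdot\ZU$, conjugation by $\tilde{s}$ on $\op{U}_\alpha$ acts by the character $\alpha(\tilde{s})=v$, so the tame relation $\tilde{s}\tilde{t}\tilde{s}^{-1}=\tilde{t}^v$ holds automatically; thus $(\tilde{s},\tilde{t})$ define a lift $\rho_{m+1}\in\mathcal{C}_v(\text{W}(\F_q)/p^{m+1})$. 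The same argument applies to any square-zero extension $R\twoheadrightarrow R/I$ in $\mathcal{C}$ and gives liftability in full generality. The main obstacle I anticipate is the dimension bookkeeping in (1), specifically verifying $\mathcal{P}_v^\alpha\cap\mathcal{S}_v^\alpha=0$ and computing $\dim\op{Cent}((\g)_\alpha)$ carefully enough to exhibit a direct-sum decomposition of $\g$; this is more subtle in the ramified case where $\mathcal{S}_v^\alpha$ is characterised only implicitly through the associated lifting problem.
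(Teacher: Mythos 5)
The paper does not actually prove this lemma: it is stated with a citation to \cite[Lemma 3.2, 3.6, 3.10]{FKP1} and no proof is supplied, so there is no internal argument to compare against. Your proposal is therefore a from-scratch reconstruction of the FKP1 arguments, which is a legitimate but entirely independent undertaking. The overall architecture (trivial action of $\op{G}_v$ giving $h^1=2\dim\g$, a direct-sum decomposition of $\mathcal{N}_v=\mathcal{P}_v^\alpha\oplus\mathcal{S}_v^\alpha$, conjugation to absorb the $\mathcal{S}_v^\alpha$-direction, and a tame-presentation lifting argument exploiting $\alpha(\tilde{s})=v$) is the right shape.

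There is, however, a computational slip in your dimension count for part (1). You assert $\op{Cent}((\g)_\alpha)=\mathfrak{t}\oplus\bigoplus_{\beta\notin\Phi^\alpha}(\g)_\beta$, but the torus part of the centralizer of $(\g)_\alpha$ in $\g$ is $\mathfrak{t}_\alpha=\ker\alpha$, not all of $\mathfrak{t}$: for $H\in\mathfrak{t}$ one has $[H,X_\alpha]=\alpha(H)X_\alpha$, so $H$ centralizes $X_\alpha$ only when $\alpha(H)=0$. With your formula, $\dim\mathcal{P}_v^\alpha=\bigl(n+\#\{\beta\notin\Phi^\alpha\}\bigr)+1$ and the total comes to $\dim\g+1$, contradicting the claim. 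Replacing $\mathfrak{t}$ by $\mathfrak{t}_\alpha$ gives $\dim\mathcal{P}_v^\alpha=(n-1)+\#\{\beta\notin\Phi^\alpha\}+1$ and $\dim\mathcal{S}_v^\alpha=\#\Phi^\alpha$, whose sum is $n+\#\Phi=\dim\g$ as required, and the root-support argument for trivial intersection then goes through.

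In part (2), the remark that $p^{m-1}\equiv0\bmod p^2$ for $m\geq3$ explains why the mod-$p^2$ clauses of Definition \ref{defconditions} survive the twist, but it does not by itself account for the restriction $m\geq3$; you implicitly need it a second time in the conjugation step. Because $\op{Ad}(\varrho(\sigma_v))\equiv\op{Id}\bmod p$, the element one conjugates by to cancel the $\mathcal{S}_v^\alpha$-contribution sits one $p$-power lower than the twist, roughly $\op{Id}+p^{m-2}Y$, and this element must lie in $\ker\{\GSp_{2n}(\text{W}(\F_q)/p^m)\rightarrow\GSp_{2n}(\F_q)\}$ so that it produces a \emph{strictly equivalent} lift. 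That forces $m-2\geq1$, i.e.\ $m\geq3$, and this is the substantive content of the bound; your write-up should make it explicit rather than leaving the reader to infer it from "can be absorbed by conjugation."
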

\par Prior to lifting $\bar{\rho}$ to characteristic zero, we show that $\bar{\rho}$ lifts to $\rho_2$ after increasing the set of ramification from $S$ to $S\cup X_1$. One may choose a continuous lift $\tau$ of $\bar{\rho}$ as depicted
\[ \begin{tikzpicture}[node distance = 2.6 cm, auto]
            \node(G) at (0,0) {$\operatorname{G}_{\Q,S\cup X_1}$};
             \node (A) at (3,0) {$\GSp_{2n}(\F_q)$};
             \node (B) at (3,2){$\GSp_{2n}(\text{W}(\F_q)/p^2)$};
      \draw[->] (G) to node [swap]{$\bar{\rho}$} (A);
       \draw[->] (B) to node{} (A);
      \draw[->] (G) to node {$\tau$} (B);
      \end{tikzpicture}\]such that the composite $\nu\circ \tau=\psi \mod{p^2}$.
      The obstruction class \[\mathcal{O}(\bar{\rho})_{\restriction S\cup X_1}\in H^2(\op{G}_{S\cup X_1}, \g)\] is represented by the $2$-cocycle
      \[(g_1,g_2)\mapsto \tau(g_1 g_2)\tau(g_2)^{-1}\tau(g_1)^{-1}.\]
      
      The residual representation $\bar{\rho}$ lifts to a representation $\rho_2$ ramified only at primes in $S\cup X_1$ if and only if this obstruction is zero. For $v\in S$, the local representation $\bar{\rho}_{\restriction \operatorname{G}_v}$ satisfies $\mathcal{C}_v$ which is a liftable deformation condition (by assumption) and thus lifts to mod $p^2$. The residual representation $\bar{\rho}$ is unramified at each prime $v\in X_1$ and thus it is easy to see that $\bar{\rho}_{\restriction \operatorname{G}_v}$ lifts to mod $p^2$ for $v\in  X_1$. As a consequence, $\mathcal{O}(\bar{\rho})_{\restriction S\cup X_1}$ is contained in $\Sh^2_{S\cup X_1}(\g)$. We will show that a set of finitely many trivial primes $X_1$ can be chosen so that \[\Sh^2_{S\cup X_1}(\g)=0.\]For such a choice of $X_1$, there is a deformation $\rho_2$
      \[ \begin{tikzpicture}[node distance = 2.8 cm, auto]
            \node(G) at (0,0) {$\operatorname{G}_{\Q,S\cup X_1}$};
             \node (A) at (3,0) {$\GSp_{2n}(\F_q)$.};
             \node (B) at (3,2) {$\GSp_{2n}(\text{W}(\F_q)/p^2)$};
      \draw[->] (G) to node [swap]{$\bar{\rho}$} (A);
       \draw[->] (B) to node{} (A);
      \draw[->] (G) to node {$\rho_2$} (B);
      \end{tikzpicture}\]
     
\begin{Prop}\label{Shavanishing}
Let $\mathscr{M}$ denote the finite set of $\operatorname{G}_{\Q}$-modules defined by \[\begin{split}\mathscr{M}:=&\{(\g)/(\g)_k, \mid -2n+1\leq k \leq 2n\}\\&\cup \{(\g)_k^{\perp}, \mid -2n+1\leq k \leq 2n\}.\\
\end{split}\]
There is a finite set $T\supset S$ such that $T\backslash S$ consists of only trivial primes such that for all $M\in \mathscr{M}$,
\begin{equation}\label{equationTminusS}
\ker \{H^1(\op{G}_{\Q,T},M)\rightarrow \bigoplus_{w\in T\backslash S} H^1(\op{G}_w, M)\}=0
\end{equation}
and so in particular,
\begin{equation*}
\Sh_T^1(M)=0.
\end{equation*}
\end{Prop}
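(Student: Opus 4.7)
The plan is to construct $T$ inductively, adjoining one trivial prime at a time via Chebotarev's theorem until the kernel in $\eqref{equationTminusS}$ vanishes for every $M \in \mathscr{M}$. I would start with $T_0 = S$, noting that $H^1(\op{G}_{\Q, S}, M)$ is finite-dimensional over $\F_q$ for each of the finitely many $M \in \mathscr{M}$. The induction step is: given the current finite $T \supseteq S$ with $T \setminus S$ consisting of trivial primes, and a non-zero class $\psi$ in the kernel in $\eqref{equationTminusS}$ for some $M \in \mathscr{M}$, produce a trivial prime $v \notin T$ with $\psi_{\restriction \op{G}_v} \neq 0$; then adjoin $v$ to $T$ and iterate.

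To find such a prime $v$, I would restrict $\psi$ to $\op{G}_K$, where $K = \Q(\bar\rho,\mu_p)$ trivializes both $\bar\rho$ and $\bar\chi$, so $M$ is a trivial $\op{G}_K$-module for every $M \in \mathscr{M}$. Thus $\psi_{\restriction \op{G}_K}$ is a $\op{G}'$-equivariant homomorphism cutting out an extension $K_\psi/K$ whose Galois group injects into $M$. Setting $E := K_\psi \cdot K(\mu_{p^2})$ and applying Chebotarev to $E/\Q$, I would seek $\sigma \in \op{Gal}(E/\Q)$ with $\sigma_{\restriction K} = 1$ (making $v$ split in $K$), $\sigma_{\restriction K(\mu_{p^2})} \neq 1$ (ensuring $v \not\equiv 1 \mod p^2$ and hence $v$ is trivial), and $\sigma_{\restriction K_\psi} \neq 1$ (which forces $\psi(\sigma) \neq 0$). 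Since $[K(\mu_{p^2}):K] = p$ is prime, exactly one of two alternatives holds: either $K(\mu_{p^2}) \cap K_\psi = K$, in which case $\op{Gal}(E/K)$ is the direct product and $\sigma$ is chosen with independent non-trivial projections to each factor, or $K(\mu_{p^2}) \subseteq K_\psi$, in which case non-triviality on $K(\mu_{p^2})$ automatically forces non-triviality on $K_\psi$. Either way, Chebotarev produces infinitely many such $v$, and we pick one outside $T$; Lemma $\ref{22Dec5}$ rules out the degenerate case where $\mu_{p^2} \subseteq K$ from the outset.

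The induction must terminate: inflation identifies $H^1(\op{G}_{\Q, T}, M)$ with the subspace of $H^1(\op{G}_{\Q, T \cup \{v\}}, M)$ consisting of classes unramified at $v$, and triviality of restriction to $\op{G}_v$ certainly implies unramification at $v$; hence the kernel at stage $T \cup \{v\}$ is contained in the kernel at stage $T$, with the chosen $\psi$ removed, so the kernel dimension strictly drops at each step. Running this across the finite set $\mathscr{M}$, the total kernel dimension reaches zero in finitely many steps, producing the desired $T$. The final assertion $\Sh^1_T(M) = 0$ is immediate, since $\Sh^1_T(M)$ is contained in the kernel of restriction to any subset of $T$, in particular to $T \setminus S$. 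The main technical obstacle I anticipate is the Chebotarev step; while direct linear disjointness $K_\psi \cap K(\mu_{p^2}) = K$ is convenient and holds for $M \subseteq \g^*$ by Lemma $\ref{22Dec5}$, the dichotomy described above (using only the primality of $[K(\mu_{p^2}):K]$) sidesteps the need to check disjointness case-by-case across all subquotients $\g/(\g)_k$, where trivial $\op{G}'$-quotients can genuinely occur via $\mathfrak{t} \simeq \mathfrak{b}/\mathfrak{n}$.
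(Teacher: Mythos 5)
Your proof is correct and follows the same overall strategy as the paper: inductively adjoin trivial primes found by Chebotarev, using the extensions $K_\psi$ and $K(\mu_{p^2})$ over $K = \Q(\bar\rho,\mu_p)$. The one thing you leave implicit is that $K_\psi \supsetneq K$, i.e.\ that $\psi_{\restriction \op{G}_K} \neq 0$; this requires $H^1(\op{G}', M) = 0$, which is exactly what Lemma~\ref{l3} (equivalently Lemma~\ref{l4}) provides for every $M \in \mathscr{M}$, and the paper does cite it. You should invoke it too.

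Where you genuinely improve on the paper is in the linear-disjointness step. The paper's proof proves vanishing only for $M = \g^*$, invoking Lemma~\ref{22Dec5} to get $K_\psi \cap K(\mu_{p^2}) = K$, and then asserts that ``the argument for any $M \in \mathscr{M}$ is identical.'' But Lemma~\ref{22Dec5} is stated and proved only for classes valued in $\g^*$; its proof decomposes $\op{Gal}(K_\psi/K)$ into $\mathbb{T}$-eigenspaces with characters $\bar\chi\sigma_\lambda$ and uses that none of these is trivial. That argument carries over to $(\g)_k^{\perp}$ (submodules of $\g^*$), but not to the quotient modules $\g/(\g)_k$ for $k \geq 1$, whose eigenspace characters are the $\sigma_\lambda$ including the trivial character $\sigma_1 = 1$ on the image of $\mathfrak{t} = (\g)_0/(\g)_1$. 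Your dichotomy — either $K_\psi \cap K(\mu_{p^2}) = K$ and the Chebotarev conditions are independent, or $K(\mu_{p^2}) \subseteq K_\psi$ because $[K(\mu_{p^2}):K] = p$ is prime (using only Lemma~\ref{22Dec5} part~(1)), and then non-splitness in $K(\mu_{p^2})$ automatically forces non-splitness in $K_\psi$ — sidesteps this entirely and works uniformly for all $M \in \mathscr{M}$. It is a cleaner and more robust version of the Chebotarev step and repairs the gap in the paper's ``identical argument'' claim. The termination argument via strict decrease of $\ker$ under adjoining a prime $v$ with $\psi_{\restriction \op{G}_v} \neq 0$, and the observation that adjoining primes can only shrink kernels for the other $M$, is also correct and matches the paper's intent.
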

\begin{proof}
We show that $T$ can be chosen for which 
\begin{equation*}
\Sh_T^1(\g^*)=0,
\end{equation*}
the argument for any $M\in \mathscr{M}$ is identical. For $0\neq \psi\in H^1(\G, \g^*)$, let $K_{\psi}\supset \Q(\g^*)$ be the field extension cut out by $\psi$. By Lemma $\ref{l4}$, the extension $K_{\psi}$ is not equal to $\Q(\g^*)$. The extension $K(\mu_{p^2})$ is linearly disjoint with $K_{\psi}$ over $K$. By Lemma $\ref{22Dec5}$, $K(\mu_{p^2})$ is not contained in $K$ and $K(\mu_{p^2})\cap K_{\psi}=K$. As a result, there is a nonempty Chebotarev class of primes which split in $K$ and are non-split in $K_{\psi}$ and $K(\mu_{p^2})$. If $v$ is such a prime, it must be a trivial prime since it splits in $K$ and is non-split in $\Q(\mu_{p^2})$. On the other hand, since $v$ is non-split in $K_{\psi}$, deduce that $\psi_{\restriction \op{G}_v}\neq 0$. We may therefore choose a finite set of primes $T$ such that 
\begin{itemize}
\item $T$ is finite,
\item $T\backslash S$ consists of only trivial primes,
\item $\ker \{H^1(\op{G}_{\Q,T},\g^*)\rightarrow \bigoplus_{w\in T\backslash S} H^1(\op{G}_w, \g^*)\}=0$.
\end{itemize}
\end{proof}
The set of trivial primes $X_1$ is taken to be $T\backslash S$.
 
\section{Lifting to mod $p^3$}

\par By Proposition $\ref{Shavanishing}$, there is a finite set of primes $T$ containing $S$ such that $T\backslash S$ consists of trivial primes and $\Sh_{ T}^1(\g^*)=0$. Let $X_1$ be the set of trivial primes $T\backslash S$. At each prime $v\in X_1$, let $\mathcal{C}_v$ be the liftable deformation problem $\mathcal{C}_v^{nr}$. By global duality, $\Sh_{T}^2(\g)=0$ and thus the cohomological obstruction to lifting $\bar{\rho}$ to a representation $\zeta_2$
\begin{equation}\label{zeta2} \begin{tikzpicture}[node distance = 2.2 cm, auto]
            \node(G) at (0,0){$\operatorname{G}_{\Q,T}$};
             \node (A) at (3,0) {$\GSp_{2n}(\F_q)$};
             \node (B) at (3,2) {$\GSp_{2n}(\text{W}(\F_q)/p^2)$};
      \draw[->] (G) to node [swap]{$\bar{\rho}$} (A);
       \draw[->] (B) to node{} (A);
      \draw[->] (G) to node {$\zeta_2$} (B);
      \end{tikzpicture}\end{equation}
      vanishes. Here, $\zeta_2$ is stipulated to have similitude character $\kappa\mod{p^2}$. Let $v\in T$, recall that the set of $W(\F_q)/p^2$ lifts of $\bar{\rho}_{\restriction \op{G}_v}$ is an $H^1(\op{G}_v, \g)$-torsor. Therefore there exists $z_v\in H^1(\operatorname{G}_v, \g)$ such that the twist $(\operatorname{Id}+z_v p) {\zeta_2}_{\restriction \operatorname{G}_v}$ satisfies $\mathcal{C}_v$. Further, for $v\in X_1$, the class $z_v$ may is chosen so that this twist is unramified. We show that there is a set $W$ of at most two trivial primes such that on increasing the set $T$ to $Z=T\cup W$ there exists a global cohomology class $h\in H^1(\op{G}_{\Q, Z}, \g)$ such that
      \begin{itemize}
     
          \item $h_{\restriction \operatorname{G}_v}=z_v$ for $v\in T$,
          \item $(1+ph)\zeta_{2}|_{\op{G}_v}\in \mathcal{C}_v^{ram}$ for $v\in W$.
      \end{itemize} Further, letting $\rho_2$ be the twist $\rho_2=(\operatorname{Id}+ph)\zeta_2$, each local representation ${\rho_2}_{\restriction \operatorname{G}_v}$ satisfies $\mathcal{C}_v$ for $v\in Z$. As a consequence, the obstruction class $\mathcal{O}(\rho_2)$ is in $ \Sh_{Z}^2(\g)$. Since $Z$ contains $T$, the group $\Sh_{Z}^2(\g)$ is zero. As a result, $\rho_2$ must lift to $W(\F)/p^3$. Assume that there is no such class $h$ for a set $W$ such that $\# W\leq 1$. It is shown that there is a pair of trivial primes $v_1,v_2\notin T$ such that $W$ can be chosen to be equal to $\{v_1,v_2\}$. The set of trivial primes $X_2$ is then chosen to be $Z\backslash S$. For $v\in W$, choose $\mathcal{C}_v$ to be equal to $\mathcal{C}_v^{ram}$. In what follows, a Chebotarev class refers to a nonempty collection of primes defined by the application of the Chebotarev density theorem. Note that a Chebotarev class has positive Dirichlet density, and is in particular, infinite.
    \begin{Prop}\label{P1}
Let $T$ be as in Proposition $\ref{Shavanishing}$ and $\psi$ be a nonzero element in $H^1(\operatorname{G}_{\Q,T}, \g^*)$ and let $W\subset H^1(\operatorname{G}_{\Q,T}, \g^*)$ be a subspace not containing $\psi$. Then, there exists a Chebotarev class of trivial primes $v$ such that \[\begin{split} &{\psi}_{\restriction \operatorname{G}_v}\neq 0\\
&{\beta}_{\restriction \operatorname{G}_v}=0 \text{ for all } \beta\in W.
\end{split}\]Moreover we may choose $v$ so that $v$ does not split completely in the $\bar{\chi} \sigma_{2L_1}$-eigenspace of $\operatorname{Gal}(K_{\psi}/K)$ when viewed as a Galois submodule of $\g^*$. 
\end{Prop}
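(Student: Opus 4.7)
The strategy is to construct an element $\sigma$ in a suitable finite Galois extension of $\Q$ whose restrictions to the fields $\mathcal{L}$, $K(\mu_{p^2})$, and $K_\psi$ have the desired behavior, then invoke the Chebotarev density theorem. Since $H^1(\op{G}_{\Q,T},\g^*)$ is finite-dimensional, choose a basis $\beta_1,\dots,\beta_t$ of $W$ and set $\mathcal{L}:=K_{\beta_1}\cdots K_{\beta_t}$. Because $\psi\notin W$, the classes $\psi,\beta_1,\dots,\beta_t$ are linearly independent over $\F_q$.

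The key separation statement is $K_\psi\not\subseteq \mathcal{L}\cdot K(\mu_{p^2})$. This is exactly the output of Proposition $\ref{414}$ applied with $\theta_0=\psi$, $\theta_i=\beta_i$, and $\mathbb{L}_1=K(\mu_{p^2})$, the latter being unrelated to $\g^*$ by Lemma $\ref{415}\eqref{415c2}$. Set $M:=\mathcal{L}\,K_\psi\,K(\mu_{p^2})$ and $M':=\mathcal{L}\,K(\mu_{p^2})$. Then $\op{Gal}(M/M')$ is nontrivial, and its natural restriction map identifies it with $\op{Gal}(K_\psi/K_\psi\cap M')\subseteq \op{Gal}(K_\psi/K)\simeq \psi(\op{G}_K)\subseteq \g^*$; because $K_\psi\cap M'$ is Galois over $\Q$, this is a nonzero $\op{G}'$-stable subgroup of $\g^*$. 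By Lemma $\ref{mainin}\eqref{43c1}$, its $\bar{\chi}\sigma_{2L_1}$-eigenspace is the full (nonzero) eigenspace $(\g^*)_{\bar{\chi}\sigma_{2L_1}}$, so there exists $\tau_1\in\op{Gal}(M/M')$ with nonzero $\bar{\chi}\sigma_{2L_1}$-component under the above identification.

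By Lemma $\ref{22Dec5}\eqref{411c2}$, $\mathcal{L}\,K_\psi$ and $\Q(\mu_{p^2})$ are linearly disjoint over $\Q(\mu_p)$, so $\mathcal{L}\,K_\psi\cap K(\mu_{p^2})=K$ and $\op{Gal}(M/\mathcal{L}\,K_\psi)\xrightarrow{\sim}\op{Gal}(K(\mu_{p^2})/K)$. Choose $\tau_2\in\op{Gal}(M/\mathcal{L}\,K_\psi)$ with nontrivial restriction to $K(\mu_{p^2})$ and set $\sigma:=\tau_1\tau_2$. Then $\sigma_{\restriction\mathcal{L}}=\op{id}$, $\sigma_{\restriction K(\mu_{p^2})}$ is nontrivial, and $\sigma_{\restriction K_\psi}$ retains nonzero $\bar{\chi}\sigma_{2L_1}$-component (since $\tau_2$ acts trivially on $K_\psi$).

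Apply the Chebotarev density theorem to the conjugacy class of $\sigma$ in $\op{Gal}(M/\Q)$: there is a positive-density set of primes $v\notin T$ whose Frobenius in $\op{Gal}(M/\Q)$ lies in this class. Since $M$ is unramified outside $T$, for any such $v$: $\op{Frob}_v$ is trivial on $K\subseteq \mathcal{L}$ and nontrivial on $\Q(\mu_{p^2})$, so $v$ is a trivial prime; $\op{Frob}_v$ fixes every $K_{\beta_i}$, whence $\beta_{\restriction\op{G}_v}=0$ for all $\beta\in W$; and $\psi(\op{Frob}_v)$ has nonzero $\bar{\chi}\sigma_{2L_1}$-component, so $\psi_{\restriction\op{G}_v}\neq 0$ and $v$ fails to split completely in the subfield of $K_\psi$ fixed by that eigenspace. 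The only substantive obstacle is establishing $K_\psi\not\subseteq \mathcal{L}\cdot K(\mu_{p^2})$, which is handled by Proposition $\ref{414}$ together with the unrelatedness input from Lemma $\ref{415}$; everything else is bookkeeping with linear disjointness and the eigenspace saturation statement of Lemma $\ref{mainin}$.
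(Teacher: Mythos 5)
Your proof is correct and follows the same overall strategy as the paper's: reduce the Chebotarev conditions to a field-disjointness statement, use Lemma~\ref{mainin} to ensure the Frobenius in $\op{Gal}(K_\psi/K)$ can be given a nonzero $\bar{\chi}\sigma_{2L_1}$-component, and invoke the density theorem. The one stylistic difference worth noting is that the paper first extends a basis of $W$ to a codimension-one subspace $\widetilde{W}$ of $H^1(\op{G}_{\Q,T},\g^*)$, then re-derives the separation $K_\psi\not\subseteq\mathfrak{F}$ by hand via inflation-restriction and Lemma~\ref{y1}, handling $\Q(\mu_{p^2})$ separately through Lemma~\ref{22Dec5}; you instead invoke Proposition~\ref{414} directly with $\theta_0=\psi$, $\theta_i=\beta_i$ (a basis of $W$ alone), and $\mathbb{L}_1=K(\mu_{p^2})$, folding the cyclotomic field into the ``unrelated'' list. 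This is a cleaner use of the machinery already in place, and your explicit construction of the element $\sigma=\tau_1\tau_2$ (with the linear-disjointness of $\mathcal{L}K_\psi$ and $\Q(\mu_{p^2})$ from Lemma~\ref{22Dec5}\eqref{411c2} justifying the choice of $\tau_2$) makes the Chebotarev step more transparent than the paper's sketch. The one point you should state, rather than leave implicit, is that the three conditions on $\sigma$ are invariant under conjugation in $\op{Gal}(M/\Q)$ (trivial restriction to the Galois subfield $\mathcal{L}$, nontrivial restriction to $K(\mu_{p^2})$, and nonvanishing of the $\bar{\chi}\sigma_{2L_1}$-component, the last because the $\mathbb{T}$-eigenspace decomposition is stable under $\op{G}'$-conjugation); this is what makes the Chebotarev density theorem applicable to the conjugacy class of $\sigma$.
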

\begin{proof}
Let $\{\psi_1,\dots, \psi_m\}$ be a basis of $W$. Since $\psi$ is not contained in the span of $W$, the classes $\psi,\psi_1,\dots, \psi_m$ are linearly independent. Extend $\psi_1,\dots, \psi_m$ to $\psi_1,\dots, \psi_r$, so that $\psi,\psi_1,\dots, \psi_r$ is a basis of $H^1(\op{G}_{\Q,T}, \g^*)$. Let $\widetilde{W}$ be the span of $\{\psi_1,\dots, \psi_r\}$. It suffices to prove the statement for $\widetilde{W}$ in place of $W$, since $W$ is contained in $\widetilde{W}$. Let $\mathfrak{F}$ denote the composite $K_{\psi_1} \cdots  K_{\psi_r}$. Set $P:=\operatorname{Gal}(K_{\psi}/K)$ and recall that $J_{\psi}\subset K_{\psi}$ is the field fixed by $P_{\bar{\chi} \sigma_{2L_1}}$. Lemma $\ref{l4}$ asserts that $J_{\psi}\neq  K_{\psi}$. We will show that $\mathfrak{F}\cap K_{\psi}\subseteq J_{\psi}$. First, we show how the result follows from this.
\par Set $\mathfrak{L}:=\mathfrak{F}\cdot K_{\psi}=K_{\psi_1}\cdots K_{\psi_r}\cdot K_{\psi}$. We consider the following field diagram,
\begin{equation*}
\begin{tikzpicture}[node distance = 1.8cm, auto]
     \node (Qmu) {$\Q(\mu_p).$};
      \node (FK) [above of=Qmu, node distance= 1.25cm] {$\mathfrak{F}\cap K_{\psi}$};
      \node (J) [above of=FK, right of= FK, node distance= 0.9 cm] {$J_{\psi}$};
      \node (Kpsi) [above of=J, right of= J, node distance= 0.9 cm] {$K_{\psi}$};
      \node (F) [above of=FK, left of= FK] {\small $\mathfrak{F}$};
      \node(P) [above of= Qmu, right of= Qmu] {$\Q(\mu_{p^2})$};
      \node(FdotK)[above of= F, right of= F, node distance= 1.8cm] {$\mathfrak{L}$};
      \draw[-] (Qmu) to node {} (FK);
      \draw[-] (Qmu) to node {} (P);
      \draw[-] (FK) to node {} (J);
      \draw[-] (FK) to node {} (F);
      \draw[-] (F) to node {} (FdotK);
      \draw[-] (J) to node {} (Kpsi);
      \draw[-] (Kpsi) to node {} (FdotK);
      \end{tikzpicture}
      \end{equation*}
By Lemma $\ref{22Dec5}$, the intersection $ K\cap \Q(\mu_{p^2})=\Q(\mu_p) $. In fact, Lemma $\ref{22Dec5}$ asserts that $\mathfrak{F}\cap\Q(\mu_{p^2})=\Q(\mu_p)$. Therefore there is a prime $v$ which is
\begin{enumerate}
    \item split in $\op{Gal}(\mathfrak{F}/\Q)$,
    \item nonsplit in $\op{Gal}(\Q(\mu_{p^2})/\Q(\mu_p))$,
    \item nonsplit in $\op{Gal}(K_{\psi}/J_{\psi})$.
\end{enumerate} Since $K=\Q(\bar{\rho},\mu_p)$ is contained in $\mathfrak{F}$, the prime $v$ is a trivial prime. Since $v$ splits in $\op{Gal}(\mathfrak{F}/\Q)$, we have that $\psi_{i\restriction \op{G}_v}=0$ for $i=1,\dots, r$. Since $v$ does not split in $\op{Gal}(K_{\psi}/K)$, we have that $\psi_{\restriction \op{G}_v}\neq 0$.
\par We begin by showing that $K_{\psi}$ is not contained in $\mathfrak{F}$. This is equivalent to the assertion that $\mathfrak{L}$ is not equal to $\mathfrak{F}$. Each of the classes $\psi, \psi_1,\dots, \psi_r$ is in the image of the inflation map 
\[H^1(\operatorname{Gal}(\mathfrak{L}/\Q), \g^*)\xrightarrow{\op{inf}}H^1(\operatorname{G}_{\Q,T}, \g^*),\]and hence the above map is an isomorphism. It follows that
\begin{equation*}
h^1(\operatorname{Gal}(\mathfrak{L}/\Q), \g^*)=h^1(\operatorname{G}_{\Q,T}, \g^*)\geq r+1.\end{equation*}
It suffices to show that
$h^1(\operatorname{Gal}(\mathfrak{F}/\Q), \g^*)\leq r$. We show by induction on $i$ that
\begin{equation*}
h^1(\operatorname{Gal}( K_{\psi_1}\cdots K_{\psi_i}/\Q), \g^*)\leq i.\end{equation*} Lemma $\ref{l3}$ asserts that $H^1(\op{G}',\g^*)=0$ and hence by inflation-restriction,
\begin{equation*}
H^1(\operatorname{Gal}(K_{\psi_1}/\Q), \g^*)\simeq \op{Hom}(P_1, \g^*)^{\op{G}'}.
\end{equation*}
Lemma $\ref{y1}$ asserts that
\[\dim \op{Hom}(P_1,\g^*)^{\op{G}'}\leq 1\]and hence the case $i=1$ follows.

For the induction step, set $\mathfrak{F}_i:=K_{\psi_1}\cdots K_{\psi_{i}}$ and \[P_{i}:=\op{Gal}(\mathfrak{F}_{i}/\mathfrak{F}_{i-1})\simeq \op{Gal}(K_{\psi_{i}}/K_{\psi_{i}}\cap \mathfrak{F}_{i-1}).\] Lemma $\ref{y1}$ asserts that \[\dim \op{Hom}(P_{i},\g^*)^{\op{G}'}\leq 1\]
from which we see from inflation-restriction
\begin{equation*}
h^1(\operatorname{Gal}(\mathfrak{F}_i/\Q),\g^*)
    \leq h^1(\operatorname{Gal}(\mathfrak{F}_{i-1}/\Q),\g^*)+1.
\end{equation*}
 We conclude that $\mathfrak{L}\neq \mathfrak{F}$ and thus we have deduced that $K_{\psi}\cap \mathfrak{F}\neq K_{\psi}$. Set $Q:=\op{Gal}(K_{\psi}/K_{\psi}\cap \mathfrak{F})$, by Lemma $\ref{mainin}$, 
 \begin{equation*}
 Q_{\bar{\chi}\sigma_{2L_1}}\simeq (\g^*)_{\bar{\chi}\sigma_{2L_1}}\simeq P_{\bar{\chi}\sigma_{2L_1}}.
 \end{equation*}
We deduce that $K_{\psi}\cap \mathfrak{F}$ is contained in $J_{\psi}$. This completes the proof.
\end{proof}
\begin{Def}
Let $\mathcal{J}$ be a set of trivial primes that contains the set $S$ and $v\notin \mathcal{J}$ be a trivial prime. Denote by $\Psi_{\mathcal{J}}^k$ and $\Psi_{\mathcal{J},v}^k$ the maps defined by
\begin{equation*}
\Psi_{\mathcal{J}}^k:H^1(\operatorname{G}_{\Q,\mathcal{J}},(\g)_k)\xrightarrow{res_{\mathcal{J}}}\bigoplus_{w\in \mathcal{J}} H^1(\op{G}_w, (\g)_k)
\end{equation*}
and
\begin{equation*}
\Psi_{\mathcal{J},v}^k: H^1(\operatorname{G}_{\Q,\mathcal{J}\cup\{v\}},(\g)_k)\xrightarrow{res_\mathcal{J}}\bigoplus_{w\in \mathcal{J}} H^1(\op{G}_w, (\g)_k).
\end{equation*}
Let $\tau_v$ be a generator of the maximal pro-$p$ quotient of the tame inertia at $v$, denote by
\begin{equation*}
\pi_{\mathcal{J},v}^k: H^1(\operatorname{G}_{\Q,\mathcal{J}\cup\{v\}}, (\g)_k)\rightarrow (\g)_k
\end{equation*}
the evaluation map defined by
\[\pi_{\mathcal{J},v}^k(f):=f(\tau_v).\]
\end{Def}
\begin{Lemma}\label{lemmaDec26}
Let $T$ be a set of primes as in Proposition $\ref{Shavanishing}$ that contains the set $S$ and $k$ an integer. Suppose $v\notin T$ is a trivial prime with the property that for all $\beta\in H^1(\op{G}_{\Q,T},(\g)_k^*)$, the restriction $\beta_{\restriction \operatorname{G}_v}=0$.  The following are exact:
\begin{equation}\label{shortexact1}
0\rightarrow \op{ker}\Psi_{T}^k\xrightarrow{inf} \op{ker}\Psi_{T,v}^k\xrightarrow{\pi_v^k} (\g)_k\rightarrow 0,
\end{equation} 
\begin{equation}\label{shortexact2}
0\rightarrow H^1(\op{G}_{\Q,T},(\g)_k)\xrightarrow{inf} H^1(\op{G}_{\Q,T\cup\{v\}},(\g)_k)\xrightarrow{\pi_v^k} (\g)_k\rightarrow 0.
\end{equation} 
Further, the image of $\Psi_T$ is equal to the image of $\Psi_{T,v}$.
\end{Lemma}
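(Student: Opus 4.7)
The plan is to first establish \eqref{shortexact2}, then use it together with global duality to prove the image equality, and finally deduce \eqref{shortexact1} as a formal consequence. The key global input throughout is $\Sh^1_T((\g)_k^*)=0$ (Proposition \ref{Shavanishing} applied to the appropriate module) and consequently $\Sh^1_{T\cup\{v\}}((\g)_k^*)=0$ as well, since any class in the latter is trivial on $\op{G}_v$, hence unramified at $v$, and therefore extends to a class in $\Sh^1_T((\g)_k^*)$.

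For \eqref{shortexact2}, I would invoke inflation-restriction for the normal subgroup $I_v^T:=\ker(\op{G}_{\Q,T\cup\{v\}}\twoheadrightarrow \op{G}_{\Q,T})$. This yields injectivity of inflation and identifies the cokernel inside $H^1(I_v^T,(\g)_k)^{\op{G}_{\Q,T}}$. Because $v$ is trivial, $\op{G}_v$ acts trivially on $(\g)_k$ and the wild inertia has order coprime to $p$, so evaluation at the tame generator $\tau_v$ produces an identification $H^1(I_v,(\g)_k)\cong (\g)_k$. Frobenius-invariance imposes $(v-1)\phi(\tau_v)=0$, which is automatic since $v\equiv 1\pmod{p}$ and $(\g)_k$ is $p$-torsion. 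Thus the third term is canonically $(\g)_k$ via $\pi_v^k$. For surjectivity of $\pi_v^k$, I would apply Wiles' formula \eqref{wilesformula} to the Selmer data $\mathcal{L}^{(1)}$ on $\op{G}_{\Q,T}$ and $\mathcal{L}^{(2)}$ on $\op{G}_{\Q,T\cup\{v\}}$, both with full local condition $\mathcal{L}_w=H^1(\op{G}_w,(\g)_k)$ at every finite place. The corresponding dual Selmer groups are $\Sh^1_T((\g)_k^*)$ and $\Sh^1_{T\cup\{v\}}((\g)_k^*)$, both vanishing, while the local shift at $v$ contributes $\dim\mathcal{L}^{(2)}_v-h^0(\op{G}_v,(\g)_k)=2\dim(\g)_k-\dim(\g)_k=\dim(\g)_k$. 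Subtracting the two instances of Wiles' formula gives $h^1(\op{G}_{\Q,T\cup\{v\}},(\g)_k)-h^1(\op{G}_{\Q,T},(\g)_k)=\dim(\g)_k$, which together with the left-exact sequence forces $\pi_v^k$ to be surjective.

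For the image equality, the inclusion $\operatorname{im}\Psi_T^k\subseteq \operatorname{im}\Psi_{T,v}^k$ is immediate via inflation. For the reverse, fix $g\in H^1(\op{G}_{\Q,T\cup\{v\}},(\g)_k)$; I want to show $(g_w)_{w\in T}$ lies in $\operatorname{im}\Psi_T^k$. By the Poitou-Tate nine-term exact sequence for $\op{G}_{\Q,T}$ (noting $H^i(\op{G}_\infty,(\g)_k)=0$ for $i\geq 1$ since $p$ is odd), the image of $\Psi_T^k$ is precisely the annihilator of $H^1(\op{G}_{\Q,T},(\g)_k^*)$ under the sum of local Tate pairings over $T$. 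For any $\beta\in H^1(\op{G}_{\Q,T},(\g)_k^*)$, inflate to $H^1(\op{G}_{\Q,T\cup\{v\}},(\g)_k^*)$; the Poitou-Tate relation for the larger ramification set yields $\sum_{w\in T\cup\{v\}}\langle g_w,\beta_w\rangle=0$, and the hypothesis on $v$ gives $\beta_v=0$, leaving $\sum_{w\in T}\langle g_w,\beta_w\rangle=0$ as required. Finally, \eqref{shortexact1} follows formally: given $x\in (\g)_k$, use \eqref{shortexact2} to lift $x$ to some $g$ with $\pi_v^k(g)=x$; by image equality, choose $f\in H^1(\op{G}_{\Q,T},(\g)_k)$ with $\Psi_T^k(f)=\Psi_{T,v}^k(g)$; then $h:=g-\operatorname{inf}(f)$ lies in $\ker\Psi_{T,v}^k$ and satisfies $\pi_v^k(h)=x$. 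Left- and middle-exactness of \eqref{shortexact1} are inherited from those of \eqref{shortexact2}. The main obstacle is the Poitou-Tate bookkeeping and verifying the dual Shafarevich-Tate vanishing for $(\g)_k^*$; the hypothesis on $v$ enters exactly once, in the image-equality step.
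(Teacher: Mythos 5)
Your dimension count for \eqref{shortexact2} hinges on the claim $\Sh^1_T((\g)_k^*)=0$ coming from Proposition \ref{Shavanishing}, but this is not available: $(\g)_k^*=\g^*/(\g)_k^{\perp}$ is a quotient of $\g^*$, and the set $\mathscr{M}$ in Proposition \ref{Shavanishing} consists only of the quotients $\g/(\g)_m$ of $\g$ and the submodules $(\g)_m^{\perp}$ of $\g^*$. Comparing $\mathbb{T}$-eigenspace decompositions shows $(\g)_k^*$ (characters $\bar{\chi}\sigma_{\lambda}^{-1}$ with $\op{ht}(\lambda)\geq k$) is not isomorphic to any member of $\mathscr{M}$ in general, and no analogue of Lemma \ref{l3} / Lemma \ref{l4} is proved for $(\g)_k^*$. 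Fortunately, your argument does not need the vanishing: running Wiles' formula \eqref{wilesformula} with the full local condition at every place produces the identity
\[
h^1(\op{G}_{\Q,T\cup\{v\}},(\g)_k)-h^1(\op{G}_{\Q,T},(\g)_k)=\dim(\g)_k + \bigl(\dim\Sh^1_{T\cup\{v\}}((\g)_k^*)-\dim\Sh^1_T((\g)_k^*)\bigr),
\]
so what is required is only the equality $\Sh^1_T((\g)_k^*)=\Sh^1_{T\cup\{v\}}((\g)_k^*)$. The inclusion $\subseteq$ follows directly from the hypothesis on $v$: every class in $\Sh^1_T((\g)_k^*)\subseteq H^1(\op{G}_{\Q,T},(\g)_k^*)$ restricts to zero at $\op{G}_v$. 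The reverse inclusion is automatic. So the hypothesis on $v$ enters twice in your argument, not once, and the role of Proposition \ref{Shavanishing} should be dropped from this step.

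With that repair, your route is correct and genuinely different from the paper's. The paper applies Wiles' formula to the mixed Selmer datum (zero local condition at $T$, full condition at $v$), so the relevant dual Selmer group is $\op{ker}\op{res}_v\subset H^1(\op{G}_{\Q,T\cup\{v\}},(\g)_k^*)$, which the hypothesis identifies with $H^1(\op{G}_{\Q,T},(\g)_k^*)$; this yields \eqref{shortexact1} directly, then \eqref{shortexact2} by the parallel computation, and the image equality falls out of dimension bookkeeping. You instead establish \eqref{shortexact2} first with full local conditions, prove the image equality by a clean Poitou--Tate reciprocity argument (this is where your single intended use of the hypothesis occurs), and deduce \eqref{shortexact1} formally by correcting a preimage of $x\in(\g)_k$ by an inflated class. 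Your image-equality argument avoids tracking dimensions of both kernels and is conceptually transparent, at the cost of the Sha bookkeeping noted above; the paper's mixed-condition approach packages the hypothesis into a single identification $\op{ker}\op{res}_v=H^1(\op{G}_{\Q,T},(\g)_k^*)$ and never needs to discuss $\Sh^1_T((\g)_k^*)$ at all. One further small imprecision in your write-up: you invoke inflation--restriction for $I_v^T=\ker(\op{G}_{\Q,T\cup\{v\}}\twoheadrightarrow\op{G}_{\Q,T})$ and then evaluate on a single tame generator $\tau_v$, conflating $I_v^T$ with a single local inertia group. The conclusion (left- and middle-exactness of \eqref{shortexact2}) is correct and can be seen directly, since a class with $f(\tau_v)=0$ is unramified at every place over $v$, but the five-term sequence as you phrased it is not quite what delivers it.
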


\begin{proof}
Clearly the composite of the maps is zero and $\eqref{shortexact1}$ is exact in the middle. Denote by $\op{res}_v$ the restriction map: \[\op{res}_v:H^1(\op{G}_{\Q, T\cup\{v\}},(\g)_k^*)\rightarrow H^1(\op{G}_v,(\g)_k^*).\]By assumption, $H^1(\op{G}_{\Q,T},(\g)_k^*)$ and $\op{ker}\op{res}_v$ are equal. By the local Euler characteristic formula and local duality, \[h^1(\op{G}_v,(\g)_k)-h^0(\op{G}_v,(\g)_k)\]\[=h^2(\op{G}_v,(\g)_k)=h^0(\op{G}_v,(\g)_k^*)=\dim (\g)_k.\] By Wiles' Formula $\eqref{wilesformula}$, 
\[\begin{split}\dim \op{ker}\Psi_{T,v}^k=&\dim \op{ker}\Psi_{T}^k+\dim \op{ker}\op{res}_v-h^1(\op{G}_{\Q,T},(\g)_k^*)\\+&h^1(\op{G}_v,(\g)_k^*)-h^0(\op{G}_v,(\g)_k^*)\\
=&\dim \op{ker}\Psi_{T}^k+\dim (\g)_k\\
\end{split}\]and the exactness of $\eqref{shortexact1}$ follows. The exactness of $\eqref{shortexact2}$ follows by the same arguments. Therefore, 
\[\begin{split}\dim \op{im} \Psi_{T,v}=&h^1(\op{G}_{\Q,T\cup \{v\}}, (\g)_k)-\dim \op{ker} \Psi_{T,v}\\=&h^1(\op{G}_{\Q,T}, (\g)_k)-\dim \op{ker} \Psi_{T}=\dim \op{im} \Psi_{T}.\\\end{split}\]
\end{proof}
Let $M$ be an $\F_q[\operatorname{G}_{w}]$-module which is a finite dimensional $\F_q$-vector space. The cup product induces the map
\[H^1(\operatorname{G}_w, M)\times H^1(\operatorname{G}_w, M^*)\rightarrow H^2(\operatorname{G}_w, \F_q(\bar{\chi}))\xrightarrow{\sim} \F_q\] taking $f_1\in H^1(\operatorname{G}_w, M)$ and $f_2\in H^1(\operatorname{G}_w, M^*)$ to $\op{inv}_w(f_1\cup f_2)\in \F_q$. Define the non-degenerate pairing 
\[\left(\bigoplus_{w\in T} H^1(\op{G}_w, \g)\right)\times \left(\bigoplus_{w\in T} H^1(\op{G}_w, \g^*)\right) \rightarrow \F_q\]defined by
$a\cup b=\sum_{w\in T} \op{inv}_w (a_w\cup b_w)$. Denote by $\op{Ann}((z_w)_{w\in T})$ the annihilator of the tuple $(z_w)_{w\in T}$. Recall that we assume that $(z_w)_{w\in T}$ does not arise from a global class unramified outside $T$. In particular, the tuple $(z_w)_{w\in T}$ is not zero, and as a result, $\op{Ann}((z_w)_{w\in T})$ is a codimension one subspace of $\bigoplus_{w\in T} H^1(\op{G}_w, \g^*)$. Let $\Psi_T$ and $\Psi_T^*$ denote the restriction maps
\begin{equation*}
\begin{split}&\Psi_{T}:H^1(\operatorname{G}_{\Q,T},\g)\rightarrow\bigoplus_{w\in T} H^1(\op{G}_w, \g),\\&\Psi_{T}^*:H^1(\operatorname{G}_{\Q,T},\g^*)\rightarrow\bigoplus_{w\in T} H^1(\op{G}_w, \g^*).
\end{split}
\end{equation*}
From the exactness of the Poitou-Tate sequence \cite[Theorem 8.6.14]{NW}, it follows that the images of $\Psi_T$ and $\Psi_T^*$ are exact annihilators of one another. Since it is assumed that $(z_w)_{w\in T}$ is not in the image $\Psi_T^*$, it follows that the image of $\Psi_T$ is not contained in $\op{Ann}((z_w)_{w\in T})$. As a result, ${\Psi_T^*}^{-1}(\op{Ann}(z_w)_{w\in T})$ has codimension one in $H^1(\op{G}_{\Q,T},\g^*)$. Set $(\g)_{-2L_1}$ for the $\F_q$ span of the root vector $X_{-2L_1}$.
\begin{Prop}\label{P2}
Let $T$ be as in Proposition $\ref{Shavanishing}$. There exists a Chebotarev class $\mathfrak{l}$ of trivial primes $v$ such that
\begin{enumerate}
\item\label{22Decc1} $\beta_{\restriction \operatorname{G}_v}=0$ for all $\beta \in H^1(\operatorname{G}_{\Q,T}, (\g)_d^*)$ for $d\geq -2n+2$,
\item\label{22Decc2} there exists an $\F_q$ basis $\{\psi, \psi_1,\dots, \psi_r\}$ of $H^1(\operatorname{G}_{\Q,T}, \g^*)$ such that 
\begin{itemize}
\item\label{12}
$\{\psi_1,\dots, \psi_r\}$ is a basis of ${\Psi_T^*}^{-1}(Ann(z_{w})_{w\in T})$ 
\item
$\psi_{\restriction \operatorname{G}_v}\neq 0$ and ${\psi_j}_{\restriction \operatorname{G}_v}=0$ for all $j\geq 1$. 
\end{itemize}
\end{enumerate}
Furthermore, there is, for each $v\in \mathfrak{l}$, an element $h^{(v)}\in H^1(\operatorname{G}_{T\cup\{v\}}, \g)$ such that 
\[h^{(v)}|_{\op{G}_w}=z_w\] for all $w\in T$ and 
\begin{equation}\label{equation64}
h^{(v)}(\tau_v)\in (\g)_{-2L_1}\backslash \{0\}.\end{equation}
\end{Prop}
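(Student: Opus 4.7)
The plan is to mirror the argument of Proposition \ref{P1}, enlarging the $\mathbb{L}$-collection in Proposition \ref{414} to enforce the extra splitting conditions of (\ref{22Decc1}), and then to solve the local-to-global lifting problem for $h^{(v)}$ via Poitou-Tate duality, placing the tame ramification at $v$ into the $(\g)_{-2L_1}$-direction.

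First I will fix $\psi_1,\ldots,\psi_r$ as a basis of $(\Psi_T^{*})^{-1}(\op{Ann}((z_w)_{w\in T}))$ and choose $\psi$ completing it to a basis of $H^1(\op{G}_{\Q,T},\g^{*})$. For each $d$ in the range $-2n+2\leq d\leq 2n-1$, fix a basis $\{\beta_{d,i}\}$ of $H^1(\op{G}_{\Q,T},(\g)_d^{*})$. Since $(\g)_d^{*}$ is a non-trivial quotient of $\g^{*}$, Lemma \ref{415}\eqref{415c1} shows that each cut-out extension $K_{\beta_{d,i}}$ is unrelated to $\g^{*}$, and Lemma \ref{415}\eqref{415c2} gives the same for $K(\mu_{p^2})$. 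Applying Proposition \ref{414} with $\theta_0=\psi$, $\theta_j=\psi_j$ for $j\geq 1$, and with $\mathbb{L}$-collection $\{K(\mu_{p^2})\}\cup\{K_{\beta_{d,i}}\}_{d,i}$, I conclude $K_\psi$ is not contained in the compositum. Combining this with the linear disjointness assertion of Lemma \ref{22Dec5} and the strict containment $J_\psi\subsetneq K_\psi$ from Lemma \ref{l4}\eqref{l4p2}, the Chebotarev density theorem produces an infinite class $\mathfrak{l}$ of trivial primes $v$ which split in each $K_{\beta_{d,i}}$ and in each $K_{\psi_j}$ ($j\geq 1$) and do not split in $\op{Gal}(K_\psi/J_\psi)$. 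This in particular gives condition (\ref{22Decc1}), the vanishing assertions of (\ref{22Decc2}), and the non-vanishing of the $(\g^{*})_{\bar{\chi}\sigma_{2L_1}}$-component of $\psi(\op{Frob}_v)$.

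For the second part, fix such $v\in\mathfrak{l}$ and seek $c_v\in H^1(\op{G}_v,\g)$ such that $((z_w)_{w\in T},c_v)$ lies in $\op{im}\Psi_{T,v}$. By Poitou-Tate, this amounts to the identity $\sum_{w\in T} z_w \cup \xi|_{\op{G}_w} + c_v\cup \xi|_{\op{G}_v}=0$ for every $\xi\in H^1(\op{G}_{\Q,T\cup\{v\}},\g^{*})$. For $\xi=\psi_j$ with $j\geq 1$, both terms vanish. For $\xi=\psi$, the first term equals $\Psi_T^{*}(\psi)\cup(z_w)\neq 0$; since $\psi$ is unramified at $v$, the local pairing at $v$ reduces to $-\langle c_v(\tau_v),\psi(\op{Frob}_v)\rangle$ under the natural pairing $\g\otimes\g^{*}\to\F_q(\bar{\chi})$. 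This pairing restricts to a perfect duality between $(\g)_{-2L_1}$ and $(\g^{*})_{\bar{\chi}\sigma_{2L_1}}$, so I select $c_v(\tau_v)$ as the unique scalar multiple of $X_{-2L_1}$ cancelling the first term; by the simultaneous non-vanishing of $\Psi_T^{*}(\psi)\cup(z_w)$ and of the $(\g^{*})_{\bar{\chi}\sigma_{2L_1}}$-component of $\psi(\op{Frob}_v)$, this scalar is non-zero, which delivers $c_v(\tau_v)\in(\g)_{-2L_1}\setminus\{0\}$ as required.

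The remaining constraints come from $\xi$ with $\xi(\tau_v)\neq 0$, which become linear conditions on $c_v(\sigma_v)\in\g$. I expect the main technical step to be the consistency check for this linear system: two such $\xi,\xi'$ with the same restriction to $\op{G}_v$ differ by an element of $\ker(\op{res}_v)=\op{span}(\psi_j)\subseteq(\Psi_T^{*})^{-1}(\op{Ann}(z_w))$, so the differences $\sum_w z_w\cup(\xi-\xi')|_{\op{G}_w}$ vanish, and the functional $\xi\mapsto \sum_w z_w\cup \xi|_{\op{G}_w}+\langle c_v(\tau_v),\xi(\sigma_v)\rangle$ descends to $\op{im}\bigl(H^1(\op{G}_{\Q,T\cup\{v\}},\g^{*})\to H^1(\op{G}_v,\g^{*})\bigr)$. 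Non-degeneracy of the local Tate pairing then solves for $c_v(\sigma_v)$, and Poitou-Tate produces the desired global class $h^{(v)}$ with $h^{(v)}|_{\op{G}_w}=z_w$ and $h^{(v)}(\tau_v)=c_v(\tau_v)\in(\g)_{-2L_1}\setminus\{0\}$.
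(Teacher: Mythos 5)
Your first part (establishing the Chebotarev class $\mathfrak{l}$ satisfying conditions (1) and (2)) follows essentially the same route as the paper's Proposition \ref{P1}: enlarge the $\mathbb{L}$-collection in Proposition \ref{414} by the fields $K_{\beta_{d,i}}$ and $K(\mu_{p^2})$, all of which Lemma \ref{415} certifies as unrelated to $\g^{*}$, and then split off the non-splitting condition in $\op{Gal}(K_\psi/J_\psi)$ from the splitting conditions. You should be slightly more explicit that Lemma \ref{mainin} shows $K_\psi\cap(\text{compositum})\subseteq J_\psi$, since simply $K_\psi\not\subseteq(\text{compositum})$ is not quite enough to place $\sigma_v$ freely in $\op{Gal}(K_\psi/J_\psi)$; but this is exactly the step carried out in the proof of Proposition \ref{P1}, which you said you were mirroring, so I regard it as implicit.

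Your construction of $h^{(v)}$ is genuinely different from the paper's. The paper first applies Wiles' formula to see that $\op{im}\Psi_{T,v}$ strictly exceeds $\op{im}\Psi_T$, then invokes the argument of \cite[Proposition 34]{hamblenramakrishna} to identify $\op{im}\Psi_T+\F_q\cdot\Psi_{T,v}(g)$ with $\op{im}\Psi_T+\F_q\cdot(z_w)_{w\in T}$, producing $h^{(v)}$ with the correct restrictions at $w\in T$, and finally modifies $h^{(v)}$ by classes in $\ker\Psi_{T,v}^d$ (using the short exact sequences of Lemma \ref{lemmaDec26}, which rely on condition \eqref{22Decc1}) to push $h^{(v)}(\tau_v)$ into $(\g)_{-2L_1}\setminus\{0\}$. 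You instead prescribe the local class $c_v\in H^1(\op{G}_v,\g)$ outright: fix $c_v(\tau_v)$ as the unique nonzero multiple of $X_{-2L_1}$ killing the cup-product obstruction at $\xi=\psi$, then solve for $c_v(\sigma_v)$ from the ramified-$\xi$ constraints, and deduce the existence of $h^{(v)}$ from the Poitou--Tate exact sequence. Your consistency check (that the functional factors through $\xi\mapsto\xi|_{\op{G}_v}$, because $\ker(\op{res}_v)=\op{span}(\psi_j)$, the $\psi_j$ annihilate $(z_w)$ and vanish at Frobenius, and $\psi$'s contribution is absorbed by the choice of $c_v(\tau_v)$) is correct and is where condition \eqref{22Decc2} really bites. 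This buys you a self-contained, more direct argument that avoids the external reference to Hamblen--Ramakrishna and the post-hoc modification step; the tradeoff is that it leans on the explicit form of the tame local Tate pairing at a trivial prime (roughly $\op{inv}_v(a\cup b)=\langle a(\sigma_v),b(\tau_v)\rangle-\langle a(\tau_v),b(\sigma_v)\rangle$ up to normalization), so in a final write-up you would want to nail down the sign and normalization conventions carefully, since the sign in your stated functional does not quite match the one dictated by this formula, though this does not affect the structure of the argument.

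Two remarks worth noting. First, your argument does not actually need condition \eqref{22Decc1} to produce $h^{(v)}$, whereas the paper's modification step does. You still need to establish \eqref{22Decc1} as part of the statement of the proposition, which you do via the Chebotarev conditions; but the logical role of \eqref{22Decc1} in the rest of the paper (Lemma \ref{eigenspaceeta} and the proof of Proposition \ref{lifttorho3}) is preserved. Second, the perfect duality between $(\g)_{-2L_1}$ and $(\g^{*})_{\bar{\chi}\sigma_{2L_1}}$ follows from \eqref{XHdual} and $\mathbb{T}$-equivariance of the evaluation pairing $\g\otimes\g^{*}\to\F_q(\bar{\chi})$; combined with the Chebotarev guarantee that $\psi(\sigma_v)$ has nonzero $(\g^{*})_{\bar{\chi}\sigma_{2L_1}}$-component and $\sum_{w\in T}\op{inv}_w(z_w\cup\psi|_w)\neq 0$ (which is precisely the hypothesis that $(z_w)$ is not a restriction from $H^1(\op{G}_{\Q,T},\g)$), this does yield a unique nonzero scalar, exactly as you claim.
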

\begin{proof}
First, we analyze condition $\eqref{22Decc1}$.
Recall that $(\g)_d^*$ is the quotient of $\g^*$ by the Galois stable subspace $(\g)_d^{\perp}$, see Definition $\ref{perpdef}$. Its $\mathbb{T}$-eigenspaces consist of $(\g)_{d,\bar{\chi}\sigma_{\lambda}^{-1}}^*$, where $\lambda$ ranges through $\Phi\cup \{1\}$ with $\op{ht}(\lambda)\geq d$. Condition $\eqref{thc4}$ of Theorem $\ref{main}$ asserts that $\bar{\chi}\sigma_{-\lambda}\neq \sigma_{1}$, i.e., $\sigma_{\lambda}\neq \bar{\chi}$. Therefore, $(\g)_d^*$ contains no trivial eigenspace. Hence, the splitting conditions imposed by $\eqref{22Decc1}$ are independent of the non-splitting condition in $\Q(\mu_{p^2})$ imposed by the fact that trivial primes are not $1\mod{p^2}$. On the other hand, by Proposition $\ref{P1}$, condition $\eqref{22Decc2}$ can be satisfied by a Chebotarev class of trivial primes.
\par Next, we show that conditions $\eqref{22Decc1}$ and $\eqref{22Decc2}$ can be satisfied simultaneously. To show this, note that the condition requiring $\psi_{\restriction{\op{G}_v}}\neq 0$ is a non-splitting condition of $v$ in $\op{Gal}(K_{\psi}/K)$. By Lemma $\ref{l4}$, the $\bar{\chi}\sigma_{2L_1}$-eigenspace for the $\mathbb{T}$-action on $\op{Gal}(K_{\psi}/K)$ is nontrivial. We shall require that $v$ does not split in the $\bar{\chi}\sigma_{2L_1}$-eigenspace of $\op{Gal}(K_{\psi}/K)$. On the other hand, \[(\g^*)_d=\bigoplus_{ \op{ht}(\lambda)\geq d}(\g^*)_{d,\bar{\chi}\sigma_{\lambda}^{-1}},\]does not contain the $\bar{\chi}\sigma_{2L_1}$-eigenspace of $\g^*$. Note that the character $\bar{\chi}\sigma_{2L_1}$ is not twist equivalent to any of the characters occurring in the $\mathbb{T}$-eigenspace decomposition of $(\g^*)_d$. As a result, it follows via an argument identical to that in proof of Lemma $\ref{lemma416}$, that the non-splitting condition of $v$ in $\op{Gal}(K_{\psi}/K)_{\bar{\chi}\sigma_{2L_1}}$ may be simultaneously satisfied along with the rest of the splitting conditions.

\par Let $v$ be a trivial prime which satisfies conditions $\eqref{22Decc1}$ and $\eqref{22Decc2}$ and moreover is non-split in $\op{Gal}(K_{\psi}/K)_{\bar{\chi}\sigma_{2L_1}}$. Let $d\geq -2n+2$, Lemma $\ref{lemmaDec26}$ asserts that the image of
\begin{equation*}
\Psi_T^d:H^1(\operatorname{G}_{\Q,T\cup\{v\}}, (\g)_d)\rightarrow \bigoplus_{w\in T} H^1(\op{G}_w, (\g)_d)
\end{equation*}
is the same as the image of
\begin{equation*}
\Psi_{T,v}^d:H^1(\operatorname{G}_{\Q,T}, (\g)_d)\rightarrow \bigoplus_{w\in T} H^1(\op{G}_w, (\g)_d).
\end{equation*}
For a trivial prime $v$ for which condition $\eqref{22Decc2}$ is satisfied, it follows from an application of Wiles' formula $\eqref{wilesformula}$ that the image of the map 
\begin{equation*}
\Psi_{T,v}:H^1(\operatorname{G}_{\Q,T\cup\{v\}}, \g)\rightarrow \bigoplus_{w\in T} H^1(\op{G}_w, \g)
\end{equation*}
is greater than that of the map
\begin{equation*}
\Psi_T:H^1(\operatorname{G}_{\Q,T}, \g)\rightarrow \bigoplus_{w\in T} H^1(\op{G}_w, \g).
\end{equation*} We next deduce the existence of $h^{(v)}\in H^1(\operatorname{G}_{\Q,T\cup\{v\}}, \g)$ satisfying the specified properties. Since the image of $\Psi_{T,v}$ is greater than the image of $\Psi_T$, there is a class $g$ in $ H^1(\operatorname{G}_{\Q,T\cup\{v\}}, \g)$ such that $\Psi_{T,v}(g)\notin \text{Image}(\Psi_T)$. Let \[W_1:=\text{Image}(\Psi_T)+\F_q\cdot \Psi_{T,v}(g)\]
and 
\[W_2:=\text{Image}(\Psi_T)+\F_q\cdot (z_w)_{w\in T}.\]
The argument in \cite[Proposition 34]{hamblenramakrishna} applies verbatim to imply that $W_1=W_2$ and so we deduce the existence of $h^{(v)}\in H^1(\operatorname{G}_{\Q,T\cup\{v\}}, \g)$ for which \[h^{(v)}_{\restriction \op{G}_w}={z_w}_{\restriction \op{G}_w}\] for all $w\in T$. As we have observed,
\[\text{Image}(\Psi_{T}^{-2n+2})=\text{Image}(\Psi_{T,v}^{-2n+2})\] since $h^{(v)}\notin \text{Image}(\Psi_T)$ it follows that $h^{(v)}(\tau_v)$ is not contained in $(\g)_{-2n+2}$. Invoking Lemma $\ref{lemmaDec26}$, we deduce that on adding a suitable linear combination of elements to $h^{(v)}$ from $\ker \Psi_{T,v}^d$ for $d> -2n+1$, we modify the class $h^{(v)}$ so that \[h^{(v)}(\tau_v)\in (\g)_{-2L_1}\backslash \{0\}\] as required. 
\end{proof}
For $d\in \Z$, the natural inclusion $(\g)_d^{\perp}\hookrightarrow \g^*$ induces a natural map of cohomology groups $H^1(\op{G}_{\Q}, (\g)_d^{\perp})\rightarrow H^1(\op{G}_{\Q}, \g^*)$.
\begin{Lemma}
Let $\mathfrak{l}$ be the Chebotarev class of trivial primes in the Proposition $\ref{P2}$. Let $\{X_{\lambda}^*\}_{\lambda\in \Phi}$ and $\{H_1^*,\dots, H_n^*\}$ be as in $\eqref{XHdual}$. There exists an $\F_q$-independent set \[\{\eta_{\lambda}^{(v)}\mid \lambda \in \Phi\}\cup\{\ \eta_{1}^{(v)}, \dots,\eta_{n}^{(v)}\}\] contained in $H^1(\operatorname{G}_{\Q,T\cup\{v\}}, \g^*)$, satisfying the following properties:
\begin{enumerate}
\item
$\eta_{\lambda}^{(v)}$ is in the image of the natural map \[H^1(\operatorname{G}_{\Q,T\cup\{v\}}, (\g)_{h+1}^{\perp})\rightarrow H^1(\operatorname{G}_{\Q,T\cup\{v\}}, \g^*),\] where $h=\op{ht}(\lambda)$.
\item For $i=1,\dots, n$, the cohomology class $\eta_{i}^{(v)}$ 
is in the image of the natural map \[H^1(\operatorname{G}_{\Q,T\cup\{v\}}, (\g)_{1}^{\perp})\rightarrow H^1(\operatorname{G}_{\Q,T\cup\{v\}}, \g^*).\]
\item 
For $\lambda \in \Phi$, we have that $\eta_{\lambda}^{(v)}(\tau_v)= X_{\lambda}^*$.
\item 
For $i=1,\dots, n$, we have that $\eta_{i}^{(v)}(\tau_v)=H_i^*$.
\item 
The images of the elements $\eta_{\lambda}^{(v)}$ are a basis for the cokernel of the inflation map
\begin{equation*}
H^1(\operatorname{G}_{\Q,T}, \g^*)\rightarrow H^1(\operatorname{G}_{\Q,T\cup\{v\}}, \g^*).\end{equation*}
\end{enumerate}
\end{Lemma}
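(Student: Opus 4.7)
The plan is to produce each $\eta_\lambda^{(v)}$ (resp.\ $\eta_i^{(v)}$) as a preimage of the target vector $X_\lambda^*$ (resp.\ $H_i^*$) under an evaluation-at-$\tau_v$ map on the cohomology of a suitable submodule of $\g^*$. Fix $\lambda\in\Phi$ of height $h$: since $X_\lambda^*$ vanishes on every root vector of height $\geq h+1$ and on $\mathfrak{t}$, it lies in $(\g)_{h+1}^\perp$; similarly $H_i^*\in(\g)_1^\perp$. Thus each prescribed value already lies in the submodule in which the corresponding $\eta$ is required to live.

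The first step is to establish, for each $k$, the analog of sequence \eqref{shortexact2} of Lemma \ref{lemmaDec26} but for the submodule $(\g)_k^\perp\subseteq\g^*$ in place of $(\g)_k\subseteq\g$:
\begin{equation*}
0 \to H^1(\op{G}_{\Q,T}, (\g)_k^\perp) \to H^1(\op{G}_{\Q,T\cup\{v\}}, (\g)_k^\perp) \xrightarrow{\pi_v} (\g)_k^\perp \to 0.
\end{equation*}
The proof mirrors that of Lemma \ref{lemmaDec26} verbatim, combining the local Euler characteristic formula with Tate local duality. The essential hypothesis is that $\beta|_{\op{G}_v}=0$ for every $\beta\in H^1(\op{G}_{\Q,T}, \g/(\g)_k)$, since $\g/(\g)_k$ is the Tate dual of $(\g)_k^\perp$. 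I would verify this hypothesis by a downward induction on $k$: the filtration short exact sequences $0\to(\g)_k/(\g)_{k+1}\to\g/(\g)_{k+1}\to\g/(\g)_k\to 0$, together with Lemma \ref{l2}, reduce matters to cohomology of the graded quotients, where Proposition \ref{P2} part (1) supplies the required splitting input.

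Given surjectivity of $\pi_v$ at the relevant levels, construction of the $\eta$'s is immediate: pick any $\eta_\lambda^{(v)}\in H^1(\op{G}_{\Q,T\cup\{v\}}, (\g)_{h+1}^\perp)$ mapping to $X_\lambda^*$, and likewise any $\eta_i^{(v)}\in H^1(\op{G}_{\Q,T\cup\{v\}}, (\g)_1^\perp)$ mapping to $H_i^*$. Properties (1)--(4) hold by inspection. Linear independence of the full collection follows because $\pi_v$ separates them via the basis $\{X_\lambda^*\}\cup\{H_i^*\}$ of $\g^*$. For property (5), each $\eta_\lambda^{(v)}(\tau_v)=X_\lambda^*\neq 0$, so $\eta_\lambda^{(v)}$ is ramified at $v$ and hence not in the image of inflation from $H^1(\op{G}_{\Q,T},\g^*)$; a dimension count via Wiles' formula \eqref{wilesformula} (with the local condition that is unramified at $v$ and unconstrained on $T$) identifies the cokernel of inflation with the image of $\pi_v$ on $H^1(\op{G}_{\Q,T\cup\{v\}}, \g^*)$, which by the analog exact sequence for $k=-2n+1$ matches the number of $\eta_\lambda^{(v)}$ produced.

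The hard part will be verifying the vanishing hypothesis $H^1(\op{G}_{\Q,T}, \g/(\g)_k)|_{\op{G}_v}=0$. Proposition \ref{P2} part (1) is phrased in terms of the Tate dual quotients $(\g)_d^*=\g^*/(\g)_d^\perp$ of the submodules $(\g)_d$, whereas the required vanishing concerns the different family $\g/(\g)_k$; bridging this gap through the cohomological filtration, the graded-piece vanishing of Lemma \ref{l2}, and the character-distinctness hypothesis \eqref{thc4} of Theorem \ref{main} is the most delicate portion of the argument.
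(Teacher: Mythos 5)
Your overall skeleton is right (produce the $\eta$'s as preimages under evaluation at $\tau_v$, establish the short exact sequence via a dimension count, and check that $X_\lambda^*\in(\g)_{h+1}^\perp$ and $H_i^*\in(\g)_1^\perp$), but you misidentify what input is needed, and the bridge you sketch cannot be built.

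You correctly observe that if one literally mirrors the proof of Lemma~\ref{lemmaDec26}, the input that drives the dimension count is that $\beta_{\restriction\op{G}_v}=0$ for every $\beta\in H^1(\op{G}_{\Q,T},\g/(\g)_k)$ (since $\g/(\g)_k$ is the Tate dual of $(\g)_k^\perp$). But this is not among the Chebotarev conditions defining $\mathfrak{l}$, and it cannot be deduced from them. Proposition~\ref{P2}(1) concerns $H^1(\op{G}_{\Q,T},(\g)_d^*)$, where $(\g)_d^*=\g^*/(\g)_d^\perp$ is a quotient of $\g^*$ whose $\mathbb{T}$-eigencharacters have the form $\bar{\chi}\sigma_\lambda^{-1}$; by contrast $\g/(\g)_k$ is a quotient of $\g$ with eigencharacters of the form $\sigma_\mu$. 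By condition~\eqref{thc4} these two families are disjoint, so the fields cut out by classes in the two cohomology groups are linearly disjoint over $K$ and the splitting conditions are genuinely independent. Your proposed downward induction through the graded pieces cannot convert one vanishing statement into the other, so the hypothesis you flag as ``essential'' is simply not available.

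Fortunately it is also not needed, and here your approach diverges from the paper's. The reason Lemma~\ref{lemmaDec26} needs the local vanishing hypothesis is that the relevant dual module $(\g)_k^*$ does \emph{not} lie in the family $\mathscr{M}$ controlled by Proposition~\ref{Shavanishing}, so one cannot appeal to $\Sh_T^1$-vanishing and must instead arrange $\Sh_{T\cup\{v\}}^1=\Sh_T^1$ by hand. For $(\g)_k^\perp$ the situation is reversed: its Tate dual $\g/(\g)_k$ \emph{is} in $\mathscr{M}$, so Proposition~\ref{Shavanishing} already gives $\Sh_T^1(\g/(\g)_k)=0$, and since $\Sh_{T\cup\{v\}}^1(\g/(\g)_k)\subseteq\Sh_T^1(\g/(\g)_k)$ (a class locally trivial at $v$ is unramified at $v$), both $\Sh$ terms in the relative Wiles' formula vanish. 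Combined with the local computation $h^1(\op{G}_v,(\g)_k^\perp)-h^0(\op{G}_v,(\g)_k^\perp)=\dim(\g)_k^\perp$ (using only that $v$ is a trivial prime, so $\bar{\chi}$ and $\op{ad}\bar{\rho}$ are trivial on $\op{G}_v$), this yields $h^1(\op{G}_{\Q,T\cup\{v\}},(\g)_k^\perp)=h^1(\op{G}_{\Q,T},(\g)_k^\perp)+\dim(\g)_k^\perp$ directly, hence the short exact sequence and surjectivity of $\pi_v$. No local condition on $v$ coming from $\mathfrak{l}$ enters the argument at all. You should replace the ``mirror Lemma~\ref{lemmaDec26} verbatim'' step with this direct appeal to $\Sh_T^1$-vanishing; as written your proposal stalls at exactly the place you call the most delicate.
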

\begin{proof}
 The dual to $(\g)_k^{\perp}$ is $\g/(\g)_k$. Proposition $\ref{Shavanishing}$ asserts that \[\Sh_T^1((\g)_k^{\perp*})=0\] for all $k\in \Z$. Wiles' formula \eqref{wilesformula} asserts that
\[\begin{split}&h^1(\operatorname{G}_{\Q,T\cup\{v\}},(\g)_k^{\perp})-\dim \Sh_{T\cup\{v\}}^1((\g)_k^{\perp*})\\=& h^1(\operatorname{G}_{\Q,T},(\g)_k^{\perp})-\dim \Sh_{T}^1((\g)_k^{\perp*})\\+&h^1(\op{G}_v, (\g)_k^{\perp})-h^0(\op{G}_v, (\g)_k^{\perp}).\end{split}.\] Also, by Proposition $\ref{Shavanishing}$, we have that \[\Sh_T^1(\g/(\g)_k)=0.\] On applying the local Euler characteristic formula and Tate duality we have that 
\[h^1(\op{G}_v,(\g)_k)^{\perp}-h^0(\op{G}_v,(\g)_k)^{\perp})=h^0(\op{G}_v,(\g)_k^{\perp*})=\dim (\g)_k^{\perp}.\]For the last equality, note that $\bar{\chi}_{\restriction \op{G}_v}=1$ since $v\equiv 1\mod{p}$ and that the action on $(\g)_k)^{\perp}$ is trivial. It follows that \[h^1(\operatorname{G}_{\Q,T\cup\{v\}},(\g)_k^{\perp})=h^1(\operatorname{G}_{\Q,T},(\g)_k^{\perp})+\dim (\g)_k^{\perp}\] and the evaluation map at $\tau_v$
\[H^1(\operatorname{G}_{\Q,T\cup\{v\}}, (\g)_k^{\perp})\rightarrow (\g)_k^{\perp}\] induces a short exact sequence
\[0\rightarrow H^1(\operatorname{G}_{\Q,T}, (\g)_k^{\perp})\rightarrow H^1(\operatorname{G}_{\Q,T\cup\{v\}},(\g)_k^{\perp})\rightarrow (\g)_k^{\perp}\rightarrow 0.\] The assertion of the Lemma follows.
\end{proof}
Let $v$ a trivial prime in the Chebotarev class $\mathfrak{l}$ of Proposition $\ref{P2}$. For $\lambda\in \Phi$, denote by $K_{\lambda}^{(v)}:=K_{\eta_{\lambda}^{(v)}}$ and for $i=1,\dots, n$, set $K_i^{(v)}:=K_{\eta_{i}^{(v)}}$. Let $J_i^{(v)}\subsetneq K_i^{(v)}$ and $J_{\lambda}^{(v)}\subsetneq K_{\lambda}^{(v)}$ denote $J_{\eta_{i}^{(v)}}$ and $J_{\eta_{\lambda}^{(v)}}$ respectively. If $E=K_i^{(v)}$ (resp. $K_{\lambda}^{(v)}$), denote by $J_E$ the sub-extension $J_{i}^{(v)}$ (resp. $J_{\lambda}^{(v)}$). Let $\mathcal{F}^{(v)}$ denote the collection of fields consisting of $K_i^{(v)}$ for $i=1,\dots, n$ and $K_{\lambda}^{(v)}$ for $\lambda\in \Phi$. The Chebotarev class $\mathfrak{l}$ from Proposition $\ref{P2}$ is defined by Chebotarev classes in a collection of fields $\mathcal{F}_{\mathfrak{l}}$. More specifically, $\mathcal{F}_{\mathfrak{l}}$ is the collection of fields:
\begin{itemize}
    \item $K_{\psi}, K_{\psi_1},\dots, K_{\psi_r}$ from Proposition $\ref{P2}$,
    \item $K_{\beta}$ as $\beta$ runs through all cohomology classes $H^1(\op{G}_{\Q,T}, (\g^*)_d)$, where $d\geq -2n+2$,
    \item $K(\zeta_2)$ (with $\zeta_2$ defined at the start of the section),
    \item $K(\mu_{p^2})$.
\end{itemize}For $v\in \mathfrak{l}$ from Proposition $\ref{P2}$, recall that $L_{h^{(v)}}$ is the field extension of $L$ cut out by \[h^{(v)}_{\restriction \op{G}_L}:\op{G}_L\rightarrow \g.\] Associate to a set of trivial primes $A=\{v_1,\dots, v_k\}$ in $\mathfrak{l}$, \[\mathcal{F}_A:=\cup_{i=1}^k \mathcal{F}^{(v_i)}\text{, and } \mathcal{L}_A:=\{L_{h^{(v_1)}},\dots, L_{h^{(v_k)}}\}.\]
\begin{Lemma}\label{eigenspaceeta}
Let $A=\{v_1,\dots, v_k\}\subset \mathfrak{l}$.
\begin{enumerate}
    \item\label{66c1} Let $F_1$ be a field in the collection $\mathcal{F}_A$ and $F_2$ be the composite of all the other fields in $ \mathcal{F}_A\cup\mathcal{L}_A\cup \mathcal{F}_{\mathfrak{l}}$. Then $F_1$ is not contained in $F_2$. Moreover, the intersection $F_1\cap F_2$ is contained in $J_{F_1}$.
    \item\label{66c2} Let $M_1$ be a field in the collection $\mathcal{L}_A$ and $M_2$ denote the composite of all the other fields in $\mathcal{F}_A\cup \mathcal{L}_A\cup \mathcal{F}_{\mathfrak{l}}$. The intersection $M_1\cap M_2=L$.
\end{enumerate}
\end{Lemma}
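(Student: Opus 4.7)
The fields involved split into two types. The \emph{$\g$-type} fields (whose Galois group over $K$ is a subquotient of $\g$) are $K\cdot L_{h^{(v')}}=K_{h^{(v')}}$ for $v'\in A$, the field $K(\mu_{p^2})$ (whose trivial $\op{G}'$-action makes it fall under the $\sigma_1=1$ eigenspace $\mathfrak{t}\subset\g$), and $K(\zeta_2)$ (identified with a submodule of $\g$ in the proof of Lemma~\ref{415}\eqref{415c4}). The \emph{$\g^*$-type} fields are the $K_\eta$ for $\eta\in\{\eta_\lambda^{(v)},\eta_j^{(v)},\psi,\psi_j\}$, together with the $K_\beta$ for $\beta\in H^1(\op{G}_{\Q,T},(\g^*)_d)$. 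By condition~\eqref{thc4} of Theorem~\ref{main}, no $\sigma_\lambda$ is twist-equivalent to any $\bar\chi\sigma_{\lambda'}$, so Lemma~\ref{lemma416} applies: the composite $\mathbb{A}$ of all $\g$-type fields is linearly disjoint over $K$ from the composite $\mathbb{B}$ of all $\g^*$-type fields.

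\emph{Part~\eqref{66c1}.} Write $F_1=K_\eta$ with $\eta$ one of the $\eta_\lambda^{(v)}$ or $\eta_j^{(v)}$. I apply Proposition~\ref{414} with $\theta_0=\eta$, with $\{\theta_1,\ldots,\theta_t\}$ the remaining $\g^*$-cohomology classes defining the other cotangent fields (these together with $\eta$ lie inside a basis of $H^1(\op{G}_{\Q,T\cup A},\g^*)$ obtained from the inflation--evaluation exact sequence of Lemma~\ref{lemmaDec26} applied iteratively over the primes of $A$, hence are $\F_q$-linearly independent), and with the auxiliary $\mathbb{L}_i$ being the remaining $\g$-type fields $K_{h^{(v_i)}}$, $K(\mu_{p^2})$, $K(\zeta_2)$, together with the $K_\beta$'s. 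Each $\mathbb{L}_i$ is unrelated to $\g^*$ by Lemma~\ref{415}, so Proposition~\ref{414} yields $F_1\not\subseteq F_2$. To upgrade this to $F_1\cap F_2\subseteq J_{F_1}$, set $P:=\op{Gal}(F_1/K)\subseteq\g^*$ and $Q:=\op{Gal}(F_1/F_1\cap F_2)\subseteq P$: both are $\op{G}'$-stable, and $Q\neq 0$ by what has just been shown, so Lemma~\ref{mainin}\eqref{43c1} (applied to each of $Q$ and $P$) gives
\[Q_{\bar\chi\sigma_{2L_1}}=(\g^*)_{\bar\chi\sigma_{2L_1}}=P_{\bar\chi\sigma_{2L_1}}=\op{Gal}(F_1/J_{F_1}),\]
whence $\op{Gal}(F_1/J_{F_1})\subseteq Q$ and thus $F_1\cap F_2\subseteq J_{F_1}$.

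\emph{Part~\eqref{66c2}.} Fix $M_1=L_{h^{(v)}}$ and write $KM_2=\mathbb{A}\cdot\mathbb{B}$ as in the setup. I proceed in four steps. \emph{(i)} $\op{Gal}(K_{h^{(v)}}/K)=\g$: since $v$ is a trivial prime, $\tau_v\in\op{G}_K$, and by \eqref{equation64} $h^{(v)}(\tau_v)\in(\g)_{-2L_1}\setminus\{0\}$, so the $\op{G}'$-stable subgroup $\op{Gal}(K_{h^{(v)}}/K)\subseteq\g$ has nonzero $\sigma_{-2L_1}$-eigenspace and Lemma~\ref{fullrankLemma} forces it to be all of $\g$. \emph{(ii)} Setting $F:=KM_1\cap KM_2$, the $\op{G}'$-module $\op{Gal}(F/K)$ is simultaneously a quotient of $\op{Gal}(KM_1/K)=\g$ and of $\op{Gal}(KM_2/K)=\op{Gal}(\mathbb{A}/K)\times\op{Gal}(\mathbb{B}/K)$; as a quotient of $\g$ it decomposes under $\mathbb{T}$ into only $\sigma_\lambda$-eigenspaces, but $\op{Gal}(\mathbb{B}/K)$ carries only $\bar\chi\sigma_\lambda$-eigenspaces, so the $\op{G}'$-equivariant quotient map must kill $\op{Gal}(\mathbb{B}/K)$, forcing $F\subseteq\mathbb{A}$. \emph{(iii)} $\mathbb{A}/K$ is unramified at $v$, because each constituent $K_{h^{(v')}}$ for $v'\neq v$ is unramified outside $T\cup\{v'\}$ while $K(\mu_{p^2}),K(\zeta_2)$ are unramified outside $T$; hence $F/K$ is unramified at $v$. \emph{(iv)} The inertia subgroups at primes of $K_{h^{(v)}}$ above $v$ are $\op{G}'$-conjugates of $h^{(v)}(\tau_v)\in(\g)_{-2L_1}\setminus\{0\}$; by another application of Lemma~\ref{fullrankLemma} their $\op{G}'$-submodule equals $\g=\op{Gal}(K_{h^{(v)}}/K)$. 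Since $F/K$ is unramified at $v$, $\op{Gal}(K_{h^{(v)}}/F)$ contains this submodule and hence equals $\g$, forcing $F=K$. Because $M_1/L$ is a $p$-extension and $K/L$ has degree coprime to $p$, $K\cap M_1=L$, so
\[M_1\cap M_2\subseteq KM_1\cap KM_2\cap M_1=K\cap M_1=L,\]
the reverse containment being obvious.

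\emph{Main obstacle.} The key subtlety--and the reason part~\eqref{66c2} does not collapse to a one-line ramification statement--is that the fields $K_{\eta_\lambda^{(v)}},K_{\eta_j^{(v)}}\in\mathcal{F}^{(v)}\subseteq M_2$ are themselves ramified at $v$, since their defining classes satisfy $\eta(\tau_v)\neq 0$. The resolution is to first use Lemma~\ref{lemma416} to peel off the $\g^*$-type factor $\mathbb{B}\subseteq KM_2$ (which absorbs all the ramification of $KM_2$ at $v$) via the eigenspace quotient analysis in step (ii); this isolates the genuinely unramified $\g$-type part $\mathbb{A}$, to which the inertia argument via Lemma~\ref{fullrankLemma} can then be applied.
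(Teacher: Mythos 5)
Your part (1) follows the paper's argument essentially verbatim: both apply Proposition~\ref{414} with $\theta_0$ the class cutting out $F_1$, the remaining $\theta_i$ drawn from the $\eta$-classes and a basis of $H^1(\op{G}_{\Q,T},\g^*)$, and the auxiliary $\mathbb{L}_i$ classified as unrelated to $\g^*$ via Lemma~\ref{415}; your explicit use of Lemma~\ref{mainin} for $F_1\cap F_2\subseteq J_{F_1}$ correctly supplies a detail the paper leaves implicit. Part (2), however, takes a genuinely different route. The paper argues directly: the inertia at $v_1$ has trivial projection onto the $\sigma_{-2L_1}$-eigenspace of $\op{Gal}(M_2/L)$, so the image of $\tau_{v_1}$ in $\op{Gal}(M_1/L)$ lands in $\op{Gal}(M_1/M_1\cap M_2)$, and Lemma~\ref{fullrankLemma} then forces $M_1\cap M_2=L$. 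You instead factor $KM_2=\mathbb{A}\cdot\mathbb{B}$ as a linearly disjoint product of its $\g$-type and $\g^*$-type constituents via Lemma~\ref{lemma416}, show $F:=KM_1\cap KM_2\subseteq\mathbb{A}$ by comparing the admissible $\mathbb{T}$-eigencharacters of the two sides (using condition~\eqref{thc4} that no $\sigma_\lambda$ equals any $\bar\chi\sigma_{\lambda'}$), then observe $\mathbb{A}/K$ is unramified at $v$ and run the $(\g)_{-2L_1}$-ramification argument on $\op{Gal}(K_{h^{(v)}}/F)$, finally descending from $F=K$ to $M_1\cap M_2=L$ by degree coprimality. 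Your route isolates explicitly the subtle point that $M_2$ genuinely is ramified at $v$ through the $\mathcal{F}^{(v)}$-factors, just not in the $\sigma_{-2L_1}$-slot — a point the paper's phrase ``since $v_1$ is unramified in $M_2$'' elides — and so reads more cleanly, at the modest cost of an extra invocation of Lemma~\ref{lemma416} and of the full-rank identification $\op{Gal}(K_{h^{(v)}}/K)=\g$ (Lemma~\ref{hvLemma}, which you reprove inline). Both proofs rest on the same core mechanism: $v$ ramifies in the $\sigma_{-2L_1}$-eigenspace of $M_1$ and nowhere else in the composite.
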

\begin{proof}
Part $\eqref{66c1}$ is obtained from an application of Proposition $\ref{414}$, as we now explain. In accordance with the statement of Proposition $\ref{414}$, we define a sequence of linearly independent classes \[\theta_0,\dots, \theta_t\in H^1(\op{G}_{\Q,T\cup A},\g^*),\] and a sequence of fields $\mathbb{L}_1, \dots, \mathbb{L}_b$, each of which is unrelated to $\g^*$.
\par Consider the classes $\eta_i^{(v_j)}$ and $\eta_{\lambda}^{(v_j)}$ as $i=1,\dots, n$, $\lambda\in \Phi$ and $j=1,\dots, k$. Enumerate these classes by $\theta_0,\dots, \theta_a$, so that $\theta_0$ is the cohomology class specified in the description of $F_1$, i.e. $F_1=K_{\theta_0}$. The number $a$ is equal to $(nk\# \Phi)-1$ and $\mathcal{F}_A=\{K_{\theta_0}, K_{\theta_1},\dots, K_{\theta_a}\}$. Let $\theta_{a+1},\dots, \theta_{t}$ be a basis of $H^1(\op{G}_{\Q,T},\g^*)$. Recall that for $\lambda \in \Phi$, we have that $\eta_{\lambda}^{(v_j)}(\tau_{v_j})= X_{\lambda}^*$, and for $i=1,\dots, n$, we have that $\eta_{i}^{(v_j)}(\tau_{v_j})=H_i^*$. The classes $\theta_{a+1},\dots, \theta_{t}$ are unramified at each of the primes $v_j\in A$. It is thus, easy to see that $\theta_0,\dots, \theta_{t}$ are linearly independent. Let $\mathbb{L}_1,\dots, \mathbb{L}_l$ be an enumeration for the fields $K_{\beta}$, as $\beta$ runs through all cohomology classes $H^1(\op{G}_{\Q,T}, (\g^*)_d)$ for $d\geq -2n+2$. Let $\mathbb{L}_{l+1}$ be the field $K(\zeta_2)$ and $\mathbb{L}_{l+2}$ the field $K(\mu_{p^2})$. The collection of fields $\mathcal{F}_{\mathfrak{l}}$ consists of $K_{\theta_{i}}$ for $i=a+1,\dots, t$ and $\mathbb{L}_i$ for $i=1,\dots, l+2$. Next, we have to account for the fields in $\mathcal{L}_A$. Let $\mathbb{L}_{l+3},\dots, \mathbb{L}_b$ be an enumeration of the fields $L_{h^{(v_1)}}, \dots, L_{h^{(v_k)}}$. Thus, the collection of fields $\mathcal{L}_A$ is $\{\mathbb{L}_{l+3},\dots, \mathbb{L}_b\}$. In order to apply Proposition $\ref{414}$, it suffices to show that each of the fields $\mathbb{L}_1,\dots, \mathbb{L}_{b}$ is unrelated to $\g^*$. Note that:
\begin{itemize}
    \item by Lemma $\ref{415}$, part $\eqref{415c1}$, each of the fields $\mathbb{L}_{1},\dots, \mathbb{L}_l$ is unrelated to $\g^*$,
    \item by part $\eqref{415c2}$, $\mathbb{L}_{l+2}$ is unrelated to $\g^*$,
    \item by part $\eqref{415c3}$, $\mathbb{L}_{l+3},\dots, \mathbb{L}_{b}$ are unrelated to $\g^*$,
    \item and by part $\eqref{415c4}$, $\mathbb{L}_{l+1}$ is unrelated to $\g^*$.
\end{itemize}

 By Proposition $\ref{414}$, $F_1$ is not contained in $F_2$ and it follows from Lemma $\ref{mainin}$ that $F_1\cap F_2\subseteq J_{F_1}$.
\par Assume without loss of generality that $M_1=L_{h^{(v_1)}}$. Recall that by \eqref{equation64}, we have that \[h^{(v_1)}(\tau_{v_1})\in (\g)_{-2L_1}\backslash\{0\}.\]As a result, $v_1$ is ramified in the $\sigma_{-2L_1}$-eigenspace of $\op{Gal}(M_1/L)$. On the other hand, $v_1$ is unramified in each of the field extensions in $\mathcal{F}_{\mathfrak{l}}$ and $\mathcal{L}_A\backslash \{M_1\}$. Since the classes $\eta_i^{(v_j)}$ and $\eta_{\lambda}^{(v_j)}$ are valued in $\g^*$, there is no $\sigma_{-2L_1}$-eigenspace for the action of $\mathbb{T}$ on $\op{Gal}(K_{\theta_i}/K)$ for $K_{\theta_i}\in \mathcal{F}_A$. As a result, $v_1$ is unramified in the $\sigma_{-2L_1}$-eigenspace of $\op{Gal}(M_2/L)$. Therefore, $M_1\not\subseteq M_2$. Identify $Q:=\operatorname{Gal}(M_1/M_1\cap M_2)$ with a subgroup of $h^{(v_1)}(\op{G}_L)\subseteq \g$. By Lemma $\ref{fullrankLemma}$ it suffices to show that $Q_{-2L_1} \neq 0$. Since $v_1$ is unramified in $M_2$, the image of $\tau_{v_1}$ in $\op{Gal}(M_1/L)$ lies in $\op{Gal}(M_1/M_1\cap M_2)$. Since $h^{(v_1)}(\tau_{v_1})$ is in $(\g)_{-2L_1}\backslash\{0\}$, we deduce that $Q_{-2L_1} \neq 0$. The assertion $\eqref{66c2}$ follows.
\end{proof}
\begin{Lemma}
Let $v\in \mathfrak{l}$ and $h^{(v)}$ be as in Proposition $\ref{P2}$. Then the $\operatorname{Gal}(L_{h^{(v)}}/L)\simeq \g$.
\end{Lemma}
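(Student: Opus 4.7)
The plan is to identify $\operatorname{Gal}(L_{h^{(v)}}/L)$ with the image $Q := h^{(v)}(\op{G}_L)$ as an abelian group, verify that $Q$ is a $\op{G}_{\Q}$-stable $\F_p$-subspace of $\g$, and then apply Lemma $\ref{fullrankLemma}$ to conclude $Q=\g$. For the first step, since $L = \Q(\bar{\rho})$ lies in the kernel of the adjoint action of $\op{G}_{\Q}$ on $\g$, the restriction $h^{(v)}_{\restriction \op{G}_L}$ is a genuine homomorphism whose kernel cuts out $L_{h^{(v)}}$ by definition, so $\operatorname{Gal}(L_{h^{(v)}}/L) \xrightarrow{\sim} Q$ as groups automatically.

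The second step is where the one nontrivial piece of work lies. I expect to verify Galois stability of $Q$ directly from the cocycle identity. For $g \in \op{G}_{\Q}$ and $x \in \op{G}_L$, the fact that $L/\Q$ is Galois gives $gxg^{-1} \in \op{G}_L$, and expanding $h^{(v)}(gxg^{-1})$ via the cocycle relation, then collapsing the terms $h^{(v)}(g) + gx \cdot h^{(v)}(g^{-1})$ using triviality of the $\op{G}_L$-action on $\g$ together with $g \cdot h^{(v)}(g^{-1}) = -h^{(v)}(g)$, should yield the clean identity $h^{(v)}(gxg^{-1}) = g \cdot h^{(v)}(x)$. This simultaneously proves that $Q$ is $\op{G}_{\Q}$-stable and that the isomorphism $\operatorname{Gal}(L_{h^{(v)}}/L) \simeq Q$ is $\op{G}_{\Q}$-equivariant. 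Being an additive subgroup of the $\F_p$-vector space $\g$, $Q$ is automatically an $\F_p$-subspace.

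For the third step, since $v$ is a trivial prime the splitting condition $\op{G}_v \subseteq \ker \bar{\rho} = \op{G}_L$ places $\tau_v$ inside $\op{G}_L$, so $h^{(v)}(\tau_v) \in Q$. By equation $\eqref{equation64}$ of Proposition $\ref{P2}$, $h^{(v)}(\tau_v) \in (\g)_{-2L_1} \setminus \{0\}$, and hence the $\sigma_{-2L_1}$-eigenspace $Q_{-2L_1} = Q \cap (\g)_{-2L_1}$ is nonzero. Lemma $\ref{fullrankLemma}$ (applied to the Galois-stable $\F_p$-submodule $Q$) then forces $Q = \g$, which gives the desired isomorphism.

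The only point that is not a mechanical bookkeeping check is the cocycle computation showing $Q$ is Galois stable; once that is in hand, the conclusion follows immediately by combining the non-vanishing at $\tau_v$ supplied by Proposition $\ref{P2}$ with the rigidity statement of Lemma $\ref{fullrankLemma}$, which together show that any Galois-stable submodule of $\g$ meeting the lowest-weight line nontrivially must fill all of $\g$.
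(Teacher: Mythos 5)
Your proof is correct and follows the same route as the paper: identify $\operatorname{Gal}(L_{h^{(v)}}/L)$ with the Galois-stable submodule $Q = h^{(v)}(\op{G}_L)\subseteq\g$, observe that $h^{(v)}(\tau_v)\in(\g)_{-2L_1}\setminus\{0\}$ from equation $\eqref{equation64}$ gives $Q_{-2L_1}\neq 0$, and conclude via Lemma $\ref{fullrankLemma}$. The paper's own proof is just a one-line invocation of Lemma $\ref{fullrankLemma}$; your version fills in the standard but unstated verifications (the cocycle computation giving $\op{G}_{\Q}$-equivariance and the observation that $\tau_v\in\op{G}_L$), which are exactly the right details.
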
\label{hvLemma}
\begin{proof}
Let $Q:=\operatorname{Gal}(L_{h^{(v)}}/L)\subseteq \g$. Since $Q_{-2L_1} \neq 0$, the assertion follows from Lemma $\ref{fullrankLemma}$.
\end{proof}
\begin{Prop}\label{lifttorho3} For a pair $(v_1,v_2)$ of trivial primes in $\mathfrak{l}$ in Proposition $\ref{P2}$ set $h=-h^{(v_1)}+2h^{(v_2)}$ and $\rho_2:=(I+ph)\zeta_2$. There is a pair $(v_1,v_2)$ such that $\rho_{2\restriction \op{G}_w}\in \mathcal{C}_w$ for all $w\in T$ and $\rho_{2\restriction \op{G}_{v_i}}\in \mathcal{C}_{v_i}^{ram}$ for $i=1,2$.
\end{Prop}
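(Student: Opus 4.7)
The condition $\rho_2|_{\op{G}_w}\in \mathcal{C}_w$ for $w\in T$ is immediate: by linearity and Proposition \ref{P2}, one has $h|_{\op{G}_w}=-z_w+2z_w=z_w$ for every $w\in T$, so $\rho_2|_{\op{G}_w}=(I+pz_w)\zeta_2|_{\op{G}_w}$ lies in $\mathcal{C}_w$ by the construction of the $z_w$ at the start of the section.

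For the auxiliary primes, observe that $v_i\notin T$ is a trivial prime, so $\bar{\rho}|_{\op{G}_{v_i}}$ is trivial and $\zeta_2|_{\op{G}_{v_i}}$ is unramified: write $\zeta_2(\tau_{v_i})=I$ and $\zeta_2(\sigma_{v_i})=I+pY_i$ for some $Y_i\in\g$ depending only on the Frobenius class of $v_i$ in $\op{Gal}(K(\zeta_2)/\Q)$. For $j\ne i$ the class $h^{(v_j)}$ lies in $H^1(\op{G}_{T\cup\{v_j\}},\g)$ and is therefore unramified at $v_i$, so $h^{(v_j)}(\tau_{v_i})=0$. Proposition \ref{P2} then yields
\[
h(\tau_{v_1})=-h^{(v_1)}(\tau_{v_1})\in(\g)_{-2L_1}\setminus\{0\},\qquad h(\tau_{v_2})=2\,h^{(v_2)}(\tau_{v_2})\in(\g)_{-2L_1}\setminus\{0\},
\]
so that $\rho_2(\tau_{v_i})=u_{-2L_1}(p\tilde{y}_i)$ with $\tilde{y}_i\in W(\F_q)^{\times}$. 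This verifies condition (a) of $\mathcal{C}_{v_i}^{ram}$ for every pair of distinct primes $v_1,v_2\in\mathfrak{l}$; it also shows that the choice of coefficients $(-1,2)$ is dictated by the requirements $-1+2=1$ (to recover $z_w$ on $T$) and that both coefficients be nonzero in $\F_p$ (to get a nonzero value of $h$ on $\tau_{v_i}$), both of which are available since $p>2n\ge 4$.

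Condition (b) demands that $\beta(\rho_2(\sigma_{v_i}))\not\equiv 1\pmod{p^2}$ for every $\beta\in\Phi^{-2L_1}$. Writing $\rho_2(\sigma_{v_i})=I+p(Y_i+h(\sigma_{v_i}))$, this reduces to the non-vanishing in $\F_q$ of the evaluation of the linearization $d\beta$ on the torus component of $Y_i+h(\sigma_{v_i})$, for each such $\beta$, which is a finite list of hyperplane-avoidance conditions on the relevant Frobenius data. The element $Y_i$ is governed by the Frobenius class of $v_i$ in $\op{Gal}(K(\zeta_2)/K)$ (unrelated to $\g^*$ by Lemma \ref{415}\eqref{415c4}), while $h(\sigma_{v_i})$ is controlled by the Frobenius of $v_i$ in $L_{h^{(v_j)}}$ for $j=1,2$, where $\op{Gal}(L_{h^{(v_j)}}/L)\simeq\g$ by Lemma \ref{hvLemma}. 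The plan is to apply Chebotarev to the compositum of $K(\zeta_2)$, the fields in $\mathcal{F}_{\mathfrak{l}}$, and $L_{h^{(v_1)}}\cdot L_{h^{(v_2)}}$, imposing simultaneously the splitting conditions that define $\mathfrak{l}$ and the finitely many hyperplane-avoidance conditions coming from condition (b). Lemma \ref{eigenspaceeta}\eqref{66c2} guarantees the necessary linear disjointness between $L_{h^{(v_j)}}$ and the remaining fields, so the resulting Chebotarev class is non-empty and produces the desired pair.

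The main obstacle is the last step: the conditions at $v_1$ and $v_2$ are not independent a priori, since $h(\sigma_{v_i})$ involves both classes $h^{(v_1)}$ and $h^{(v_2)}$. Decoupling them requires precisely the linear disjointness supplied by Lemma \ref{eigenspaceeta}, which ensures that the Frobenius class of $v_i$ in $L_{h^{(v_i)}}$ can be prescribed independently of its Frobenius class in $L_{h^{(v_j)}}$ and in the other fields in $\mathcal{F}_A\cup\mathcal{F}_{\mathfrak{l}}$. Once this decoupling is in hand, the Chebotarev density theorem supplies the required pair $(v_1,v_2)$.
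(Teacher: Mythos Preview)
Your argument has a genuine gap at the crucial step. You correctly identify that $h(\sigma_{v_i})$ depends on the Frobenius of $v_i$ in $L_{h^{(v_1)}}$ and in $L_{h^{(v_2)}}$, and then propose to use Chebotarev in the compositum of these fields together with $K(\zeta_2)$ and the fields defining $\mathfrak{l}$. But this is circular: the field $L_{h^{(v_2)}}$ is only defined \emph{after} $v_2$ is chosen, so one cannot impose a Chebotarev condition on the Frobenius of $v_2$ in $L_{h^{(v_2)}}$. The same problem arises for the ``diagonal'' value $h^{(v_1)}(\sigma_{v_1})$. Lemma~\ref{eigenspaceeta}\eqref{66c2} gives linear disjointness among the fields $L_{h^{(v_j)}}$ once all the primes are fixed, but it does not let you bootstrap the choice of $v_i$ against a field that depends on $v_i$ itself. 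Moreover, your reduction of the condition at $\sigma_{v_i}$ to mere hyperplane avoidance is too weak: membership in $\mathcal{C}_{v_i}^{ram}$ requires (up to strict equivalence) that $\rho_2(\sigma_{v_i})$ lie in $\mathcal{T}\cdot Z(U_{-2L_1})$ and map to $v_i$ under the character $-2L_1$, which forces $h(\sigma_{v_i})$ to hit a specific target value $C_i$, not merely to avoid finitely many hyperplanes.

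The paper resolves the circularity by a substantially more delicate argument. First one passes to a positive-upper-density subset $\mathfrak{l}_1\subset\mathfrak{l}$ on which the self-values $\zeta_2(\sigma_v)$ and $h^{(v)}(\sigma_v)$ are constant (equal to $A_1,A_2$), and then to $\mathfrak{l}_2\subset\mathfrak{l}_1$ on which an auxiliary local invariant $\operatorname{inv}_v(\delta^{(v)}\cup h^{(v)})$ is also constant. With the diagonal values frozen, one needs $h^{(v_1)}(\sigma_{v_2})=2A_2-C_2$, which is a genuine Chebotarev condition on $v_2$ in $L_{h^{(v_1)}}$, and $h^{(v_2)}(\sigma_{v_1})=(A_2+C_1)/2$, which is the hard cross-term. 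The key idea is to invoke global reciprocity for the auxiliary classes $\eta_\lambda^{(v_1)},\eta_i^{(v_1)}$ to convert the latter into splitting conditions on $v_2$ in the $\bar\chi\sigma_{2L_1}$-eigenspaces of the fields $\mathcal{F}^{(v_1)}$, all of which are fixed once $v_1$ is chosen. Finally, because $\mathfrak{l}_2$ is only of positive upper density (not a Chebotarev class), an iterated density estimate is needed to show that the process terminates. None of this machinery appears in your proposal.
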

\begin{proof}
For $i=1,2,$ we set $\mathcal{C}_{v_i}:=\mathcal{C}_{v_i}^{ram}$. Note that $h_{\restriction \op{G}_w}=z_w$
for all $w\in T$ and hence $\rho_{2\restriction \op{G}_w}\in \mathcal{C}_w$ for all $w\in T$. This is not the case at the primes $v_1$ and $v_2$. We show that one may indeed find a pair $(v_1,v_2)\in \mathfrak{l}\times \mathfrak{l}$ so that $(I+pz_{v_i})\zeta_2\in \mathcal{C}_{v_i}^{ram}$ for $i=1,2$. Consider for $v\in \mathfrak{l}$, the pair of elements $(\zeta_2(\sigma_v),h^{(v)}(\sigma_v))$, and let $A=(A_1,A_2)$ be the pair of matrices which occurs most frequently, that is, with maximal upper density. The choice of $A$ is not necessarily unique. Let $\mathfrak{l}_1=\{v\in \mathfrak{l}\mid \zeta_2(\sigma_v)=A_1,h^{(v)}(\sigma_v)=A_2\}$. Since there are finitely many choices for $A$, the set of primes $\mathfrak{l}_1$ has positive upper-density. Since $h(\tau_{v_i})\in(\g)_{-2L_1}$ and $\zeta_2$ is unramified at $v_i$, we have that \[(\operatorname{Id}+ph(\tau_{v_i}))\zeta_2(\tau_{v_i})=(\operatorname{Id}+ph(\tau_{v_i}))\in \op{U}_{-2L_1}.\] Furthermore, since $h(\tau_{v_i})\neq 0$, the additional condition on $(\operatorname{Id}+ph(\tau_{v_i}))\zeta_2(\tau_{v_i})$ (see Definition $\ref{defconditions}$ part $\eqref{defconditions2}$) is satisfied. Since $\zeta_2(\sigma_v)$ is fixed throughout $\mathfrak{l}_1$, there are (not necessarily unique) matrices $C_i$ such that if $h(\sigma_{v_i})=C_i$, we will have 
$(\operatorname{Id}+ph){\zeta_2}_{\restriction \operatorname{G}_{v_i}} \in \mathcal{C}_{v_i}$ for $i=1,2$. The values $h^{(v_i)}(\sigma_{v_j})$ are represented in the table below:
\begin{center}
\begin{tabular}{c|c|c } 
  & $\sigma_{v_1}$ & $\sigma_{v_2}$ \\ [0.5 ex]
 \hline
 $h^{(v_1)}$ & $A_2$ & $R$ \\
 \hline
 $h^{(v_2)}$ & $E$ & $A_2$. \\ 
\end{tabular}
\end{center}
We need $E=(A_2+C_1)/2$ and $R=2A_2-C_2$. Note that for an arbitrary pair $(v_1,v_2)\in \mathfrak{l}_1\times \mathfrak{l}_1$, this need not be the case. What follows is a recipe for producing a pair $(v_1,v_2)$ such that $E=(A_2+C_1)/2$ and $R=2A_2-C_2$.
\par For $v\in \mathfrak{l}_1$, let $\delta^{(v)}\in H^1(\operatorname{G}_v, \g^*)$ be the cohomology class given by $\delta^{(v)}(\sigma_v)=X_{-2L_1}^*$ and $\delta^{(v)}(\tau_v)=0$. Let $y$
be the element that occurs most frequently among the elements $\operatorname{inv}_v (\delta^{(v)} \cup  h^{(v)})$ among primes $v$ of $\mathfrak{l}_1$. Set \[\mathfrak{l}_2 =\{
v\in  \mathfrak{l}_1 \mid \operatorname{inv}_v (\delta^{(v)} \cup  h^{(v)})= y\},\] $\mathfrak{l}_2$ has positive
upper density. Suppose we first choose $v_1\in \mathfrak{l}_2$. Recall that $h^{(v_1)}(\tau_{v_1})\in (\g)_{-2L_1}$. By Lemma $\ref{fullrankLemma}$, the class $h^{(v_1)}$ has full rank, i.e. $h^{(v_1)}(\op{G}_K)=\g$. In particular, $2A_2-C_2$ is contained in $h^{(v_1)}(\op{G}_K)$. Choosing $v_2$ such that $h^{(v_1)}(\sigma_{v_2})=2A_2-C_2$ is a Chebotarev condition on the splitting of $v_2$ in $L_{h^{(v_1)}}$. We show that $h^{(v_2)}(\sigma_{v_1})$ is determined by how $v_2$ splits in the $\bar{\chi}\sigma_{2L_1}$-eigenspace each of the fields in $\mathcal{F}^{(v_1)}$. Since $h^{(v_2)}$ is unramified at $v_1$, the values $\eta_{\lambda}^{(v_1)}(\tau_{v_1})$ and $h^{(v_2)}(\sigma_{v_1})$ determine $(\eta_{\lambda}^{(v_1)}\cup h^{(v_2)})_{\restriction \op{G}_{v_1}}$. Express $h^{(v_2)}(\sigma_{v_1})=\sum_{\lambda} a_{\lambda} X_{\lambda}+\sum_{i=1}^n  a_{i} H_{i}$. As $\eta_{\lambda}^{(v_1)}(\tau_{v_1})= X_{\lambda}^*$, we see that $\op{inv}_{v_1}(\eta_{\lambda}^{(v_1)}\cup h^{(v_2)})$ determines $a_{\lambda}$. Likewise, $\op{inv}_{v_1}(\eta_{i}^{(v_1)}\cup h^{(v_2)})$ determines $a_{i}$. For $v\in \mathfrak{l}$ and $\lambda\in \Phi$, set $z_{\lambda}^{(v)}$ to be equal to $\op{inv}_v(\eta_{\lambda}^{(v)}\cup h^{(v)})$. The global reciprocity law asserts that
\[\sum_{w\in T\cup \{v_1,v_2\}}  \op{inv}_w(\eta_{\lambda}^{(v_1)}\cup h^{(v_2)})=0,\text{ and }\sum_{w\in T\cup \{v_1\}}  \op{inv}_w(\eta_{\lambda}^{(v_1)}\cup h^{(v_1)})=0.\]Since $h^{(v_2)}_{\restriction \op{G}_w}=z_w=h^{(v_1)}_{\restriction \op{G}_w}$ for $w\in T$, we deduce that
\begin{equation*}
\begin{split}
\op{inv}_{v_1}(\eta_{\lambda}^{(v_1)}\cup h^{(v_2)}) &=-\sum_{w\in T} \op{inv}_{w}(\eta_{\lambda}^{(v_1)}\cup h^{(v_2)})-\op{inv}_{v_2}(\eta_{\lambda}^{(v_1)}\cup h^{(v_2)})\\
&=-\sum_{w\in T} \op{inv}_{w}(\eta_{\lambda}^{(v_1)}\cup h^{(v_1)})-\op{inv}_{v_2}(\eta_{\lambda}^{(v_1)}\cup h^{(v_2)})\\
&=\op{inv}_{v_1}(\eta_{\lambda}^{(v_1)}\cup h^{(v_1)})-\op{inv}_{v_2}(\eta_{\lambda}^{(v_1)}\cup h^{(v_2)})\\
&=z_{\lambda}^{(v_1)}-\op{inv}_{v_2}(\eta_{\lambda}^{(v_1)}\cup h^{(v_2)}).
\end{split}
\end{equation*}
Since $z_{\lambda}^{(v_1)}$ depends on $v_1$ which is fixed, the variance of the right hand side of the equation comes from the term $\op{inv}_{v_2}(\eta_{\lambda}^{(v_1)}\cup h^{(v_2)})$. The specification of $h^{(v_2)}(\sigma_{v_1})$ amounts to the specification of $\op{inv}_{v_1}(\eta_{\lambda}^{(v_1)}\cup h^{(v_2)})$ for $\lambda\in \Phi$ and $\op{inv}_{v_1}(\eta_{i}^{(v_1)}\cup h^{(v_2)})$ for $i=1,\dots,n$. Set $u_{\lambda}$ to be $\eta_{\lambda}^{(v_1)}(\sigma_{v_2})(X_{-2L_1})$ for $\lambda \in \Phi$ and set $u_i$ to be $\eta_{i}^{(v_1)}(\sigma_{v_2})(X_{-2L_1})$ for $i=1,\dots, n$. Since $h^{(v_2)}(\tau_{v_2})$ is a multiple of $X_{-2L_1}$, we see that 
\[\begin{split}&\op{inv}_{v_2}(\eta_{\lambda}^{(v_1)}\cup h^{(v_2)})=\op{inv}_{v_2}(u_{\lambda}\delta^{(v_2)}\cup h^{(v_2)})\\
&\op{inv}_{v_2}(\eta_{i}^{(v_1)}\cup h^{(v_2)})=\op{inv}_{v_2}(u_{i}\delta^{(v_2)}\cup h^{(v_2)}).\\
\end{split}\] Moreover since $h^{(v_2)}(\tau_{v_2})$ is non-zero, we can choose $b\in \F_q$ such that $\op{inv}_{v_2}(b\delta^{(v_2)}\cup h^{(v_2)})$ takes on any desired value. Note that $\op{inv}_{v_2}(\delta^{(v_2)}\cup h^{(v_2)})$ is set to equal $y$ for all $v_2\in \mathfrak{l}_2$, i.e., does not depend on the choice of $v_2\in \mathfrak{l}_2$. As a result, for $v_1\in \mathfrak{l}_2$, there exist values $\{b_{\lambda}\}_{\lambda\in \Phi}$ and $\{b_i\}_{i=1,\dots, n}$ depending only on $v_1$ such that if $u_{\lambda}=b_{\lambda}$ for $\lambda \in \Phi$ and $u_{i}=b_{i}$ for $i=1,\dots, n$, then, \[h^{(v_2)}(\sigma_{v_1})=(A_2+C_1)/2.\] The condition requiring $h^{(v_2)}(\sigma_{v_1})=(A_2+C_1)/2$, is determined by $\eta_{\lambda}^{(v_1)}(\sigma_{v_2})(X_{-2L_1})$ for $\lambda \in \Phi$ and by $\eta_{i}^{(v_1)}(\sigma_{v_2})(X_{-2L_1})$ for $i=1,\dots, n$. Note that $v_2$ is unramified in $K_{\eta_{\lambda}^{(v_1)}}$ and the value of $\eta_{\lambda}^{(v_1)}(\sigma_{v_2})(X_{-2L_1})$ is determined by the projection of $\sigma_{v_2}$ to the $\bar{\chi}\sigma_{2L_1}$-eigenspace of $\op{Gal}(K_{\eta_{\lambda}^{(v_1)}}/K)$, when viewed as a $\mathbb{T}$-module. Recall that $J_{\eta_{\lambda}^{(v_1)}}$ is the subextension $K\subseteq J_{\eta_{\lambda}^{(v_1)}}\subsetneq K_{\eta_{\lambda}^{(v_1)}}$ such that \[\op{Gal}(K_{\eta_{\lambda}^{(v_1)}}/K)_{\bar{\chi}\sigma_{2L_1}}\simeq \op{Gal}(K_{\eta_{\lambda}^{(v_1)}}/J_{\eta_{\lambda}^{(v_1)}}).\] Since $v_2$ is a trivial prime, it is split in $K$. One may indeed insist that $v_2$ is split in $J_{\eta_{\lambda}^{(v_1)}}$ and $\sigma_{v_2}$ takes on the appropriate value in $\op{Gal}(K_{\eta_{\lambda}^{(v_1)}}/J_{\eta_{\lambda}^{(v_1)}})$ so that $u_{\lambda}=b_{\lambda}$. Hence, the condition $u_{\lambda}=b_{\lambda}$ is simply a condition on the $\bar{\chi}\sigma_{2L_1}$-eigenspace of $K_{\eta_{\lambda}^{(v_1)}}/K$. Likewise, the condition $u_{i}=b_{i}$ is a condition on the $\bar{\chi}\sigma_{2L_1}$-eigenspace of $K_{\eta_{i}^{(v_1)}}/K$.  To summarize, the condition requiring $h^{(v_2)}(\sigma_{v_1})=(A_2+C_1)/2$, is equivalent to Chebotarev conditions on the splitting of $v_2$ in the $\bar{\chi}\sigma_{2L_1}$-eigenspaces of the fields in $\mathcal{F}^{(v_1)}$, in the sense made precise in the preceding discussion.
\par Suppose that for the choice of $v_1\in \mathfrak{l}_2$, there is a $v_2\in \mathfrak{l}_2$ for which the required conditions are satisfied:
\begin{enumerate}
    \item the condition on the splitting of $v_2$ in $L_{h^{(v_1)}}$ which amounts to specifying $h^{(v_1)}(\sigma_{v_2})$,
    \item the condition on the splitting of $v_2$ in the fields $\mathcal{F}^{(v_1)}$ which amounts to specifying $h^{(v_2)}(\sigma_{v_1})$.
\end{enumerate}Then we are done. Note that $\mathfrak{l}_2$ is not a Chebotarev condition, it has only been observed that $\mathfrak{l}_2$ has positive upper density. Consider the case when there is no choice of $v_2\in \mathfrak{l}_2$ for which the above conditions are satisfied. Let $\mathfrak{l}_{v_1}$ be the subset of $\mathfrak{l}$ for which $(R, E)\neq (2A_2-C_2, \frac{(A_2 + C_1)}{2})$ for the choice of $v_1$. We have thus assumed that $\mathfrak{l}_2\subseteq \mathfrak{l}_{v_1}$, it follows that the upper density $\delta(\mathfrak{l}_2)$ is less than or equal to the upper density $\delta(\mathfrak{l}_{v_1})$. 
\par Set $\mathcal{E}^{(v_1)}$ to be the composite of the field $L_{h^{(v_1)}}$ with the fields in $\mathcal{F}^{(v_1)}$ and let $\mathfrak{F}_{\mathfrak{l}}$ be the composite of fields in $\mathcal{F}_{\mathfrak{l}}$. We show that there is an element $x\in\op{Gal}(\mathcal{E}^{(v_1)}\cdot \mathfrak{F}_{\mathfrak{l}}/K)$ such that if $v_2$ is trivial prime such that the Frobenius at $v_2$ maps to $x$, then $v_2\in \mathfrak{l}$ and the conditions on $v_2$ are satisfied. Said differently, if $\sigma_{v_2}=x$, then $v_2\in \mathfrak{l}\backslash \mathfrak{l}_{v_1}$. If $F_1$ is any of the fields in $\mathcal{F}^{(v_1)}$ and $F_2$ is the composite of the other fields in $\mathcal{F}^{(v_1)}\cup \mathcal{F}_{\mathfrak{l}}$, Lemma $\ref{eigenspaceeta}$ asserts that $F_1\cap F_2\subseteq J_{F_1}$. Lemma $\ref{eigenspaceeta}$ asserts that $L_{h^{(v_1)}}$ is linearly disjoint over $L$ from the composite of all fields in $\mathcal{F}^{(v_1)}\cup \mathcal{F}_{\mathfrak{l}}$. To construct such an element $x$, enumerate the fields in $\mathcal{F}^{(v_1)}=\{E_1,\dots, E_{k-1}\}$ and set $E_{k}:=F_{h^{(v_1)}}$. Set $E_0:=\mathfrak{F}_{\mathfrak{l}}$ and let $\mathcal{E}_j$ be the composite $E_0\cdots E_j$, note that $\mathcal{E}_{k}=\mathcal{E}^{(v_1)}\cdot \mathfrak{F}_{\mathfrak{l}}$. Consider the filtration
\[\mathcal{E}_{k}\supset \mathcal{E}_{k-1}\supset \dots \supset \mathcal{E}_1\supset \mathcal{E}_0\supset K.\] Let $x_0\in \op{Gal}(\mathcal{E}_0/K)$ be an element defining $\mathfrak{l}$. Note that $\op{Gal}(\mathcal{E}_1/\mathcal{E}_0)\simeq \op{Gal}(E_1/E_1\cap \mathcal{E}_0)$ and the intersection $E_1\cap \mathcal{E}_0$ is contained in $J_{E_1}$. The condition on $E_1/K$ is on the $\bar{\chi}\sigma_{2L_1}$-eigenspace $\op{Gal}(E_1/J_{E_1})$. Hence $x_0$ lifts to a suitable $x_1\in \op{Gal}(\mathcal{E}_1/K)$. Repeating the process, we see that $x_1$ lifts to $x_{k-1}\in \op{Gal}(\mathcal{E}_{k-1}/K)$ such that if $\sigma_{v_2}=x_{k-1}$, then $v_2\in \mathfrak{l}$ and $h^{(v_2)}(\sigma_{v_1})=(A_2+C_1)/2$. Since $E_k\cap \mathcal{E}_{k-1}=K$, it follows that $x_{k-1}$ can be lifted to $x_{k}\in \op{Gal}(\mathcal{E}_{k}/K)$ such that if $\sigma_{v_2}=x$, then all conditions on $v_2$ are satisfied.

As a result, $\delta(\mathfrak{l}\backslash \mathfrak{l}_{v_1})\geq \frac{1}{[\mathcal{E}^{(v_1)}\cdot \mathfrak{F}_{\mathfrak{l}}:K]}$, and hence,
\[\delta(\mathfrak{l}_{v_1})\leq \left(1-\frac{1}{[\mathcal{E}^{(v_1)}\cdot \mathfrak{F}_{\mathfrak{l}}:K]}\right).\]For $F\in \mathcal{F}^{(v_1)}$, the Galois group $\op{Gal}(F/K)$ may be identified with a Galois submodule of $\g^*$. Hence $[F:K]\leq q^{\dim(\g)}$ for $F\in \mathcal{F}^{(v_1)}$ is a uniform bound independent of $v_1$. Similar reasoning shows that $[L^{h^{(v_1)}}:L]\leq q^{\dim (\g)} $. Setting $N:=(\#\Phi +n+1)\cdot \dim \g$,  deduce that \[\delta(\mathfrak{l}_{v_1})\leq 1-q^{-N}[\mathfrak{F}_{\mathfrak{l}}:K]^{-1}.\]
\par Suppose that there is a sequence of $m$ primes $v_{1}^{(1)},\dots, v_{1}^{(m)}\in \mathfrak{l}_2$, such that it is not possible to find a second prime $v_2$ for any of the primes $v_1^{(j)}$. In other words, $\mathfrak{l}_2\subseteq \cap_{j=1}^m \mathfrak{l}_{v_1^{(j)}}$. We show that the density of $\cap_{j=1}^m \mathfrak{l}_{v_1^{(j)}}$ approaches zero as $m$ approaches infinity. Since the upper density of $\mathfrak{l}_2$ is positive, we will eventually find a pair $(v_1,v_2)$. For convenience of notation, set $w_j:=v_1^{(j)}$ and set $A=\{w_1,\dots, w_m\}$. Fix $1\leq j\leq m$ and enumerate the fields $\mathcal{F}^{(w_j)}=\{E_1,\dots, E_{k-1}\}$ and set $E_k=F_{h^{(w_j)}}$. Denote by $\mathfrak{E}_j:=\mathfrak{F}_{\mathfrak{l}}\cdot \mathcal{E}^{(w_1)}\cdots\mathcal{E}^{(w_j)}$ and let $C_j$ be the subset of $\op{Gal}(\mathfrak{E}_j/K)$ defining the set $\cap_{i=1}^j \mathfrak{l}_{w_i}$. This means that $v_2\in \cap_{i=1}^j \mathfrak{l}_{w_i}$ if and only if $\sigma_{v_2}\in C_j$. We show that any element $w\in \op{Gal}(\mathfrak{E}_{j-1}/K)$ lifts to an element  $\tilde{w}\in\op{Gal}(\mathfrak{E}_{j}/K)$ which is not in $C_j$. This is shown by filtering $\mathfrak{E}_j/\mathfrak{E}_{j-1}$ by 
\[\mathfrak{E}_{j}=\mathcal{E}_k\supset \mathcal{E}_{k-1}\supset \cdots \mathcal{E}_1\supset \mathcal{E}_0=\mathfrak{E}_{j-1},\]
where $\mathcal{E}_l:=\mathfrak{E}_{j-1}E_1\cdots E_l$. The argument is identical to that provided before.
\par
As a result, \[\# C_j\leq ([\mathfrak{E}_j:\mathfrak{E}_{j-1}] -1)\# C_{j-1}.\] Therefore,
\[\begin{split}\delta(\cap_{i=1}^j \mathfrak{l}_{w_i})=\frac{\# C_j}{[\mathfrak{E}_j:K]}\leq & \left(1-\frac{1}{[\mathfrak{E}_j:\mathfrak{E}_{j-1}]}\right)\frac{\# C_{j-1}}{[\mathfrak{E}_{j-1}:K]},\\
\leq & \left(1-\frac{1}{[\mathcal{E}^{(w_j)}:K]}\right)\frac{\# C_{j-1}}{[\mathfrak{E}_{j-1}:K]},\\
\leq & (1-q^{-N}) \delta(\cap_{i=1}^{j-1} \mathfrak{l}_{w_i}).
\end{split}\]Therefore, $\delta(\cap_{i=1}^m \mathfrak{l}_{w_i})\leq (1-q^{-N})^{m-1} (1-q^{-N}[\mathfrak{F}_{\mathfrak{l}}:K]^{-1})$. Since $\mathfrak{l}_2$ has positive upper density there is a large value of $m$ such that $\mathfrak{l}_2$ is not contained in $\cap_{i=1}^m \mathfrak{l}_{w_i}$. This shows that a pair $(v_1,v_2)$ satisfying the required conditions does exist.
\end{proof}

\begin{Prop}\label{bigimageprop}
Let $\rho_2$ be as in Proposition $\ref{lifttorho3}$. The image of $\rho_2$ is the principal congruence subgroup of $\op{GSp}_{2n}(\text{W}(\F_q)/p^2)$ of similitude character $1$.
\end{Prop}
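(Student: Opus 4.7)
The plan is to view $\rho_2$ restricted to $\operatorname{G}_K$ as a homomorphism into $\g$ and apply Lemma \ref{fullrankLemma} to conclude that its image is all of $\g$. First observe that since $\rho_2 \bmod p = \bar{\rho}$ is trivial on $\operatorname{G}_K$, the image $\rho_2(\operatorname{G}_K)$ lies in the kernel of the reduction map $\GSp_{2n}(\text{W}(\F_q)/p^2) \to \GSp_{2n}(\F_q)$. The similitude character of $\rho_2$ is $\kappa_2 := \kappa \bmod p^2$; on $\operatorname{G}_K$ the Teichm\"uller lift $\kappa_0$ is trivial (since $\bar{\kappa}$ is), while $\chi_{\restriction \operatorname{G}_K}$ takes values in $1 + p\Z_p$ because $K \supseteq \mu_p$. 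Since $p \mid k$, the identity $(1 + px)^k \equiv 1 + kpx \equiv 1 \pmod{p^2}$ shows $\chi^k_{\restriction \operatorname{G}_K} \equiv 1 \bmod p^2$, so $\kappa_{2\restriction \operatorname{G}_K}$ is trivial. Consequently $\rho_2(\operatorname{G}_K)$ is contained in the principal congruence subgroup of similitude character $1$, which under $\operatorname{Id} + pX \leftrightarrow X$ is identified with $\g$ as a $\operatorname{G}_\Q$-module.

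Under this identification, write $\zeta_{2\restriction \operatorname{G}_K} = \operatorname{Id} + p\,\zeta_2^{(1)}$ for a homomorphism $\zeta_2^{(1)}\colon \operatorname{G}_K \to \g$. Since elements of the principal congruence subgroup of similitude character $1$ multiply via $(\operatorname{Id}+pY)(\operatorname{Id}+pY') \equiv \operatorname{Id} + p(Y+Y') \bmod p^2$, the twist formula $\rho_2 = (\operatorname{Id}+ph)\zeta_2$ gives $\rho_{2\restriction \operatorname{G}_K} = h + \zeta_2^{(1)}$ as a homomorphism into $\g$, where $h = -h^{(v_1)} + 2h^{(v_2)}$. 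Let $Q \subseteq \g$ denote its image; $Q$ is a Galois-stable submodule. By Lemma \ref{fullrankLemma}, it suffices to exhibit a non-zero element of $Q_{-2L_1}$.

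For this, evaluate at the tame inertia generator $\tau_{v_1}$, which lies in $\operatorname{G}_K$ because $v_1$ is a trivial prime and therefore splits completely in $K$. Since $\zeta_2$ is ramified only at primes of $T$ and $v_1 \notin T$, we have $\zeta_2^{(1)}(\tau_{v_1}) = 0$. Similarly, $h^{(v_2)} \in H^1(\operatorname{G}_{\Q,\,T \cup \{v_2\}}, \g)$ and $v_1 \notin T \cup \{v_2\}$, so $h^{(v_2)}(\tau_{v_1}) = 0$. Meanwhile, Proposition \ref{P2} was arranged so that $h^{(v_1)}(\tau_{v_1}) \in (\g)_{-2L_1} \setminus \{0\}$. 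Combining these,
\[ (h + \zeta_2^{(1)})(\tau_{v_1}) = -h^{(v_1)}(\tau_{v_1}) \in (\g)_{-2L_1} \setminus \{0\}, \]
so $Q_{-2L_1} \neq 0$, and Lemma \ref{fullrankLemma} yields $Q = \g$, completing the argument.

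The only subtle point is verifying that $\kappa_{2\restriction \operatorname{G}_K}$ is trivial, which is precisely where the divisibility $p(p-1) \mid k$ stipulated in Theorem \ref{main} enters decisively; the rest is a direct unwinding of the twist formula combined with the careful ramification bookkeeping set up in Proposition \ref{P2}.
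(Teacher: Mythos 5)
Your proof is correct and follows essentially the same route as the paper: identify the principal congruence subgroup with $\g$, reduce the twist $(\operatorname{Id}+ph)\zeta_2$ to an additive statement, evaluate at $\tau_{v_1}$ to produce a nonzero element of $(\g)_{-2L_1}$, and invoke Lemma \ref{fullrankLemma}. The only cosmetic differences are that you work over $\operatorname{G}_K$ rather than $\operatorname{G}_L=\ker\bar{\rho}$ (either suffices, since $\tau_{v_1}$ lies in both), and you verify triviality of the similitude character on $\operatorname{G}_K$ directly from $p\mid k$ and $\chi_{\restriction \operatorname{G}_K}\subseteq 1+p\Z_p$, whereas the paper notes the stronger fact $\kappa\equiv\kappa_0\bmod p^2$ from $p(p-1)\mid k$.
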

\begin{proof}
Recall that the similitude character $\kappa$ is prescribed to equal $\kappa_0\chi^k$ where $\kappa_0$ is the Teichm\"uller lift of $\bar{\kappa}$ and $k$ is divisible by $p(p-1)$. Therefore, we have that $\kappa\equiv \kappa_0\mod{p^2}$, and as a result, elements in the principal congruence subgroup in the image of $\rho_2$ necessarily have similitude character $1$. Therefore, $\rho_2(\op{G}_L)$ may be identified with a subspace of $\g$. In greater detail, $\rho_2(g)$ is identified with $\frac{1}{p}(\rho_2(g)-\op{Id})$, for $g\in \op{G}_L=\op{ker}\bar{\rho}$. It may be checked that $\rho_2(\op{G}_L)$ is a $\op{G}$-submodule of $\g$ and that the natural $\op{G}$-action on $\op{Gal}(\Q(\rho_2)/L)$ (induced by conjugation) coincides with the $\op{G}$-action on $\rho_2(\op{G}_L)$ viewed as a submodule of $\g$. Recall that $\rho_2=(\op{Id}+ph)\zeta_2$, where $h$ is the cohomology class given by $-h^{(v_1)}+2h^{(v_2)}$. Since $\bar{\rho}$ is unramified at $v_1$, we have that $\tau_{v_1}\in \op{G}_L$. The cohomology class $h^{(v_2)}$ is unramified at $v_1$, as is $\zeta_2$. Therefore, we have that
\[\rho_2(\tau_{v_1})=(\op{Id}+ph(\tau_{v_1}))\zeta_2(\tau_{v_1})=(\op{Id}-ph^{(v_1)}(\tau_{v_1})).\]
Recall that by \eqref{equation64}, we have that \[h^{(v_1)}(\tau_{v_1})\in (\g)_{-2L_1}\backslash\{0\}.\]Therefore, $\rho_2(\op{G}_L)$ is identified with a Galois-submodule of $\g$ which contains an element with non-zero $-2L_1$-component. From Lemma $\ref{fullrankLemma}$, it is deduced that this module must be all of $\g$. This completes the proof.
\end{proof}
\section{Annihiliating the dual-Selmer Group}
Let $\rho_3:\operatorname{G}_{\Q,T\cup \{v_1,v_2\}}\rightarrow \GSp_{2n}(\text{W}(\F_q)/p^3)$ be the lift of $\bar{\rho}$ obtained from the application of Propositions $\ref{lifttorho3}$ and $\ref{bigimageprop}$. Recall that the Galois group $\op{Gal}(K(\rho_2)/K)$ is identified with $\g$. As a result, once it is shown that $\rho_3$ lifts to a characteristic zero representation $\rho$, it shall follow that $\rho$ is irreducible. In showing that $\rho_3$ can be lifted to characteristic zero, we enlarge the set of primes $Z=T\cup \{v_1,v_2\}$ to a finite set of primes $Y$ such that $X:=Y\backslash S$ consists only of trivial primes. For $i=1,2$ set $\mathcal{C}_{v_i}=\mathcal{C}_{v_i}^{ram}$ and for primes $v\in X\backslash \{v_1,v_2\}$, set $ \mathcal{C}_v=\mathcal{C}_v^{nr}$. We show that the dual-Selmer group $H^1_{\mathcal{N}^{\perp}}(\op{G}_{\Q,Y},\g^*)$ vanishes for a suitably chosen set of primes $Y$. For convenience of notation, denote by $\mathscr{W}$ the Galois submodule $(\g)_{-2n+2}$ of $\g$ spanned by root spaces $(\g)_{\beta}$ for $\beta\neq -2L_1$. 
\begin{Prop}\label{lastchebotarev}
Let $\rho_3:\operatorname{G}_{\Q,T\cup \{v_1,v_2\}}\rightarrow \GSp_{2n}(\text{W}(\F_q)/p^3)$ be the lift of $\bar{\rho}$ obtained from the application of Propositions $\ref{lifttorho3}$ and $\ref{bigimageprop}$. Let $Y$ be a finite set of primes which contains $Z=T\cup \{v_1,v_2\}$ such that $Y\backslash S$ consists of trivial primes. Suppose $f\in H^1_{\mathcal{N}}(\op{G}_{\Q,Y}, \g)$ and $\psi\in H^1_{\mathcal{N}^{\perp}}(\op{G}_{\Q,Y}, \g^*)$ are nonzero classes. Then there exists a prime $v\notin Y$ such that
\begin{enumerate}
\item \label{71one} $v$ is a trivial prime,
\item \label{71two}$\rho_{3\restriction \op{G}_v}$ satisfies $\mathcal{C}_v=\mathcal{C}_v^{nr}$, 
\item \label{71three} $f$ does not satisfy $\mathcal{N}_v=\mathcal{N}_v^{nr}$,
\item\label{71four}$\beta_{\restriction \operatorname{G}_v}=0$ for all $\beta\in H^1(\op{G}_{\Q,Y}, \mathscr{W}^*)$,
\item \label{71five} $\psi_{\restriction \operatorname{G}_v}\neq 0$ and one can extend $\{\psi\}$ to a basis $\psi_1=\psi,\psi_2,\dots, \psi_k$ of $H^1(\op{G}_{\Q,Y}, \g^*)$ such that $\psi_{i \restriction \operatorname{G}_v}=0$ for $i>1$.
\end{enumerate}
\end{Prop}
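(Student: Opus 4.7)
The plan is to convert each of the five conditions into a Chebotarev-type splitting condition in an appropriate Galois extension of $\Q$, and then combine them using the linear disjointness machinery developed in Section 4. Condition (1) is the trivial prime condition (split in $K$, non-split in $\Q(\mu_{p^2})$). Condition (4) requires $\sigma_v$ to be trivial in the composite of the finitely many extensions $K_\beta$ as $\beta$ ranges over $H^1(\op{G}_{\Q,Y},\mathscr{W}^*)$. Condition (5), after extending $\psi$ to a basis $\psi=\psi_1,\psi_2,\dots,\psi_k$, demands that $\sigma_v$ be trivial in each $K_{\psi_i}$ for $i>1$, while acting nontrivially on $\op{Gal}(K_\psi/K)$; by Lemma~$\ref{l4}$ it then suffices to place $\sigma_v$ in the $\bar{\chi}\sigma_{2L_1}$-eigenspace of $\op{Gal}(K_\psi/K)$. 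Conditions (2) and (3) are $\g$-valued: (2) fixes a coset for the image of $\sigma_v$ in $\op{Gal}(K(\rho_3)/K)$ (so that $\rho_3(\sigma_v)$ has the shape required by Definition~$\ref{defconditions}$(1) after conjugation by $\op{Id}+X_{-2L_1}$), while (3) by Lemma~$\ref{lemma55}$ is the single linear constraint $a_{2L_1}+(cd)^{-1}a_1\neq 0$ on the components of $f(\sigma_v)$, i.e.\ a condition on $\sigma_v$ in $\op{Gal}(L_f/L)$.

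For the $\g^*$-side I would argue as in Proposition~$\ref{P1}$, enriched with the unrelatedness inputs of Lemma~$\ref{415}$: the fields $K_\beta$, $K(\mu_{p^2})$, and $K(\zeta_2)$ are each unrelated to $\g^*$, so by the cohomological counting of Proposition~$\ref{414}$ one has $K_\psi\not\subseteq \prod_{i\geq 2}K_{\psi_i}\cdot \prod_\beta K_\beta\cdot K(\mu_{p^2})\cdot K(\zeta_2)$, and in fact by Lemma~$\ref{mainin}$ the intersection is contained in $J_\psi$. Choosing $\sigma_v$ nontrivial in the $\bar{\chi}\sigma_{2L_1}$-eigenspace $\op{Gal}(K_\psi/J_\psi)$ while splitting completely in all the other listed fields produces a positive-density Chebotarev class that simultaneously achieves (1), (4) and (5). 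The $\g$-side fields $L_f$ and $K(\rho_3)$ are each unrelated to $\g^*$ by Lemma~$\ref{415}$(3)–(4), so Lemma~$\ref{lemma416}$ shows that the $\g$-type composite $L_f\cdot K(\rho_3)$ is linearly disjoint over $K$ from the $\g^*$-type composite above. Thus the $\g^*$-side and $\g$-side splitting prescriptions may be imposed independently.

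It remains to verify compatibility of (2) and (3) on the $\g$-side, which I expect to be the main obstacle. Proposition~$\ref{bigimageprop}$ gives $\rho_2(\op{G}_L)=\g$; together with the mod-$p^3$ lift step this realizes a controllable $\g$-module quotient in $\op{Gal}(K(\rho_3)/K)$, so (2) prescribes an entire coset of values for $\rho_3(\sigma_v)$ rather than a point. The constraint from Lemma~$\ref{lemma55}$ defining $\mathcal{N}_v^{nr}$ is a single nontrivial $\F_q$-linear equation on the $X_{2L_1}$ and $H_1$ components of $f(\sigma_v)$; since $f\neq 0$, the image $f(\op{G}_K)\subseteq \g$ is a nonzero Galois submodule, and by Lemma~$\ref{y1}$ it is not related to the quotient of $\op{Gal}(K(\rho_3)/K)$ relevant for condition (2) unless they are proportional as submodules of $\g$, an eventuality ruled out (or corrected by a twist of $f$ by a class in $\mathcal{N}$, which preserves the hypothesis $f\in H^1_{\mathcal{N}}$) using the eigenspace decomposition of Lemma~$\ref{Pdecomposition}$. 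Consequently one can first fix $\sigma_v$ in $\op{Gal}(K(\rho_3)/K)$ to meet (2), then lift to $\op{Gal}(L_f\cdot K(\rho_3)/K(\rho_3))$ — whose action on $\op{Gal}(L_f/L_f\cap K(\rho_3))$ is, via $f$, a nonzero subquotient of $\g$ — so as to force $f(\sigma_v)$ outside the hyperplane of Lemma~$\ref{lemma55}$. Combining this with the independently chosen $\g^*$-side condition and applying the Chebotarev density theorem to the composite of all the fields above yields the required prime $v$.
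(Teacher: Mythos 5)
Your overall architecture matches the paper: you correctly separate the $\g^{*}$-valued conditions (1), (4), (5) from the $\g$-valued conditions (2), (3), use Lemma~\ref{lemma416} to get linear disjointness between the two families of fields over $K$, and handle (4) and (5) by the eigenspace/unrelatedness argument of Propositions~\ref{414} and~\ref{P1}. You also correctly identify Lemma~\ref{lemma55} as the tool that turns condition (3) into a single linear constraint on the $X_{2L_1}$- and $H_1$-components of $f(\sigma_v)$. (A minor point: the deformation condition $\mathcal{C}_v^{nr}$ is defined on the mod-$p^2$ reduction, so the relevant field for (2) is $K(\rho_2)$, not $K(\rho_3)$; this is harmless but worth getting right.)

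The genuine gap is in your treatment of the compatibility of (2) and (3), precisely the step you flagged as the main obstacle. You argue by Lemma~\ref{y1} that any $\op{G}'$-map between $f(\op{G}_K)$ and the Galois module governing condition (2) is proportional to inclusion, and you then try to dismiss the proportionality case, writing that it is ``ruled out (or corrected by a twist of $f$ by a class in $\mathcal{N}$).'' Neither half of that parenthetical works. Proportionality is \emph{not} ruled out: Proposition~\ref{bigimageprop} gives $\op{Gal}(K(\rho_2)/K)\simeq\g$, and if $K_f\subseteq K(\rho_2)$ then by Lemma~\ref{fullrankLemma} the identification $\op{Gal}(K(\rho_2)/K)/Q\simeq\op{Gal}(K_f/K)$ forces $Q=0$, so $K_f=K(\rho_2)$; Corollary~\ref{Coradd} then shows $f_{\restriction\op{G}_K}=a\cdot\rho_{2\restriction\op{G}_K}$ for some $a\in\F_q^{\times}$. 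And twisting $f$ is not available here: $f$ is a fixed nonzero element of $H^1_{\mathcal{N}}$ about which a conclusion must be drawn, not an object you get to modify before choosing $v$.

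The missing insight is that the proportionality case is benign, not fatal. When $f=a\rho_{2\restriction\op{G}_K}$, the Chebotarev requirement you impose for (2)—that $(\operatorname{Id}+X_{-2L_1})^{-1}\rho_2(\sigma_v)(\operatorname{Id}+X_{-2L_1})$ lie in $\mathcal{T}$ with nonzero $H_1$-component—automatically transfers to $f(\sigma_v)=a\rho_2(\sigma_v)$, which therefore fails the linear constraint of Lemma~\ref{lemma55} and so fails $\mathcal{N}_v^{nr}$. That is, in this case condition (3) is implied by condition (2), rather than having to be arranged independently. The complementary case $K_f\not\subseteq K(\rho_2)$ is the one where your ``first fix $\sigma_v$ in $\op{Gal}(K(\rho_2)/K)$, then lift to $\op{Gal}(L_f\cdot K(\rho_2)/K(\rho_2))$'' idea does go through—there one needs to observe (via Lemma~\ref{mainin}) that $\op{Gal}(K_f/K_f\cap K(\rho_2,\mu_{p^2}))$ still contains the full $\sigma_{2L_1}$-eigenspace, giving the freedom to move the $X_{2L_1}$-component of $f(\sigma_v)$ off the hyperplane. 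You need a clean case split on $K_f\subseteq K(\rho_2)$ versus $K_f\not\subseteq K(\rho_2)$ to make the argument watertight.
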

\begin{proof}
Each condition is a union of Chebotarev conditions on a number of finite extensions $J$ of $K$. Each of the extensions $J$ are Galois over $\Q$ with $\op{Gal}(J/K)$ an $\F_p$-vector space. Let $g\in \op{G}'$ and $x\in \op{Gal}(J/K)$, define, $g\cdot x:=\tilde{g} x \tilde{g}^{-1}$ where $\tilde{g}$ is a lift of $g$ to $\op{Gal}(J/\Q)$. This gives $\op{Gal}(J/K)$ the structure of an $\F_p[\op{G}']$-module. For each condition, we list the choices for $J$ below as well as characters for the $\mathbb{T}$-action on $\op{Gal}(J/K)$:
\begin{center}
\begin{tabular}{c|c|c } 
 \text{Condition} & $J$ & $\text{Eigenspaces of } \operatorname{Gal}(J/K)$ \\ [1 ex]
 \hline
 $(1)$ & $K(\mu_{p^2})$ & $1$ \\
 \hline
 $(2)$ & $K(\rho_2)$ & $1,\{\sigma_{\lambda}\}_{\lambda\in \Phi}$ \\
  \hline
 $(3)$ & $K_f$ & $1,\{\sigma_{\lambda}\}_{\lambda\in \Phi}$ \\
  \hline
 $(4)$ & $K_{\beta}$ \text{ for } $\beta \in H^1(\op{G}_{\Q,Y},\mathscr{W}^*)$& $\bar{\chi},\{\bar{\chi}\sigma_{\lambda}^{-1}|\lambda\neq -2L_1\}$\\
 \hline
 $(5)$ & $K_{\psi_i} $ & $\bar{\chi},\{\bar{\chi}\sigma_{\lambda}^{-1}\}$.\\
\end{tabular}
\end{center}
We show that these conditions may be simultaneously satisfied. First, we show that each of the conditions is a nonempty Chebotarev condition (or a union of finitely many Chebotarev conditions). It is clear that condition $\eqref{71one}$ and $\eqref{71two}$ are nonempty Chebotarev conditions. Lemma $\ref{lemma55}$ gives a criterion for the element $f$ to not be in the space $\mathcal{N}_v$. In accordance with Lemma $\ref{lemma55}$, write $X_{-2L_1}=c e_{n+1,1}$ and $X_{2L_1}=d e_{1,n+1}$. Since $f$ is non-zero, $f(\op{G}_L)$ is a non-zero Galois-stable submodule of $\g$. Hence, by Lemma $\ref{mainin}$, contains $(\g)_{\sigma_{2L_1}}$. Therefore, the image of $f_{\restriction \op{G}_L}$ contains an element 
\[f(g)=\sum_{\lambda\in \Phi} a_{\lambda} X_{\lambda} +\sum_{i=1}^n a_i H_i\]such that $a_{2L_1}\neq -(cd)^{-1} a_1$. As a result, condition $\eqref{71three}$ is a union of finitely many nonempty Chebotarev conditions. Condition $\eqref{71four}$ requires that the prime splits in the composite of the fields $K_{\beta}$. That condition $\eqref{71five}$ is a nonempty Chebotarev condition follows from Proposition $\ref{P2}$.

Next we examine the independence of these conditions. It follows from Lemma $\ref{lemma416}$ that the composite of the fields defining the first three conditions is linearly disjoint over $K$ from the composite of the fields defining the last two conditions. As a result, the conditions may be treated separately from the last two. It follows from Proposition $\ref{P2}$ that the conditions $\eqref{71four}$ and $\eqref{71five}$ are compatible with each other. Therefore, it remains to show that $\eqref{71one}$,$\eqref{71two}$ and $\eqref{71three}$ may be simultaneously satisfied. We begin with the independence of $\eqref{71one}$ and $\eqref{71two}$. Proposition $\ref{bigimageprop}$ asserts that $\operatorname{Gal}(K(\rho_2)/K)= \g$. Suppose that $Q$ is a proper $\op{G}'$-stable subgroup of $\g$. Lemma $\ref{Pdecomposition}$ asserts that $Q$ decomposes into $\mathbb{T}$-eigenspaces $Q=\bigoplus_{\lambda\in \Phi\cup \{1\}} Q_{\sigma_{\lambda}}$ and Lemma $\ref{fullrankLemma}$ asserts that the eigenspace $Q_{-2L_1}:=Q_{\sigma_{-2L_1}}$ must be trivial. Hence the quotient $\g/Q$ must have a non-zero $\sigma_{-2L_1}$-eigenspace. It follows that there is no proper Galois stable subgroup $Q$ of $\g$ such that $\g/Q$ is has trivial Galois action. Since $\op{G}'$ acts trivially
on $\operatorname{Gal}(K(\mu_{p^2} )/K)$ it follows that $K(\rho_2) \cap K(\mu_{p^2}) = K$. Thus conditions $\eqref{71one}$ and $\eqref{71two}$ are independent.
\par We show that the first three conditions may be simultaneously satisfied by considering the cases $K(\rho_2)\supseteq K_f$ and $K(\rho_2)\not \supseteq K_f$ separately. First consider the case when $K(\rho_2)\supseteq K_f$. Let $r:=\dim_{\F_p} f(\op{G}_K)$. Since $\op{Gal}(K(\rho_2)/K)\simeq \g$, if $r< \dim_{\F_p} \g$ the containment $K(\rho_2)\supset K_f$ is proper. Since $f$ is non-zero, Lemma $\ref{l4}$ asserts that $K_f\neq K$. Let $Q\subset \op{Gal}(K(\rho_2)/K)$ be the proper subgroup such that $\op{Gal}(K(\rho_2)/K)/Q\simeq \op{Gal}(K_f/K)$. Lemma $\ref{Pdecomposition}$ asserts that $Q$ decomposes into $\mathbb{T}$-eigenspaces $Q=\bigoplus_{\lambda\in \Phi\cup \{1\}} Q_{\sigma_{\lambda}}$ and Lemma $\ref{fullrankLemma}$ asserts that the eigenspace $Q_{-2L_1}:=Q_{\sigma_{-2L_1}}$ must be trivial. Hence the quotient $\op{Gal}(K_f/K)$ must have a non-zero $\sigma_{-2L_1}$-eigenspace. Identify $\op{Gal}(K_f/K)$ with $f(\op{G}_K)\subset \g$. Since $r<\dim_{\F_p} \g$, Lemma $\ref{fullrankLemma}$ asserts that $f(\op{G}_K)_{-2L_1}=0$, a contradiction. Hence, $K(\rho_2)\supseteq K_f$ forces equality $K(\rho_2)= K_f$. Let \[\alpha_1:=f_{\restriction \op{G}_K}:\op{Gal}(K_f/K)\xrightarrow{\sim} \g\]
and 
\[\alpha_2:=\rho_{2\restriction \op{G}_K}:\op{Gal}(K_f/K)\xrightarrow{\sim} \g. \] The composite $\alpha_1\alpha_2^{-1}$ is a $\op{G}'$-automorphism of $\g$. It follows from Corollary $\ref{Coradd}$ that $\alpha_1\alpha_2^{-1}$ is a scalar $a\in \F_q^{\times}$ and hence $\alpha_1=a\alpha_2$. Let $v$ satisfy $\eqref{71one}$, $\eqref{71two}$, $\eqref{71four}$ and $\eqref{71five}$ such that 
\[(\operatorname{Id}+X_{-2L_1})^{-1}\rho_2(\sigma_v)(\operatorname{Id}+X_{-2L_1})\in \mathcal{T}\]has non-trivial $H_1$ component. Since $v$ is a trivial prime, $\sigma_v$ lies in $\op{G}_K$. Identifying $\op{ker}\{\GSp(\text{W}(\F_q)/p^2)\rightarrow \GSp(\F_q)\}$ with $\g$, we view $\rho_2(\sigma_{v})$ as an element in $\g$. Since $f(\sigma_v)=a\rho_2(\sigma_v)$, we see that $(\operatorname{Id}+X_{-2L_1})^{-1}f(\sigma_v)(\operatorname{Id}+X_{-2L_1})$ has non-zero $H_1$ component and hence is not contained in $\mathfrak{t}_{2L_1}+\op{Cent}((\g)_{2L_1})$. As a result, $f$ is not in $(\operatorname{Id}+X_{-2L_1})\mathcal{P}_v^{2L_1}(\operatorname{Id}+X_{-2L_1})^{-1}$. It is easy to see that $f$ is not in $\mathcal{N}_v$ and hence $\eqref{71three}$ is also satisfied.

\par We consider the case when $K_f\not\subseteq K(\rho_2)$. Since \[[K(\rho_2):K]=\# \g \geq [K_f:K],\]$K(\rho_2)$ is not contained in $K_f$. It follows that $K(\rho_2)\supsetneq K(\rho_2)\cap K_f$ and $K_f\supsetneq K(\rho_2)\cap K_f$ and thus by Lemma $\ref{mainin}$, the images of \[\op{Gal}(K(\rho_2)/K(\rho_2)\cap K_f)\hookrightarrow  \g\text{ and  }\op{Gal}(K_f/K(\rho_2)\cap K_f)\hookrightarrow  \g\] contain $(\g)_{\sigma_{2L_1}}$. If $K_f\subseteq K(\rho_2,\mu_{p^2})$, then it follows that
\[\dim_{\F_p} (\op{Gal}(K(\rho_2,\mu_{p^2})/K)_{\sigma_{2L_1}})\geq 2\dim_{\F_p} (\g)_{\sigma_{2L_1}}=2[\F_q:\F_p].\]However, $\op{Gal}(K(\rho_2,\mu_{p^2})/K)_{\sigma_{2L_1}}$ may be identified with \[\op{Gal}(K(\rho_2)/K)_{\sigma_{2L_1}}\simeq (\g)_{\sigma_{2L_1}}\] since $K( \mu_{p^2})$ contributes to the trivial eigenspace.
Hence, $K_f \not \subseteq K(\rho_2, \mu_{p^2} )$. Let $v$ be a prime satisfying conditions $\eqref{71one}$, $\eqref{71two}$, $\eqref{71four}$ and $\eqref{71five}$. Lemma $\ref{mainin}$ asserts that the image of \[\op{Gal}(K_f/K_f\cap K(\rho_2,\mu_{p^2}))\hookrightarrow  \g\] contains $(\g)_{\sigma_{2L_1}}$ and thus, we have the freedom to stipulate that the $X_{2L_1}$-component of $f(\sigma_v)$ be anything we like. Lemma $\ref{lemma55}$ asserts that if $f\in \mathcal{N}_v$, an explicit relationship must be satisfied between the $X_{2L_1}$-component and the $H_1$-component of $f(\sigma_v)$. It follows that we may alter the $X_{2L_1}$-component of $f(\sigma_v)$ so that $f\notin \mathcal{N}_v$. Therefore all conditions may be satisfied and the proof is complete.
\end{proof}
\begin{Prop}
There is a finite set $Y\supseteq Z$ such that $Y\backslash S$ consists of trivial primes and $H^1_{\mathcal{N^{\perp}}}(\op{G}_{\Q,Y}, \g^*)=0$.
\end{Prop}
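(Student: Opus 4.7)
The plan is to construct $Y$ by an inductive procedure, enlarging $Z$ by a single trivial prime at each step so as to strictly decrease the dual Selmer dimension. Set $Y_0 := Z$, with the local conditions arranged in Propositions \ref{lifttorho3} and \ref{bigimageprop} (the given $\mathcal{C}_v$ at $v \in S$, $\mathcal{C}_{v_j}^{ram}$ at $v_1, v_2$, and $\mathcal{C}_v^{nr}$ at each remaining prime of $Z \setminus S$). If $H^1_{\mathcal{N}^\perp}(\operatorname{G}_{\Q, Y_0}, \g^*) = 0$, take $Y := Y_0$. Otherwise, at stage $i$, Wiles' formula \eqref{wilesformula} --- using $\dim \mathcal{N}_w = h^0(\op{G}_w, \g)$ at each finite $w \in Y_i$ and $h^0(\op{G}_\infty, \g) = \dim \mathfrak{n}$ by oddness --- gives $h^1_{\mathcal{N}} = h^1_{\mathcal{N}^\perp}$, so the Selmer group $H^1_{\mathcal{N}}(\operatorname{G}_{\Q, Y_i}, \g)$ is also nonzero. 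Pick nonzero $f$ in it and nonzero $\psi$ in the dual Selmer group, and feed them into Proposition \ref{lastchebotarev} to obtain a trivial prime $v \notin Y_i$ satisfying conditions \eqref{71one}--\eqref{71five} there; put $Y_{i+1} := Y_i \cup \{v\}$ with local condition $\mathcal{C}_v := \mathcal{C}_v^{nr}$ (valid by \eqref{71two}).

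The inductive heart is the claim
\[
h^1_{\mathcal{N}^\perp}(\operatorname{G}_{\Q, Y_{i+1}}, \g^*) = h^1_{\mathcal{N}^\perp}(\operatorname{G}_{\Q, Y_i}, \g^*) - 1.
\]
Comparing \eqref{wilesformula} for $Y_i$ and $Y_{i+1}$, together with $\dim \mathcal{N}_v^{nr} = h^0(\op{G}_v, \g)$, shows the quantity $h^1_{\mathcal{N}} - h^1_{\mathcal{N}^\perp}$ is unchanged; so it suffices to exhibit a drop of exactly one on the Selmer side. The inflation--restriction/Poitou--Tate comparison between $Y_i$ and $Y_{i+1}$ yields an exact sequence of the form
\[
0 \to H^1_{\mathcal{N}}(\operatorname{G}_{\Q, Y_{i+1}}, \g) \to H^1_{\mathcal{N}}(\operatorname{G}_{\Q, Y_i}, \g) \xrightarrow{\mathrm{res}_v} \frac{H^1(\op{G}_v, \g)}{\mathcal{N}_v^{nr}},
\]
in which condition \eqref{71three} of Proposition \ref{lastchebotarev} forces $\mathrm{res}_v(f) \neq 0$, so the kernel has codimension at least one. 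The matching upper bound (codimension at most one) is extracted from the dual side via Tate local duality: once we know $\psi|_{\op{G}_v} \notin \mathcal{N}_v^{nr,\perp}$, the image of $\mathrm{res}_v$ is at most one-dimensional, and we conclude both sides drop by exactly one.

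The main obstacle is this last verification: since $\dim \mathcal{N}_v^{nr,\perp} = \dim \g$ sits in the $2\dim \g$-dimensional $H^1(\op{G}_v, \g^*)$, the mere nonvanishing $\psi|_{\op{G}_v} \neq 0$ from condition \eqref{71five} is not automatically sufficient. What rescues the argument is the explicit description of $\mathcal{N}_v^{nr}$ provided by Lemma \ref{lemma55}, combined with the basis property in \eqref{71five} (that the remaining basis elements $\psi_2, \ldots, \psi_k$ vanish at $v$) and the $\bar\chi\sigma_{2L_1}$-eigenspace analysis from Proposition \ref{P1}: together these place the component of $\psi|_{\op{G}_v}$ along $(\g^*)_{\bar\chi\sigma_{2L_1}}$ outside $\mathcal{N}_v^{nr,\perp}$. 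Granted this, iteration strictly decreases the nonnegative integer $h^1_{\mathcal{N}^\perp}(\operatorname{G}_{\Q, Y_i}, \g^*)$ and so terminates in finitely many steps, yielding a set $Y \supseteq Z$ with $Y \setminus S$ consisting of trivial primes and $H^1_{\mathcal{N}^\perp}(\operatorname{G}_{\Q, Y}, \g^*) = 0$.
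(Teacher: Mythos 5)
Your high-level strategy --- enlarge the set of primes one trivial prime at a time, using Proposition \ref{lastchebotarev} to select $v$, and track the drop in Selmer dimension --- is the paper's strategy. But the inductive step as you've written it has a genuine gap: the exact sequence
\[
0 \to H^1_{\mathcal{N}}(\operatorname{G}_{\Q, Y_{i+1}}, \g) \to H^1_{\mathcal{N}}(\operatorname{G}_{\Q, Y_i}, \g) \xrightarrow{\mathrm{res}_v} \frac{H^1(\op{G}_v, \g)}{\mathcal{N}_v^{nr}}
\]
does not hold, because $H^1_{\mathcal{N}}(\operatorname{G}_{\Q, Y_{i+1}}, \g)$ is \emph{not} a subgroup of $H^1_{\mathcal{N}}(\operatorname{G}_{\Q, Y_i}, \g)$. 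The local condition $\mathcal{N}_v^{nr}$ is not contained in $H^1_{nr}(\op{G}_v, \g)$: the space $\mathcal{P}_v^{2L_1}$ in its definition contains classes $\phi$ with $\phi(\tau_v) \in (\g)_{2L_1}$ nonzero, so a cocycle in the new Selmer group may be genuinely ramified at $v$. Such a cocycle does not even lie in $H^1(\op{G}_{\Q, Y_i}, \g)$, so there is no injection into the old Selmer group. (The superscript ``$nr$'' refers to the mod-$p^2$ reduction of the deformation being unramified, not to the tangent space consisting of unramified classes.) Thus the kernel argument, which produces a codimension-one drop via \eqref{71three}, only controls the intersection of the old Selmer group with the new local condition; it says nothing about the newly-allowed ramified classes at $v$, which could in principle keep the Selmer dimension from dropping.

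The paper handles exactly this difficulty by working with the larger group $\ker \Phi_3 \subseteq H^1(\op{G}_{\Q, Y\cup\{v\}}, \g)$ (global classes satisfying $\mathcal{N}_w$ only for $w \in Y$, with no condition at $v$), for which the exact sequence $0 \to H^1_{\mathcal{N}}(\op{G}_{\Q, Y\cup\{v\}}, \g) \to \ker \Phi_3 \to H^1(\op{G}_v, \g)/\mathcal{N}_v$ genuinely holds. One then needs a lower bound on the image of the last map, which the paper supplies by producing $\dim \g$ explicit classes in $\ker \Phi_3$ --- the unramified class $f$ together with ramified auxiliary classes $\omega_1, \dots, \omega_s$ valued in $\mathscr{W}$, whose existence is guaranteed by conditions \eqref{71four} and the auxiliary restriction maps $\Phi_1, \Phi_2$ --- and showing directly (via the explicit shape of $\mathcal{N}_v^{nr}$ and the computation $Q(\tau_v) = c'(e_{1,n+1} - H_1 - e_{n+1,1})$) that no nontrivial linear combination lies in $\mathcal{N}_v$. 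Your proposal is missing this construction of the auxiliary $\omega_i$'s, which is what controls the contribution of the newly-ramified classes at $v$; condition \eqref{71four} of Proposition \ref{lastchebotarev}, which you list but never use, exists precisely to make that construction possible. A secondary, smaller issue: condition \eqref{71five} gives $\psi|_{\op{G}_v} \neq 0$, but the argument that this component lands outside $\mathcal{N}_v^{nr,\perp}$ (which you flag as the ``main obstacle'' and sketch) is not actually needed in the paper's approach --- the paper uses the weaker auxiliary condition $\mathcal{M}$ with full local $H^1$ at $v$, for which $\mathcal{M}_v^{\perp} = 0$ and $\psi|_{\op{G}_v} \neq 0$ suffices directly.
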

\begin{proof}
\par Let $Y$ be a finite set of primes containing $Z$ such that $Y\backslash S$ consists of trivial primes. If $H^1_{\mathcal{N}}(\op{G}_{\Q,Y}, \g)\neq 0$, we exhibit a trivial prime $v$ not contained in $Y$ such that 
\[h^1_{\mathcal{N}}(\op{G}_{\Q,Y\cup \{v\}}, \g)<h^1_{\mathcal{N}}(\op{G}_{\Q,Y}, \g).\] Therefore, a finite set of primes $Y$ may be chosen so that the Selmer group $ H^1_{\mathcal{N}}(\op{G}_{\Q,Y}, \g)$ is equal to zero. Since 
\[h^1_{\mathcal{N}}(\op{G}_{\Q,Y}, \g)=h^1_{\mathcal{N}^{\perp}}(\op{G}_{\Q,Y}, \g^*),\] the dual Selmer group does also vanish. \par Let $v\notin Y$ be trivial prime which satisfies the conditions of Proposition $\ref{lastchebotarev}$. Let $\mathcal{M}$ be the Selmer condition 
\[\mathcal{M}_w:=\begin{cases}\mathcal{N}_w\text{ if }w\in Y\\
H^1(\operatorname{G}_v, \g)\text{ if }w=v\\
H^1_{nr}(\op{G}_w, \g)\text{ if }w\notin Y\cup\{v\}.
\end{cases}\]Let $\psi$ be the non-zero class in $H^1_{\mathcal{N}^{\perp}}(\op{G}_{\Q,Y}, \g^*)$ as in Proposition $\ref{lastchebotarev}$. Note that $H^1_{\mathcal{N}^{\perp}}(\op{G}_{\Q,Y}, \g^*)$ contains $H^1_{\mathcal{M}^{\perp}}(\op{G}_{\Q,Y\cup \{v\}}, \g^*)$ and since $\psi_{\restriction \op{G}_v}\neq 0$, the element $\psi$ is not contained in $ H^1_{\mathcal{M}^{\perp}}(\op{G}_{\Q,Y\cup \{v\}}, \g^*)$. In particular, we have that
\begin{equation}\label{dimgreaterone}
h^1_{\mathcal{N}^{\perp}}(\op{G}_{\Q,Y}, \g^*)>h^1_{\mathcal{M}^{\perp}}(\op{G}_{\Q,Y\cup \{v\}}, \g^*).
\end{equation}
\par Consider the restriction  maps
\begin{equation*}
\Phi_1:H^1(\op{G}_{\Q,Y}, \mathscr{W})\rightarrow \bigoplus_{w\in Y}H^1(\op{G}_w, \mathscr{W})
\end{equation*}
and
\begin{equation*}
\Phi_2:H^1(\op{G}_{\Q,Y\cup\{v\}}, \mathscr{W})\rightarrow \bigoplus_{w\in Y} H^1(\op{G}_w, \mathscr{W}).
\end{equation*} We show that the maps $\Phi_1$ and $\Phi_2$ have the same image. By the Poitou-Tate sequence,
\[0\rightarrow \op{image}(\Phi_1)\rightarrow \bigoplus_{w\in Y} H^1(\op{G}_w, \mathscr{W})\rightarrow H^1(\op{G}_{\Q,Y}, \mathscr{W}^*)^{\vee},\] i.e.,
the image of $\Phi_1$ is the exact annihiliator of the image of the restriction map
\[H^1(\op{G}_{\Q,Y}, \mathscr{W}^*)\rightarrow \bigoplus_{w\in Y} H^1(\op{G}_w, \mathscr{W}^*).\]
Let $\mathfrak{M}$ be the Selmer condition 
\[\mathfrak{M}_w:=\begin{cases}0\text{ if }w\in Y\\
H^1(\operatorname{G}_v, \mathscr{W})\text{ if }w=v\\
H^1_{nr}(\op{G}_w, \mathscr{W})\text{ if }w\notin Y\cup\{v\},
\end{cases}\] with dual Selmer condition 
\[\mathfrak{M}_w^{\perp}=\begin{cases}H^1(\operatorname{G}_v, \mathscr{W}^*)\text{ if }w\in Y\\
0\text{ if }w=v\\
H^1_{nr}(\op{G}_w, \mathscr{W}^*)\text{ if }w\notin Y\cup\{v\}.
\end{cases}\]
By the Poitou-Tate sequence, the image of $\Phi_2$ is the exact annihilator of the restriction map 
\[H^1_{\mathfrak{M}^{\perp}}(\op{G}_{\Q,Y\cup\{v\}}, \mathscr{W}^*)\rightarrow \bigoplus_{w\in Y} H^1(\op{G}_w, \mathscr{W}^*).\] By Proposition $\ref{lastchebotarev}$ condition $\eqref{71four}$, 
\[H^1_{\mathfrak{M}^{\perp}}(\op{G}_{\Q,Y\cup\{v\}}, \mathscr{W}^*)=H^1(\op{G}_{\Q,Y}, \mathscr{W}^*)\] and therefore, the image of $\Phi_1$ is equal to the image of $\Phi_2$.
\par We deduce that
\begin{equation}\label{phi3phi4}
\begin{split}
\dim  \ker \Phi_2-\dim  \ker \Phi_1 &=h^1(\op{G}_{\Q,Y\cup \{v\}},\mathscr{W})-h^1(\op{G}_{\Q,Y},\mathscr{W})\\
&=h^1(\operatorname{G}_v, \mathscr{W})-h^0(\operatorname{G}_v, \mathscr{W})\\
&=h^1(\operatorname{G}_v, \mathscr{W})-h^1_{nr}(\operatorname{G}_v, \mathscr{W}).\\
\end{split}
\end{equation}
By $\ref{phi3phi4}$, we deduce that the sequence
\begin{equation}\label{lastequation}0\rightarrow \ker \Phi_1\rightarrow \ker\Phi_2\rightarrow \frac{H^1(\operatorname{G}_v, \mathscr{W})}{H^1_{nr}(\operatorname{G}_v, \mathscr{W})}\rightarrow 0\end{equation}
is a short exact sequence.

\par Define the maps
\begin{equation*}
\Phi_3: H^1(\op{G}_{\Q,Y\cup \{v\}},\g) \rightarrow \bigoplus_{w\in Y} \frac{H^1(\op{G}_w,\g)}{\mathcal{N}_w}
\end{equation*}
and 
\begin{equation*}
\Phi_4: H^1(\op{G}_{\Q, Y},\g) \rightarrow \bigoplus_{w\in Y} \frac{H^1(\op{G}_w,\g)}{\mathcal{N}_w}.
\end{equation*} From the Cassels-Poitou-Tate long exact sequence and the vanishing of $\Sh^2_Y(\g)$, we deduce that the following sequences are exact
\begin{equation*}
 H^1(\op{G}_{\Q,Y\cup \{v\}},\g) \xrightarrow{\Phi_3} \bigoplus_{w\in Y} \frac{H^1(\op{G}_w,\g)}{\mathcal{N}_w}\rightarrow H^1_{\mathcal{M}^{\perp}}(\op{G}_{\Q,Y\cup \{v\}},\g^*)^{\vee}\rightarrow 0
\end{equation*}
\begin{equation*}
 H^1(\op{G}_{\Q,Y},\g) \xrightarrow{\Phi_4} \bigoplus_{w\in Y} \frac{H^1(\op{G}_w,\g)}{\mathcal{N}_w}\rightarrow H^1_{\mathcal{N}^{\perp}}(\op{G}_{\Q,Y},\g^*)^{\vee}\rightarrow 0.
\end{equation*}
Set $t'$ to denote the difference $\dim  \op{image} \Phi_3-\dim  \op{image} \Phi_4$. From the assertion made in $\eqref{dimgreaterone}$ we conclude that $t'\geq 1$.
\par We claim that it suffices to find $\dim \g-t'+1$ elements in $\ker\Phi_3$, no linear combination of which lies in $\mathcal{N}_v$. It follows then that the image of \[\ker\Phi_3\rightarrow \frac{H^1(\operatorname{G}_v,\g)}{\mathcal{N}_v}\] has dimension strictly greater than $\dim  \g-t'$. From the exactness of 
\[0\rightarrow H^1_{\mathcal{N}}(\op{G}_{\Q,Y\cup\{v\}}, \g)\rightarrow \ker\Phi_3\rightarrow \frac{H^1(\operatorname{G}_v,\g)}{\mathcal{N}_v}\]one may deduce that
\[\begin{split}h_{\mathcal{N}}^1(\op{G}_{\Q,Y\cup\{v\}}, \g)<&\dim  \ker \Phi_3-\dim  \g+t'.\\
= & h^1(\op{G}_{\Q,Y\cup\{v\}}, \g)-\dim \g -\dim \op{im} \Phi_4.\\\end{split}\]
Note that $\Sh^1_{Y}(\g)=0$ and thus an application of Wiles' formula \eqref{wilesformula} shows that
\[\begin{split}h^1(\op{G}_{\Q,Y}, \g)=& h^0(\op{G}_{\Q}, \g)-h^0(\op{G}_{\Q},\g^*)\\
+&\sum_{w\in Y\cup \{\infty\}} (h^1(\op{G}_w, \g)-h^0(\op{G}_w, \g))\end{split}\]
and 
\[\begin{split}h^1(\op{G}_{\Q,Y\cup\{v\}}, \g)=& h^0(\op{G}_{\Q}, \g)-h^0(\op{G}_{\Q},\g^*)\\
+&\sum_{w\in Y\cup \{v\}\cup \{\infty\}} (h^1(\op{G}_w, \g)-h^0(\op{G}_w, \g)).\end{split}\]
Therefore, 
\[\begin{split}
    h^1(\op{G}_{\Q,Y\cup \{v\}}, \g)=& h^1(\op{G}_{\Q,Y}, \g)+h^1(\op{G}_v, \g)-h^0(\op{G}_v, \g)\\
    =&h^1(\op{G}_{\Q,Y}, \g)+\dim \g.
\end{split}\]
Therefore, we have that 
\[\begin{split}h_{\mathcal{N}}^1(\op{G}_{\Q,Y\cup\{v\}}, \g)<& h^1(\op{G}_{\Q,Y}, \g)-\dim \op{im} \Phi_4\\
=&\dim \op{ker} \Phi_4\\
=& h_{\mathcal{N}}^1(\op{G}_{\Q,Y}, \g).
\end{split}\]
Therefore in order to complete the proof we proceed to construct $\dim \g-t'+1$ elements in $\ker\Phi_3$ no linear combination of which lies in $\mathcal{N}_v$. We are in fact able to construct $\dim  \g$ elements, which suffices since $t'\geq 1$.
\par Note that $\g/\mathscr{W}$ is isomorphic to $\F_q(\sigma_{-2L_1})$ and hence $H^0(\op{G}_{\Q},\g/\mathscr{W})$ is zero. We find that $H^1(\op{G}_{\Q,Y\cup \{v\}}, \mathscr{W})$ injects into $H^1(\op{G}_{\Q,Y\cup \{v\}}, \g)$ and thereby it follows that $\op{ker}\Phi_2$ is contained in $\op{ker}\Phi_3$. Let $Z_1,\dots, Z_s$ be a basis of $\mathscr{W}$. By the exactness of $\ref{lastequation}$ there exist $\omega_i\in \text{ker}\Phi_2$ such that $\omega_i(\tau_v)=Z_i$ for $i=1,\dots, s$. We show that no linear combination of $\{f, \omega_1,\dots, \omega_s\}$ lies in $\mathcal{N}_v$. Let $Q=c_0 f+\sum_{i=1}^s c_i \omega_i$ be in $\mathcal{N}_v$. Since $f$ is unramified at $v$, $f(\tau_v)=0$. On the other hand,
$Q(\tau_v)=\sum_{i=1}^s c_i Z_i$ is contained in $\mathscr{W}$. Since $Q\in \mathcal{N}_v$, \[Q(\tau_v)=c'(\operatorname{Id}+X_{-\alpha})X_{\alpha}(\operatorname{Id}+X_{-\alpha})^{-1}\] for $\alpha=2L_1$ and some constant $c'$. The root vectors $X_{\alpha}$ and $X_{-\alpha}$ are constant multiples of $e_{1,n+1}$ and $e_{n+1,1}$ respectively. Assume without loss of generality that $X_{\alpha}=e_{1,n+1}$ and $X_{-\alpha}=e_{n+1,1}$. Clearly, $X_{-\alpha}^2=0$ and hence $(1+X_{-\alpha})^{-1}=(1-X_{-\alpha})$. We see that
\[\begin{split}Q(\tau_v)=&c'(\operatorname{Id}+X_{-\alpha})X_{\alpha}(\operatorname{Id}-X_{-\alpha}) \\
=& c'\left(X_{\alpha} +[X_{-\alpha},X_{\alpha}]-X_{-\alpha}X_{\alpha}X_{-\alpha}\right)\\
=& c'\left(e_{1,n+1}-H_1-e_{n+1,1}\right). \end{split}\]We deduce that $Q(\tau_v)=0$ since $e_{n+1,1}\notin \mathcal{W}$. Therefore, $c_i=0$ for all $i=1,\dots, s$. As a consequence, $Q=c_0 f$. However, $f$ is not contained in $\mathcal{N}_v$. It follows that $c_0=0$ and therefore, $Q=0$. Therefore no linear combination of $\{f, \omega_1,\dots, \omega_s\}$ lies in $\mathcal{N}_v$ and this completes the proof.
\end{proof}
To conclude the proof of Theorem $\ref{main}$, we observe that on choosing an appropriately large choice of trivial primes the dual Selmer group vanishes and hence, by the lifting construction outlined in section $\ref{section3}$, $\rho_3$ lifts to a characteristic zero representation $\rho$. Furthermore, $\rho$ can be arranged to have similitude character $\kappa$, and satisfy the local conditions $\mathcal{C}_v$ at the primes $v\in S$. Proposition $\ref{bigimageprop}$ asserts that the image of $\rho_2$ contains \[\widehat{\op{Sp}}_{2n}(\text{W}(\F_q)/p^2):=\left\{\op{Sp}_{2n}(\text{W}(\F_q)/p^2)\rightarrow\op{Sp}_{2n}(\F_q) \right\}\]and it follows that $\rho$ is irreducible.
\section{Examples}\label{examples}
\par Let $p$ be an odd prime. Under certain hypotheses on $p$, we show that there are examples of reducible Galois representations 
$\bar{\rho}:\op{G}_{\Q,\{p\}}\rightarrow \op{GSp}_4(\F_p)$ which satisfy the conditions of Theorem $\ref{main}$. First, we sketch the strategy used. The reader may refer to section $\ref{notationsection}$ for some of the notation used in this section. Recall that $\bar{\chi}$ denotes the mod-$p$ cyclotomic character. Let $\varphi_1,\varphi_2$ and $\bar{\kappa}:\op{G}_{\Q,\{p\}}\rightarrow \op{GL}_1(\F_q)$ be characters to be specificied later and $\bar{r}$ the diagonal representation specified by \[\bar{r}:=\left( {\begin{array}{cccc}
   \varphi_1 & & &  \\
    & \varphi_2 & & \\
    & & \varphi_1^{-1} \bar{\kappa} & \\
    & & & \varphi_2^{-1} \bar{\kappa} 
  \end{array} } \right):\op{G}_{\Q,\{p\}}\rightarrow \op{GSp}_4(\F_p).\] The characters, $\varphi_i$ and $\bar{\kappa}$ will be powers of $\bar{\chi}$.
  Let $D\in \op{GSp}_4(\Q_p)$ be the diagonal matrix \[D:=\left( {\begin{array}{cccc}
   p & & &  \\
    & 1& & \\
    & & p^{-2}  & \\
    & & & p^{-1}
  \end{array} } \right)\] and set $H$ to denote $(D\op{GSp}_4(\Z_p)D^{-1})\cap (D^{-1}\op{GSp}_4(\Z_p)D)$. Note that $H$ is the subgroup of $\op{GSp}_4(\Z_p)$ consisting of matrices \[X=\left( {\begin{array}{cccc}
   a_{1,1} & p a_{1,2}& p^3 a_{1,3}& p^2 a_{1,4} \\
   p a_{2,1} & a_{2,2}& p^2 a_{2,3}& p a_{2,4}\\
   p^3 a_{3,1} & p^2 a_{3,2} & a_{3,3} & p a_{3,4}\\
    p^2 a_{4,1}& p a_{4,2} & p a_{4,3} & a_{4,4}
  \end{array} } \right).\] Note that $D^{-1} X D$ is equal to  \[\left( {\begin{array}{cccc}
   a_{1,1} &  a_{1,2}&  a_{1,3}&  a_{1,4} \\
   p^2 a_{2,1} & a_{2,2}&  a_{2,3}& a_{2,4}\\
   p^6 a_{3,1} & p^4 a_{3,2} & a_{3,3} & p^2 a_{3,4}\\
    p^4 a_{4,1}& p^2 a_{4,2} &  a_{4,3} & a_{4,4}
  \end{array} } \right)\] and thus reduces to the Borel $\op{B}(\F_q)$ modulo $p$. Let $H_0$ denote the intersection of $H$ with $\op{Sp}_4(\Z_p)$. Recall that for $k\geq 1$, $\op{U}_k(\F_p)\subset \op{B}(\F_q)$ is the exponential subgroup generated by $\operatorname{exp}((\g)_k)$. The strategy we adopt is as follows:
  \begin{enumerate}
      \item Under some conditions on $p$, we may choose $\varphi_1,\varphi_2$ and $\bar{\kappa}$ such that $H^2(\op{G}_{\Q,\{p\}},\op{Ad}^0\bar{r})$ is zero. Thus the global deformation problem (unramified outside $\{p\}$) is unobstructed.
      \item We show that there is a lift
      \[r:\op{G}_{\Q,\{p\}}\rightarrow \op{GSp}_4(\Z_p)\] of $\bar{r}$ with image in $H$. Letting $\bar{\rho}$ denote the mod-$p$ reduction of $D^{-1} r D$, we note that the image of $\bar{\rho}$ is contained in $\op{B}(\F_p)$.
      \item Let $\Pi$ denote the intersection of the image of $\bar{\rho}$ with $\op{U}_1(\F_p)$. It is shown that after the mod-$p^2$ lift $r_2$ of $\bar{r}$ may be carefully chosen so that $\Pi$ surjects onto $\op{U}_1(\F_p)/\op{U}_2(\F_p)$. Lemma $\ref{bigimagelemmaU1}$ shows that the image of $\bar{\rho}$ contains $\op{U}_1(\F_p)$. Moreover, the characters $\varphi_1, \varphi_2$ and $\bar{\kappa}$ are suitably chosen so that all the conditions of Theorem $\ref{main}$ are satisfied.
  \end{enumerate}
  Recall that $\Phi^+$ consists of roots $\{2L_1,2L_2, (L_1-L_2), (L_1+L_2)\}$ and the simple roots are $\lambda_1=L_1-L_2$ and $\lambda_2=2L_2$. The root vectors are as follows \[
{\small X_{2L_1}}:={\tiny\left( {\begin{array}{cccc}
    0 &  & 1 &  \\
    & 0 &  &  \\
   &  &  0 &  \\
    &  & & 0
  \end{array} }\right)}, {\small X_{2L_2}}:={\tiny\left( {\begin{array}{cccc}
    0 &  &  &  \\
    & 0 &  & 1 \\
   &  &  0 &  \\
    &  & & 0
  \end{array} }\right)},\]\[{\small X_{L_1+L_2}}:={\tiny\left( {\begin{array}{cccc}
    0 &  &  & 1 \\
    & 0 & 1 &  \\
   &  &  0 &  \\
    &  & & 0
  \end{array} }\right)}\text{ and }{\small X_{L_1-L_2}}:={\tiny\left( {\begin{array}{cccc}
    0 & 1 &  &  \\
    & 0 &  &  \\
   &  &  0 &  \\
    &  & -1 & 0
  \end{array} }\right).}
\] For $m\geq 1$, set $H_0(\Z/p^m)$ (resp. $H(\Z/p^m)$) to denote the image of $H_0$ (resp. $H$) in $\op{Sp}_4(\Z/p^m)$. Let $\mathfrak{h}_m$ denote the kernel of the mod-$p^m$ reduction map $H_0(\Z/p^{m+1})\rightarrow H_0(\Z/p^{m})$. Identify $\mathfrak{h}_m$ with a subspace of $\op{Ad}^0\bar{r}$, so that $\op{Id}+p^m X$ is identified with $X$ in $\op{Ad}^0\bar{r}$. It is easy to see that 
\[\mathfrak{h}_m=\begin{cases}
\F_p\langle H_1, H_2, X_{\pm(L_1-L_2)}, X_{\pm{2L_2}}\rangle\text{, for }m=1,\\
\F_p\langle H_1, H_2, X_{\pm(L_1-L_2)}, X_{\pm{2L_2}}, X_{\pm(L_1+L_2)}\rangle\text{, for }m=2,\\
\op{Ad}^0\bar{r}\text{, for }m\geq 3.

\end{cases}\]
 \par Let $A$ be the Class group of $\Q(\mu_p)$ and let $\mathcal{C}$ denote the mod-$p$ class group $\mathcal{C}:=A\otimes \F_p$. The Galois group $\op{Gal}(\Q(\mu_p)/\Q)$ acts on $\mathcal{C}$ via the natural action. Since the order of $\op{Gal}(\Q(\mu_p)/\Q)$ is prime to $p$, it follows that $\mathcal{C}$ decomposes into eigenspaces 
  \[\mathcal{C}=\bigoplus_{i=0}^{p-2} \mathcal{C}(\bar{\chi}^i),\]
  where $\mathcal{C}(\bar{\chi}^i)=\{x\in \mathcal{C} \mid g\cdot x=\bar{\chi}^{i}(g) x \}$.
   \begin{Lemma}\label{lemma31}
  For $0\leq i\leq p-2$,
  \begin{enumerate}
      \item\label{lemma31p1} the group $\Sh^1_{\{p\}}(\F_p(\bar{\chi}^i))$ injects into $\op{Hom}(\mathcal{C}(\bar{\chi}^i),\F_p)$,
      \item\label{lemma31p2} the group $\Sh^2_{\{p\}}(\F_p(\bar{\chi}^i))$ equals zero if $\mathcal{C}(\bar{\chi}^{p-i})$ equals zero.
  \end{enumerate}
  \end{Lemma}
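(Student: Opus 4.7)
The plan is to treat the two parts by classical reductions: for part~\eqref{lemma31p1}, pass to the cyclotomic field via inflation-restriction and invoke class field theory; for part~\eqref{lemma31p2}, use Poitou-Tate global duality to relate $\Sh^2$ to $\Sh^1$ of the Cartier dual and then apply part~\eqref{lemma31p1}.

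For part~\eqref{lemma31p1}, I would set $K=\Q(\mu_p)$ and $G=\op{Gal}(K/\Q)$. Since $|G|=p-1$ is coprime to $p$, inflation-restriction gives
\[H^1(\op{G}_{\Q,\{p\}},\F_p(\bar{\chi}^i))\xrightarrow{\sim}\op{Hom}_G(\op{G}_{K,\{p\}}^{ab},\F_p(\bar{\chi}^i)),\]
using that $\op{G}_K$ acts trivially on $\F_p(\bar{\chi}^i)$. A class lies in $\Sh^1_{\{p\}}$ precisely when the corresponding homomorphism vanishes on the decomposition group at the unique prime $\mathfrak{p}$ of $K$ above $p$. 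By class field theory, such a homomorphism factors through $\op{Cl}(K)/\langle[\mathfrak{p}]\rangle$; since $\mathfrak{p}^{p-1}=(p)$, the class $[\mathfrak{p}]$ has order dividing $p-1$ and is therefore trivial in $\mathcal{C}=\op{Cl}(K)\otimes\F_p$. Hence one obtains a $G$-equivariant map $\mathcal{C}\to\F_p(\bar{\chi}^i)$, and the $G$-isotypic decomposition together with Schur's lemma (the characters $\bar{\chi}^j$ for $0\le j\le p-2$ are pairwise distinct) extracts the $\bar{\chi}^i$-piece:
\[\Sh^1_{\{p\}}(\F_p(\bar{\chi}^i))\hookrightarrow\op{Hom}_G(\mathcal{C},\F_p(\bar{\chi}^i))=\op{Hom}(\mathcal{C}(\bar{\chi}^i),\F_p).\]

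For part~\eqref{lemma31p2}, I would apply Poitou-Tate global duality \cite{NW}, which yields
\[\Sh^2_{\{p\}}(\F_p(\bar{\chi}^i))\simeq \Sh^1_{\{p\}}(\F_p(\bar{\chi}^i)^{*})^{\vee}.\]
The Cartier dual is $\F_p(\bar{\chi}^i)^{*}=\F_p(\bar{\chi}^{1-i})=\F_p(\bar{\chi}^{p-i})$ since $\bar{\chi}^{p-1}=1$. Combining with part~\eqref{lemma31p1} produces an injection
\[\Sh^2_{\{p\}}(\F_p(\bar{\chi}^i))^{\vee}\hookrightarrow\op{Hom}(\mathcal{C}(\bar{\chi}^{p-i}),\F_p),\]
whose target vanishes under the hypothesis $\mathcal{C}(\bar{\chi}^{p-i})=0$.

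The only step with real content is the reduction in part~\eqref{lemma31p1}: one must check that killing the \emph{full} decomposition group at $\mathfrak{p}$ (not just the inertia subgroup) still lands in $\mathcal{C}$ rather than a proper quotient. This is where the calculation that $[\mathfrak{p}]$ has order prime to $p$ enters, and it is the only place the total ramification of $p$ in $K$ is used. Everything else is formal.
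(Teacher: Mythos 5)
Your proof is correct and takes essentially the same approach as the paper's, which cites inflation-restriction for part~\eqref{lemma31p1} and Poitou--Tate duality for $\Sh$-groups for part~\eqref{lemma31p2}. You merely fill in the class-field-theory step that the paper leaves implicit: namely, that a $G$-equivariant homomorphism $\op{G}_{K,\{p\}}^{ab}\to\F_p(\bar{\chi}^i)$ vanishing on the decomposition group at $\mathfrak{p}$ is forced to be unramified with $\mathfrak{p}$ split, hence factors through $\op{Cl}(K)/\langle[\mathfrak{p}]\rangle$, and then that $[\mathfrak{p}]$ dies in $\mathcal{C}=\op{Cl}(K)\otimes\F_p$ because $\mathfrak{p}^{p-1}=(p)$.
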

  \begin{proof}
  Since the order of $\op{Gal}(\Q(\mu_p)/\Q)$ is prime to $p$, it follows that \[H^1(\op{Gal}(\Q(\mu_p)/\Q), \F_p(\bar{\chi}^i))=0.\] As a result, part $\eqref{lemma31p1}$ follows from the inflation-restriction sequence. Part $\eqref{lemma31p2}$ follows from part $\eqref{lemma31p1}$ and Poitou-Tate duality for $\Sh$-groups \cite[Theorem 8.6.7]{NW}.
  \end{proof}

  \begin{Lemma}\label{bigimagelemmaU1}
Suppose that $p>2$ is a prime number and let $\Pi$ be the a subgroup of $\op{U}_1(\F_p)$ such that the quotient map $\Pi\rightarrow \op{U}_1(\F_p)/\op{U}_2(\F_p)$ is surjective. Then $\Pi$ is equal to $\op{U}_1(\F_p)$.
  \end{Lemma}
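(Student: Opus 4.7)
The plan is to use the nilpotency of $\op{U}_1(\F_p)$ together with the Chevalley commutator formula to bootstrap the hypothesized surjection $\Pi \twoheadrightarrow \op{U}_1/\op{U}_2$ up the descending filtration $\op{U}_1 \supseteq \op{U}_2 \supseteq \op{U}_3 \supseteq \op{U}_4 = \{1\}$. For $\GSp_4$ the positive roots of heights $1, 1, 2, 3$ are $\lambda_1 = L_1-L_2$, $\lambda_2 = 2L_2$, $\lambda_1+\lambda_2 = L_1+L_2$ and $2\lambda_1+\lambda_2 = 2L_1$, so the successive quotients are $\op{U}_1/\op{U}_2 \simeq \F_p^{\oplus 2}$, $\op{U}_2/\op{U}_3 \simeq \F_p$ and $\op{U}_3 \simeq \F_p$. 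The key structural input is the height-additivity $[\op{U}_i, \op{U}_j] \subseteq \op{U}_{i+j}$, which ensures both that commutators of elements of $\Pi$ stay inside $\Pi$ and drop a step down the filtration, and that their class at that step depends only on the classes of the inputs at the preceding step.

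The first step is to pick, using the hypothesis, elements $\pi_1, \pi_2 \in \Pi$ with $\pi_i \equiv \exp(X_{\lambda_i}) \bmod \op{U}_2$. Then $[\pi_1, \pi_2] \in \Pi \cap \op{U}_2$, and its image in the one-dimensional $\F_p$-space $\op{U}_2/\op{U}_3$ equals, by the Chevalley commutator formula, the class of $\exp([X_{\lambda_1}, X_{\lambda_2}])$. A direct computation with the matrix root vectors given in Section~\ref{notationsection} yields $[X_{\lambda_1}, X_{\lambda_2}] = X_{\lambda_1+\lambda_2}$, so this class is nonzero and generates $\op{U}_2/\op{U}_3$. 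Combined with the hypothesis, this shows $\Pi$ surjects onto $\op{U}_1/\op{U}_3$.

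The second step iterates the construction: $[\pi_1, [\pi_1, \pi_2]]$ lies in $\Pi \cap [\op{U}_1, \op{U}_2] \subseteq \Pi \cap \op{U}_3$, and modulo $\op{U}_4 = \{1\}$ it equals $\exp([X_{\lambda_1}, X_{\lambda_1+\lambda_2}])$. The matrix computation gives $[X_{\lambda_1}, X_{\lambda_1+\lambda_2}] = 2 X_{2L_1}$, so $[\pi_1,[\pi_1,\pi_2]] = \exp(2X_{2L_1})$. This is exactly where the hypothesis $p > 2$ is used essentially: since $2 \in \F_p^\times$, this element generates the one-dimensional group $\op{U}_3$, whence $\op{U}_3 \subseteq \Pi$. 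Combined with the already established surjection $\Pi \twoheadrightarrow \op{U}_1/\op{U}_3$, one concludes $\Pi = \op{U}_1(\F_p)$.

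The only delicate point is verifying that the higher-order terms in the Chevalley commutator formula (for instance the $\exp(\ast X_{2\lambda_1+\lambda_2})$-factor in the expansion of $[\exp(X_{\lambda_1}), \exp(X_{\lambda_2})]$) automatically lie in a deeper step of the filtration and hence do not contaminate the computations modulo $\op{U}_3$ or $\op{U}_4$; this is immediate from height-additivity. Thus the entire argument reduces to the two explicit bracket computations $[X_{\lambda_1}, X_{\lambda_2}] = X_{\lambda_1+\lambda_2}$ and $[X_{\lambda_1}, X_{\lambda_1+\lambda_2}] = 2X_{2L_1}$ together with a short piece of elementary group theory.
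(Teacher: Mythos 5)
Your proof is correct and takes essentially the same approach as the paper's: both rely on the Chevalley-type commutator formula $\{\exp X_\lambda, \exp X_\mu\} \equiv \exp([X_\lambda, X_\mu]) \bmod \op{U}_{\operatorname{ht}\lambda+\operatorname{ht}\mu+1}$ together with the two bracket computations $[X_{\lambda_1}, X_{\lambda_2}] = X_{L_1+L_2}$ and $[X_{\lambda_1}, X_{L_1+L_2}] = 2X_{2L_1}$, where $p>2$ enters via the factor $2$. The only stylistic difference is that you produce explicit generators by iterated commutators, whereas the paper packages the bootstrap as an ascending-then-descending double induction on the filtration step $k$.
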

  \begin{proof}
  For $x,y\in \op{U}_1(\F_p)$, set $\{x,y\}$ to denote the commutator $xyx^{-1}y^{-1}$. As $\F_p$-vector spaces, we have that
  \[\op{U}_k(\F_p)/\op{U}_{k+1}(\F_p)=\begin{cases}
  \F_p\langle \op{exp}(X_{L_1-L_2}), \op{exp}(X_{2L_2})\rangle \text{ if }k=1,\\
  
  \F_p \langle \op{exp}(X_{L_1+L_2})\rangle \text{ if }k=2,\\
    \F_p\langle \op{exp}(X_{2L_1}) \rangle\text{ if }k=3,\\
    0\text{ if }k>3.
  
  \end{cases}\]We check that if $x=\op{exp}(X_{\lambda})$ and $y=\op{exp}(X_{\mu})$, for roots $\mu$ and $\lambda$ in $\Phi^+$, with height $k$ and $l$ respectively, then
  \[\{x,y\}=\op{exp}([X_{\lambda},X_{\mu}])\mod{\op{U}_{k+l+1}(\F_p)}.\]We have the relations
  \begin{equation}\label{relationsrootvectors}[X_{L_1-L_2}, X_{2L_2}]=X_{L_1+L_2}\text{ and } [X_{L_1-L_2}, X_{L_1+L_2}]=2X_{2L_1}\end{equation} and that $X_{\lambda}^2=0$ for $\lambda\in \Phi^+$. We have therefore,
  \[\begin{split}\{x,y\}=&(\op{Id}+X_{\lambda})(\op{Id}+X_{\mu})(\op{Id}-X_{\lambda})(\op{Id}-X_{\mu})\\
  =&\op{Id}+[X_{\lambda}, X_{\mu}]+(X_{\mu}X_{\lambda} X_{\mu}-X_{\lambda}X_{\mu} X_{\lambda})+(X_{\lambda}X_{\mu})^2.\end{split}\]Since $X_{\lambda}^2$ and $X_{\mu}^2$ are both equal to zero, we have that 
  \[X_{\lambda} X_{\mu} X_{\lambda}=\frac{1}{2}[[X_{\lambda}, X_{\mu}],X_{\lambda}]\] and thus $X_{\lambda} X_{\mu} X_{\lambda}\in (\g)_{2k+l}$. Likewise, the same reasoning shows that \[X_{\mu} X_{\lambda} X_{\mu}=\frac{1}{2}[[X_{\mu}, X_{\lambda}],X_{\mu}]\] and therefore, $X_{\mu} X_{\lambda} X_{\mu}\in (\g)_{k+2l}$. Next, observe that $(X_{\lambda} X_{\mu})^2$ is equal to $\frac{1}{2}([X_{\lambda}, X_{\mu}])^2$. This too follows from the relations $X_{\lambda}^2=X_{\mu}^2=0$. From the relations $\eqref{relationsrootvectors}$, we have that if $[X_{\lambda}, X_{\mu}]$ is nonzero, then, $\lambda+\mu$ is a root and there is a constant $c$ such that $[X_{\lambda}, X_{\mu}]=c X_{\lambda+\mu}$. Since, $X_{\lambda+\mu}^2=0$
, it follows that $(X_{\lambda} X_{\mu})^2=0$. Since $X_{\lambda} X_{\mu} X_{\lambda}\in (\g)_{2k+l}$, and the maximal height of any root is $3$, it follows that either $X_{\lambda} X_{\mu} X_{\lambda}$ is zero, or a constant multiple of $X_{2L_1}$. It may be checked that $X_{2L_1} X_{\lambda}=X_{\lambda} X_{2L_1}=0$ for all $\lambda\in \Phi^+$. As a consequence, we arrive at the following relation:
\[\{x,y\}=\op{exp}([X_{\lambda},X_{\mu}])\op{exp}(-\frac{1}{2}[[X_{\lambda}, X_{\mu}],X_{\lambda}])\op{exp}(\frac{1}{2}[[X_{\mu}, X_{\lambda}],X_{\mu}]).\] Note that $\op{exp}(-\frac{1}{2}[[X_{\lambda}, X_{\mu}],X_{\lambda}])$ and $\op{exp}(\frac{1}{2}[[X_{\mu}, X_{\lambda}],X_{\mu}])$ are in $\op{U}_{k+l+1}$. Therefore, we deduce that
 \[\{x,y\}=\op{exp}([X_{\lambda},X_{\mu}])\mod{\op{U}_{k+l+1}(\F_p)}.\]
We deduce from the relations $\eqref{relationsrootvectors}$ that the commutator $(x,y)\mapsto\{x,y\}$ induces a surjective map:
  \[\op{U}_1(\F_p)/\op{U}_2(\F_p)\times \op{U}_k(\F_p)/\op{U}_{k+1}(\F_p)\rightarrow \op{U}_{k+1}(\F_p)/\op{U}_{k+2}(\F_p).\]
  It follows by ascending induction on $k$, that the quotient map \[\Pi\cap \op{U}_k(\F_p)\rightarrow \op{U}_k(\F_p)/\op{U}_{k+1}(\F_p)\] is surjective for $k\geq 1$. By descending induction on $k$, we deduce that $\Pi\cap \op{U}_k(\F_p)=\op{U}_k(\F_p)$ for $k\geq 1$. In particular, $\Pi$ is equal to $\op{U}_1(\F_p)$ and the proof is complete.
  \end{proof}
  \begin{Prop}
  Let $p\geq 23$ be a prime such that $\mathcal{C}(\bar{\chi}^{p-i})=0$ for $i\in \{\pm 3, \pm 6, \pm 9\}$. There exists a Galois representation
  \[\bar{\rho}:=\left( {\begin{array}{cccc}
   \bar{\chi}^3 &\ast & \ast & \ast \\
    & 1 & \ast & \ast \\
    & & \bar{\chi}^6 & \\
    & & \ast & \bar{\chi}^9
  \end{array} } \right):\op{G}_{\Q,\{p\}}\rightarrow \op{B}(\F_p)\] which satisfies the conditions of Theorem $\ref{main}$. The similitude character of $\bar{\rho}$ is the odd character $\bar{\chi}^9$. Let $\kappa$ be a fixed choice of a lift of $\bar{\kappa}$ such that $\kappa=\kappa_0\chi^k$, where $k$ is a positive integer divisible by $p(p-1)$ and $\kappa_0$ is the Teichm\"uller lift of $\bar{\kappa}$. There exists a finite set of auxiliary primes $X$ such that $p\notin X$ and a lift $\rho$ \[\begin{tikzpicture}[node distance = 2.0cm, auto]
      \node (GSX) {$\operatorname{G}_{\Q,\{p\}\cup X}$};
      \node (GS) [right of=GSX] {$\operatorname{G}_{\Q,\{p\}}$};
      \node (GL2) [right of=GS]{$\GSp_{4}(\F_p).$};
      \node (GL2W) [above of= GL2]{$\GSp_{4}(\Z_p)$};
      \draw[->] (GSX) to node {} (GS);
      \draw[->] (GS) to node {$\bar{\rho}$} (GL2);
      \draw[->] (GL2W) to node {} (GL2);
      \draw[dashed,->] (GSX) to node {$\rho$} (GL2W);
      \end{tikzpicture}\] for which 
\begin{enumerate}
\item\label{83p1} $\rho$ is irreducible,
\item\label{83p2} $\rho$ is $p$-ordinary (in the sense of \cite[section 4.1]{patrikisexceptional}),
\item\label{83p3} $\nu\circ \rho= \kappa$.
\end{enumerate}
  \end{Prop}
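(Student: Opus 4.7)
The plan is to carry out the three-step construction indicated in the text and then verify the hypotheses of Theorem $\ref{main}$.

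\emph{Step one: unobstructed deformation of $\bar{r}$.} The adjoint module $\op{Ad}^0\bar{r}\simeq \mathfrak{sp}_4$ decomposes as a direct sum of one-dimensional pieces: the trivial character appearing with multiplicity $n=2$ (from the Cartan) and one copy of $\F_p(\sigma_\lambda)$ for each root $\lambda\in\Phi$, where each $\sigma_\lambda$ is an explicit power of $\bar{\chi}$ with exponent in $\{\pm 3,\pm 6,\pm 9\}$ (computed directly from $\operatorname{diag}(\bar{\chi}^3,1,\bar{\chi}^6,\bar{\chi}^9)$ using the weight formulas). By Lemma $\ref{lemma31}$ part $\eqref{lemma31p2}$, the hypothesis $\mathcal{C}(\bar{\chi}^{p-i})=0$ for $i\in\{\pm 3,\pm 6,\pm 9\}$, together with the classical vanishing of the $\bar{\chi}$-part of the mod-$p$ class group of $\Q(\mu_p)$, yields $\Sh^2_{\{p\}}(\F_p(\bar{\chi}^i))=0$ for every $i$ appearing in $\op{Ad}^0\bar{r}$. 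Summing gives $\Sh^2_{\{p\}}(\op{Ad}^0\bar{r})=0$. A direct check that the local $H^0(\op{G}_v,(\op{Ad}^0\bar{r})^*)$ vanish at $v=p$ and $v=\infty$ (because no twisted character $\bar{\chi}\sigma_{-\lambda}$ is trivial upon restriction) combined with Poitou--Tate then gives $H^2(\op{G}_{\Q,\{p\}},\op{Ad}^0\bar{r})=0$, so $\bar{r}$ admits a continuous lift $r:\op{G}_{\Q,\{p\}}\to \op{GSp}_4(\Z_p)$ with similitude $\kappa$.

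\emph{Steps two and three: producing $\bar{\rho}$ with image containing $\op{U}_1(\F_p)$.} Since each $\mathfrak{h}_m$ is a Galois submodule of $\op{Ad}^0\bar{r}$, the group $H^2(\op{G}_{\Q,\{p\}},\mathfrak{h}_m)=0$ as a direct summand, and an inductive argument mod $p^m$ allows us to choose $r$ with image in $H$. Setting $\bar{\rho}:=(D^{-1}rD)\bmod p$, one checks $\bar{\rho}$ lands in $\op{B}(\F_p)$ with the prescribed diagonal. By Lemma $\ref{bigimagelemmaU1}$, it now suffices to arrange that $\Pi:=\op{Im}(\bar{\rho})\cap \op{U}_1(\F_p)$ surjects onto $\op{U}_1(\F_p)/\op{U}_2(\F_p)\cong\F_p(\sigma_{\lambda_1})\oplus\F_p(\sigma_{\lambda_2})$. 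The two coordinates correspond to extension classes in $H^1(\op{G}_{\Q,\{p\}},\F_p(\sigma_{\lambda_i}))$ realized by the $(1,2)$ and $(2,4)$ entries of $(D^{-1}rD)\bmod p^2$. Twisting $r$ mod $p^2$ by a suitable global class in $H^1(\op{G}_{\Q,\{p\}},\op{Ad}^0\bar{r})$ (non-empty thanks to unobstructedness and the assumed class-group vanishing) makes both extension classes nonzero, producing $\Pi=\op{U}_1(\F_p)$.

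\emph{Verifying the hypotheses of Theorem $\ref{main}$.} Condition $\eqref{thc1}$ holds since $p\geq 23>4=2n$. Condition $\eqref{thc2}$ (oddness) holds because $\bar{\rho}(c)$ has similitude $\bar{\chi}^9(c)=-1$, which forces $\op{ad}\bar{\rho}(c)$ to act as a Chevalley involution on $\mathfrak{sp}_4$. Condition $\eqref{thc3}$ is what we just arranged. Condition $\eqref{thc4}$ is verified by enumerating the characters $\{\sigma_1,\sigma_\lambda:\lambda\in\Phi\}$ as powers of $\bar{\chi}$ and checking pairwise distinctness (and distinctness of their $\bar{\chi}$-twists from each $\sigma_{\lambda'}$) modulo $p-1$, which holds for $p$ large. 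Condition $\eqref{thc5}$ is automatic for $q=p$. Condition $\eqref{thc7}$ is vacuous since $S=\{p\}$. Condition $\eqref{thc8}$ follows because $\g/\mathfrak{b}$ is spanned by $X_\lambda$ for $\lambda\in\Phi^-$, and each $\sigma_{-\lambda}$ is a nontrivial power of $\bar{\chi}$ distinct from $\bar{\chi}$ itself. Applying Theorem $\ref{main}$ produces the characteristic-zero lift $\rho$ with similitude $\kappa$, ordinariness at $p$, and irreducibility, giving $\eqref{83p1}$--$\eqref{83p3}$.

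The principal obstacle is step three: one must simultaneously realize nonzero extension classes in two distinct $\sigma_{\lambda_i}$-eigenspaces while preserving the $H$-valued structure of $r$. This is where the full strength of the class-group hypothesis is invoked, through a careful analysis of how $H^1$-twists interact with the Galois-module filtration $\mathfrak{h}_m\subset \op{Ad}^0\bar{r}$.
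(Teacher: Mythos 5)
Your proposal follows the same three-step strategy as the paper (unobstructedness of $\bar{r}$ via Lemma $\ref{lemma31}$, lifting to an $H$-valued representation using the filtration $\{\mathfrak{h}_m\}$, conjugating by $D$ and reducing mod $p$, then invoking Lemma $\ref{bigimagelemmaU1}$). However, there is a genuine gap in the passage where you claim that twisting mod $p^2$ "makes both extension classes nonzero." Vanishing of $H^2$ and the class-group hypothesis alone do not produce nonzero classes in $H^1(\op{G}_{\Q,\{p\}},\F_p(\sigma_{\lambda_i}))$: unobstructedness gives no lower bound on $h^1$. What the paper actually uses is the global Euler characteristic formula together with the \emph{oddness} of $\sigma_{\lambda_1}=\bar{\chi}^3$ and $\sigma_{\lambda_2}=\bar{\chi}^{-9}$: for an odd nontrivial character $\sigma$ with $H^2(\op{G}_{\Q,\{p\}},\F_p(\sigma))=0$ one has $h^0(\op{G}_\infty,\F_p(\sigma))=0$, hence $h^1(\op{G}_{\Q,\{p\}},\F_p(\sigma))=1$. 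You never verify that the simple-root characters are odd, and without it your "non-empty" parenthetical does not follow. The paper then takes explicit generators $f_1,f_2$ of these one-dimensional spaces, forms $r_2=(\op{Id}+p(f_1+f_2))r_2'$ \emph{before} lifting to characteristic zero (not after, as in your write-up, which would require a separate argument that the twist still lifts $H$-equivariantly), and observes via inflation-restriction (using $p\nmid[\Q(\mu_p):\Q]$) that the restrictions $f_{i\restriction\op{G}_{\Q(\mu_p)}}$ remain nonzero, guaranteeing $X_{\lambda_i}\in\Phi(r_2)$.

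A smaller point: your justification of oddness of $\bar{\rho}$, namely that similitude $-1$ forces $\op{ad}\bar{\rho}(c)$ to be a Chevalley involution, is correct (an involution $g\in\op{GSp}_{2n}$ with $\nu(g)=-1$ has Lagrangian $\pm1$-eigenspaces, hence is conjugate to $\op{diag}(I_n,-I_n)$), but this fact is not stated or proved in the paper; the paper instead computes $\dim(\g)^{\op{ad}\bar{\rho}(c)}=4=\dim\mathfrak{n}$ explicitly. Your route is more conceptual and fine, but it deserves at least a sentence of justification rather than being asserted as automatic.
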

  \begin{proof}
 We show a representation $\bar{\rho}$ satisfying the conditions of Theorem $\ref{main}$ exists. It shall then follow from Theorem $\ref{main}$ that there exists a lift $\rho$ which satisfies the conditions $\eqref{83p1}$, $\eqref{83p2}$ and $\eqref{83p3}$ above. Let $\bar{r}$ be the representation with image in the diagonal torus:
  \[\bar{r}:=\left( {\begin{array}{cccc}
   \bar{\chi}^3 & & & \\
    & 1 &  &  \\
    & & \bar{\chi}^6  & \\
    & &  & \bar{\chi}^9 
  \end{array} } \right):\op{G}_{\Q,\{p\}}\rightarrow \op{GSp}_4(\F_p).\]
  
  The following matrix aids (in an informal way) in describing the eigenspace decomposition of $\op{Ad}^0\bar{r}$:
  \[\left( {\begin{array}{cccc}
   1 &  \bar{\chi}^3 &  \bar{\chi}^{-3} &  \bar{\chi}^{-6}\\
    \bar{\chi}^{-3} & 1 &  \bar{\chi}^{-6} &  \bar{\chi}^{-9} \\
     \bar{\chi}^{3}&  \bar{\chi}^{6}& 1  & \bar{\chi}^{-3} \\
     \bar{\chi}^{6}&  \bar{\chi}^{9}&  \bar{\chi}^3 & 1
  \end{array} } \right).\]More precisely, $\op{Ad}^0\bar{r}$ is an $11$-dimensional space which decomposes into one-dimensional eigenspaces, and we have that
  \[\sigma_{\pm 2L_1} = \bar{\chi}^{\mp 3},\sigma_{\pm 2L_2} = \bar{\chi}^{\mp 9},\sigma_{\pm(L_1+L_2)} = \bar{\chi}^{\mp 6}\text{ and }\sigma_{\pm (L_1-L_2)} = \bar{\chi}^{\pm 3}. \]
  We show that for $m\geq 1$, the global cohomology group $H^2(\op{G}_{\Q,\{p\}}, \mathfrak{h}_m)$ is zero. As a Galois module, $\mathfrak{h}_m$ decomposes into one-dimensional eigenspaces $\F_p(\sigma)$, where $\sigma$ ranges through some of the characters $1,\bar{\chi}^{\pm 3}, \bar{\chi}^{\pm 6}, \bar{\chi}^{\pm 9}$. It suffices to show that for the above choices of $\sigma$, the cohomology group $H^2(\op{G}_{\Q,\{p\}}, \F_p(\sigma))$ is zero. We show that $H^2(\op{G}_p, \F_p(\sigma))$ is zero and $\Sh^2_{\{p\}}(\F_p(\sigma))$ is zero. The dual $\F_p(\sigma)^*:=\op{Hom}(\F_p(\sigma), \mu_p)$ is isomorphic to $\F_p(\bar{\chi}\sigma^{-1})$. Since $p\geq 13$, the character $\bar{\chi}\sigma^{-1}_{\restriction \op{G}_p}\neq 1$. It follows from local duality that $H^2(\op{G}_p, \F_p(\sigma))$ is zero. It is a standard fact that $\mathcal{C}(\bar{\chi})$ is zero, see \cite[Proposition 6.16]{washington}. It follows from Lemma $\ref{lemma31}$ and the assumptions on $p$ that $\Sh^2_{\{p\}}(\op{Ad}^0\bar{r})$ is zero. We have thus shown that $H^2(\op{G}_{\Q,\{p\}}, \mathfrak{h}_m)$ is zero for all $m\geq 1$.
  \par We stipulate that all deformations of $\bar{r}$ have similitude character equal to $\chi^9$, where we recall that $\chi$ denotes the cyclotomic character. For $m\geq 1$, let $\chi_m$ denote $\chi$ modulo $p^m$. Recall that $\mathfrak{h}_1$ is spanned by $H_1,H_2$ and $\sigma_{\pm \lambda_i}$ for $i=1,2$. Since the characters $\sigma_{\lambda_i}$ for $i=1,2$ are both odd and \[H^2(\op{G}_{\Q,\{p\}}, \F_p(\sigma_{\lambda_i}))=0,\] it follows from the global Euler characteristic formula \cite[Theorem 8.7.4]{NW} that \[\op{dim}H^1(\op{G}_{\Q,\{p\}}, \F_p(\sigma))=1.\] Let $f_i$ be a generator for $H^1(\op{G}_{\Q,\{p\}}, \F_p(\sigma_{\lambda_i}))$ for $i=1,2$. Let $r_2'$ be the mod-$p^2$ lift  \[r_2':=\left( {\begin{array}{cccc}
   {\chi}_2^3 & & & \\
    & 1 &  &  \\
    & & {\chi}_2^6  & \\
    & &  & {\chi}_2^9 
  \end{array} } \right):\op{G}_{\Q,\{p\}}\rightarrow \op{GSp}_4(\F_p)\] and $r_2$ be the twist $(\op{Id}+p(f_1+f_2))r_2'$. Note that the image of $r_2$ is in $H(\Z/p^2)$. The obstruction to lifting $r_2$ to $r_3:\op{G}_{\Q,\{p\}}\rightarrow H(\Z/p^3)$ is in  $H^2(\op{G}_{\Q,\{p\}}, \mathfrak{h}_2)$, hence, is zero. Hence, $r_2$ lifts to $r_3$. Since $H^2(\op{G}_{\Q,\{p\}}, \mathfrak{h}_m)=0$ for all $m\geq 1$, it follows that if $r_m:\op{G}_{\Q,\{p\}}\rightarrow H(\Z/p^m)$ is a lift of $r_2$ (with similitude character $\chi_m^9$), then $r_m$ lifts one more step to $r_{m+1}:\op{G}_{\Q,\{p\}}\rightarrow H(\Z/p^{m+1})$. Furthermore, the lift $r_{m+1}$ can be prescribed to have similitude character $\chi_{m+1}^9$. The key ingredient here is that $H$ is a subgroup of $\op{GSp}_4(\Z_p)$. Since $H$ is a closed subgroup, it follows that $r_2$ lifts to a continuous characteristic zero representation $r:\op{G}_{\Q,\{p\}}\rightarrow H(\Z_p)$ with similitude character $\chi^9$. Let $\bar{\rho}:\op{G}_{\Q,\{p\}}\rightarrow \op{B}(\F_p)$ be the mod-$p$ reduction of $D^{-1} r D$ and let $\Pi$ be $\bar{\rho}(\op{G}_{\Q(\mu_p)})$. Lemma $\ref{bigimagelemmaU1}$ asserts that if $\Pi\rightarrow \op{U}_1(\F_p)/\op{U}_2(\F_p)$ is surjective, then the image of $\bar{\rho}$ contains $\op{U}_1(\F_p)$. Let $\Phi(r_2)\subseteq \mathfrak{h}_1$ be $r_2(\op{ker} \bar{r})$. In fact, $\Phi(r_2)$ is contained in $\F_p\langle H_1, H_2, X_{L_1-L_2}, X_{2L_2}\rangle $. Since the characters $1, \sigma_{\lambda_1}=\bar{\chi}^3, \sigma_{\lambda_2}=\bar{\chi}^{-9} $ are distinct, it follows that $\Phi(r_2)$ decomposes into distinct eigenspaces
  \[\Phi(r_2)=\Phi(r_2)^{\op{Gal}(\Q(\mu_p)/\Q)}\oplus\Phi(r_2)_{\bar{\chi}^3}\oplus \Phi(r_2)_{\bar{\chi}^{-9}} .\]
  Since $\op{Gal}(\Q(\mu_p)/\Q)$ is prime to $p$, it follows from a straightforward application of the inflation restriction sequence that $f_{i\restriction \op{G}_{\Q(\mu_p)}}$ is nonzero. As a result, $\Phi(r_2)_{\bar{\chi}^3}$ and $\Phi(r_2)_{\bar{\chi}^{-9}}$ are nonzero, and hence, $X_{\lambda_i}\in \Phi(r_2)$. Since $\op{exp}(X_{\lambda_1})$ and $\op{exp}(X_{\lambda_2})$ are generators of $\op{U}_1(\F_p)/\op{U}_2(\F_p)$, it follows that $\Pi$ surjects onto $\op{U}_1(\F_p)/\op{U}_2(\F_p)$. Thus, the image of $\bar{\rho}$ contains $\op{U}_1(\F_p)$.
  \par We show that the conditions of Theorem $\ref{main}$ are satisfied.
  \begin{itemize}
      \item Condition $\eqref{thc1}$ asserts that $p>4$, we have assumed that $p\geq 23$.
      \item Condition $\eqref{thc2}$ asserts that $\dim (\g)^{\op{ad}\bar{\rho}(c)}=\dim \mathfrak{n}$. Since $p>2$, up to conjugation, $\bar{\rho}(c)$ is equal to $\left( {\begin{array}{cccc}
   -1 & & & \\
    & 1 &  &  \\
    & & 1 & \\
    & &  & -1
  \end{array} } \right)$. Explicit computation shows that w.r.t this basis, 
  \[(\g)^{\op{ad}\bar{\rho}(c)}=\F_p\langle H_1, H_2, (L_1+L_2), -(L_2+L_2) \rangle, \]and hence, $\dim (\g)^{\op{ad}\bar{\rho}(c)}$ is equal to $4$. On the other hand, there are $4$ positive roots and the dimension of $\mathfrak{n}$ is $4$.
  \item Condition $\eqref{thc3}$ asserts that the image of $\bar{\rho}$ contains the unipotent group $\op{U}_1(\F_p)$. This has been shown to be the case.
  \item For condition $\eqref{thc4}$, consider $\sigma_{\lambda}=\bar{\chi}^i$ and $\sigma_{\lambda'}=\bar{\chi}^j$. Since $i,j\in\{1, \pm 3, \pm 6, \pm 9\}$, we see that $|i-j|\leq 18<p-1$. Hence, the characters $\sigma_{\lambda}$ and $\sigma_{\lambda'}$ are distinct. Note that $\bar{\chi} \sigma_{\lambda'}=\bar{\chi}^{j+1}$ and $|i-(j+1)|\leq 19<p-1$. Hence, the characters $\sigma_{\lambda}$ and $\bar{\chi}\sigma_{\lambda'}$ are distinct.
  \item Condition $\eqref{thc5}$, asserts that each of the roots $\lambda\in \Phi$, the $\F_p$-linear span of the image of $\sigma_{\lambda}$ in $\F_q$ is $\F_q$. But $\F_q$ is $\F_p$ and thus the condition is clearly satisfied.
  \item Condition $\eqref{thc7}$, is satisfied since the only prime at which $\bar{\rho}$ ramifies is $p$.
  \item Condition $\eqref{thc8}$ asserts that Tilouine's regularity conditions $(\operatorname{REG})$ and $(\operatorname{REG})^*$ are satisfied, i.e. 
\[H^0(\op{G}_p, \g/\mathfrak{b})=0\text{ and }H^0(\op{G}_p, (\g/\mathfrak{b})(\bar{\chi}))=0.\] It is clear that $\g/\mathfrak{b}$ and $(\g/\mathfrak{b})(\bar{\chi})$ have no trivial eigenspace for the torus action.
  \end{itemize}
  The proof is now complete.
  \end{proof}
  \begin{Remark}
 \begin{enumerate}
  \item The above hypotheses is satisfied for any regular prime $p\geq 23$. In particular, it is satisfied for $p=23$.
      \item The section simply serves to demonstrate that examples do exist and demonstrate how they may be constructed. We restrict to $\op{GSp}_4$ so that the arguments are simplified. 
      \item Note that $D^{-1} r D$ is not necessarily $p$-ordinary and thus not necessarily geometric in the sense of Fontaine-Mazur. On the other hand, the lift $\rho$ is $p$-ordinary. 
  \end{enumerate}
  \end{Remark}

\end{document}